\documentclass[reqno,11pt]{amsart}
\usepackage{geometry}
\usepackage{mathtools,amssymb,amsthm,mathrsfs,color,lineno,paralist,graphicx,float}
\usepackage[colorlinks,
linkcolor=red,
anchorcolor=green,
citecolor=blue,
]{hyperref}
\usepackage{comment}
\usepackage{enumitem}

\usepackage[T1]{fontenc}
\usepackage[utf8]{inputenc}
\usepackage[english]{babel}
\usepackage {graphicx,overpic}
\usepackage{subfig}
\usepackage{float}
\usepackage[export]{adjustbox}
\usepackage{tikz}
\usepackage{relsize}

\usepackage{calc}

\newcommand{\gentau}{\mathord{\scalebox{0.8}{$\mathfrak{T}$}}}

\newcommand{\R}{{\mathbb R}}

\newcommand{\T}{{\mathbb T}}

\newtheorem{theorem}{Theorem}[section]
\newtheorem{lemma}[theorem]{Lemma}%[section]
\newtheorem{prop}[theorem]{Proposition}%[section]
\newtheorem{corollary}[theorem]{Corollary}%[section]
\newtheorem{proposition}[theorem]{Proposition}
\newtheorem{theoalph}{Theorem}

\theoremstyle{definition}
\newtheorem{remark}[theorem]{Remark}%[section]
\newtheorem{definition}[theorem]{Definition}

\def\uns{\mathrm{u}}
\def\sta{\mathrm{s}}

\def\BB{B}

\newcommand{\difknu}[2]{\mathcal{C}^{#1}(\mathbb{A}_{#2})}
\newcommand{\der}[1]{\dot{{#1}}}
\newcommand{\dertwo}[1]{\ddot{{#1}}}
\newcommand{\norm}[2]{\mathcal{C}^{{#1}}({#2})}

\renewcommand{\ss}{\mathbf{s}}
\newcommand{\vp}{\varphi}

\newcommand{\gna}{\gamma}
\newcommand{\dergna}{\der{\gamma}}
\newcommand{\dertwogna}{\dertwo{\gamma}}

\newcommand{\lna}{\lambda}
\newcommand{\derlna}{\der{\lambda}}
\newcommand{\dertwolna}{\dertwo{\lambda}}

\newcommand{\dgnalna}{\gamma_\lambda}

\newcommand{\nna}{\mathbf{n}}

\newcommand{\ga}{{\beta}}
\newcommand{\derga}{\der{\beta}}
\newcommand{\dertwoga}{\dertwo{\beta}}

\newcommand{\defga}{\beta_\varepsilon}
\newcommand{\derdefga}{\der{\beta}_\varepsilon}
\newcommand{\dertwodefga}{\dertwo{\beta}_\varepsilon}

\newcommand{\dgalac}{\beta_\eta}
\newcommand{\derdgalac}{\der{\beta}_\eta}

\newcommand{\lac}{\eta}
\newcommand{\derlac}{\der{\eta}}
\newcommand{\dertwolac}{\dertwo{\eta}}

\newcommand{\narc}{n}

\newcommand{\etana}{\xi}

\newcommand{\etarc}{\zeta}
\newcommand{\deretarc}{\der{\zeta}}
\newcommand{\dertwoetarc}{\dertwo{\zeta}}

\newcommand{\co}{C_0}

\renewcommand{\diamond}{\ast}

\newcommand{\ttt}{t}

\newcommand{\g}{\psi}

\newcommand{\Bl}[1]{\BB_{#1}}
\newcommand{\Bf}[2]{\mathbf{B}^{\nu}_{#1}(#2)}
  \RequirePackage{textcmds}\relax

\ProvideTextCommandDefault{\cprime}{\tprime} 
\begin{document}

\title[]{Chaoticity of generic analytic convex billiards}

\author{Inmaculada Baldom\'a}
\address{Departament de Matem\`atiques, Universitat Polit\`ecnica de Catalunya (UPC), IMTECH (UPC), Centre de Recerca Matem\`atica (CRM), Barcelona, Spain.}
\email{immaculada.baldoma@upc.edu}

\author{Anna Florio}
\address{\textsc{Ceremade}-Universit\'e Paris Dauphine-PSL\\
75775 Paris, France.}
\email{florio@ceremade.dauphine.fr}

\author{Martin Leguil}
\address{\'Ecole polytechnique, CMLS\\
	Route de Saclay, 91128 Palaiseau Cedex, France.}
\email{martin.leguil@polytechnique.edu}

 \author{Tere M.-Seara}
 \address{Departament de Matem\`atiques, Universitat Polit\`ecnica de Catalunya (UPC), IMTECH (UPC), Centre de Recerca Matem\`atica (CRM), Barcelona, Spain.}
 \email{tere.m-seara@upc.edu}

\begin{abstract}
We show that a generic analytic strongly convex billiard is ``maximally chaotic'' in the sense that, for every rational number $\frac{p}{q} \in \mathbb{Q} \cap (0,1)$, all intersections between the stable and unstable manifolds of maximizing periodic orbits with rotation number $\frac{p}{q}$ are transverse. 
\end{abstract}
\maketitle

\selectlanguage{english}
\tableofcontents

\section{Introduction}
A~\emph{planar billiard} is a dynamical system that describes the motion of a massless point moving within a domain of $\mathbb{R}^2$ along a straight line until it hits a boundary component. Upon collision, the angle of reflection equals the angle of incidence.
Planar billiards have attracted significant attention from mathematicians due to their fascinating dynamical properties, particularly because of the wide range of dynamical behaviors that emerge depending on the shape of the boundary. A paradigmatic example is the class of~\emph{strongly convex billiards}, first introduced by G.~D.~Birkhoff in~\cite{Birkhoff}. These are convex domains of $\mathbb{R}^2$ whose boundary is a smooth curve with nonvanishing curvature. Such billiards exhibit mixed dynamical behavior: near the boundary, Lazutkin's celebrated work~\cite{Lazutkin} demonstrates that the billiard map admits a positive-measure set of essential (i.e., homotopically non-trivial) invariant curves, giving rise to quasi-periodic dynamics. The existence of elliptic islands near certain periodic orbits has been also investigated, see, e.g.,~\cite{CarneiroOliffsonPC}. Additionally, hyperbolic sets can arise from transverse intersections between the invariant manifolds of hyperbolic periodic orbits.   
As emphasized above, the dynamical features can change drastically when considering other classes of billiards. For instance, Lazutkin's result fails for convex planar billiards with flat arcs, where no invariant curve prevents diffusive behavior near the boundary. A notable example is that of~\emph{Bunimovich stadia} (see~\cite{Bunimovich}), which are ergodic and exhibit non-uniformly hyperbolic dynamics. 
Furthermore, billiards within convex polygons display parabolic motion (see, e.g.,~\cite{Gutkin}), whereas dispersing billiards --- in which particles bounce off strongly convex obstacles --- serve as examples of uniformly hyperbolic systems (see, e.g.,~\cite{Sinai}).

In the present work, we focus on~\emph{strongly convex planar domains}. One fascinating line of research dates back to Birkhoff, who asked whether ellipses are the only examples of~\emph{integrable} convex billiards --- in the sense that an open region of phase space is foliated by essential invariant curves. Recent breakthroughs in this direction have been achieved in various works~\cite{AvilaKalSimoi,  huang2018nearly, KalSorr, BialyMironov}, demonstrating that, under certain conditions (such as when the domain exhibits central or axial symmetries), the presence of ``sufficiently many invariant curves'' forces the domain to be elliptic. Let us also mention the very recent work of Koval~\cite{Koval}, who proves a local version of the strong Birkhoff conjecture for most ellipses. Specifically, for almost every eccentricity $e \in (0,1)$, domains that are sufficiently close to an ellipse of eccentricity $e$ and integrable near the boundary are shown to be elliptic.

Complementing these results, another line of research explores the idea that a ``typical'' convex billiard should exhibit chaotic behavior incompatible with integrability. This chaos arises, in particular, due to the presence of transverse intersections between stable and unstable manifolds of hyperbolic periodic orbits. Donnay~\cite{Donnay2} proves the existence of smooth curves, which are $\mathcal{C}^2$-close to ellipses, whose billiard maps admit transverse homoclinic points. This result was later generalized in the works of Dias Carneiro, Kamphorst, and Pinto de Carvalho~\cite{PintoC}, as well as Xia and Zhang~\cite{XiaZhang}: they show that for a $\mathcal{C}^r$-generic billiard table, with $r\in \mathbb{N}_{\geq 2}\cup \{\infty\}$, all hyperbolic periodic orbits exhibit transverse homoclinic intersections. In~\cite{PinRam}, the authors demonstrate that a generic perturbation of a convex billiard table destroys any resonant invariant essential curve. A variational approach is developed in~\cite{Cheng} to study the relationship between smooth billiard tables and positive topological entropy. 
Higher dimensional results are also present in the literature. Delshams, Fedorov, and Ramírez-Ros~\cite{DelFedRam} perturb an ellipsoid in $\mathbb{R}^3$ to break the separatrices. The existence of a $\mathcal{C}^2$-open and dense set of convex bodies in $\mathbb{R}^d$ ($d > 2$) whose billiard map exhibits a hyperbolic set --- and thus has positive topological entropy --- is established in~\cite{entropyhigherdim}.

Let us emphasize that similar results have been obtained in the general framework of twist maps of the two-dimensional annulus. In~\cite{Forni}, the author proves that invariant circles of area-preserving twist maps can be destroyed under small analytic perturbations. Thus, using the result in~\cite{Angen}, it can be deduced that a generic area-preserving twist map has positive entropy.
However, they cannot be directly applied to billiards, where the class of allowed perturbations is more restrictive. Indeed, perturbing the billiard map within the class of twist maps of the annulus could result in leaving the framework of billiard maps. Instead, one must first perturb the boundary of the domain, which in turn perturbs the billiard map. 
In particular, one of the main difficulties is that local perturbations of the boundary lead to non-local perturbations of the billiard map. Specifically, any modification of the table translates into a perturbation of an open set of fibers in phase space, affecting all orbits that bounce off the perturbed segment of the boundary, regardless of their bouncing angles.

The aforementioned results obtained for convex billiards successfully address such fibered perturbations in the $\mathcal{C}^r$ category, where $r \in \mathbb{N} \cup \{\infty\}$. The present work pursues a two-fold objective:

\begin{itemize}
\item Extending these results to the class of strongly convex billiards with \emph{analytic boundaries}, which are even more rigid. Indeed, by analytic continuation, any localized perturbation of the boundary will globally affect the dynamics.
\item Obtaining stronger conclusions by demonstrating the genericity of ``maximally chaotic'' convex billiards, for which all maximizing periodic orbits are hyperbolic and all intersections between their stable and unstable manifolds are transverse (in the spirit of the Kupka-Smale theorem). In particular, such behavior is incompatible with integrability: by the twist property, any annular region will necessarily contain hyperbolic sets.
\end{itemize}

Informally, our main result can be stated as follows:
\begin{theoalph}
    For a generic analytic billiard table, the corresponding billiard map is such that, for every $\frac p q \in\mathbb{Q}\cap(0,1)$, there exists a unique maximizing hyperbolic orbit of rotation number $\frac p q$, which admits homoclinic intersections, all of which are transverse.
\end{theoalph}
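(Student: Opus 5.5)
The plan is a Baire category argument in a Banach space of analytic boundaries, done rational by rational and then over countably many compact pieces of each homoclinic set.

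\textbf{Setting and reduction to open-dense statements.} Fix $\nu>0$ and the complex strip $\mathbb{A}_\nu$. Consider the Banach space of real-analytic $2\pi$-periodic functions bounded and holomorphic on $\mathbb{A}_\nu$, and inside it the open set of strongly convex tables, parametrized for instance by the support function or the radius of curvature. Generic will mean a countable intersection of open dense sets in this space; intersecting over $\nu\in\mathbb{N}^{-1}$ then gives genericity in the inductive-limit topology. Fix $\frac pq$. The maximizing $(p,q)$-orbits are the maxima of the perimeter functional $L(s_1,\dots,s_q)=\sum_i |\gamma(s_{i+1})-\gamma(s_i)|$. For each $\frac pq$ I would prove three statements.

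\textbf{(1)} Tables for which the maximum of $L$ is attained on a single orbit, and this orbit is nondegenerate, form an open dense set.

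\textbf{(2)} Within (1), the maximizing orbit is hyperbolic. Since the orbit is a nondegenerate maximum of a twist-map generating function, its stable and unstable manifolds are distinct, and Birkhoff/Mather theory gives homoclinic intersections. These are provided by the minimax $(p,q)$-orbits between consecutive translates, or via the Aubry–Mather heteroclinics to the next translate of the maximizer.

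\textbf{(3)} For each compact fundamental piece of the local unstable manifold and each $n$, the tables for which all intersection points of $W^u_{\rm loc}$ with $f^{-n}(W^s_{\rm loc})$ inside the piece are transverse form an open dense set.

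Openness in each case comes from persistence of nondegenerate critical points and of transversality over compact pieces.

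\textbf{Density for (1) and (2).} Perturb $\gamma$ by $\varepsilon h$ with $h$ a trigonometric polynomial, which is analytic. Then $L_\varepsilon=L+\varepsilon\, dL\cdot h+O(\varepsilon^2)$, where the first variation at a bounce point is $2\sin\varphi_i\, h(s_i)$ in the normal direction. Since $h$ can be prescribed freely at finitely many points $s_i$ together with its derivatives, one can select one maximizer among finitely many candidates. A standard Morse-type argument with a parameter family then makes the maximum unique and nondegenerate. I would use a finite-dimensional family $\sum_{|k|\le K} a_k e^{iks}$ and Sard's theorem applied to the map (orbit, parameter) $\mapsto\nabla L$.

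\textbf{Density for (3), the main obstacle.} Parametrize the homoclinic orbit $\{x_n\}_{n\in\mathbb{Z}}$. The splitting of $W^u$ and $W^s$ along it is measured by a Melnikov-type function
\[
M_h(\tau)=\sum_{n\in\mathbb{Z}} \sin\varphi_n(\tau)\,\bigl(h(s_n(\tau))-h(s^\infty_n(\tau))\bigr),
\]
the derivative of the homoclinic action difference. It converges because the orbit approaches the periodic one exponentially fast. Transversality fails exactly at degenerate critical points of the homoclinic action, and one needs $h$ analytic such that $M_h$ and $M_h'$ are not jointly degenerate.

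The difficulty is that $h$ cannot be localized: changing $h$ near one bounce $s_n$ changes all bounces simultaneously, including those of the periodic orbit. I would handle this by using that the points $s_n$ accumulate on the periodic points, with $s_n$ distinct and different from the periodic points for $|n|$ large. Then I would choose $h$ vanishing to high order at the $q$ periodic points, approximating a $C^\infty$ bump at one $s_{n_0}$ by a polynomial via Runge/Weierstrass approximation with control on the tail. The tail is controlled by the exponential convergence together with the vanishing order. This yields a finite-dimensional parameter family whose Melnikov map is a submersion, and parametric transversality (Sard) then concludes.

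Finally, intersect over $\frac pq$, $n$, the compact pieces and $\nu$. The step I expect to be genuinely delicate is this last approximation: making the analytic perturbation act as a submersion on the Melnikov functional uniformly over a compact family of homoclinic points.
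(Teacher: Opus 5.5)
The gap is in the density step for (3). It lies in the localization that comes before the polynomial approximation you flag.

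Your construction assumes that the bounces $s_n$ of the homoclinic orbit are ``distinct and different from the periodic points for $|n|$ large''. You then expect a bump at one $s_{n_0}$ to perturb the map at a single orbit point. The distinctness is false in general:
\begin{itemize}
\item A normal perturbation of the table supported near $s_{n_0}$ changes the billiard map on the whole fiber $\{s\approx s_{n_0}\}$ of phase space, for outgoing and incoming chords alike.
\item The forward iterates approach $\mathcal{P}$ along $W^{\sta}_{\mathrm{loc}}$ and the backward ones along $W^{\uns}_{\mathrm{loc}}$. Both are graphs over the same intervals around the points $s_{P_i}$.
\item So an orbit point on one local manifold can share its vertical fiber with an orbit point on the other. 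The first-order effects of your bump at the two points may then cancel.
\end{itemize}

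For $q=2$ this is structural rather than accidental. Every perturbation of the table preserves the time reversal $\mathcal{I}(s,\varphi)=(s,-\varphi)$. Symmetric homoclinic orbits (those through $\{\varphi=0\}$) satisfy $s_{-n}=s_n$ for all $n$, and they persist whenever $W^{\sta}$ crosses $\{\varphi=0\}$ transversally. They therefore cannot be perturbed away, and along them every bounce other than the symmetric one is hit twice.

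Handling this is the main new ingredient of the paper:
\begin{itemize}
\item Lemma~\ref{lemma at most two fibered points:section3}: far out, there are at most two orbit points per fiber, one on each local manifold.
\item Proposition~\ref{nec cond for not one fib:section3}: by the twist condition and the Aubry--Mather ordering of the local branches, if no orbit point on the local manifolds is one-fibered, then $q=2$.
\item Proposition~\ref{good property in the bas case:section3}: in that case the orbit is symmetric.
\item Lemmas~\ref{lem: computing the differential ii} and~\ref{exhaustive}: the two symmetric contributions add, giving the factor $C_\ell+C_{-\ell}$.
\end{itemize}
You also need to check nondegeneracy at the chosen bounce. The $\dot\lambda$- and $\ddot\lambda$-terms of $d\Psi(0)$ carry the factors $\pi_1\mathbf{t}$ and $(\pi_1\mathbf{t})^2$. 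This is why the paper works at points of the local manifolds of the Aubry--Mather orbit, which are graphs over $s$.

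Smaller points:
\begin{itemize}
\item The $M_h$ you write involves only the values $h(s_n)$, so it is the Melnikov potential (the homoclinic action difference), not its derivative. The displacement itself involves $\dot h$ at the bounces (the matrix $\mathcal{T}$ of Lemma~\ref{lem: computing the differential}).
\item Tail control and high-order vanishing at the periodic points are unnecessary. The $\mathbf{t}_i$ decay exponentially in both time directions, so the displacement functional is continuous in the $\mathcal{C}^k$ norm of the perturbation. Truncating the Fourier series of a $\mathcal{C}^\infty$ bump therefore preserves nondegeneracy, exactly as in Proposition~\ref{invertible}.
\item In (1), the set of maximizers need not be finite a priori; it can fill an invariant circle. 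So ``finitely many candidates'' is not available before your Morse step. The paper instead uses one nonpositive perturbation vanishing to second order on a chosen maximizer, which makes it the unique and nondegenerate maximizer in one stroke.
\item In (2), a nondegenerate maximum does not exclude coinciding stable and unstable branches; the paper removes them by a perturbation (Lemma~\ref{non coincident branches}). Minimax $(p,q)$-orbits are periodic, not homoclinic. The Aubry--Mather heteroclinic you also mention is the right object, and it is automatically one-fibered (Proposition~\ref{prop:homoclinicpq}).
\item $\mathcal{C}^\omega$ with the inductive-limit topology is not a Baire space, which is why genericity is stated in each $\mathcal{B}_r$.
\end{itemize}

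Your Sard/parametric-transversality ending could replace the paper's Rouch\'e counting and its inductive use of the implicit function theorem. That only works once nondegeneracy at every non-transverse homoclinic point has been established, and that is precisely what the proposal leaves unproved.
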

In fact, the proof yields a stronger result: for any fixed $\frac{p}{q} \in \mathbb{Q} \cap (0,1)$ and any fixed $N\in \mathbb{N}$, any strongly convex analytic billiard table can be perturbed by a trigonometric polynomial so that the resulting table admits a unique maximizing hyperbolic periodic orbit of rotation number $\frac{p}{q}$, and all intersections between its local stable and unstable manifolds of size $N$ are transverse. 

In particular, we obtain the following stronger result for trigonometric polynomial billiard tables, answering a question by Berger: there exists an open and dense set of trigonometric polynomial billiard tables whose billiard map has positive topological entropy (see Corollary~\ref{coro entropy}).

As a consequence of our main result, we deduce that, for a generic analytic billiard table, there exists a family of horseshoes accumulating on the boundary. In particular, these horseshoes coexist in this region with the family of essential curves exhibiting quasi-periodic motion, as guaranteed by Lazutkin's result.

We also note that our main result has applications to questions related to the spectral rigidity of convex billiards. 
In~\cite{guankaloshin}, it is shown that for \( k \geq 3 \), the eigendata corresponding to Aubry-Mather periodic orbits of the billiard map can be recovered from the (maximal) marked length spectrum for a generic \( \mathcal{C}^k \) strictly convex domain. As a consequence of the present work, the same conclusion extends to the analytic category. Let us also mention that in ongoing work by the third author, together with Vadim Kaloshin and Ke Zhang, the authors investigate the rigidity of the marked length spectrum for generic strongly convex billiards with analytic boundaries. Here, the marked length spectrum is a function that associates to each rational number $\frac{p}{q} \in \mathbb{Q} \cap (0,1)$ the maximal perimeter of periodic orbits with rotation number $\frac{p}{q}$. This work focuses on a specific sequence of periodic orbits accumulating on a transverse homoclinic orbit associated with the maximizing two-periodic orbit. In particular, our present work verifies that the necessary conditions for this study --- especially the transversality of all intersections between the stable and unstable manifolds of the maximizing two-periodic orbit --- are indeed generic. A key step is the ability to select the relevant periodic orbits solely by their rotation number, which is ensured by the generic conditions satisfied by the billiards.

To our knowledge, there is a scarcity of research addressing the generic properties of convex billiards with analytic boundaries. In~\cite{ClarkeTuraev}, the authors demonstrate that, assuming the existence of a hyperbolic periodic orbit with a transverse homoclinic orbit, a generic real-analytic convex domain in $\mathbb{R}^d$, $d \geq 3$, exhibits diffusive trajectories. These trajectories begin near the tangent direction and reach almost vanishing angles of reflection. 

A related class of dynamical systems with constrained perturbations is that of geodesic flows. Contreras and Mazzucchelli~\cite{contreras2025closed} demonstrated that, given a closed Riemannian manifold $(M, g_0)$ of dimension at least two with a minimal closed geodesic $\gamma$, the metric $g_0$ can be perturbed to a $\mathcal{C}^\infty$-close metric $g$ such that $\gamma$ becomes a hyperbolic closed geodesic of $g$ with a transverse homoclinic orbit. 
Earlier, Contreras~\cite{Contreras} showed that, for closed manifolds of dimension at least two on which a $\mathcal{C}^2$-generic Riemannian metric admits infinitely many closed geodesics, a $\mathcal{C}^2$-small perturbation of the metric yields a transverse homoclinic orbit to a hyperbolic closed geodesic. Let us also mention the work of Clarke~\cite{Clarke} 
in the analytic category, where he proved that if the geodesic flow on a real-analytic closed hypersurface $M \subset \mathbb{R}^n$ ($n \geq 3$), with respect to the Euclidean metric, admits an elliptic periodic orbit, then $M$ can be perturbed analytically so that, generically, the geodesic flow on the perturbed hypersurface exhibits a hyperbolic periodic orbit with a transverse homoclinic point.

To address the analytic topology, we draw on ideas from the seminal work of Zehnder~\cite{Zehnder} on analytic symplectic diffeomorphisms of the disk near an elliptic fixed point. Specifically, Zehnder proved that a Baire-generic diffeomorphism in this class exhibits a hyperbolic periodic point with transverse homoclinic intersections in any neighborhood of the fixed point. 
Some years later, Genecand~\cite{Genecand} provided an alternative proof of the same result using a twist condition and the theory of Aubry-Mather sets, which guarantees the existence of homoclinic points (though not necessarily transverse). Once again, a second perturbation is usually required to ensure that the homoclinic point is transverse. Let us also mention that a different approach to obtaining similar results within the analytic topology can be found in the work of Broer and Tangerman~\cite{BroTan}.

Another interesting approach is to consider the case of homoclinic tangencies. In the context of generic symplectic diffeomorphisms of the disk with an elliptic fixed point, Gelfreich and Turaev~\cite{GelTur} prove that any such diffeomorphism can be perturbed --- even within the analytic topology --- to produce a quadratic homoclinic tangency. In particular, a generic diffeomorphism is~\emph{universal}, meaning that the closure of all its renormalizations contains any disk diffeomorphism. This universality is a consequence of the genericity of quadratic homoclinic tangencies, combined with the universality theory developed by Gonchenko, Shilnikov, and Turaev in~\cite{GST}. Notably, these genericity results hold in the analytic topology.
Let us highlight that for convex billiards with smooth boundary, universality results --- reminiscent of the Gonchenko-Shilnikov-Turaev theory --- were recently obtained by Callis~\cite{Callis}.  

Let us further mention that Zehnder's result on the abundance of transverse homoclinic points has been also adapted to the framework of $C^\infty$ Reeb flows on $3$-dimensional manifolds in~\cite{CDHR24}. By leveraging the powerful tool of broken book decompositions, the dynamics of the flow can generically be studied as return diffeomorphisms on Birkhoff sections. 
They build on earlier results by Le Calvez and Sambarino~\cite{LeCalvSam}, obtained in the broader framework of surface diffeomorphisms, concerning transverse homoclinic intersections of hyperbolic periodic points.\\

The paper is organized as follows. In Section~\ref{sec:main results}, we  give some definitions needed to stablish the main results: Theorem \ref{main_thm bis_AM}, which is a direct consequence of Theorem \ref{main_thm ter_AM}, which claims that a  certain set of analytic billiards is open and dense. The (easy) proof of the openness property is given right after the theorem, but the density, which is the main result in this paper,  is stated in Theorem \ref{main thm 2 bis}. We also provide Corollary \ref{coro entropy}, which is an easy consequence of both theorems.
The rest of the paper is devoted to prove Theorem \ref{main thm 2 bis}.
In Section  \ref{sec:proof of main thm 2 bis} define the fundamental notions and we give the steps and results which lead to the proof of Theorem \ref{main thm 2 bis} and the rest of the sections of the paper are devoted to the (more technical) proofs of these results.

\vspace{10pt}

\noindent \textbf{Aknowledgements}

\noindent A.F. is partially supported by the ANR project CoSyDy (ANR-CE40-0014) and the ANR project GALS (ANR-23-CE40-0001). M.L. is partially supported by the ANR project CoSyDy (ANR-CE40-0014), the ANR project Padawan (ANR-21-CE40-0012-01), the ANR project NO-LIMIT (ANR-25-CE40-1380-01) and by the LESET Math-AMSUD project. I.B. and T.M.S. have been partially supported by the grant PID-2021-122954NB-100 and PID2024-158570NB-I00 funded by
MCIN/AEI and “ERDF A way of making Europe”.   
This work is also supported by the Spanish State Research Agency, through the Severo Ochoa and Mar\'ia de Maeztu Program for Centers and Units of Excellence in R\&D (CEX2020-001084-M).

\section{Main results}\label{sec:main results}
\subsection{Topology}\label{subsec:topology}
For any $z\in\mathbb{C}$, we denote by $\Im z$ its imaginary part. Let $\mathbb{T}:=\mathbb{R}/\mathbb{Z}$. For any $r>0$, we consider the complex strip 
\[
\mathbb{T}_{r}:=\{z\in \mathbb{C} :\ \vert \Im z\vert < r\}/\mathbb{Z}.
\]
The set 
\[
\mathcal{C}_{r}^{\omega}(\mathbb{T},\mathbb{R}^2) = \{ \gna\colon \mathbb{T}_r\to \mathbb{C}^2\,:\, \text{real analytic and bounded} \},
\]
endowed with the norm 
\begin{equation*} 
\|\gna\|_{r}:=\sup_{z\in\mathbb{T}_{r}}\|\gna(z)\|,
\end{equation*} 
is a  Banach space; here, $\| (z_1,z_2)\| = \sqrt{|z_1|^2 + |z_2|^2}$ denotes the usual (complex) Euclidian norm on $\mathbb{C}^2$. We notice that $\mathcal{C}^{\omega}(\mathbb{T}, \mathbb{R}^2)=\cup_{r>0} \mathcal{C}_{r}^{\omega}(\mathbb{T}, \mathbb{R}^2)$ is the set of real analytic functions $\gna \colon \mathbb{T}\to \mathbb{R}^2$.  
    
We consider the open subset
\begin{align*}
\mathcal{B}_{r}^0:=
\{\gna\in \mathcal{C}^{\omega}_{r}(\mathbb{T},\mathbb{R}^2)\text{ such that } 
\gna\colon \T \hookrightarrow \mathbb{R}^2\text{ is an embedding}\}  
\end{align*}
and we note that for any $\gna \in \mathcal{B}_{r}^0$, $\gna(\mathbb{T})$ is a Jordan curve. Denote by $\Omega(\gna)$ the bounded connected component of $\mathbb{R}^2\setminus\gna(\mathbb{T})$.  
We then let 
\begin{equation}\label{def:mathcalBr}
\mathcal{B}_{r} = \{ \gna \in \mathcal{B}_{r}^0\text{ such that} \; \gna(\T) = \partial \Omega (\gna)\text{ is strongly convex}\}.
\end{equation}
In particular, for any $\gna \in \mathcal{B}_r$, $\Omega(\gna)$ is a convex analytic billiard table such that the curvature of $\partial \Omega (\gna)$ is everywhere non-zero.   
    
Moreover, $\mathcal{B}_r$ has the Baire property, as it is an open subset of the Banach space $(\mathcal{C}^{\omega}_r(\mathbb{T}, \mathbb{R}^2), \|\cdot \|_r)$. 
In other words, the countable intersection of open and dense subsets of $\mathcal{B}_r$ is a dense set. We say that a subset of $\mathcal{B}_r$ is Baire generic if it is a countable intersection of open and dense sets. 

\begin{remark}Observe that every strongly convex analytic billiard table $\Omega\subset \mathbb{R}^2$ can be seen as $\Omega(\gna)$, for some $r>0$ and some analytic function $\gna \in \mathcal{B}_r$. This representation is not unique, as $\gna$ could be replaced with $\gna\circ \sigma$, for some analytic diffeomorphism $\sigma \colon \T \to \T$. Moreover, since we are interested in dynamical properties of the billiard $\Omega(\gna)$, which are invariant under rigid motions of $\mathbb{R}^2$, we could as well identify it with $\Omega(\tau \circ \gna)$, for any $\tau$ which is the composition of a homothety and some isometry.
\end{remark}
    
\subsection{Statement of main results}
To state our main result Theorem~\ref{main_thm bis_AM}, we begin by fixing notation and recalling some fundamental concepts.

Given a billiard table $\Omega(\gna)$, for some $\gna\in \mathcal{C}^\omega_r(\T,\mathbb{R}^2)$, denote by $f=f_\gna\colon \mathbb{A}\to \mathbb{A}$ the corresponding analytic billiard map on the annulus 
\[
\mathbb{A}:=\T\times \left [-\frac{\pi}{2},\frac{\pi}{2} \right].
\]
Denote by $p_1\colon\mathbb{R}^2\to \mathbb{R}$ (resp. $p_2\colon\mathbb{R}^2\to \mathbb{R}$) the projection onto the first (resp. second) coordinate. 
Given a point $(\ss,\varphi)\in\mathbb{A}$, we say that it admits a rotation number if the following limit exists: 
\[
\lim_{m\to +\infty}\dfrac{p_1\circ F^m(S,\varphi)-p_1(S,\varphi)}{m}\, ,
\]
where $(S,\varphi)\in\mathbb{R}\times[-\frac{\pi}{2},\frac{\pi}{2}]$ is a lift of $(\ss,\varphi)$ and $F\colon \mathbb{R}\times[-\frac{\pi}{2},\frac{\pi}{2}]\to \mathbb{R}\times[-\frac{\pi}{2},\frac{\pi}{2}]$ is a lift of $f$. The limit exists for a lift if and only if it exists for every lift, and rotation numbers given by different lifts could differ only by integers. Thus, the rotation number of $(\ss,\varphi)\in \mathbb{A}$ with respect to $f$, when it exists, is well defined in $\mathbb{R}/\mathbb{Z}$.

For a given diffeomorphism $f\colon \mathbb{A} \to \mathbb{A}$, we say that $P\in \mathbb{A}$ is a hyperbolic $q-$periodic point, $q\in \mathbb{N}\backslash \{0\}$, if $f^q(P)=P$, $f^m(P)\neq P$ if $1\leq m \leq q-1$ and the eigenvalues of $Df^q(P)$ lie off the unit circle. We call its orbit $\mathcal{P}=\{f^j(P)\}_{j=0}^{q-1}$ a hyperbolic $q-$periodic orbit. 
   
A hyperbolic $q-$periodic point, $P$, has associated stable and unstable invariant manifolds 
\begin{align*}
W^{\sta}(P )= \{ (\ss,\varphi) \in \mathbb{A} : \|f^{m}(\ss,\varphi) - f^{m}(P )\| \to 0 \; \text{as }\,m \to +\infty\} \\
W^{\uns}(P )= \{ (\ss,\varphi) \in \mathbb{A} : \|f^{m}(\ss,\varphi) - f^{m}(P )\| \to 0 \; \text{as }\,m \to -\infty\}
\end{align*}
where the sign $+$ corresponds to the stable manifold and the sign $-$ to the unstable one. We also introduce the stable and unstable invariant manifolds of a hyperbolic $q-$ periodic orbit defined as the union of stable, resp. unstable manifolds of the points in $\mathcal{P}=\{P_j\}_{j=0}^{q-1}$:
\begin{equation} \label{defWsWu}
W^\sta(\mathcal{P}):=\bigcup_{j=0}^{q-1} W^\sta(P_j), \qquad     
W^\uns(\mathcal{P}):=\bigcup_{j=0}^{q-1} W^\uns(P_j).
\end{equation}  
 
\begin{definition}
Let $P\in \mathbb{A}$ be a hyperbolic periodic point of period $q \in \mathbb{N}\setminus \{0\}$ for a map $f:\mathbb{A}\to \mathbb{A}$. 
A point $Q\in \mathbb{A}$ is homoclinic to $P$ if $Q$ does not belong to the orbit of $P$ and it lies at the intersection of the stable and unstable manifolds of the orbit of $P$. 
A homoclinic point $Q$ is transverse if the stable and unstable manifolds of the orbit of $P$ intersect transversally at $Q$.

Analogously, if $\mathcal{P}=\{P_j\}_{j=0}^{q-1} \subset \mathbb{A}$ is a hyperbolic $q-$periodic orbit, we say that a point $Q$ is homoclinic to $\mathcal{P}$ if $Q\in \mathcal{W}^{\sta}(P_n) \cap \mathcal{W}^{\uns}(P_m)$, for some $n,m\in \{0,\cdots , q-1\}$. When the intersection is transverse, we say that the homoclinic point is transverse.  
\end{definition}

We can now state our main result. The notation refers to Section~\ref{subsec:topology}. See Section~\ref{Aubry Mather} for precise definitions related to Aubry-Mather theory. 
\begin{theorem}\label{main_thm bis_AM}
Let $r>0$. There exists a Baire generic set $\widetilde{\mathcal{B}}_r\subset \mathcal{B}_r$ such that for any $\gna \in \widetilde{\mathcal{B}}_r$, the billiard map $f_\gna$ associated to the billiard table $\Omega(\gna)$ satisfies the following properties for every $\frac{p}{q} \in \mathbb{Q}/\mathbb{Z}$:
\begin{itemize}
\item 
there exists a unique periodic orbit $\mathcal{P}=\mathcal{P}^{p/q}$ in the Aubry-Mather set of rotation number $\frac p q$, and it is hyperbolic;
\item 
all the homoclinic points to $\mathcal{P}$ are transverse\footnote{As recalled in Section~\ref{Aubry Mather} (see also Proposition \ref{prop:homoclinicpq}), Aubry-Mather theory guarantees that the set of homoclinic points to $\mathcal{P}^{p/q}$ is non-empty.};   
\item in particular, the Aubry-Mather set with rotation number $\frac{p}{q}$ is uniformly hyperbolic. 
\end{itemize}
\end{theorem}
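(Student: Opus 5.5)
The plan is to write the generic set as a countable intersection $\widetilde{\mathcal{B}}_r=\bigcap_{p/q}\bigcap_{N\in\mathbb{N}}\mathcal{U}_{p/q,N}$. Here $\mathcal{U}_{p/q,N}\subset\mathcal{B}_r$ is the set of $\gna$ for which two things hold. First, the Aubry--Mather set of rotation number $\frac pq$ contains a unique periodic orbit $\mathcal{P}^{p/q}$, i.e.\ the length functional on $(p,q)$-configurations has a unique maximizer up to cyclic shift, and this orbit is hyperbolic. Second, all intersections of the local stable and unstable manifolds of $\mathcal{P}^{p/q}$, of ``size $N$'', are transverse. By size $N$ we mean the pieces $f^{Nq}(W^\uns_{\rm loc})$ and $f^{-Nq}(W^\sta_{\rm loc})$, restricted to compact fundamental-domain pieces. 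Since $\mathbb{Q}\cap(0,1)$ and $\mathbb{N}$ are countable and $\mathcal{B}_r$ is Baire, it is enough to show that each $\mathcal{U}_{p/q,N}$ is open and dense. The three bullets then follow directly.

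\emph{Transversality.} Every homoclinic point lies in some size-$N$ piece, so it is transverse.

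\emph{Uniform hyperbolicity.} By Aubry--Mather theory (Section~\ref{Aubry Mather}, Proposition~\ref{prop:homoclinicpq}), when the minimizer is unique the Aubry--Mather set of rotation number $\frac pq$ is the periodic orbit together with the heteroclinic/homoclinic minimal orbits in its gaps. This is a compact invariant set made of a hyperbolic orbit and transverse homoclinic orbits. A standard shadowing/cone-field argument then gives uniform hyperbolicity, because these orbits spend all but a bounded number of iterates in a neighbourhood of $\mathcal{P}$.

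\emph{Openness.} A hyperbolic periodic orbit persists under $\|\cdot\|_r$-small perturbations, and its local manifolds vary continuously in $\mathcal{C}^1$. Transversality on compact pieces is therefore open. Uniqueness of the maximizer combined with its nondegeneracy is also open: the length functional depends continuously on $\gna$, the maximizer is nondegenerate because it is hyperbolic, and any other maximizer would have to be close to a maximizer of the unperturbed functional.

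\emph{Density.} This is the main step. Fix $\gna$ and work only with perturbations $\gna+\varepsilon\,\delta$ where $\delta$ is a trigonometric polynomial (normal deformation of the boundary), which lies in $\mathcal{C}^\omega_r$ for every $r$. The argument has three steps.

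\emph{(a) Unique maximizer.} Take a maximizing $(p,q)$ orbit with bounce points $s_1,\dots,s_q$. Add a small normal bump whose Taylor data at these points increases the length of this configuration only to second order, while leaving other maximizing configurations unchanged to that order. One way to do this is a trigonometric polynomial vanishing to high order at the bounce points of the chosen orbit but not at the others, which is possible since there are finitely many constraints. The chosen orbit then becomes the strict, nondegenerate maximizer. A nondegenerate maximum of the length functional gives a hyperbolic orbit, by the standard relation between the Hessian and $\mathrm{tr}\,Df^q$, after an extra small perturbation if the trace is exactly $\pm2$.

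\emph{(b) Finitely many tangencies.} For a fixed compact size-$N$ piece, use analyticity. The splitting is measured by a Melnikov-type potential, namely the difference of generating functions of $W^\uns$ and $W^\sta$ along a fundamental domain. This is a real-analytic function on a compact interval, so it either vanishes identically or has finitely many critical points. It cannot vanish identically, because that would force a homoclinic loop, contradicting the twist/Aubry--Mather structure, or else it can be removed by a first perturbation.

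\emph{(c) Removing tangencies.} Perturb by a trigonometric polynomial $\delta$. The first-order variation of the splitting function is a Melnikov integral $\sum_{n\in\mathbb{Z}}\partial_\varepsilon L(\ldots)$, a sum over the bounce points of the homoclinic orbit of the normal displacement $\delta$ weighted by $2\cos\varphi_n$. A finite-dimensional transversality argument (parametric Sard/Thom) will make all critical points nondegenerate and shifted off zero. The condition to check is that the linear map from the finite-dimensional space of trigonometric polynomials of degree $\le K$ to the jets of the Melnikov function at the finitely many tangencies is surjective. The expected obstacle is exactly this surjectivity. Because the perturbation is nonlocal, it affects all bounce points of all the orbits simultaneously, and one must rule out cancellation among the infinitely many terms. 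I would try to handle this by exploiting that the bounce points of a homoclinic orbit are distinct and accumulate only on the $q$ periodic points, where the weights decay geometrically. Choosing $\delta$ with high-degree Fourier modes that separate finitely many dominant bounce points, in the manner of Lagrange interpolation with derivative data, should make the map onto the finite jet space surjective for $K$ large.
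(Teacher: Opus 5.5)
\textbf{Main gap: step (c).} This step is the core of the density argument. You correctly name the obstacle, namely surjectivity of the first variation of the splitting under trigonometric-polynomial deformations, but you leave it unproved, and the mechanism you propose rests on a false premise.

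The bounce points of a homoclinic orbit need not be distinct. Injectivity of $(s,\varphi)\mapsto s$ along an orbit holds for orbits in the Aubry--Mather set (Proposition~\ref{prop:injectivity}), but you must make \emph{every} homoclinic point transverse. For example, a homoclinic orbit to a $2$-periodic orbit that passes through a point with $\varphi=0$ retraces itself, so $s_{-n}=s_n$ for all $n$.

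A normal deformation near a fiber $\{s=s_*\}$ perturbs the map at every orbit point bouncing at $s_*$. So two orbit points on one fiber contribute terms that depend on the same jet of $\delta$, and these can cancel. No interpolation scheme can separate them. Moreover, prescribing jets at finitely many ``dominant'' points gives no control on the infinitely many other terms in the sums of Proposition~\ref{prop:Genecand}.

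The paper supplies four ingredients you are missing:
\begin{enumerate}
\item[(i)] A twist/ordering argument (Proposition~\ref{nec cond for not one fib:section3}) shows that, unless $q=2$, every homoclinic orbit has a point in $W^{\sta}_{\mathrm{loc}}\cup W^{\uns}_{\mathrm{loc}}$ whose fiber contains no other point of the orbit.
\item[(ii)] For $q=2$, the remaining non-transverse orbits are symmetric (Proposition~\ref{good property in the bas case:section3}), with $f^{\pm\ell}(Q)$ on a common fiber. The reversibility $\mathcal{I}\circ f=f^{-1}\circ\mathcal{I}$ forces the two contributions to add, giving the factor $C_\ell+C_{-\ell}$ with $C_k=1+\cos\varphi_k/\cos\varphi_{k+1}>0$.
\item[(iii)] For $\lambda$ compactly supported near that single fiber, the sums collapse to the terms at $Q_0,Q_{-1}$ (resp.\ $Q_{\pm\ell},Q_{\pm\ell-1}$). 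An explicit computation then gives a triangular $2\times2$ system with determinant $(\pi_1\mathbf{t}_0)^3C_0^2$. This is nonzero because the local manifolds of an Aubry--Mather orbit are graphs over $s$, so $\pi_1\mathbf{t}_0\neq0$ (Lemma~\ref{exhaustive}).
\item[(iv)] Surjectivity passes to trigonometric polynomials because $d\Psi(0)$ is continuous in the $\mathcal{C}^k$ norm, so Fourier truncations of the bumps still work (Proposition~\ref{invertible}).
\end{enumerate}
With this input, your parametric-transversality scheme could replace the paper's Rouch\'e/order-counting induction. Without it, the density proof does not go through.

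\textbf{Smaller errors in (a) and (b).}

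In (a), vanishing to high order at the chosen bounce points ``but not at the others'' is not enough. For another maximizing orbit, the length changes at first order by $\sum_k 2\delta(s_k)\cos\varphi_k$. So for $\bar s$ to become the strict maximizer you need $\delta\le0$ with zeros only at the $\bar s_k$; the paper uses $-\prod_k\sin^2(\pi(s-\bar s_k))$. High-order vanishing also leaves $D^2L^0_{p,q}(\bar s)$ unchanged at first order, so it cannot cure a degeneracy. You need exactly second-order vanishing with $\ddot\delta(\bar s_k)<0$. Combine this with the fact that the $(q-1)\times(q-1)$ principal minors of $D^2L^0_{p,q}(\bar s)$ have the strict sign $(-1)^{q-1}$, because the maximizer with one point pinned is nondegenerate \cite{ArnaudSalto}. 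This gives $\det D^2L^\varepsilon_{p,q}(\bar s)\neq0$, and a nondegenerate maximizer is then automatically hyperbolic, with no further perturbation needed.

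In (b), identical vanishing of the splitting does not contradict the twist or Aubry--Mather structure. In an ellipse, the unique Aubry--Mather $2$-periodic orbit (the major axis) is hyperbolic with coinciding separatrices. So coinciding branches must be removed by a perturbation, and that again requires the surjectivity result above, as in Lemma~\ref{non coincident branches}.
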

    
In order to prove Theorem~\ref{main_thm bis_AM}, we find a countable collection of open and dense subsets satisfying suitable properties (see Theorem~\ref{main_thm ter_AM}). Let us first to introduce the notion of \textit{branch}. 
\begin{definition}\label{definition branch} 
Let $\mathcal{P}= \{P_j\}_{j=0}^{q-1}$ be a hyperbolic $q-$periodic orbit of a diffeomorphism $f: \mathbb{A} \to \mathbb{A}$. For $\diamond=\uns,\sta$, a \emph{branch}  $W$ of $W^{\diamond}(\mathcal{P})$ is a connected component of $W^{\diamond}(P_j)\setminus \{P_j\}$ for some $j \in \{0,\cdots,q-1\}$.      
     
Given a branch $W\subset W^{\diamond}(P_j)$ and a number $\varrho>0$, we denote by 
\begin{equation}\label{def:branch}
W_\varrho:=\{x\in W:d_W(P_j,x)<\varrho\}
\end{equation} 
the set of points $x\in W$ at distance at most $\varrho$ from $P_j$ with respect to the distance $d_W$ on $W$ induced by the Riemannian metric. 
\end{definition}
    
\begin{theorem}\label{main_thm ter_AM}
Let $r>0$, $\frac{p}{q} \in \mathbb{Q}/\mathbb{Z}$ and $N\in \mathbb{N}$. Consider the set $ {\mathcal{V}}^{p/q}_{r,N}\subset \mathcal{B}_r$ such that for any $\gna \in  {\mathcal{V}}^{p/q}_{r,N}$, the billiard map $f_\gna$ satisfies: 
\begin{enumerate}[label=(\Alph*)]
\item \label{it_a_main} there exists a unique  periodic orbit $\mathcal{P}=\mathcal{P}^{p/q}$ in the Aubry-Mather set of rotation number $\frac p q$, and it is hyperbolic;
\item \label{it_b_main} the set of transverse homoclinic points to $\mathcal{P}$ is not empty;
\item \label{it_c_main} for any branches $W\subset W^\sta(\mathcal{P})$, $W'\subset W^\uns(\mathcal{P})$, any homoclinic point $h\in W_N\cap W_N'$ is  transverse. 
\end{enumerate}

Then, the set $\mathcal{V}^{p/q}_{r,N}$ is open and dense in the analytic topology induced by $\| \cdot \|_r$. 
\end{theorem}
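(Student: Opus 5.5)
Openness and density are proved separately; density is the real content and, as the introduction says, it is the subject of Theorem~\ref{main thm 2 bis}. We therefore dispose of openness first and then outline how density is obtained.

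\emph{Openness.} Fix $\gna\in\mathcal{V}^{p/q}_{r,N}$. Since $\|\cdot\|_r$ controls all derivatives on $\T$ by Cauchy estimates, the map $\gna\mapsto f_\gna$ is continuous into the $\mathcal{C}^k$ topology on $\mathbb{A}$ for every $k$. Hyperbolicity of $\mathcal{P}$ is an open condition, so $\mathcal{P}$ has a hyperbolic continuation $\mathcal{P}_{\tilde\gna}$ for $\tilde\gna$ near $\gna$. For \ref{it_a_main} we use that the Aubry-Mather set of rotation number $\frac pq$ is upper semicontinuous with respect to the map. Minimizing (here maximizing) $(p,q)$-orbits of nearby tables therefore accumulate on minimizers of $f_\gna$, and those all equal $\mathcal{P}$ by uniqueness. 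A maximizer for $\tilde\gna$ close to the hyperbolic orbit $\mathcal{P}$ must coincide with its continuation, since a hyperbolic orbit is isolated among periodic orbits of the same period, locally uniformly in the map. This gives uniqueness.

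Property \ref{it_b_main} persists because a transverse intersection of stable and unstable manifolds is stable under $\mathcal{C}^1$ perturbation. For \ref{it_c_main} we argue by compactness. The local branches $\overline{W_N}$ and $\overline{W'_N}$ are compact arcs that depend $\mathcal{C}^1$-continuously on the map. If nearby tables $\gna_n\to\gna$ had non-transverse homoclinic points $h_n\in W_N(\gna_n)\cap W'_N(\gna_n)$, a limit point $h$ would be a tangency of the closed branches $\overline{W_N}$ and $\overline{W'_N}$. Tangency is a closed condition, so $h$ is a non-transverse intersection point, and $h\neq P_j$ because the manifolds are transverse at the periodic point. To conclude we need $h\in W_N\cap W'_N$ rather than only in the closures. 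The clean way is to note that \ref{it_c_main} for $\gna$ applied with $N$ replaced by a slightly larger $N'$ is what is actually needed, or to replace $N$ by $N+1$ in the compactness argument. I would handle this by proving density for every $N$ and using that $\mathcal{V}_{r,N+1}\subset\mathcal{V}_{r,N}$, adjusting the definition of the set accordingly if necessary.

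\emph{Density.} This is Theorem~\ref{main thm 2 bis}, and the plan follows Section~\ref{sec:proof of main thm 2 bis}. Given $\gna\in\mathcal{B}_r$, we perturb it by a small trigonometric polynomial $\gna+\varepsilon\psi$ in three stages.
\begin{enumerate}
\item Choose $\psi$ so that the length function on $(p,q)$-configurations acquires a unique nondegenerate maximum. Perturbing the normal displacement at the bounce points changes the perimeter at first order by an explicit term. Trigonometric polynomials are dense and, by Stone--Weierstrass, they separate the finitely many maximizing configurations, so we can break ties and then make the Hessian nondegenerate. Nondegeneracy of the maximum gives hyperbolicity, which is standard for twist maps.
\item Existence of some homoclinic point follows from Aubry-Mather theory (Proposition~\ref{prop:homoclinicpq}). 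Minimax orbits between neighbouring minimizers give heteroclinic, here homoclinic, connections.
\item Achieve transversality on $W_N\cap W'_N$ by a Melnikov-type argument. For a fixed compact piece of the branches, the splitting is measured by a Melnikov function $M_\psi(h)=\sum_{n\in\mathbb{Z}}\partial_\varepsilon L(f^n h)$, a convergent sum along the homoclinic orbit of the derivative of the generating function in the direction $\psi$. Tangencies correspond to degenerate zeros of the splitting function, and a parametric transversality (Sard / Thom) argument over a finite-dimensional family of trigonometric perturbations removes them.
\end{enumerate}

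The main obstacle is stage 3 in the analytic setting. A perturbation cannot be localized, so we must show that the finite family of trigonometric modes has enough freedom to make the map $\psi\mapsto M_\psi$ submersive along the compact set of homoclinic points in $W_N$. The first attempt would be to use that the homoclinic orbit hits the boundary at infinitely many distinct points accumulating at the $q$ bounce points of $\mathcal{P}$. The sums $\sum_n e^{2\pi i k s_n}(\dots)$ then cannot vanish for all $k$ unless the homoclinic orbit coincides with $\mathcal{P}$, by an analytic uniqueness or Vandermonde-type argument. Compactness of the set of homoclinic points in $W_N$ would then reduce the problem to finitely many modes, and a parametric transversality theorem applied to the family gives transverse intersections for almost every small parameter. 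Combined with stage 1, which persists for small parameters, this yields a dense set of tables in $\mathcal{V}^{p/q}_{r,N}$.
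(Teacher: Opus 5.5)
There is a genuine gap in Stage~3, at the step you yourself flag as the main obstacle.

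\textbf{The pointwise non-vanishing claim.} At a tangency $h$, the first-order splitting is the derivative along $\mathbf t$ of your sum; as written, your sum is the Melnikov potential. This splitting is a linear functional $\psi\mapsto\sum_n\big(a_n\psi(s_n)+b_n\dot\psi(s_n)\big)$, where $(s_n,\varphi_n)=f^n(h)$ and $b_n$ is a positive multiple of $\pi_1\mathbf t_n$ (compare Lemma~\ref{lem: computing the differential}). It vanishes on every Fourier mode iff the distribution $\sum_n(a_n\delta_{s_n}-b_n\delta'_{s_n})$ on $\T$ vanishes. A Vandermonde or uniqueness argument rules this out only once you know that at some node the combined weight of all orbit points on that fiber is nonzero. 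A node visited once with $b_m\neq0$ would suffice. But when several orbit points share a vertical fiber, their weights add and may cancel. This is exactly the fibered nature of table perturbations: moving the boundary near $\gna(\ss_*)$ perturbs $f$ on the whole fiber $\{\ss=\ss_*\}$. The accumulation of the $s_n$ at the bounce points of $\mathcal P$ says nothing about it.

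\textbf{The claim fails for $q=2$.} Every table perturbation preserves the reversibility $\mathcal I\circ f=f^{-1}\circ\mathcal I$, and the $2$-periodic orbit stays on $\{\varphi=0\}$, so $W^{\uns}(\widetilde{\mathcal P})=\mathcal I\,W^{\sta}(\widetilde{\mathcal P})$. Suppose $Q\in\T\times\{0\}$ is a tangency with vertical common tangent. Then the continuations of the two branches cross $\{\varphi=0\}$ at the same point for every perturbation. Hence your displacement functional is identically zero, $0$ is not a regular value of your splitting function, and Sard/Thom cannot be applied there.

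\textbf{What is missing.} You need the analysis of Section~\ref{sec:existence one fibered}:
\begin{itemize}
\item Apart from finitely many iterates, at most two orbit points share a fiber, one on $W^{\sta}_{\mathrm{loc}}$ and one on $W^{\uns}_{\mathrm{loc}}$ (Lemma~\ref{lemma at most two fibered points:section3}).
\item A twist/ordering argument shows that if no orbit point on the local manifolds is one-fibered, then $q=2$ (Proposition~\ref{nec cond for not one fib:section3}).
\item In that case reversibility forces the orbit to hit $\{\varphi=0\}$ (Proposition~\ref{good property in the bas case:section3}).
\end{itemize}
The isolated fiber must also be taken on a local manifold. There $\pi_1\mathbf t\neq0$, because local manifolds of an Aubry--Mather orbit are graphs over $\ss$; elsewhere the $\dot\psi$-weight can be zero. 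In the symmetric case the paper exploits the $\ddot\lambda(s_\ell)$-contribution to the angle component. There the two points on the shared fiber enter with weights $(\pi_1\mathbf t_{\pm\ell})^2$ times positive factors, so they cannot cancel; in the vertical-tangent situation above, this is the only component that can be moved.

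\textbf{Comparison of the finishing arguments.} Granted this pointwise input, your compactness-plus-parametric-transversality finish is a reasonable alternative, at least for $q\ge3$. The paper instead removes coincident branches, bounds the number of homoclinic points near a tangency by its order via Rouch\'e, and kills tangencies one at a time with the implicit function theorem.

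\textbf{Two smaller points.}
\begin{itemize}
\item In Stage~1, the set of maximizing $(p,q)$-configurations need not be finite; for the disk it is a whole circle. So separating maximizers by Stone--Weierstrass does not work as stated. The paper instead fixes one maximizer $\bar\ss$ and perturbs by $\lambda_0=-\prod_k\sin^2(\pi(\ss-\bar\ss_k))\le0$. This leaves the length of $\bar\ss$ unchanged and strictly decreases every other configuration. Via $\dertwolna_0(\bar\ss_k)<0$ and the sign of the minors $\delta^k_{p,q}$, it also makes the Hessian nondegenerate.
\item On openness your caution is justified. The paper asserts that $W_N\cap W'_N$ is compact, although $W_N$ is defined by a strict inequality. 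A tangency at distance exactly $N$ can then be pushed inside by a small perturbation. Working with closed branches is the right repair and costs nothing for the $G_\delta$ conclusion.
\end{itemize}
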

Note that, as said before, Theorem~\ref{main_thm bis_AM} follows directly from Theorem~\ref{main_thm ter_AM}, by considering the $G_\delta$-set 
\[
\widetilde{\mathcal{B}}_r:=\bigcap_{\frac{p}{q}\in\mathbb{Q}\cap(0,1)}\bigcap_{N \in \mathbb{N}} {\mathcal{V}}^{p/q}_{r,N}.
\]

\begin{remark}
Actually, our main result holds for any topology that makes the space of analytic billiards a Baire space.
\end{remark}

\begin{corollary}\label{coro entropy}
		Let $r>0$. Then, the set of billiards $\gna\in  \mathcal{B}_r$ such that the billiard map $f_\gna$ has positive topological entropy is open and dense (in the analytic topology). In fact, the same result also holds for billiards with boundaries defined by trigonometric polynomials. 
\end{corollary}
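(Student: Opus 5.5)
We deduce Corollary~\ref{coro entropy} from Theorem~\ref{main_thm ter_AM} by fixing one rotation number, say $\frac pq=\frac12$ (any fixed value works), and $N=1$. The candidate set is $\mathcal{E}_r:=\{\gna\in\mathcal{B}_r:\ h_{\rm top}(f_\gna)>0\}$.

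\emph{Density.} By Theorem~\ref{main_thm ter_AM}, $\mathcal{V}^{1/2}_{r,1}$ is open and dense in $\mathcal{B}_r$. For $\gna\in\mathcal{V}^{1/2}_{r,1}$, property~\ref{it_b_main} gives a hyperbolic periodic orbit $\mathcal{P}$ together with a transverse homoclinic point $h$. The Smale--Birkhoff homoclinic theorem then yields, for some iterate $f_\gna^{k}$, a compact invariant hyperbolic set $\Lambda$ on which $f_\gna^k$ is topologically conjugate to a full shift on two symbols. Consequently
\[
h_{\rm top}(f_\gna)\ \ge\ \tfrac1k\log 2>0 ,
\]
so $\mathcal{V}^{1/2}_{r,1}\subset\mathcal{E}_r$ and $\mathcal{E}_r$ is dense. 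One technical point must be checked: the billiard map is only a homeomorphism up to the boundary $\varphi=\pm\frac\pi2$. This causes no trouble, since $\mathcal{P}$ and the homoclinic orbit of $h$ lie in the interior of $\mathbb{A}$, hence so does $\Lambda$, and there $f_\gna$ is an analytic diffeomorphism.

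\emph{Openness.} Fix $\gna_0\in\mathcal{E}_r$. We want to avoid relying on structural stability of an arbitrary positive-entropy system, so we use Katok's theorem for $\mathcal{C}^{1+\alpha}$ surface diffeomorphisms: positive entropy implies the existence of a horseshoe, i.e.\ a hyperbolic periodic point with a transverse homoclinic point. Here again the horseshoe provided by Katok's theorem must be shown to lie away from the boundary circles. This follows because the measures of positive entropy can be chosen ergodic, and such a measure gives zero mass to the boundary circles, on which the billiard map acts as the identity. Once a transverse homoclinic point exists, it persists under $\mathcal{C}^1$-small perturbations of $f$. The map $\gna\mapsto f_\gna$ is continuous from $(\mathcal{B}_r,\|\cdot\|_r)$ into the $\mathcal{C}^1$ topology on compact subsets of the interior of $\mathbb{A}$, by Cauchy estimates for analytic functions. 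Hence a transverse homoclinic point, and therefore positive entropy, persists for all $\gna$ in a $\|\cdot\|_r$-neighbourhood of $\gna_0$. Alternatively, the openness of $\mathcal{V}^{p/q}_{r,N}$ already established after Theorem~\ref{main_thm ter_AM} shows that $\mathcal{E}_r$ contains an open dense set. In what follows we state the result with $\mathcal{E}_r$ replaced by $\{\gna:\ f_\gna$ has a transverse homoclinic point$\}$, which is open by persistence and dense by the above, and is contained in $\mathcal{E}_r$.

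\emph{Trigonometric polynomials.} We use the stronger statement explained after the informal theorem: the perturbations used in the density part of Theorem~\ref{main_thm ter_AM} are trigonometric polynomials. Hence, given a trigonometric polynomial table $\gna$ and any $\varepsilon>0$, adding a small trigonometric polynomial produces a table with a transverse homoclinic point; the degree may increase. Openness in this class follows from the same persistence argument, restricted to the subspace. The main obstacle is guaranteeing that this density holds within trigonometric polynomials, possibly with a degree bound. That depends on the proof of Theorem~\ref{main thm 2 bis} producing perturbations in a finite-dimensional trigonometric family; we will verify this from its construction, where the perturbing functions are explicit finite Fourier sums.
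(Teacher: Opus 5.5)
\textbf{Assessment.} Your treatment of the statement on $\mathcal{B}_r$ is sound. The trigonometric-polynomial part has a genuine gap, which a short approximation argument closes.

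\textbf{The statement on $\mathcal{B}_r$.} This is the argument the paper has in mind when it calls the corollary an easy consequence of Theorems~\ref{main_thm ter_AM} and~\ref{main thm 2 bis}. Tables in the open dense set $\mathcal{V}^{p/q}_{r,N}$ have a transverse homoclinic point, hence a horseshoe by Smale--Birkhoff, hence positive entropy. Your use of Katok's theorem is a real addition that the paper leaves implicit. Openness of $\mathcal{V}^{p/q}_{r,N}$ alone only shows that the positive-entropy set \emph{contains} an open dense set. Katok shows that it \emph{equals} the set of tables with a transverse homoclinic point, which is open by persistence. So your closing step of replacing $\mathcal{E}_r$ by that set is unnecessary once Katok is invoked; without Katok it would prove a weaker statement. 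To keep the horseshoe off $\partial\mathbb{A}$, a cleaner argument than the zero-mass remark is this: every boundary point is a fixed point whose derivative has eigenvalue $1$ along the boundary circle, so no hyperbolic set can meet $\partial\mathbb{A}$.

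\textbf{The gap: trigonometric polynomials.} Theorem~\ref{main thm 2 bis} does not add a trigonometric polynomial to $\gamma$. It replaces $\gamma$ by $\gamma+\lambda\mathbf{n}$, where $\lambda$ is a scalar trigonometric polynomial and $\mathbf{n}=R_{-\pi/2}\dot\gamma/\|\dot\gamma\|$.
\begin{itemize}
\item Even when $\gamma$ has trigonometric-polynomial components, $\lambda\mathbf{n}$ does not, because of the normalisation by $(\dot\gamma_1^2+\dot\gamma_2^2)^{1/2}$.
\item So the perturbed table leaves the class. Checking that the perturbing functions are finite Fourier sums, as you plan, cannot repair this. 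Nor is the degree bound the real obstacle.
\end{itemize}

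\textbf{How to close it.} The missing step is an approximation argument, and it makes the trigonometric form of $\lambda$ irrelevant.
\begin{itemize}
\item Let $U\subset\mathcal{B}_r$ be the set of tables with a transverse homoclinic point. It is dense because it contains $\mathcal{V}^{p/q}_{r,N}$. It is open even under $\mathcal{C}^2$-small changes of the curve on $\mathbb{T}$, and the embedding and strong-convexity conditions are also $\mathcal{C}^2$-open.
\item Given a trigonometric-polynomial table $\gamma$ and $\epsilon>0$, pick $\gamma'\in U$ with $D:=\gamma'-\gamma$ satisfying $\|D\|_r<\epsilon$.
\item Replace $D$ by its Fej\'er means $D_M$. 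These are trigonometric polynomials.
\item They satisfy $\|D_M\|_r\le\|D\|_r$. On each line $\Im z=y$, $D_M(\cdot+iy)$ is the average of $D(\cdot+iy)$ against the Fej\'er kernel, which is nonnegative with mass one.
\item They converge to $D$ in $\mathcal{C}^2(\mathbb{T})$, since differentiation commutes with Fej\'er summation.
\end{itemize}
Hence for $M$ large, $\gamma+D_M$ is a trigonometric-polynomial table in $U$ within $\epsilon$ of $\gamma$ in $\|\cdot\|_r$. Openness within the class then follows, as you say, by restricting the persistence argument.
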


The proof of Theorem~\ref{main_thm ter_AM} has two different parts. We first prove that the set $\mathcal{V}^{p/q}_{r,N}$ is open for any $\frac{p}{q} \in \mathbb{Q}\backslash \mathbb{Z}$ and $N \in \mathbb{N}$. The proof of its density will be the conclusion of Theorem~\ref{main thm 2 bis}, whose proof will be given along Section \ref{sec:proof of main thm 2 bis}.

In fact, the proof that $\mathcal{V}_{r,N}^{p/q} \subset \mathcal{B}_r$ is an open set is immediate. Indeed, take $\gna \in \mathcal{B}_r$ such that its associated billiard map has a unique  hyperbolic periodic orbit $\mathcal{P}$ in the Aubry-Mather set of rotation number $\frac{p}{q}$. For $\diamond= \uns,\sta$, let us label by $\big \{ W^{j,\diamond}\}_{j=0, \cdots, 2q-1}$ the branches of $W^{\diamond}(\mathcal{P})$ and consider the set  
\begin{equation}\label{homoclinic set}
\mathcal{H}_N:=\bigcup_{i,j=0}^{2q-1} W_{N}^{i,\sta} \cap W^{j,\uns}_N,
\end{equation}
with $W_N^{i,\sta}, W_N^{j,\uns}$ defined as in~\eqref{def:branch}. We observe that $\mathcal{H}_N$ is finite. Indeed, we only need to check that, for all $i,j =\{0, \cdots , 2q-1\}$, the set $W_{N}^{i,\sta} \cap W^{j,\uns}_N$ is finite. To check this, we consider $h \in W^{i,\sta}_N \cap W^{j,\uns}_N$ a homoclinic point, which, by hypothesis is transverse. Then, denoting by $W^{i,\sta}_\rho(h) \subset W^{i,\sta}_N$ and $W^{j,\uns}_\rho(h) \subset W^{j,\uns}_N$ the $\rho$-neighborhoods of $h$, for the distance induced by the Riemannian metric on these leaves, we deduce by analiticty that, as the intersection is transverse, 
for $\rho>0$ small enough $W_\rho^{i,\sta}(h)\cap W_\rho^{j,\uns}(h)=\{h\}$. Therefore, since by definition~\eqref{def:branch}, $W_N^{i,\sta} \cap W_N^{j,\uns}$ is compact, we conclude that it is finite. 

As a result, the $\mathcal{C}^2$-openness of $\mathcal{V}^{p/q}_{r,N}$ is clear as a $\mathcal{C}^2$-small perturbation of the function $\gna$ defining the boundary of the billiard induces a $\mathcal{C}^1$-small perturbation of the associated billiard map (see e.g.~\cite[Section 2.11]{CheMar_book} for a reference). 
As a consequence, if the perturbation is small enough, the perturbed billiard map will have a unique periodic orbit $\tilde{\mathcal{P}}$  in the Aubry-Mather set of rotation number $\frac{p}{q}$ with the same (finite) number of transversal intersections, which are continuation of the ones for the unperturbed billiard map.

The density of the set $\mathcal{V}^{p/q}_{r,N}$ is given in next Theorem.

\begin{theorem}\label{main thm 2 bis}
Let $r>0$, $\frac p q\in \mathbb{Q}/\mathbb{Z}$, $N\in \mathbb{N}$ and take $\gna\in\mathcal{B}_{r}$. 
For any $\epsilon>0$, there exists a trigonometric polynomial $\lambda$ with $\|\lambda\|_r := \sup_{\ss\in \mathbb{T}_r} |\lambda(\ss)|<\epsilon$ such that the following holds. 

Let $\dgnalna\colon \ss \mapsto \gna(\ss)+\lambda(\ss) \nna(\ss)$, where $\nna(\ss)\in\mathbb{R}^2$ is the unitary outward normal vector at $\gna(\ss)$. The billiard map $f_{\dgnalna}$ associated to $\Omega(\dgnalna)$ satisfies:
\begin{enumerate}[label=(\Alph*)]
\item\label{it_a} there exists a unique periodic orbit $\mathcal{P}=\mathcal{P}^{p/q}$ in the Aubry-Mather set of rotation number $\frac p q$, and it is hyperbolic;
\item\label{it_b}  the set of transverse homoclinic points to $\mathcal{P}$ is not empty;
\item\label{it_c}  for any branches $W\subset W^\sta(\mathcal{P})$, $W'\subset W^\uns(\mathcal{P})$, any homoclinic point $h\in W_N\cap W_N'$ is  transverse.  
\end{enumerate}
\end{theorem}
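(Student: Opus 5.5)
We will obtain $\lambda$ as a sum of three successive small trigonometric perturbations, each aimed at one of the properties \ref{it_a}--\ref{it_c}. At each stage the previously obtained properties are open (this is the openness argument given after Theorem~\ref{main_thm ter_AM}), so later perturbations, if small enough, preserve them.

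We work in the variational setting. Periodic orbits of rotation number $\frac pq$ correspond to critical points of the perimeter function
\[
L_\gamma(\ss_0,\dots,\ss_{q-1}) = \sum_{j} \|\gamma(\ss_{j+1})-\gamma(\ss_j)\|, \qquad \ss_{j+q}=\ss_j+p ,
\]
and Aubry--Mather orbits of rotation number $\frac pq$ correspond to its maximizers. Under the normal perturbation $\gamma+\lambda\nna$, the reflection law at each bounce gives
\[
L_{\gamma_\lambda}=L_\gamma+\sum_j 2\cos\varphi_j\,\lambda(\ss_j)+O(\|\lambda\|^2).
\]

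\textbf{Step 1 (property \ref{it_a}).} We take a maximizing configuration $\bar\ss$ of $L_\gamma$. We add a trigonometric polynomial $\lambda_1$ whose leading term $\sum_j 2\cos\varphi_j\lambda_1(\ss_j)$ has a strict, nondegenerate maximum on the orbit of $\bar\ss$ among all maximizers. Such a $\lambda_1$ exists by approximating a smooth bump function that is concentrated near the $q$ bounce points of $\bar\ss$, using Fejér/Stone--Weierstrass approximation. After this, the maximizer is unique up to the $\mathbb{Z}$-shift, and the Hessian of $L$ is negative definite on the complement of the shift direction. By the standard relation between nondegenerate maximizers of twist maps and hyperbolicity (MacKay--Meiss), the orbit is then hyperbolic. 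We need a second-order term to lift degeneracies; we obtain it by adding $\delta(\cos 2\pi k\ss)$-type terms, which make the Hessian generic.

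\textbf{Step 2 (property \ref{it_b}).} By Proposition~\ref{prop:homoclinicpq}, Aubry--Mather theory provides minimax homoclinic orbits in each gap between neighbouring maximizers. We choose one homoclinic orbit $(h_n)_{n\in\mathbb Z}$, whose bounce points $\ss_n$ accumulate only on the $q$ periodic points. We use the Melnikov-type formula: the splitting of $W^\sta$ and $W^\uns$ at $h$ is measured to first order by the derivative along the homoclinic parameter of
\[
\sum_{n\in\mathbb Z}\Big(2\cos\varphi_n\,\lambda(\ss_n)-\text{(periodic counterpart)}\Big),
\]
a convergent series because the convergence is exponential. If the intersection is not already transverse, we choose $\lambda_2$ so that this derivative is nonzero. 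Since a trigonometric polynomial can approximate any smooth function on the compact set $\{\ss_n\}\cup\{\text{periodic points}\}$ in $\mathcal C^1$, we can prescribe $\lambda_2'$ at one bounce point $\ss_{n_0}$ that is isolated from the others while keeping the values near the periodic points essentially zero.

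\textbf{Step 3 (property \ref{it_c}), the main obstacle.} The set of homoclinic points in $W_N\cap W'_N$ is compact, but a priori it may be infinite or contain non-transverse tangencies, possibly along whole arcs of coincidence. We plan a parametric transversality argument. We consider a finite-dimensional family
\[
\lambda=\sum_{|k|\le K}(a_k\cos 2\pi k\ss+b_k\sin 2\pi k\ss)
\]
and the map $(h,a,b)\mapsto$ (splitting function on the compact pieces $W_N$). We show that this map is a submersion for $K$ large, using the Melnikov series above together with the fact that homoclinic orbits visit a bounce point $\ss_{n_0}$ not in the periodic orbit, so the finitely many Fourier modes separate the required jets. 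Analyticity of $f_\gamma$ gives analytic dependence on $(a,b)$. Sard's theorem (or the Thom transversality theorem) then yields parameters arbitrarily close to $0$ for which every intersection in $W_N\cap W'_N$ is transverse. The delicate points are the following. First, the bounce points of distinct homoclinic orbits may coincide or cluster, and the zero-dimensional analytic family may degenerate. Second, we need uniform control of the series over a compact family of homoclinic orbits, since orbits near the periodic orbit enter only through tails. We handle the second point by restricting to a fundamental domain of $W^\uns$, where the first few iterates already leave a neighbourhood of $\mathcal P$.

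Finally, we choose each $\lambda_i$ small enough that the earlier properties persist, and set $\lambda=\lambda_1+\lambda_2+\lambda_3$ with $\|\lambda\|_r<\epsilon$.
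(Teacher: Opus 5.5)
Your overall architecture (three successive small perturbations, openness of the earlier properties, passing from compactly supported smooth perturbations to trigonometric polynomials) matches the paper's. Step 3, however, has a genuine gap, and it sits exactly at what the paper treats as the main difficulty. A normal perturbation $\lambda$ of the table is a \emph{fibered} perturbation of the billiard map. A $\lambda$ localized near $\ss_*$ changes $f$ at every point $(\ss,\varphi)$ whose base point, or the base point of its image, lies near $\ss_*$, whatever the angle $\varphi$. Consequently the first-order splitting at a homoclinic point $h$ is a sum of contributions from \emph{all} points of the orbit of $h$ lying on the vertical fiber $\{\ss=\ss_*\}$. In Step 2 this is harmless, because the Aubry--Mather homoclinic orbit projects injectively. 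The homoclinic points in $W_N\cap W'_N$, however, are in general not in the Aubry--Mather set. Nothing in your argument rules out that every available fiber carries two orbit points whose contributions cancel. In that case, ``visiting a bounce point not in the periodic orbit'' and ``Fourier modes separating jets'' give nothing, and the submersion claim behind your Sard/Thom step is unproved. The coincidence of bounce points of \emph{distinct} homoclinic orbits, which you flag, is irrelevant: the first-order splitting at $h$ only sees the orbit of $h$.

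The paper supplies the missing dynamical input in three steps.
\begin{enumerate}
\item Near $\mathcal P$, at most two orbit points share a fiber, one in $W^{\sta}_{\mathrm{loc}}$ and one in $W^{\uns}_{\mathrm{loc}}$ (Lemma~\ref{lemma at most two fibered points:section3}).
\item A twist/ordering argument shows that if no point of the orbit lying in the local manifolds is alone on its fiber, then $q=2$ (Proposition~\ref{nec cond for not one fib:section3}).
\item In that case, reversibility $\mathcal I\circ f=f^{-1}\circ\mathcal I$ forces an iterate onto $\{\varphi=0\}$ (Proposition~\ref{good property in the bas case:section3}). The two points sharing the fiber are then $\mathcal I$-images of each other, and their contributions add, with coefficients proportional to $C_\ell+C_{-\ell}$, where $C_k=1+\cos\varphi_k/\cos\varphi_{k+1}>0$ (Lemma~\ref{lem: computing the differential ii}).
\end{enumerate}
The paper also uses $\pi_1\mathbf t\neq0$, which holds in the local manifolds of an Aubry--Mather orbit since these are graphs over $\ss$. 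With this input, your parametric-transversality route for Step 3, using only the displacement component, would be a reasonable alternative to the paper's route (removing coincident branches, bounding via Rouch\'e by the order of tangency, then induction). Without it, the step fails.

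Two smaller points.
\begin{itemize}
\item In Step 1, there is no continuous shift direction for the periodic action on $\mathbb R^q$, so the full Hessian must be nondegenerate. Also, peaking the first-order term at $\bar\ss$ among maximizers does not by itself give a unique maximizer after perturbation. The paper instead takes $\lambda_0\le0$ vanishing to second order exactly at the $\bar\ss_k$, with $\ddot\lambda_0(\bar\ss_k)<0$. This gives $L^\varepsilon<L^0$ off $\bar\ss$, and nondegeneracy follows from the sign of the cofactors $\delta^k_{p,q}$.
\item In Step 2, making the derivative of the Melnikov potential nonzero at $h$ separates the manifolds there and need not create a transverse crossing (for example, on a coincident branch). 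The paper instead prescribes displacement $0$ and angle difference $b\neq0$ simultaneously. It does so via surjectivity of $d\Psi(0)$ onto $\mathbb R^2$, which requires controlling $\ddot\lambda$ and not only $\dot\lambda$, followed by the implicit function theorem.
\end{itemize}
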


The proof of Theorem~\ref{main thm 2 bis} is given in Section~\ref{sec:proof of main thm 2 bis} where we provide the main ingredients together with some technical results whose proofs are postponed to the subsequent sections.

\section{Proof of Theorem~\ref{main thm 2 bis}}\label{sec:proof of main thm 2 bis}
To build the trigonometric polynomial $\lambda$ in Theorem~\ref{main thm 2 bis} satisfying the required properties we design a strategy for the billiard setting combining the ideas of Zehnder~\cite{Zehnder} and Genecand~\cite{Genecand}. 
The main difficulty in adapting Zehnder's methodology in the framework of billiards comes from the fact that any perturbation of the billiard table reflects into a fibered perturbation of the corresponding twist map of the annulus. We use Aubry-Mather theory and we manage to overcome such a difficulty, obtaining periodic orbits whose projection on the first coordinate is injective; moreover, as in~\cite{Genecand}, we obtain the existence of homoclinic points. For that reason in Section~\ref{Aubry Mather} we give an overview about Aubry-Mather theory, stating the results we will use along the proof of Theorem~\ref{main thm 2 bis}. 

To begin the proof, take an analytic billiard table $\Omega = \Omega(\gna)\in\mathcal{B}_r$, fix the rational rotation number $\frac{p}{q}$, and  consider the Aubry-Mather set associated to $\frac{p}{q}$, denoted by $\mathcal{M}_{\frac p q}(\Omega)$.

While the initial billiard table $\gna$ and the perturbation we want to produce are analytic, our strategy is partially based on $\mathcal{C}^k$ perturbations. For that reason,  we begin by working with $\mathcal{C}^k$ billiards. Some of the results along the proof require  $k\geq k_0$ for $k_0$ big enough. We emphasize that it is not needed to pay special attention to the minimum differentiability $k_0$ needed because, ultimately, we will work with analytic billiards.

Therefore, we think of the initial billiard  $\gna\colon \T \hookrightarrow \mathbb{R}^2$ as a $\mathcal{C}^k$, $k \in \mathbb{N}_{k\geq 2}\cup \{\infty,\omega\}$, embedding and we assume that 
$\Omega =\Omega(\gna)\subset \R^2$ is a strongly convex billiard table. Following the convention in \cite{CheMar_book}, we assume that its curvature is strictly negative.

We consider the usual $\mathcal{C}^k-$norm:  
\[
\|\gna\|_{\mathcal{C}^k} := \max_{j=0,\cdots , k} \left \{ \max_{\ss\in \mathbb{T}} \|\gna^{(j)}(\ss)\|\right \} < \infty\, ,
\]
denoting by $\gna^{(j)}$ the $j$-th derivative of $\gna$, and by $\| \cdot \|$ the Euclidean norm of $\mathbb{R}^2$. We will also use the notation $\dergna(\ss) = \gna^{(1)}(\ss)$ and $\dertwogna(\ss) = \gna^{(2)}(\ss)$, and we denote by $\nna(\ss)$ the unit outward normal vector at the point $\gna(\ss)\in \partial \Omega$. 
 
We let $\mathbb{A}:=\T \times [-\frac{\pi}{2},\frac{\pi}{2}]$, 
and denote by 
\begin{equation}\label{eq:unperturbedmap}
\begin{split}
f_\gna \colon\mathbb{A} & \to\mathbb{A}    \\
(\ss,\vp) & \mapsto (\ss',\vp')
\end{split}
\end{equation}
the billiard map (or collision map) within $\Omega$, where $\ss$ corresponds to the position on the boundary, and $\vp$ is the oriented angle from the velocity vector to the inward normal vector $-\nna(\ss)$ at $\gna(\ss)$. The billiard map $f_\gna$ belongs to $\mathcal{C}^{k-1} (\mathbb{A}, \mathbb{A})$ with $\| f_\gna\|_{\mathcal{C}^{k-1}} < \infty$ (see e.g.~\cite{LeCalvez1990}) and, for $\ell \leq k-1$,
\[
\|f_\gna\|_{\mathcal{C}^\ell} := \max_{j=0,\cdots , \ell} \left \{ \max_{(\ss,\vp)\in \mathbb{A}} \|D^j f_\gna(\ss,\vp)\|\right \}
\]
with $D^jf_\gna (\ss,\vp)$ the differential of order $j$ of $f_\gna$, and where we denote by $\|\cdot\|$ the norm induced by the Euclidean norm of $\mathbb{R}^2$ on (multi)-linear maps. 

The billiard perturbations we are going to consider are of the form 
\begin{equation}\label{def gamma lambda}
\dgnalna(\ss) = \gna(\ss) + \lna(\ss) \nna(\ss)
\end{equation}
with $\lna\in \mathcal{C}^k(\mathbb{T},\mathbb{R})$. We emphasize that $\dgnalna\in \mathcal{C}^{k-1}(\mathbb{T},\mathbb{R}^2)$. We also denote by $f_{\dgnalna}$ the billiard map associated to the billiard table $\Omega(\dgnalna)$ and we note that $f_\gna=f_{\gna_0}$. 

Our strategy is to write the perturbed billiard table $\dgnalna$ in the arc-length coordinates associated to the unperturbed billiard $\gna$ and to obtain a new parameterization of the perturbed table $\dgnalna$, that we denote by $\dgalac$. 
We then will study  the billiard map $f_{\dgalac}$. The first step of the proof is to study the regularity of the functional $ \lna\mapsto f_{\dgalac}$. We will see that such correspondence is $\mathcal{C}^1$ Fr\'echet differentiable in a suitable $\mathcal{C}^k$ topologies and we will compute its derivative at $ \lna=0$. 

\subsection{Preliminaries}\label{sec: preliminaries billiard perturbations}
Consider  
\begin{equation}\label{arclength parameterization:section3}
\sigma(\ss) = \frac{1}{|\partial \Omega(\gna)|} \int_{0}^{\ss} \|\dergna(u)\| du, \qquad 
\ga(s)  := \frac{\gna \circ \sigma^{-1}(s)}{|\partial \Omega(\gna)| }\, ,
\end{equation}
such that $\ga $ is the anticlockwise arc length parameterization of $\gna(\mathbb{T})=\partial \Omega(\gna)$ satisfying $|\partial \Omega (\ga)| =1$. We observe that, letting  
\begin{equation}\label{change arc length}
(s,\varphi)=S(\ss,\vp) := \left (  \sigma(\ss), \vp \right ), \qquad DS (\ss,\vp) = \begin{pmatrix}
    \frac{\| \dergna(\ss)\|}{|\partial \Omega (\gna)|} & 0 \\ 0 & 1 
\end{pmatrix},
\end{equation}
it is clear that 
\begin{equation}\label{conjugation with arclength}
f_{\gna} = S^{-1} \circ f_{\ga} \circ S
\end{equation}
with
\begin{equation}\label{eq:unperturbedmap arclength}
\begin{split}
f_{\ga} \colon\mathbb{A} & \to\mathbb{A}    \\
(s,\varphi) & \mapsto (s',\varphi')
\end{split}
\end{equation}
the billiard map associated to the billiard table $\Omega(\ga)$. Observe that, while $f_{\ga}$ preserves the $2$-form $\omega=\cos(\varphi)d\varphi\wedge ds$, the map $f_\gna$ preserves the $2$-form $S^*\omega= \cos(\varphi)\sigma'(\ss)d\varphi\wedge d\ss$.

Let $\pi\colon \mathbb{R}\to \mathbb{T}$ be a universal covering map of the 1-torus. With an abuse of notation, we still refer to $\ga$ when considering $\ga\circ\pi\colon \mathbb{R}\to \mathbb{R}^2$. We denote by
\begin{equation}\label{generating function arclength}
\begin{split}
\tau \colon \R\times \R &\to \R_+ \\
(s,s')& \mapsto \tau(s,s'):=\|\ga(s)-\ga(s')\|     
\end{split}
\end{equation}
the Euclidean distance between the points of $\partial \Omega(\ga)$ with respective parameters $s\, (\mathrm{mod}\, 1)$ and $s'\, (\mathrm{mod}\, 1)$. We recall that $\tau$ is a generating function for the billiard map $f_{\ga}$, i.e., we have $f_{\ga}^*\alpha - \alpha=d\tau$, where $\alpha:=\sin \varphi\, ds$ is the Liouville $1$-form. In other words, on the set $\mathbb{R}\times \mathbb{R}\setminus \Delta$, with 
\begin{equation}\label{def Delta}
\Delta:=\{(s,s')\in \mathbb{R}:s'-s\in \mathbb{Z}\},
\end{equation}
we have that $\tau$ is a $\mathcal{C}^{k}$ function and
\begin{equation}\label{first gen fct:section3}
\partial_1 \tau(s,s')=-\sin \varphi,\qquad \partial_2 \tau(s,s')=\sin \varphi'.
\end{equation}
Note that the above formulas can be continuously extended to $\mathbb{R}\times \mathbb{R}$ by letting 
\begin{equation}\label{extension generating function}
\partial_1 \tau(s,s+k)=(-1)^k,\quad \partial_2 \tau(s,s+k)=(-1)^{k+1},\quad \forall\, k \in \mathbb{Z}. 
\end{equation}

As a result, coming back to the original variables $(\ss,\vp)$, taking 
\begin{equation}\label{generating function original:section3}
\begin{split}
\gentau \colon \R\times \R &\to \R_+ \\
(\ss,\ss')& \mapsto \gentau(\ss,\ss'):=  \|\gna(\ss)-\gna(\ss')\| =|\partial \Omega(\gna)|\tau(\sigma(\ss), \sigma(\ss')),  
\end{split}
\end{equation}
from~\eqref{first gen fct:section3}, we have that 
\begin{equation}\label{first gen fct no arclength}
\sin \vp = - \partial_1 \gentau (\ss,\ss') \frac{1 }{\|\dergna(\ss)\|}, 
\qquad \sin \vp' = \partial_2 \gentau (\ss,\ss') \frac{1  }{\|\dergna(\ss')\|}\, .
\end{equation}
Observe that the generating function of $f_\gna$ is then $\frac{\gentau}{\vert \partial\Omega(\gna)\vert}$. 
Indeed, one can directly check that 
\[
f_\gna^* S^*\alpha-S^*\alpha=d\left(\frac{\gentau}{\vert \partial\Omega(\gna)\vert}\right).
\]
Recall that $\gna\colon \T \hookrightarrow \mathbb{R}^2$ is a $\mathcal{C}^k$ embedding so that $\|\dergna(\ss)\|\geq C>0$. 

From now on we make the convention that whenever we write $(s,\varphi)$ we deal with a billiard map such that $s$ is the counterclockwise arclength parameterization of the corresponding boundary $\ga(\mathbb{T})$ which has unit perimeter. In contrast, the variables $(\ss,\vp)$ refer to the original variables of $\Omega(\gna)$. 

 \subsection{Remaining away from the boundaries}\label{billiard perturbationBoundary}
Fix $\gna \in \mathcal{C}^k(\mathbb{T};\mathbb{R}^2)$ and consider its arclength parameterization $\ga\in \mathcal{C}^k(\mathbb{T};\mathbb{R}^2)$ defined in~\eqref{arclength parameterization:section3}. 
Let $\gentau,\tau$ be the functions defined in~\eqref{generating function original:section3} and~\eqref{generating function arclength} respectively. As in~\eqref{eq:unperturbedmap} and~\eqref{eq:unperturbedmap arclength}, we use the notations
\[
(\ss',\vp')=f_\gna (\ss,\varphi), \qquad (s',\varphi')=f_{\ga}(s,\varphi).
\]

By the twist condition, one knows that there exists a diffeomorphism $\chi$, that changes coordinates, from $\R\times(-\frac{\pi}{2},\frac{\pi}{2})$ to $\{ (s,s')\in \mathbb{R}^2 :\ s<s'<s+1 \}$. Observe that $\chi (s+1,\varphi)=\chi(s,\varphi)+(1,1)$. 

The following lemma is then an immediate outcome of the existence and the property of $\chi$ and of some compactness argument along with the continuous extension (see~\eqref{extension generating function}) of $\partial_2 \tau(s,s')$ to the set $\Delta$ in~\eqref{def Delta}. We omit its proof.

\begin{lemma}\label{away from boundary 1}
Let $0<\nu\ll 1$. Denote by $\mathbb{A}_\nu$ the annulus
\begin{equation}\label{def:Anu}
\mathbb{A}_\nu =\mathbb{T}\times \left [-\frac{\pi}{2}+\nu,\frac{\pi}{2}-\nu \right ]
\end{equation}
and by $\tilde{\mathbb{A}}_\nu$ a lift of it in $\mathbb{R}\times(-\frac{\pi}{2},\frac{\pi}{2})$. Then, there exist constants $0<\mu <\frac 1 2$ and $\nu'>0$ such that
\begin{equation}\label{definition Delta mu}
\chi(\tilde{\mathbb{A}}_\nu)\subset \Delta_\mu:=\{ (s,s')\in \mathbb{R}^2 :\ s+\mu\leq s'\leq s+1-\mu \}\, 
\end{equation}
{and $\varphi' \in\left [-\frac{\pi}{2}+\nu',\frac{\pi}{2}-\nu' \right ]$. }
\end{lemma}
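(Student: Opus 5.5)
The plan is to use compactness together with the continuous extension of the derivatives of the generating function, exactly as indicated before the statement. Recall that $\chi(s,\varphi)=(s,s')$, where $s'$ is the first component of a lift $F$ of $f_\ga$ chosen with $s<s'<s+1$ for $\varphi\in(-\frac{\pi}{2},\frac{\pi}{2})$. Because $\chi$ commutes with the deck translations, $\chi(s+1,\varphi)=\chi(s,\varphi)+(1,1)$, the difference $s'-s$ defines a continuous function $g(s,\varphi)$ on $\mathbb{T}\times(-\frac{\pi}{2},\frac{\pi}{2})$ with values in $(0,1)$.

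The first step is to show that $g$ stays away from $0$ and $1$ on the compact set $\mathbb{A}_\nu$. Since $g$ is continuous on the open annulus and $\mathbb{A}_\nu$ is a compact subset of it, $g$ attains its minimum $m$ and maximum $M$ there, with $0<m\le M<1$. Setting $\mu:=\min\{m,1-M,\frac14\}$ gives $0<\mu<\frac12$ and $\chi(\tilde{\mathbb{A}}_\nu)\subset\Delta_\mu$, because the lift $\tilde{\mathbb{A}}_\nu$ is a union of integer translates of a fundamental domain on which $g$ takes the same values.

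For the angle bound, I would use \eqref{first gen fct:section3}: $\sin\varphi'=\partial_2\tau(s,s')$. The function $\partial_2\tau$ is continuous on $\mathbb{R}^2$, with the extension \eqref{extension generating function}, and periodic under $(s,s')\mapsto(s+1,s'+1)$. It is therefore continuous on the compact set $K:=\{(s,s'):s\in[0,1],\ s+\mu\le s'\le s+1-\mu\}$. On the open strip $s<s'<s+1$ we have $|\partial_2\tau|<1$, since this is $\sin\varphi'$ with $\varphi'\in(-\frac{\pi}{2},\frac{\pi}{2})$. The reason is that a chord joining two distinct points of a strongly convex curve is never tangent to it; equivalently, $\chi$ is a bijection onto the strip and the image angle is interior. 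Since $K$ lies in this strip, compactness gives $\max_K|\partial_2\tau|=:c<1$, and we set $\nu':=\frac{\pi}{2}-\arcsin c>0$.

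One could instead apply $\chi^{-1}$ and compactness directly: $f_\ga(\mathbb{A}_\nu)$ is a compact subset of the open annulus, because $f_\ga$ maps interior points to interior points. That argument is also immediate.

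The only point needing care is that $g$ and the image angle are genuinely interior (values in $(0,1)$ and $(-\frac{\pi}{2},\frac{\pi}{2})$). This follows from strong convexity via the twist property encoded in $\chi$ being a diffeomorphism onto the open strip; everything else is routine compactness.
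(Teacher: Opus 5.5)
Your argument is correct and is essentially the one the paper has in mind: the periodicity of $\chi$ together with compactness of $\mathbb{A}_\nu$ gives $\mu$, and the bound $\sin\varphi'=\partial_2\tau(s,s')$ staying away from $\pm1$ on a compact fundamental piece of $\Delta_\mu$ gives $\nu'$. One quibble: you do not need the extension of $\partial_2\tau$ to $\Delta$ at all, since your set $K$ stays a positive distance away from $\Delta$, where $\tau$ is already smooth; the extension is also not continuous across the diagonal, where $\partial_2\tau$ jumps between $-1$ and $+1$, so your claim that it is continuous on all of $\mathbb{R}^2$ is literally false, though harmless here.
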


We are then concerned with the fact that, being at a positive distance from the boundaries, implies a uniform lower bound on the generating function $\tau$. Once again, this follows by continuity of $\tau$ away from the diagonal and by compactness and periodicity of some set.
\begin{lemma}\label{away from boundary 2}
Let $0<\mu<\frac 1 2$ and $\Delta_\mu$ defined in~\eqref{definition Delta mu}. 
There exists a constant $c>0$ such that, for every $(s,s')\in \Delta_\mu$, one has $\tau(s,s')\geq c$.
    
Conversely, for any $c'>0$, there exists a constant $0<\mu'<\frac{1}{2}$ such that if $\tau(s,s')\geq c'$, then $(s,s') \in \Delta_{\mu'}$.
\end{lemma}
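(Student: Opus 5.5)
The plan is a compactness and periodicity argument, using that $\ga$ is an embedding of $\mathbb{T}$. Since $\tau(s,s')=\|\ga(s)-\ga(s')\|$ with $\ga$ $1$-periodic, $\tau(s+1,s'+1)=\tau(s,s')$. The set $\Delta_\mu$ is invariant under $(s,s')\mapsto(s+1,s'+1)$, so a fundamental domain is
\[
K_\mu:=\{(s,s') : s\in[0,1],\ s+\mu\le s'\le s+1-\mu\},
\]
which is compact. It suffices to bound $\tau$ from below on $K_\mu$.

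\textbf{Lower bound on $\Delta_\mu$.} On $K_\mu$, $s'-s\in[\mu,1-\mu]$, so $s'-s\notin\mathbb{Z}$ and $s\neq s' \pmod 1$. Since $\ga\colon\mathbb{T}\to\mathbb{R}^2$ is injective, $\ga(s)\ne\ga(s')$, and hence $\tau(s,s')>0$ at every point of $K_\mu$. The function $\tau$ is continuous, so it attains a positive minimum $c$ on the compact set $K_\mu$. By periodicity this gives $\tau\ge c$ on all of $\Delta_\mu$.

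\textbf{Converse.} I argue by contradiction. Suppose that for every $n$ there is $(s_n,s_n')$ with $\tau(s_n,s_n')\ge c'$ but $(s_n,s_n')\notin\Delta_{1/n}$. Here I read $\Delta_{\mu'}$, as in the context of Lemma~\ref{away from boundary 1}, within the region $s\le s'\le s+1$ given by the image of $\chi$, or equivalently after choosing the lift with $s'-s\in[0,1)$.

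Translating by integers, I can assume $s_n\in[0,1]$ and $s_n'-s_n\in[0,1/n)\cup(1-1/n,1]$. Passing to a subsequence, $(s_n,s_n')\to(s,s')$ with $s'-s\in\{0,1\}$. Continuity of $\tau$ then gives $\tau(s,s')=\|\ga(s)-\ga(s')\|=0$, which contradicts $\tau\ge c'>0$ in the limit. So some $\mu'=1/n$ works, and I may shrink it so that $\mu'<\frac12$.

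The only delicate point is the converse. One has to be careful about which lift of $s'$ is meant, so that $\Delta_{\mu'}$ is measured on the normalized strip $s\le s'\le s+1$. Once that normalization is fixed, the argument is routine. Uniform continuity of $\ga$ could replace sequential compactness, giving an explicit $\mu'$ with $\|\ga(s)-\ga(s')\|<c'$ whenever $|s-s'|<\mu'$ modulo $1$.
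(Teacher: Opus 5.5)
Your proof is correct and follows essentially the paper's argument: periodicity of $\tau$ plus compactness of a fundamental domain of $\Delta_\mu$, positivity of $\tau$ off $\{s'-s\in\mathbb{Z}\}$ from the injectivity of $\ga$, and a compactness argument for the converse. Your remark that the converse only holds once the lift is normalized to $s\le s'\le s+1$ is a needed precision that the paper's terse proof leaves implicit.
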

\begin{proof}
First observe that, by definition of the generating function, $\tau(s,s')$ is zero if and only if $s\equiv s'\mod 1$. Since the function $\tau$ is smooth except at points of the set $\Delta$ defined in~\eqref{def Delta}, and since $\tau(s+1,s'+1)=\tau(s,s')$, by compactness of $\Delta_\mu$ intersected with a fundamental domain, the first statement follows. The second one is proved by observing that $\tau(s,'s) \geq c>0$ implies that $s-s'\notin \mathbb{Z}$ and the conclusion is clear by compactness.
\end{proof}

The following corollary is a straightforward consequence of the previous results and the fact that $\gna$ is an embedding. We omit its proof.

\begin{corollary}\label{corollary away boundary}
Recall that $\Delta_\mu = \{ (\ss,\ss')\in \mathbb{R}^2 :\ \ss+\mu\leq \ss'\leq \ss+1-\mu \}$. We have that:
\begin{itemize}
\item For any $0<\nu \ll 1$ small enough, there exist $0<\mu <\frac{1}{2}$ and $\nu'>0$ such that if $(\ss,\vp) \in \tilde{\mathbb{A}}_\nu$, then $(\ss,\ss') \in \Delta_\mu$ and 
$\vp' \in \left [-\frac{\pi}{2} + \nu', \frac{\pi}{2} - \nu'\right ]$.
\item For any $0<\mu <\frac{1}{2}$, there exists $c>0$ such that $\gentau(\ss, \ss') \geq c$ for all $(\ss,\ss') \in \Delta_\mu$.
\item For any $c'>0$, there exists $0<\mu' <\frac{1}{2}$ such that if $\gentau(\ss,\ss') \geq c'$, then $(\ss,\ss') \in \Delta_\mu$.
\end{itemize}
\end{corollary}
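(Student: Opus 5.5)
The plan is to transport the two lemmas just stated from the arclength variables $(s,\vp)$ to the original variables $(\ss,\vp)$ via the conjugacy~\eqref{conjugation with arclength}. The map $S$ only changes the first coordinate, by $s=\sigma(\ss)$, and leaves the angle untouched. Two facts about $\sigma$ will do all the work. First, $\sigma$ is a $\mathcal{C}^k$ diffeomorphism of $\R$ with $\sigma(\ss+1)=\sigma(\ss)+1$. Second, its derivative $\sigma'(\ss)=\|\dergna(\ss)\|/|\partial\Omega(\gna)|$ is bounded above and below by positive constants, since $\gna$ is an embedding, so $\|\dergna\|\ge C>0$, and $\T$ is compact. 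Hence $\sigma$ and $\sigma^{-1}$ are bi-Lipschitz with constants $0<a\le b$.

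\textbf{First item.} Take $(\ss,\vp)\in\tilde{\mathbb{A}}_\nu$. Then $S(\ss,\vp)=(\sigma(\ss),\vp)$ also lies in $\tilde{\mathbb{A}}_\nu$, because the angle is unchanged.
\begin{itemize}
\item Lemma~\ref{away from boundary 1} then gives $\mu_0\in(0,\frac12)$ and $\nu'>0$ such that $\sigma(\ss)+\mu_0\le s'\le \sigma(\ss)+1-\mu_0$ and $\vp'\in[-\frac\pi2+\nu',\frac\pi2-\nu']$.
\item By~\eqref{conjugation with arclength} we have $s'=\sigma(\ss')$, and $\vp'$ is the same in both coordinate systems. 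So the angle bound transfers directly.
\item Applying $\sigma^{-1}$, together with its periodicity $\sigma^{-1}(x+1)=\sigma^{-1}(x)+1$ and the Lipschitz bound, gives $\ss'-\ss\ge \mu_0/b$ and $\ss+1-\ss'\ge \mu_0/b$. We therefore take $\mu=\min(\mu_0/b,\,\tfrac12-\epsilon)$ for any small $\epsilon>0$, which keeps $\mu<\frac12$ and yields $(\ss,\ss')\in\Delta_\mu$.
\end{itemize}

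\textbf{Second item.} Take $(\ss,\ss')\in\Delta_\mu$. By the lower Lipschitz bound and periodicity of $\sigma$, $(\sigma(\ss),\sigma(\ss'))$ lies in $\Delta_{a\mu}$. Lemma~\ref{away from boundary 2} then gives $\tau\ge c_0$ there, and~\eqref{generating function original:section3} gives $\gentau=|\partial\Omega(\gna)|\,\tau\ge |\partial\Omega(\gna)|\,c_0=:c$.

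\textbf{Third item.} Suppose $\gentau(\ss,\ss')\ge c'$. Then $\tau(\sigma(\ss),\sigma(\ss'))\ge c'/|\partial\Omega(\gna)|$. The converse part of Lemma~\ref{away from boundary 2} places $(\sigma(\ss),\sigma(\ss'))$ in some $\Delta_{\mu_1}$, and pulling back by $\sigma^{-1}$ as in the first item gives $(\ss,\ss')\in\Delta_{\mu_1/b}$. The statement writes $\Delta_\mu$ here, which should read $\Delta_{\mu'}$.

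One interpretive point needs care. The convention in Lemma~\ref{away from boundary 1} implicitly takes the lift with $s<s'<s+1$, and the same choice must be made for $\ss'$. This is consistent because $\sigma$ is an increasing lift, so it preserves the ordering $s<s'<s+1$. There is no real obstacle beyond this bookkeeping.
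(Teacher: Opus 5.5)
Your proof is correct and follows the route the paper intends. The paper omits the proof, calling it a straightforward consequence of Lemmas~\ref{away from boundary 1} and~\ref{away from boundary 2} together with the fact that $\gna$ is an embedding, and that fact is exactly what gives your two-sided bound on $\sigma'$. You also handle correctly both the lift convention $\ss<\ss'<\ss+1$ (without it the third item is false as literally stated) and the typo $\Delta_\mu\to\Delta_{\mu'}$ in the third item.
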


\subsection{Perturbations of the billiard table} \label{billiard perturbationTable:sec3} 
Consider a billiard map $f_{\ga}$, with $\ga$ the arc length anticlockwise parametrization of $\ga(\mathbb{T})$.  
For a given $\lac\in \mathcal{C}^k(\mathbb{T},\mathbb{R})$, we consider the one-parameter family $\{\Omega_\varepsilon\}_{\varepsilon\in [0,1]}$ of $\mathcal{C}^k$ deformations of $\Omega (\ga)$, with 
\begin{equation}\label{deformations}
\partial\Omega_\varepsilon:=\{\defga (s):=\ga(s)+\varepsilon \lac(s)\narc(s)\,;\, s\in \mathbb{T}\}\, ,
\end{equation}
where $\narc(s)$ is the outward normal vector to $\ga(\mathbb{T})$ at $\ga(s)$. Let us note that by keeping the same parameter $s$ to describe the perturbed boundary $\partial\Omega_\varepsilon$, the latter is \textit{a priori} not parametrized in arc length anymore. 

We introduce the $\mathcal{C}^k$ norm of $\lac\in \mathcal{C}^k(\T,\R)$ as
\begin{equation}\label{Ck norm of lambda}
\|\lac\|_{\mathcal{C}^k} = \max_{j=0\cdots, k} \max_{s\in \T} |\lac^{(j)}(s)|
\end{equation}
where, again, we have denoted by $\lac^{(j)}$ the derivative of order $j$ and, in general for a given function $g\in \mathcal{C}^\ell (\mathbb{T} \times [-\varepsilon_0,\varepsilon_0],\mathbb{R})$, $\ell \leq k$, $\varepsilon_0\geq 0$, we define analogously 
\[
\|g \|_{\mathcal{C}^{\ell}} = \max_{j=0\cdots, \ell} \max_{(s,\varepsilon)\in \T \times [-\varepsilon_0,\varepsilon_0]} |D^j g(s;\varepsilon)|
\]
where $D$ is the differential with respect to $(s,\varepsilon)$. 
We also introduce, for 
$0<\nu \ll 1$ and $\ell \leq k-1$ 
\begin{align*}
\|R\|_{\norm{\ell}{\mathbb{A}_\nu}}  & := \max_{j=0,\cdots , \ell} \left \{ \max_{(s,\varphi,\varepsilon)\in \mathbb{A}_\nu \times [0,\varepsilon_0]} \|D^j R(s,\varphi;\varepsilon)\|\right \}
\end{align*}
where $R$ is $\mathcal{C}^{\ell}$ in its arguments and $D$ here means the differential with respect to $(s,\varphi,\varepsilon)$. 
Recall that the set $\mathbb{A}_\nu$ was introduced in Lemma~\ref{away from boundary 1}. 

Consider $f_{\ga}\colon (s,\varphi)\mapsto (s',\varphi')$ the initial billiard map parameterized by arc-length, and   
\begin{equation}\label{eq:perturbedmap:section3}
f_{\defga}\colon (s,\varphi)\mapsto (s_\varepsilon',\varphi_\varepsilon')
\end{equation}
the billiard map for the perturbed domain $\Omega_\varepsilon$, defined in~\eqref{deformations}, with $s_\varepsilon'=s_\varepsilon'(s,\varphi)$ and $\varphi_\varepsilon'=\varphi_\varepsilon'(s,\varphi)$. 

Next proposition, whose proof is given in Section~\ref{billiard perturbationTable}, gives  the first order in $\varepsilon$ of the perturbed billiard map $f_{\defga}$. 

\begin{prop}\label{cor:expr:fepsilon} 
Fix $0<\nu \ll 1$. 
Then, there exist $\varepsilon_\nu>0$ and a constant $M_\nu$ (depending also on $\|\ga\|_{\mathcal{C}^k}$) such that, for $\varepsilon \in [-\varepsilon_\nu, \varepsilon_\nu]$ and $(s,\varphi)\in \mathbb{A}_\nu $, with $\mathbb{A}_\nu$ given in~\eqref{def:Anu}, the perturbed billiard map $f_{\defga}$ in~\eqref{eq:perturbedmap:section3} can be written as:
\begin{align*}
s_\varepsilon'&=s'-\varepsilon\frac{1}{\cos\varphi'}(\derlac(s)\hat{\tau}+\lac(s)\sin \varphi-\lac(s')\sin\varphi')+\varepsilon^2 \mathcal{R}_1(s,\varphi;\varepsilon)\\
\varphi_\varepsilon'&=\varphi' - \varepsilon \frac{\mathcal{K}(s')}{\cos \varphi'}(\derlac(s)\hat \tau+\lac(s)\sin\varphi- \lac(s')\sin \varphi') + \varepsilon (\derlac(s') - \derlac(s)) + \varepsilon^2 \mathcal{R}_2(s,\varphi;\varepsilon).
\end{align*}
where $(s',\varphi')=f(s,\varphi)$, $\hat \tau=\hat \tau(s,\varphi) = \tau(s,s'(s,\varphi))$, $\mathcal{K}(s')$ is the curvature of $\ga$ at the point $\ga(s')$ and the functions $\mathcal{R}_1,\mathcal{R}_2$ satisfy:
\[
\|\mathcal{R}_1\|_{\norm{k-3}{\mathbb{A}_\nu}}\le M_\nu \| \lac \|_{\mathcal{C}^{k-1}}^2, \qquad \|\mathcal{R}_2\|_{\norm{k-3}{\mathbb{A}_\nu}}\le M_\nu \| \lac \|_{\mathcal{C}^{k-1}}^2\, .
\]
Moreover, $\|f_{\defga}\|_{\norm{k-3}{\mathbb{A}_\nu}} \leq 2 \|f_{\ga}\|_{\norm{k-1}{\mathbb{A}_\nu}}$. 
\end{prop}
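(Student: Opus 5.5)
The plan is to use the generating function and the implicit function theorem, and to differentiate in $\varepsilon$ at $\varepsilon=0$.

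\textbf{Step 1: the implicit equation.} Write $\tau_\varepsilon(s,s')=\|\defga(s)-\defga(s')\|$. The perturbed table is no longer parametrized by arc length, so the relations \eqref{first gen fct no arclength} give
\[
\sin\varphi=-\frac{\partial_1\tau_\varepsilon(s,s'_\varepsilon)}{\|\derdefga(s)\|},\qquad \sin\varphi'_\varepsilon=\frac{\partial_2\tau_\varepsilon(s,s'_\varepsilon)}{\|\derdefga(s'_\varepsilon)\|}.
\]
The angle $\varphi$ is measured with respect to the perturbed normal, which is exactly what these formulas encode. I define $s'_\varepsilon$ by the equation
\[
G(s,\varphi,s';\varepsilon):=\partial_1\tau_\varepsilon(s,s')+\|\derdefga(s)\|\sin\varphi=0.
\]
At $\varepsilon=0$ its solution is $s'(s,\varphi)$. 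Moreover $\partial_{s'}G=\partial_{12}\tau\ne0$, by the twist condition for strongly convex tables; in fact $\partial_{12}\tau=\cos\varphi\cos\varphi'/\tau$.

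\textbf{Step 2: uniformity on $\mathbb{A}_\nu$.} Lemma~\ref{away from boundary 1} and Lemma~\ref{away from boundary 2} give $(s,s')\in\Delta_\mu$, $\tau\ge c$ and $\cos\varphi'\ge\sin\nu'$. Hence $\tau_\varepsilon$ is $\mathcal{C}^{k-1}$ in $(s,s',\varepsilon)$ near the graph of $s'$; one derivative is lost because $\narc$ only has $\mathcal{C}^{k-1}$ regularity. The implicit function theorem, applied with uniform bounds and using compactness of $\mathbb{A}_\nu$, yields $\varepsilon_\nu$ and a $\mathcal{C}^{k-2}$ solution $s'_\varepsilon$. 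Then $\varphi'_\varepsilon$ is obtained from the second formula, at the cost of one more derivative, which explains the $\mathcal{C}^{k-3}$ norm of the remainders.

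\textbf{Step 3: the first-order terms.} Let $u=(\ga(s')-\ga(s))/\tau$. A first-order expansion gives
\[
\partial_\varepsilon\tau_\varepsilon|_{0}=\langle u,\lac(s')\narc(s')-\lac(s)\narc(s)\rangle=\lac(s')\sin\varphi'+\lac(s)\sin\varphi,
\]
with the sign conventions of \eqref{first gen fct:section3}. Taking its $\partial_1$ derivative, and using the Frenet formula for $\narc'$ together with $\|\derdefga(s)\|=1-\varepsilon\mathcal{K}(s)\lac(s)+O(\varepsilon^2)$, I compute $\partial_\varepsilon G$. Then $\partial_\varepsilon s'_\varepsilon=-\partial_\varepsilon G/\partial_{12}\tau$. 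I expect the $\mathcal{K}(s)$ terms to cancel, leaving
\[
-\frac{1}{\cos\varphi'}\bigl(\derlac(s)\tau+\lac(s)\sin\varphi-\lac(s')\sin\varphi'\bigr).
\]
The $\varphi'$ expansion can also be seen geometrically. The outgoing ray hits the boundary at the displaced point, so the shift in $\varphi'$ equals $\mathcal{K}(s')\,\partial_\varepsilon s'_\varepsilon$, coming from the rotation of the normal along the arc. To this is added $\derlac(s')-\derlac(s)$, which is the rotation of the normals at both endpoints at first order, since the normal to $\defga$ rotates by $-\varepsilon\derlac$.

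\textbf{Step 4: remainders and the norm bound.} The remainders are given by Taylor's formula with integral form in $\varepsilon$. Since $G$ is linear in $\lac$ at first order and every $\varepsilon$-derivative carries a factor of $\lac$ and its derivatives up to order $k-1$, this yields the bound $M_\nu\|\lac\|^2_{\mathcal{C}^{k-1}}$. The estimate $\|f_{\defga}\|\le2\|f_\ga\|$ then follows by shrinking $\varepsilon_\nu$.

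The main obstacle I expect is the careful sign and curvature bookkeeping in Step 3, together with tracking derivative loss uniformly in Step 2.
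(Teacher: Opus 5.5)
Your route is essentially the paper's. The paper also works with the relation $\|\dot\beta_\varepsilon(s)\|\sin\varphi=-\partial_1\tau_\varepsilon(s,s'_\varepsilon)$ (see \eqref{gennnnbi}). It first expands $\tau_\varepsilon$ and the angle functions in the $(s,s')$ variables (Lemma~\ref{lemma pertr lenght func}, Corollary~\ref{coro angl}). It then solves $\varphi_\varepsilon(s,s'_\varepsilon)=\varphi$ for $s'_\varepsilon=s'+\varepsilon\varpi$ by a fixed point argument, which is your implicit function step in slightly different form.

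However, the formula on which Step 3 rests is wrong. Since $\varphi$ is the angle between $u$ and the inward normal $-n(s)$, one has $\langle u,n(s)\rangle=-\cos\varphi$ and $\langle u,n(s')\rangle=\cos\varphi'$. Hence
\[
\partial_\varepsilon\tau_\varepsilon\big|_{\varepsilon=0}=\eta(s)\cos\varphi+\eta(s')\cos\varphi',
\]
as in Lemma~\ref{lemma pertr lenght func}. The sines are the \emph{tangential} components $\langle\dot\beta(s),u\rangle$, which is what enters $\partial_1\tau=-\sin\varphi$. As a sanity check, take a chord hitting the boundary orthogonally at both ends, so $\varphi=\varphi'=0$. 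Pushing both endpoints outward lengthens it by $2\varepsilon\eta$, while your formula gives $0$.

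With your sine version, $\partial_1$ produces $\dot\eta(s)\sin\varphi-\eta(s)\mathcal{K}(s)\cos\varphi+\dots$. This curvature term does not cancel the $-\mathcal{K}(s)\eta(s)\sin\varphi$ coming from $\partial_\varepsilon\|\dot\beta_\varepsilon(s)\|$. So the coefficient you ``expect'' would not come out, and this first-order coefficient is the whole content of the proposition. Step 3 as written therefore fails.

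With the correct expansion, your plan goes through. Using \eqref{partial cos phi},
\[
\partial_\varepsilon G\big|_{\varepsilon=0}=\cos\varphi\Bigl(\dot\eta(s)+\eta(s)\frac{\sin\varphi}{\tau}-\eta(s')\frac{\sin\varphi'}{\tau}\Bigr).
\]
Here the $\mathcal{K}(s)$ terms cancel exactly, and dividing by $\partial_{12}\tau=\cos\varphi\cos\varphi'/\tau$ gives the claimed $\varpi_0=\partial_\varepsilon s'_\varepsilon|_{\varepsilon=0}$.

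For $\varphi'_\varepsilon$, your geometric picture has the right structure but is not a proof. Instead, differentiate $\|\dot\beta_\varepsilon(s'_\varepsilon)\|\sin\varphi'_\varepsilon=\partial_2\tau_\varepsilon(s,s'_\varepsilon)$ at $\varepsilon=0$:
\[
\cos\varphi'\,\partial_\varepsilon\varphi'_\varepsilon=\partial_{22}\tau\,\varpi_0+\partial_2\bigl(\eta(s)\cos\varphi+\eta(s')\cos\varphi'\bigr)+\mathcal{K}(s')\eta(s')\sin\varphi'.
\]
Using $\partial_{22}\tau=\mathcal{K}(s')\cos\varphi'+\cos^2\varphi'/\tau$, the $\mathcal{K}(s')\eta(s')$ terms cancel. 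What remains is $\partial_\varepsilon\varphi'_\varepsilon=\mathcal{K}(s')\varpi_0+\dot\eta(s')-\dot\eta(s)$, which is the stated formula. The paper reaches the same point via $\partial_2\varphi'=\mathcal{K}(s')+\cos\varphi'/\tau$.

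Steps 2 and 4 are at the paper's level of detail. You should add one point: the local implicit-function solution is the actual next collision, by uniqueness of the solution in $(s,s+1)$ for the convex perturbed table.
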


\subsection{From normal perturbations to billiard maps}\label{sec:Gammanu:sec3}
Recall that, given $\gna \in \mathcal{C}^k (\mathbb{T},\mathbb{R}^2)$  and $\lambda\in \mathcal{C}^k(\mathbb{T},\mathbb{R})$, in~\eqref{def gamma lambda} we have defined $\dgnalna$ as
\begin{equation}
\label{def gamma lambda bis}
\dgnalna(\ss) = \gna(\ss) + \lambda(\ss) \nna(\ss), \qquad \dgnalna \in \mathcal{C}^{k-1} (\mathbb{T};\mathbb{R}^2). 
\end{equation}

We introduce, for $\varrho>0$, the $\mathcal{C}^k$ open ball of radius $\varrho$, 
\begin{equation}\label{def:Brho}
\Bl{\varrho} = \{ \lna: \mathbb{T} \to \mathbb{R}, \, \lambda\in \mathcal{C}^k : \,  \|\lambda\|_{\mathcal{C}^k}< \varrho\},
\end{equation}
where $\|\lambda\|_{\mathcal{C}^k}$ denotes the usual $\mathcal{C}^k$-norm (see~\eqref{Ck norm of lambda}). 

We observe that, considering $\ss = \sigma^{-1}(s)$ and $\ga$ the arc length parameterization of $\gna$ as in~\eqref{arclength parameterization:section3}, it is clear that, letting $ \lac = |\partial \Omega(\gna)|^{-1} \lna\circ \sigma^{-1}$, 

\begin{equation}\label{def gamma lambda arc length}
\dgalac(s) := \frac{1}{|\partial \Omega (\gna)|} \dgnalna (\sigma^{-1}(s))=\ga(s) +  \lac(s) \narc(s),  
\end{equation}
with $\narc(s) = \nna(\sigma^{-1}(s))$. 
Using the Fa\`a di Bruno formula one can easily check that there exists a constant $\co>0$ that only depends on $\|\gna \|_{\mathcal{C}^k}$, such that for all $\lna\in \Bl{\varrho}$
\begin{equation}\label{lambdaast lambda}
\| \lac\|_{\mathcal{C}^k} = \frac{1}{|\partial \Omega (\gna)|} \|\lna\circ \sigma^{-1} \|_{\mathcal{C}^k} \leq\co \|\lna\|_{\mathcal{C}^k}  \leq \co   \varrho.
\end{equation}
We notice that $\dgalac$ can be written as
\begin{equation}\label{deformations gamma ast}
\dgalac(s)= \ga(s) + \varepsilon \lac_0(s) \narc(s):= \ga(s) + \varepsilon \frac{ \lac(s)}{\|\lac\|_{\mathcal{C}^k} }\narc(s), \qquad \varepsilon = \| \lac\|_{\mathcal{C}^k} \leq \co   \varrho
\end{equation}
in such a way that the billiard table $\Omega(\dgalac)$ belongs to the family of deformations of $\Omega(\ga)$ given by
\[
\partial \Omega_\varepsilon=\{ \defga(s)= \ga(s) + \varepsilon \lac_0(s) \narc(s) \}, \qquad \varepsilon \in [-\co\varrho,\co\varrho],
\]
as defined in~\eqref{deformations}.  

For a billiard map $f_{\ga}$, $\nu>0$ and $\varrho>0$, we denote
\begin{equation}\label{ball fgamma}
\Bf{\varrho}{f_{\ga}}:=\{\tilde f\colon \mathbb{A}_\nu\to\mathbb{A}, \text{ symplectic }\mathcal{C}^{k-2}\text{ diffeomorphism}: \Vert \tilde f-f_{\ga}\Vert_{\difknu{k-3}{\nu}}<\varrho\} .
\end{equation}
Notice that when $k\geq 5$, if $\tilde f \in \Bf{\varrho}{f_{\ga}}$, then $\|\tilde f -f_{\ga}\|_{\difknu{2}{\nu}} <\varrho$.

Next Proposition, whose proof is given in Section \ref{sec:Gammanu}, studies the regularity of the map $f_{\dgalac}$ respect to the original perturbation $\lambda$ where we recall that $\lac = |\partial \Omega(\gamma)| \lambda \circ \sigma^{-1}$. In particular proves that for $\lambda \in \Bl{\varrho}$ the billiard tables $f_{\dgalac}$ are strictly convex when $\varrho$ is small enough.
 
\begin{proposition}\label{prop Frechet Gamma nu} 
Fix $\nu>0$. There exist $\varrho_\nu >0$ and $b_\nu>0$,  such that for all $0<\varrho\leq \varrho_\nu $, the billiard table $\Omega (\dgalac)$ is strictly convex and the  map 
\begin{equation}\label{eq:gammatilde}
\begin{array}{rcl}
\Gamma_\nu \colon \Bl{\varrho} & \to & \Bf{b_\nu\varrho}{f_{\ga}}  \\
\lna& \mapsto &  f_{\dgalac} 
\end{array}      
\end{equation}
 is $\mathcal{C}^1$-Fr\'echet differentiable.     
\end{proposition}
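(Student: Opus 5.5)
The plan is to factor $\Gamma_\nu$ as a composition of maps whose Fréchet regularity is elementary, and then to identify the derivative using Proposition~\ref{cor:expr:fepsilon}. First, the map $\lna\mapsto\lac=|\partial\Omega(\gna)|^{-1}\lna\circ\sigma^{-1}$ is bounded and linear from $\mathcal{C}^k$ to $\mathcal{C}^k$, by~\eqref{lambdaast lambda}. It therefore suffices to prove that $\lac\mapsto f_{\dgalac}$ is $\mathcal{C}^1$ on a small $\mathcal{C}^k$ ball, with values in $\mathcal{C}^{k-3}(\mathbb{A}_\nu)$.

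Strict convexity comes first. The curvature of $\dgalac=\ga+\lac\narc$ is a continuous function of $(\dot\beta_\eta,\ddot\beta_\eta)$. These depend on $\lac,\derlac,\dertwolac$ and on $\dertwoga$, and the curvature of $\ga$ is bounded away from zero. Hence for $\|\lac\|_{\mathcal{C}^2}\le \co\varrho\le \co\varrho_\nu$ small enough, the curvature stays of one sign, and $\dgalac$ remains an embedding of a strongly convex curve.

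Next I will describe $f_{\dgalac}$ implicitly, with $s$ kept as the (non arc-length) parameter on the perturbed boundary. The image $(s'_\eta,\varphi'_\eta)$ of $(s,\varphi)$ is determined by two conditions. The first says that $\dgalac(s'_\eta)-\dgalac(s)$ is parallel to the direction obtained by rotating the unit tangent of $\dgalac$ at $s$ by the angle $\varphi$. The second expresses $\sin\varphi'_\eta$ as the generating-function derivative, $\partial_2$ of $\|\dgalac(s)-\dgalac(s')\|$ divided by $\|\dot\beta_\eta(s')\|$, in analogy with~\eqref{first gen fct no arclength}. This gives an equation $\mathcal{F}(\lac;s,\varphi,s'_\eta,\varphi'_\eta)=0$. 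The function $\mathcal{F}$ depends on $\lac$ only through composition with $\lac,\derlac$ evaluated at $s$ and $s'_\eta$, and it is polynomial or analytic in these quantities away from $\tau=0$.

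By Lemma~\ref{away from boundary 1}, Lemma~\ref{away from boundary 2} and Corollary~\ref{corollary away boundary}, on $\mathbb{A}_\nu$ we have $(s,s')\in\Delta_\mu$ and $\tau\ge c$. This keeps us away from the diagonal, so $\mathcal{F}$ is smooth there. The twist condition gives that $\partial_{s'}\mathcal{F}$ is invertible at $\lac=0$, with bounds uniform on $\mathbb{A}_\nu$. The implicit function theorem in Banach spaces, applied to the map $(\lac,g)\mapsto \mathcal{F}(\lac;\cdot,g(\cdot))$ from $\mathcal{C}^k\times\mathcal{C}^{k-3}(\mathbb{A}_\nu)$ into $\mathcal{C}^{k-3}$, should then yield $\mathcal{C}^1$ dependence of $f_{\dgalac}$ on $\lac$.

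The main obstacle is the loss of derivatives in composition operators. The map $(\lac,g)\mapsto \derlac\circ g$ is $\mathcal{C}^1$ from $\mathcal{C}^k\times\mathcal{C}^{k-3}$ to $\mathcal{C}^{k-3}$ only because $\derlac\in\mathcal{C}^{k-1}$ has two spare derivatives. One derivative is used for differentiability in $g$, namely the term $\dertwolac\circ g\cdot\delta g$. The other is used for continuity of that derivative in $g$, via the uniform continuity of $\dertwolac$ composed with $g$. I will prove this composition lemma carefully using Faà di Bruno, and it explains the drop $k\to k-3$.

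The same estimates, together with $\Gamma_\nu(0)=f_{\ga}$ and the mean value inequality, show that $\|f_{\dgalac}-f_{\ga}\|_{\mathcal{C}^{k-3}(\mathbb{A}_\nu)}\le b_\nu\varrho$. Alternatively, this bound follows directly from Proposition~\ref{cor:expr:fepsilon} applied with $\varepsilon=\|\lac\|_{\mathcal{C}^k}$ as in~\eqref{deformations gamma ast}. The image is symplectic, with respect to the $2$-form pulled back to the $s$ parameter, and it is a $\mathcal{C}^{k-2}$ diffeomorphism since it is a billiard map of a $\mathcal{C}^{k-1}$ strongly convex table; hence it lies in $\Bf{b_\nu\varrho}{f_{\ga}}$. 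Finally, the derivative at $0$ is read off from the first-order terms in Proposition~\ref{cor:expr:fepsilon}, which are linear in $(\lac,\derlac)$.
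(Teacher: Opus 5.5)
Your proposal is correct and shares the paper's outer structure: factoring out the linear map $\lambda\mapsto\eta$, convexity from the curvature formula, the bound $\|f_{\beta_\eta}-f_{\beta}\|_{\mathcal{C}^{k-3}(\mathbb{A}_\nu)}\le b_\nu\varrho$ from Proposition~\ref{cor:expr:fepsilon}, and nondegeneracy from $\partial_{12}\tau=\cos\varphi\cos\varphi'/\tau>0$. The core step, however, is organized differently.

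The paper applies the implicit function theorem \emph{pointwise}. It solves the scalar equation $\|\dot\beta_\eta(s)\|\sin\varphi+\partial_1\tau_\eta(s,s+t)=0$ for $t$, with Banach parameter $\eta$ and finite-dimensional variables $(s,\varphi,t)$, near each base point $(s_0,\varphi_0)$. For this it only needs that evaluation maps are $\mathcal{C}^1$ (Remark~\ref{rmk Frechet}). It then globalizes with a finite subcover of the compact $\mathbb{A}_\nu$ plus uniqueness, and gets $\varphi'_\eta$ explicitly from the $\arcsin$ formula.

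You instead take the whole map $g=(s'_\eta,\varphi'_\eta)$ as the unknown in $\mathcal{C}^{k-3}(\mathbb{A}_\nu)$ and use a Banach-space implicit function theorem. That is where your composition lemma enters.

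The paper's route is lighter. But what it literally yields is joint $\mathcal{C}^1$ regularity of the scalar function $(\eta,s,\varphi)\mapsto s'_\eta$. Passing to Fréchet differentiability with values in the $\mathcal{C}^{k-3}(\mathbb{A}_\nu)$ topology of $\mathbf{B}^{\nu}_{b_\nu\varrho}(f_\beta)$ requires a further bootstrap (higher joint regularity, uniformly over $\mathbb{A}_\nu$), which the paper leaves implicit.

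Your route delivers $\mathcal{C}^1$ dependence directly in the target norm and makes the loss of derivatives explicit. In exchange you must prove the composition lemma. You should also state that the implicit-function solution is the billiard map, by uniqueness of the second intersection of the chord with the strictly convex curve in $\Delta_\mu$. Note that your parallelism condition also admits the spurious root $s'=s$, which $\Delta_\mu$ excludes.

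One correction to your derivative bookkeeping. The map $(\eta,g)\mapsto\dot\eta\circ g$ is already $\mathcal{C}^1$ into $\mathcal{C}^m$ once $\dot\eta\in\mathcal{C}^{m+1}$, because continuity of $g\mapsto\ddot\eta\circ g$ in $\mathcal{C}^m$ only uses uniform continuity of $\ddot\eta^{(m)}$. So one spare derivative suffices, not two. In your formulation, what actually forces $k-3$ is composing $g$ with fixed data of the base curve such as $\mathcal{K}$ or $\dot n$. These are only $\mathcal{C}^{k-2}$ and enter through $\|\dot\beta_\eta(s')\|$. This does not affect the validity of your argument.
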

We stress that, as $f_{\dgnalna} = S^{-1} \circ f_{\dgalac} \circ S $, (see \eqref{conjugation with arclength} and \eqref{change arc length}), the same is true for the map $f_{\dgnalna}$. This gives the following corollary.

\begin{corollary}\label{prop Frechet Gamma nu noal} 
Fix $\nu>0$. There exists $\varrho_\nu$  and $b_\nu>0$,  such that for all $0<\varrho\leq \varrho_\nu$, the billiard table $\Omega (\gna_\lambda)$ is strictly convex and the  map 
\begin{equation}\label{eq:gammatilde noarclenth}
\begin{array}{rcl}
\widetilde{\Gamma}_\nu \colon \Bl{\varrho} & \to & \Bf{b_\nu\varrho}{f_{\gna}} \\
\lna& \mapsto &  f_{\gna_\lambda} 
\end{array}      
\end{equation}
is $\mathcal{C}^1$-Fr\'echet differentiable. 
\end{corollary}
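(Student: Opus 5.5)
The corollary follows by transporting Proposition~\ref{prop Frechet Gamma nu} through the fixed arclength change of coordinates $S$ in~\eqref{change arc length}. The key point is that $S$ depends only on the unperturbed curve $\gna$, not on $\lna$.

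I will first check that the two constructions produce the same table. Given $\lna\in\Bl{\varrho}$, set $\lac=|\partial\Omega(\gna)|^{-1}\lna\circ\sigma^{-1}$. By~\eqref{def gamma lambda arc length}, $\dgalac$ is a reparametrization of $\dgnalna$, rescaled by the constant $|\partial\Omega(\gna)|^{-1}$, and the reparametrization is $\sigma$, which is the same for every $\lna$. Strict convexity is invariant under reparametrization and homothety. Hence, for $\varrho\le\varrho_\nu$, strict convexity of $\Omega(\gna_\lambda)$ follows from that of $\Omega(\dgalac)$ given by Proposition~\ref{prop Frechet Gamma nu}.

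Next I will verify the conjugation $f_{\gna_\lambda}=S^{-1}\circ f_{\dgalac}\circ S$. The billiard map in the coordinates "parameter, angle" is geometric. Changing the boundary parameter by $\sigma$ acts only on the first coordinate, homothety does not change angles, and the angle is measured from the same normal direction $\nna(\ss)=\narc(\sigma(\ss))$. Thus $\widetilde\Gamma_\nu=\mathcal{S}\circ\Gamma_\nu$, where $\mathcal{S}(\tilde f):=S^{-1}\circ\tilde f\circ S$. Because $S$ preserves the second coordinate, $S(\mathbb{A}_\nu)=\mathbb{A}_\nu$, so $\mathcal{S}$ maps maps defined on $\mathbb{A}_\nu$ to maps defined on $\mathbb{A}_\nu$. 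It also sends symplectic maps for $\omega$ to maps preserving $S^*\omega$, which is the form $f_\gna$ preserves. So the target ball should be read with respect to that form, or the word "symplectic" interpreted accordingly.

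The main step is showing that $\mathcal{S}$ is a bounded affine map between the relevant $\mathcal{C}^{k-3}(\mathbb{A}_\nu)$ spaces. Write $\tilde f=f_\ga+g$. Then
\[
S^{-1}\circ\tilde f\circ S-f_\gna=S^{-1}\circ(f_\ga+g)\circ S-S^{-1}\circ f_\ga\circ S.
\]
Since $S^{-1}(s,\varphi)=(\sigma^{-1}(s),\varphi)$ is nonlinear in $s$, this is not exactly linear in $g$. I will use a first-order Taylor expansion of $\sigma^{-1}$ along the segment joining $f_\ga\circ S$ to $(f_\ga+g)\circ S$. Then $g\mapsto S^{-1}\circ(f_\ga+g)\circ S$ is a composition of a fixed $\mathcal{C}^{k}$ map with $g$ on the left, plus composition with the fixed $\mathcal{C}^{k}$ diffeomorphism $S$ on the right. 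Right composition with $S$ is bounded linear on $\mathcal{C}^{k-3}$. Left composition with $\sigma^{-1}\in\mathcal{C}^{k}$ is $\mathcal{C}^1$ from $\mathcal{C}^{k-3}$ to $\mathcal{C}^{k-3}$, by the standard omega-lemma, since $\sigma^{-1}$ has two more derivatives than required. This gives the estimate $\|\widetilde\Gamma_\nu(\lna)-f_\gna\|_{\mathcal{C}^{k-3}(\mathbb{A}_\nu)}\le C\|\Gamma_\nu(\lna)-f_\ga\|_{\mathcal{C}^{k-3}(\mathbb{A}_\nu)}<Cb_\nu\varrho$, with $C$ depending only on $\|\gna\|_{\mathcal{C}^k}$. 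Replacing $b_\nu$ by $Cb_\nu$ then yields the target ball $\Bf{b_\nu\varrho}{f_\gna}$.

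Finally, $\widetilde\Gamma_\nu=\mathcal{S}\circ\Gamma_\nu$ is a composition of $\mathcal{C}^1$ maps, so it is $\mathcal{C}^1$-Fréchet differentiable by the chain rule. The map $\lna\mapsto\lac$ does not need separate treatment: it is already absorbed in $\Gamma_\nu$, and in any case it is bounded linear by~\eqref{lambdaast lambda}. I expect the only delicate point to be the loss of derivatives in the left composition with $\sigma^{-1}$, which the available regularity margin covers.
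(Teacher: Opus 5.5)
Your proposal is correct and is essentially the paper's argument. The paper deduces the corollary in one line from Proposition~\ref{prop Frechet Gamma nu} via the conjugation $f_{\dgnalna}=S^{-1}\circ f_{\dgalac}\circ S$ with the $\lambda$-independent map $S$. You supply the details it omits: that $\tilde f\mapsto S^{-1}\circ\tilde f\circ S$ is $\mathcal{C}^1$ by the omega lemma, that $b_\nu$ must be rescaled, and that the target ball has to be read with respect to $S^*\omega$.

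One small slip: the angles of the perturbed billiards are measured from the normals of the perturbed curves, not from $\nna(\ss)=\narc(\sigma(\ss))$. These normals nevertheless agree at corresponding parameters, because $\dgalac\circ\sigma$ is a positive constant multiple of $\dgnalna$, so the conjugation holds as you claim.
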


\subsection{First perturbation: keeping a single hyperbolic Aubry-Mather periodic orbit of type $(p,q)$}\label{sec:hyp:section3}

After the perturbation theory for billiard maps is done, we start constructing the suitable perturbation $\lambda$ which provide the results of Theorem~\ref{main thm 2 bis}. To do so, we strongly rely on the results given in Section~\ref{billiard perturbationTable:sec3} and the properties of the Aubry-Mather set $\mathcal{M}_{\frac{p}{q}} (\Omega(\dgnalna))$ of rotation number $\frac{p}{q}$, see Section~\ref{Aubry Mather}. Therefore, first, we need to ensure that, given an Aubry-Mather set at a positive distance from the boundaries, its perturbation remains at a positive distance from the boundaries.

\begin{lemma}\label{lazutkin}
Fix $\frac p q\in (0,1)$ and consider the Aubry-Mather set $\mathcal{M}_{\frac p q}(\Omega)$ with that rotation number for the map $f_\gna$. 
There exist $\nu>0$ and $\varepsilon_0>0$, such that the set $\mathcal{M}_{\frac p q}(\Omega)$ is contained in $\mathbb{A}_\nu$ and, for any $\varepsilon\in [0,\varepsilon_0]$, and  for every billiard table $\widetilde \Omega$ which is $\varepsilon$-close to the initial table $\Omega$ in the $\mathcal{C}^k$ distance (for $k$ large enough), one also has $\mathcal{M}_{\frac p q}(\widetilde \Omega)\subset \mathbb{A}_\nu$.
\end{lemma}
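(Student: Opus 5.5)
The plan is to use the variational characterization of the Aubry-Mather set through the generating function, and to reduce the statement to a uniform lower bound on the chord lengths of minimizing (maximizing perimeter) configurations of rotation number $\frac pq$, uniform in tables close to $\Omega$. By Corollary~\ref{corollary away boundary} (third item) and Lemma~\ref{away from boundary 1}, such a bound gives what we want. If every pair of consecutive bounces $(\ss_i,\ss_{i+1})$ of an orbit in $\mathcal{M}_{\frac pq}$ satisfies $\gentau(\ss_i,\ss_{i+1})\geq c'$, then $(\ss_i,\ss_{i+1})\in\Delta_{\mu'}$. Through the diffeomorphism $\chi$ and the continuous extension~\eqref{extension generating function} of $\partial_1\gentau,\partial_2\gentau$ to $\Delta$, the formula~\eqref{first gen fct no arclength} then shows that $|\sin\vp|\le 1-\delta$ on the compact set $\Delta_{\mu'}$. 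Indeed, $|\partial_1\gentau|/\|\dergna\|=1$ only on the diagonal set $\Delta$, by strict convexity: a chord is tangent to the boundary only when it degenerates. This bound gives $\vp\in[-\frac\pi2+\nu,\frac\pi2-\nu]$ for some $\nu>0$.

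The key step is the uniform chord bound. Every orbit of the Aubry-Mather set $\mathcal{M}_{\frac pq}$ is a limit of maximizing $(p,q)$ configurations, or is itself one (ordered, Birkhoff-type). So it suffices to bound from below the chord lengths of maximizing $(p,q)$-periodic configurations and to pass to closures. I will argue by contradiction with a compactness argument. Suppose there are tables $\Omega_n\to\Omega$ in $\mathcal{C}^k$ (or $\Omega_n=\Omega$) and maximizing configurations with a chord of length tending to $0$. Since configurations are cyclically ordered and of type $(p,q)$ with $0<\frac pq<1$, we can extract a limit configuration $(\ss_0,\dots,\ss_q)$ on $\partial\Omega$. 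It is still maximizing for $\Omega$, because the perimeter functionals converge uniformly, but it has two coinciding consecutive points. Such a degenerate configuration cannot be maximizing. The polygon has strictly fewer than $q$ distinct vertices yet winds $p$ times. Inserting the collapsed vertex elsewhere on an arc, or perturbing two coincident points apart, strictly increases the total length by the triangle inequality and strict convexity. Alternatively, one can use the classical fact, going back to Birkhoff, that the maximal $(p,q)$ perimeter is attained only at nondegenerate polygons whose consecutive vertices are separated. Thus the infimum of chord lengths over all maximizers for all tables in a small $\mathcal{C}^k$ ball is some $c'>0$.

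Finally, I will choose $\varepsilon_0$ so that the ball is small enough and the constants from Corollary~\ref{corollary away boundary} are uniform. These constants depend continuously on $\|\gna\|_{\mathcal{C}^k}$ and on the lower bounds on $\|\dergna\|$ and on the curvature, all stable under $\mathcal{C}^k$-small perturbations for $k\ge2$. This yields a single $\nu$ that works for $\Omega$ and every $\widetilde\Omega$. The main obstacle is the nondegeneracy step: one must show rigorously that a limit of maximizers with a collapsing chord is not maximizing. This has to be done carefully for general $p$, where collapsing could a priori be balanced by the winding constraint. I would handle it by an explicit local modification that moves one doubled vertex along the boundary, using that $\partial_2\gentau(\ss,\ss)=\pm\|\dergna\|\neq0$ at coincident points by~\eqref{extension generating function}. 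This makes the first variation nonzero, which contradicts criticality of the limit.
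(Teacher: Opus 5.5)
Your overall strategy is sound: compactness over tables $\Omega_n\to\Omega$, nondegeneracy of maximizing configurations, then converting a chord bound into an angle bound. However, one reduction is false as stated. The $(p,q)$-periodic maximizers form a \emph{closed} set, so ``passing to closures'' never reaches the heteroclinic parts $\mathcal{M}^{\pm}_{\frac pq}$ of the Aubry--Mather set. These parts are nonempty (Theorem~\ref{thm_Bangert}). When $\mathcal{M}^{\mathrm{Per}}_{\frac pq}$ is a single orbit, they are exactly the homoclinic orbits of Proposition~\ref{prop:homoclinicpq}, so they cannot be skipped.

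The repair is to run your contradiction argument on arbitrary elements of $\mathcal{M}_{\frac pq}(\Omega_n)$. Shift and translate so that the short chord is $(Z^{(n)}_0,Z^{(n)}_1)$ with $Z^{(n)}_0\in[0,1)$. The Birkhoff bound $|Z_i-Z_0-i\frac pq|\le 1$ holds on all of $\mathcal{M}_{\frac pq}$; it gives compactness in the product topology and survives the limit. You thus obtain a minimal configuration for $\Omega$, of rotation number $\frac pq$, with a degenerate chord. Your local modification then applies verbatim, since it only moves one interior vertex of a finite segment.

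Two remarks on that nondegeneracy step. First, $\gentau$ is not differentiable at a collapsed chord, so what you contradict is maximality (a suitable one-sided variation is strictly positive), not criticality. Second, the vertex must be taken at the end of a maximal run of degenerate chords; such an end exists because $0<\frac pq<1$. Your finite move, placing the vertex on the arc towards the next distinct vertex and using the strict triangle inequality, handles all cases at once. Simpler still: a limit of orbits of the billiard maps of $\Omega_n$ is an orbit of $f_\gna$ on the closed annulus. The boundary circles are invariant with rotation numbers $0$ and $1$, so an orbit of rotation number $\frac pq$ cannot touch them.

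Compared with the paper, your route is genuinely different. The paper takes Lazutkin's invariant curve $\mathfrak{C}(a_0)$, with $a_0<\frac pq$ Diophantine. By the twist condition this curve is a barrier separating $\mathcal{M}_{\frac pq}$ from the boundary, and KAM persistence of the curve under $\mathcal{C}^k$-small perturbations (for $k$ large) gives uniformity. Your argument is soft: it needs neither Lazutkin nor KAM, works under $\mathcal{C}^2$-closeness of tables, and uses only Aubry--Mather structure and strict convexity. The paper's barrier, on the other hand, confines every orbit whose rotation number exceeds $a_0$, minimal or not. Together with the analogous curve near $\varphi=\frac{\pi}{2}$, it confines every orbit with rotation number in $(a_0,1-a_0)$, in particular the stable and unstable manifolds of $\mathcal{P}$. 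Your argument controls only minimal configurations, which is all the lemma asks.
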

\begin{proof}
In \cite{Lazutkin}, it is proved that, for a billiard table $\Omega$ with sufficiently smooth boundary, there are a neighborhood $\mathcal{U}\subset \mathbb{A}$ of the lower boundary $\mathbb{T}\times \{-\frac{\pi}{2}\}$, a one-parameter family $(\mathfrak{C}(a))_{0\leq a\leq \alpha}$ of curves, $\alpha \in (0,1)$,  and a closed set $E\subset [0,\alpha]$ such that 
\begin{itemize}
\item $\mathfrak{C}(a)$ is an invariant curve if and only if $a\in E$;
\item for $a\in E$, $\mathfrak{C}(a)=\mathcal{M}_{a}(\Omega)$ is an invariant graph with rotation number $a \mod 1$;
\item $\cup_{a\in E} \mathfrak{C}(a)\subset \mathcal{U}$ has positive area;
\item $\exists\, (a_n)_n\in E^\mathbb{N}$ such that $\lim_{n \to +\infty} a_n=0$ and $\lim_{n \to +\infty} \mathfrak{C}(a_n)= \mathbb{T}\times \{-\frac{\pi}{2}\}$.   
\end{itemize}

Recall that we have fixed a rational rotation number $\frac p q$ and its corresponding Aubry-Mather set. Fix a Diophantine number $a_0 \in E\cap (0,\frac{p}{q})$. By the twist condition, the Aubry-Mather set $\mathcal{M}_{\frac p q}(\Omega)$ is separated from the lower boundary of $\mathbb{A}$ by the curve $\mathfrak{C}(a_0)$. Then, there exists $\nu>0$ (we can e.g. choose any $\nu>0$ smaller than a quarter of the minimum of the height of $\mathfrak{C}(a_0)$) such that $\mathcal{M}_{\frac p q}(\Omega)\subset \mathbb{A}_\nu$. 

By the Diophantine condition on $a_0$, for $k$ large enough,  the KAM theorem ensures that any sufficiently $\mathcal{C}^k$-small perturbation $\widetilde \Omega$ of the initial billiard $\Omega$ still admits an invariant graph $\Gamma$ of rotation number $a_0 \mod 1$. Since $a_0<\frac{p}{q}$, by the twist condition, $\Gamma$ lies below the Aubry-Mather set $\mathcal{M}_{\frac p q}(\widetilde \Omega)$ of the perturbed billiard. Moreover, by further restricting the size of the perturbation, we can ensure that $\nu$ is smaller than half the minimum height of $\Gamma$. Consequently, $\mathcal{M}_{\frac p q}(\widetilde \Omega)\subset \mathbb{A}_\nu$.
\end{proof}

The first perturbation is provided by the following result whose proof is delayed to Section~\ref{sec:hyp}.
\begin{proposition}\label{prop:hyperbolicfixedpoint:section3}
Let $\frac p q \in \mathbb{Q}\cap (0,1)$. For any $\epsilon>0$, there exists a trigonometric polynomial $\lambda$ (in particular, $\lna\in \mathcal{C}_r^{\omega}(\mathbb{T}, \mathbb{R})$) with $\| \lambda\|_{r}< \epsilon$, such that, for 
\[
\dgnalna(\ss)= \gna(\ss) + \lambda(\ss)  \nna(\ss)\, ,
\]
we have $\dgnalna\in \mathcal{B}_r$, and the billiard map $f_{\dgnalna}$ has a unique periodic orbit in the Aubry-Mather set $\mathcal{M}_{\frac{p}{q}}(\Omega(\dgnalna))$ of rotation number $\frac{p}{q}$, which, moreover, is hyperbolic. 
\end{proposition}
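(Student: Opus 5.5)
Let $\varrho_\nu$ and $\Gamma_\nu$ be as in Proposition~\ref{prop Frechet Gamma nu}. We fix $\nu>0$ and $\varepsilon_0$ given by Lemma~\ref{lazutkin}, so that for every $\lambda$ small in $\mathcal{C}^k$ the Aubry-Mather set $\mathcal{M}_{\frac pq}(\Omega(\dgnalna))$ lies in $\mathbb{A}_\nu$. On this region we can use Proposition~\ref{cor:expr:fepsilon} and Corollary~\ref{prop Frechet Gamma nu noal}. We work variationally. A $(p,q)$ Aubry-Mather periodic orbit corresponds to a minimizer (for the length functional with sign convention, a maximizer of perimeter) of
\[
L_\lambda(\ss_0,\dots,\ss_{q-1})=\sum_{j=0}^{q-1}\gentau_\lambda(\ss_j,\ss_{j+1}),\qquad \ss_q=\ss_0+p,
\]
where $\gentau_\lambda(\ss,\ss')=\|\dgnalna(\ss)-\dgnalna(\ss')\|$. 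The first-order variation is easy to compute. By Proposition~\ref{cor:expr:fepsilon}, or directly since the normal displacement changes the chord length by $\lambda(\ss)\cos\vp+\lambda(\ss')\cos\vp'$, we get
\[
L_\lambda=L_0+\sum_j 2\lambda(\ss_j)\cos\vp_j+O(\|\lambda\|_{\mathcal{C}^2}^2)
\]
along configurations in $\Delta_\mu$. The perturbation therefore acts at first order as a potential $V(\ss)$ added at each bounce, weighted by $\cos\vp_j$, which is bounded below on $\mathbb{A}_\nu$.

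\textbf{Step 1 (uniqueness).} Maximal configurations of $L_0$ form a closed set. Aubry-Mather theory orders them, and their projections $\{\ss_j\}$ are injective (cyclically ordered, since they are well-ordered). We pick one maximizing orbit $\mathcal{P}_0$, with points $\ss^0_0<\dots$. We choose a smooth $\lambda_1\ge0$ that is a small bump near each $\ss^0_j$, vanishing to second order exactly at the points of $\mathcal{P}_0$ and strictly positive away from them. Here the injectivity of the projection is used: different bounces of $\mathcal{P}_0$ see different values of $\lambda_1$. Then $-\lambda_1$ (moving the boundary inward) lowers $L$ by an amount comparable to $\sum_j \lambda_1(\ss_j)$ on every other configuration and leaves $L$ at $\mathcal{P}_0$ unchanged to second order. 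Hence $\mathcal{P}_0$ becomes the unique maximizer modulo translation of indices. The $O(\lambda^2)$ remainder must be controlled. To do this we scale: with $\lambda=t\lambda_1$, the gain is order $t$ away from $\mathcal{P}_0$, and a uniform second-order estimate in a neighbourhood handles the rest.

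\textbf{Step 2 (hyperbolicity).} The Hessian of $L$ at a maximizer is a periodic Jacobi matrix, negative semidefinite. The orbit is hyperbolic iff its kernel is trivial and, for minimax/Aubry-Mather orbits, the standard discrete Sturm theory (MacKay-Meiss) gives that nondegenerate maximizers are hyperbolic. Adding $-t\lambda_1$ with $\lambda_1''(\ss^0_j)>0$ subtracts $t\,\mathrm{diag}(2\lambda_1''(\ss_j)\cos\vp_j)$ plus terms from the $\ss$-dependence of $\vp$ that vanish since $\lambda_1'(\ss_j)=0$. This makes the Hessian negative definite, so $\mathcal{P}_0$ becomes nondegenerate and hence hyperbolic. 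Nondegeneracy also gives local uniqueness robustly.

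\textbf{Step 3 (analytic approximation).} The properties ``unique AM periodic orbit of type $(p,q)$, hyperbolic'' are open in $\mathcal{C}^k$. Hyperbolicity persists, and uniqueness persists by upper semicontinuity of the set of maximizers together with local uniqueness near a nondegenerate maximizer. We then approximate $t\lambda_1$ in $\mathcal{C}^k$ by a Fejér trigonometric polynomial $\lambda$. Choosing $t$ small and the degree large enough, $\|\lambda\|_r<\epsilon$ still holds, since a trigonometric polynomial of fixed degree has $\|\cdot\|_r$ controlled by a constant times its $\mathcal{C}^0$ norm, and $t$ can be taken arbitrarily small once the bump is fixed. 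Finally, $\dgnalna\in\mathcal{B}_r$ holds because strong convexity and embedding are open.

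The main obstacle is Step 1's quantitative comparison: showing that the order-$t$ penalty dominates the $O(t^2)$ error uniformly over all $(p,q)$ configurations, including those close to $\mathcal{P}_0$ where the penalty is only quadratic. I would handle the near region with the Hessian argument of Step 2 (strict concavity in a fixed neighbourhood for small $t$), and the far region with compactness and the linear gain.
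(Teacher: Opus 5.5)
Your overall strategy is the paper's. You fix a maximizer $\bar{\mathbf{s}}$ and push the boundary inward by $\lambda=-t\lambda_1$, with $\lambda_1\ge 0$ vanishing to second order exactly at the $\bar{\mathbf{s}}_k$. Then $\bar{\mathbf{s}}$ keeps its action while every other configuration loses action, and the Hessian at $\bar{\mathbf{s}}$ acquires the term $-2t\,\mathrm{diag}(\ddot\lambda_1(\bar{\mathbf{s}}_k)\cos\bar\varphi_k)$. Your Step 2 is fine: a negative semidefinite matrix plus an order-$t$ negative definite term is negative definite, which is more direct than the paper's determinant expansion.

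\textbf{The gap is in Step 1, near $\bar{\mathbf{s}}$.} You propose ``strict concavity in a fixed neighbourhood for small $t$''. This fails in exactly the case you must treat, namely when $D^2L_0(\bar{\mathbf{s}})$ has a kernel vector $v$. At nearby points, $D^2L_0(\bar{\mathbf{s}}+w)[v,v]=D^3L_0(\bar{\mathbf{s}})[w,v,v]+O(|w|^2)$, which is linear in $w$ and can be positive. This is compatible with $\bar{\mathbf{s}}$ being a maximizer: take $L_0=-y^2+2x^2y-2x^4$, whose reduced function is $-x^4$. So $D^2L_t<0$ is only guaranteed on a ball of radius $O(t)$. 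With a remainder known only to be $O(t^2)$, the linear gain $\gtrsim t|\mathbf{s}-\bar{\mathbf{s}}|^2$ wins only when $|\mathbf{s}-\bar{\mathbf{s}}|\gg\sqrt t$. In the shell $t\lesssim|\mathbf{s}-\bar{\mathbf{s}}|\lesssim\sqrt t$, nothing in your argument excludes a second maximizer of $L_t$.

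\textbf{The missing observation.} The error is controlled by the values of $\lambda$ at the bounce points, not by $\|\lambda\|^2$. Lemma~\ref{lemma pertr lenght func} gives the exact identity
\[
\gentau_\lambda^2(\mathbf{s},\mathbf{s}')=\gentau^2(\mathbf{s},\mathbf{s}')+2\gentau(\mathbf{s},\mathbf{s}')\big(\lambda(\mathbf{s})\cos\varphi+\lambda(\mathbf{s}')\cos\varphi'\big)+\|\lambda(\mathbf{s})\mathbf{n}(\mathbf{s})-\lambda(\mathbf{s}')\mathbf{n}(\mathbf{s}')\|^2 .
\]
Take $\lambda=-t\lambda_1\le 0$ and work on the region where maximizers live (Lemma~\ref{lazutkin}), where $\gentau\ge c$ and $\cos\ge\sin\nu$.
\begin{itemize}
\item The last term is at most $2t^2\|\lambda_1\|_\infty(\lambda_1(\mathbf{s})+\lambda_1(\mathbf{s}'))$.
\item The middle term is at most $-2tc\sin\nu\,(\lambda_1(\mathbf{s})+\lambda_1(\mathbf{s}'))$.
\end{itemize}
Hence, for $t<c\sin\nu/\|\lambda_1\|_\infty$, every chord with an endpoint off the zero set of $\lambda_1$ strictly shrinks, uniformly in the configuration. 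It follows that $L_\lambda(\mathbf{s})<L_0(\mathbf{s})\le L_0(\bar{\mathbf{s}})=L_\lambda(\bar{\mathbf{s}})$ for every relevant $\mathbf{s}$ that is not a shift of $\bar{\mathbf{s}}$. No near/far splitting is needed, and this is exactly the paper's argument. Alternatively, $\bar{\mathbf{s}}$ remains a critical point with unchanged value for every $t$, so the remainder vanishes to second order there and is $O(t^2|\mathbf{s}-\bar{\mathbf{s}}|^2)$.

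\textbf{A smaller issue in Step 3.} Assume the property fails at $t=0$, as in the case of interest. Then the $\mathcal{C}^k$-radius around $-t\lambda_1$ on which it persists is at most $t\|\lambda_1\|_{\mathcal{C}^k}\to0$. Unless you prove this radius is at least $\kappa t$, the Fej\'er degree $N$ must grow as $t\to 0$. For a $\lambda_1$ not already analytic on $\mathbb{T}_r$, the norms $\|F_N\lambda_1\|_r$ are unbounded in $N$, so $\|\lambda\|_r<\epsilon$ is not justified. The cure is to take $\lambda_1$ itself a trigonometric polynomial with the required zeros, e.g.\ $\lambda_1(\mathbf{s})=\prod_{k=0}^{q-1}\sin^2(\pi(\mathbf{s}-\bar{\mathbf{s}}_k))$ as in the paper. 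This makes Step 3 and the openness argument unnecessary.
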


From Aubry--Mather theory, discussed in Section~\ref{Aubry Mather}, we also prove the following statement.

\begin{proposition}\label{prop:homoclinicpq}
    Let \(\frac p q\in \mathbb{Q}\cap (0,1)\). Assume that the billiard map \(f_{\gamma_\lambda}\) has a unique periodic orbit \(\mathcal{P}\) in the Aubry-Mather set \(\mathcal{M}_{\frac p q}(\Omega(\gamma_\lambda))\) of rotation number \(\frac p q\), which is hyperbolic. Then, there exists a homoclinic orbit \(\mathcal{Q}\) to \(\mathcal{P}\), belonging to the Aubry-Mather set \(\mathcal{M}_{\frac p q}(\Omega(\gamma_\lambda))\). In particular, the projection over the first coordinate of the orbit \(\mathcal{Q}\) is injective.
\end{proposition}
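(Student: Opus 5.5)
We will use the standard Aubry--Mather picture for monotone twist maps (Bangert's and Mather's theory), applied to the lift of $f_{\gamma_\lambda}$ with generating function $\mathfrak{T}$ (in arclength coordinates, $\tau$). In this theory, $\mathcal{M}_{\frac pq}$ consists of three kinds of configurations: the \emph{minimal} (here length-maximizing) periodic configurations of type $(p,q)$, and minimal configurations that are asymptotic, forward and backward, to two \emph{neighbouring} minimal periodic configurations. By hypothesis there is only one periodic orbit $\mathcal{P}$ in $\mathcal{M}_{\frac pq}$, so its lift gives a totally ordered family of periodic configurations $\{x^{(j)}\}_{j\in\mathbb{Z}}$. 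These are the translates $x_n\mapsto x_{n+a}+b$, ordered so that consecutive ones have no other minimal periodic configuration strictly between them. Each consecutive pair $x^{(j)}<x^{(j+1)}$ thus forms a ``gap'' of the periodic Aubry set.

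We then invoke Bangert's existence theorem for heteroclinic minimizers in a gap (equivalently, Mather's result in the twist-map language). It produces a minimal configuration $y$ with $x^{(j)}<y<x^{(j+1)}$ such that $y_n-x^{(j)}_n\to 0$ as $n\to-\infty$ and $y_n-x^{(j+1)}_n\to 0$ as $n\to+\infty$. Since $y$ is minimal with rotation number $\frac pq$, its orbit $\mathcal{Q}$ lies in $\mathcal{M}_{\frac pq}(\Omega(\gamma_\lambda))$. Lemma~\ref{lazutkin} guarantees that everything stays in $\mathbb{A}_\nu$, away from the boundary where the generating function degenerates.

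Next we translate configurations into phase space through the diffeomorphism $\chi$ of Section~\ref{billiard perturbationBoundary}. The point corresponding to $(y_n,y_{n+1})$ converges, as $n\to+\infty$, to the orbit of the point attached to $x^{(j+1)}$, and as $n\to-\infty$ to that of $x^{(j)}$. Both belong to the same orbit $\mathcal{P}$, so the phase-space orbit $\mathcal{Q}$ converges to $\mathcal{P}$ in both time directions. Because $\mathcal{P}$ is hyperbolic, convergence to it forces membership in the stable and unstable manifolds (local stable manifold theorem: a forward orbit staying in a small neighbourhood of a hyperbolic periodic orbit lies on $W^\sta$). Hence $\mathcal{Q}\subset W^\sta(\mathcal{P})\cap W^\uns(\mathcal{P})$, and $\mathcal{Q}\neq\mathcal{P}$ because $y$ lies strictly between $x^{(j)}$ and $x^{(j+1)}$. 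So $\mathcal{Q}$ is homoclinic.

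Injectivity of the projection follows from the Aubry Fundamental Lemma: distinct translates of a minimal configuration do not cross. Suppose $y_n\equiv y_m\pmod 1$ for some $n\ne m$, i.e.\ $y_m=y_n+b$. The translate $\tilde y_k:=y_{k+m-n}-b$ would then touch $y$ at index $n$. By non-crossing and minimality (two minimal configurations agreeing at one site and not crossing must coincide, via the twist condition $\partial_{12}\tau<0$), this forces $\tilde y=y$, making $y$ periodic. That contradicts its strict asymptotics to two distinct periodic configurations.

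The main obstacle is making sure the abstract theory applies to the billiard generating function, which only satisfies the twist and coercivity conditions away from the diagonal $\Delta$. We handle this through Corollary~\ref{corollary away boundary} and the confinement to $\mathbb{A}_\nu$, or equivalently by the standard modification of $\tau$ near $\Delta$, which does not affect minimizers of rotation number $\frac pq\in(0,1)$.
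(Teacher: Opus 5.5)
Your proposal is correct and follows essentially the paper's route. Both arguments use Bangert's theorem to obtain a minimal heteroclinic configuration between neighbouring periodic minimal configurations; by uniqueness of $\mathcal{P}$ these are translates of one another, so the heteroclinic orbit is homoclinic. Both then obtain injectivity of the projection from a non-crossing plus twist argument, which the paper packages as the total ordering of $\mathcal{M}^{\mathrm{Per}}_{p/q}\cup\mathcal{M}^{+}_{p/q}$ together with the graph property of Lemma~\ref{lem:injectivity}, while you apply Aubry's non-self-intersection directly to the translates of $y$. You also spell out steps the paper leaves implicit, namely why the orbit is homoclinic rather than merely heteroclinic and why it lies in $W^\sta\cap W^\uns$.
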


We refer to Proposition~\ref{prop:injectivity} and the discussion over there for the proof. 
We stress that, as a consequence of Proposition~\ref{prop:homoclinicpq}, the existence of homoclinic points for the perturbed billiard map is guaranteed, but \textit{a priori} not their transversality. 

From now on, we rename the billiard table $\dgnalna$ provided by Propositions~\ref{prop:hyperbolicfixedpoint:section3}  and \ref{prop:homoclinicpq} as $\gna$ and therefore we assume that the map $f_\gna$ associated to our billiard table $\gna$  has a unique periodic orbit in the Aubry-Mather set of rotation number $\frac{p}{q}$, which is moreover hyperbolic, and which has, at least, one homoclinic point. Next step is to study all its homoclinic intersections and their transversality.

\subsection{Existence of one fibered homoclinic points}\label{sec:existence one fibered:sec3}
We will recover analytic deformations of the billiard table from compactly supported ones. 
A key difficulty in constructing differentiable perturbations on billiard tables lies in the fact that any modification in the billiard table $\gamma$ induces a fibered perturbation of the phase space $\mathbb{A}$ of the billiard map $f_\gna$ (see Figure~\ref{fig:billards:anell}). 
In other words, even if the function $ \lambda$ (see \eqref{def gamma lambda bis}) modifies only a small portion of $\partial \Omega$, the resulting effect propagates across a large region of the annulus $\mathbb{A}$. 

\begin{figure}[h]
\subfloat 
{
\begin{overpic}[width=0.25\textwidth]{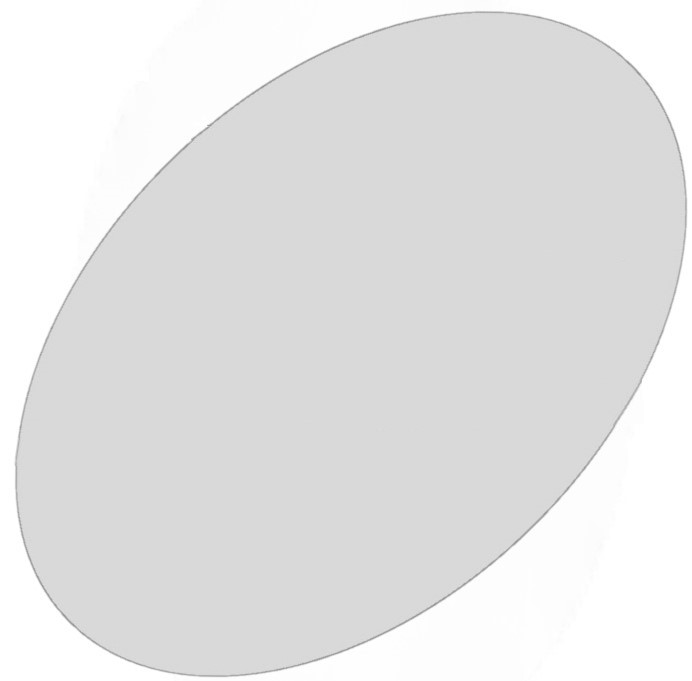}
	\put(60,75){$\Omega(\gna)$ }
    \end{overpic}
}
\hspace{0.3cm}
\subfloat{
\begin{overpic}[width=0.25\textwidth]{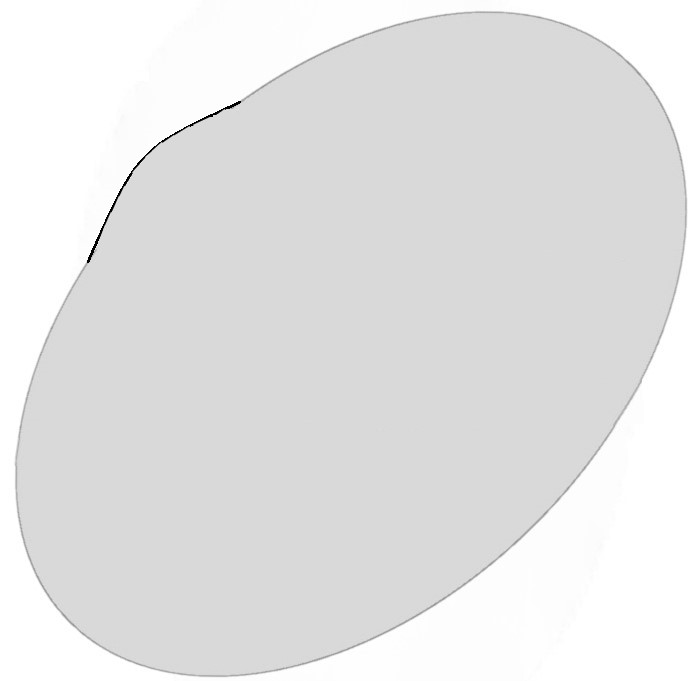}
	\put(60,75){$\Omega (\tilde{\gna})$ }
    \end{overpic}
}
\hspace{0.3cm}
\subfloat
{
\begin{overpic}[width=0.20\textwidth]{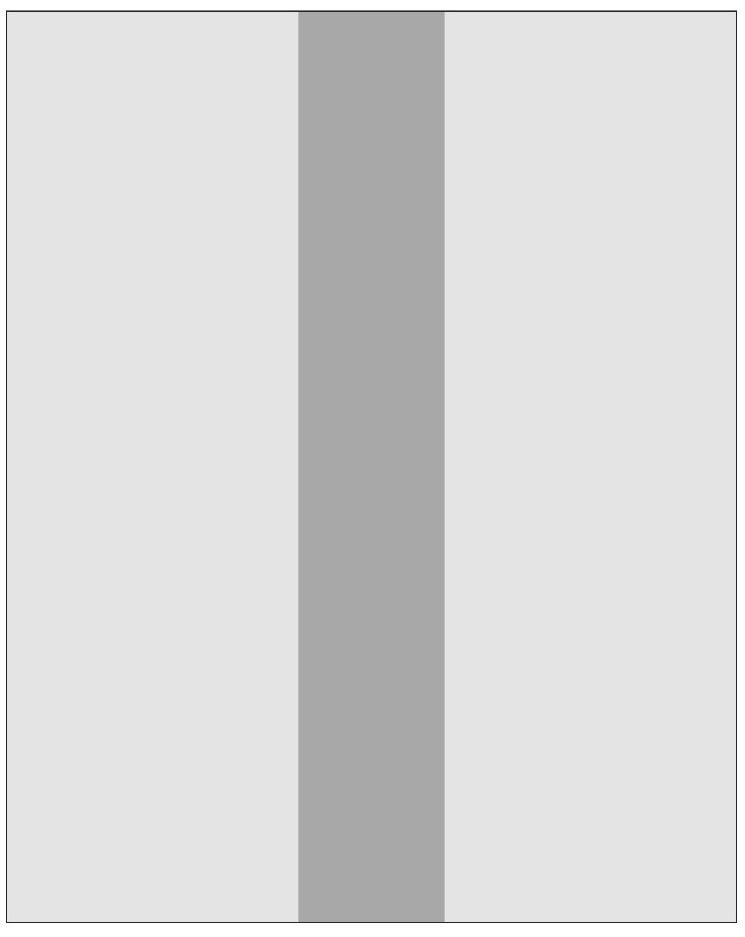}
	\put(60,75){$\mathbb{A}$ }
    \end{overpic}
}
\caption{On the left, the initial billiard table. In the middle, a compactly supported perturbation of the billiard table, this would correspond to $\dgnalna = \gna + \lna\cdot \nna$ with $\lna$ a compact supported function. All the incidence angles are affected by the perturbation. On the right, the region in the phase space $\mathbb{A}$ affected by the perturbation.}  
\label{fig:billards:anell}
\end{figure}

Our strategy to overcome this difficulty relies in the study, for a given homoclinic orbit $\{f_{\gna}^{k}(Q) \}_{k\in \mathbb{Z}}$, of how many iterates of it belong to a given fiber $\{\ss=\ss_*\}$. 

Even if the following discussion can be done for a general twist map, we are going to focus on the billiard map $f_{\gna}$. 

Fix $\frac{p}{q} \in \mathbb{Q} \cap (0,1)$ and $\gna \in \mathcal{C}^k (\mathbb{A},\mathbb{R}^2)$, with $k$ big enough. We recall that the invariant manifolds of a periodic orbit $\mathcal{P}=\{P_j\}_{j=0,\cdots ,q-1}$ are the union of the branches (see~\eqref{defWsWu}):
\[
W^{\diamond}(\mathcal{P})= \bigcup_{j=0}^{q-1} W^{\diamond}(P_j)\, ,\quad \text{for }\diamond=\uns,\sta\,.
\]
Since the initial billiard table $\Omega = \Omega(\gna)$ is such that there is only one periodic orbit $\mathcal{P}$ in $\mathcal{M}_{\frac{p}{q}} (\Omega)$, the Aubry-Mather set of rotation number $\frac p q$, 
by Proposition~\ref{prop:homoclinicpq},
the billiard map has a homoclinic point \(Q\) belonging to $\mathcal{M}_{\frac{p}{q}}(\Omega)$, that is   
$W^{\uns}(\mathcal{P})\cap W^{\sta}(\mathcal{P}) \cap \mathcal{M}_{\frac{p}{q}}(\Omega)\setminus\{\mathcal{P}\}\neq \emptyset$. 
In addition, again by Proposition~\ref{prop:homoclinicpq}, if $Q \in W^{\uns}(\mathcal{P})\cap W^{\sta}(\mathcal{P}) \cap \mathcal{M}_{\frac{p}{q}}(\Omega)\setminus\{\mathcal{P}\}$, the projection of its orbit onto the first component is injective. 
In other words, letting $(\ss_k,\varphi_k)=f_{\gna}^k(Q)$, we have that $\ss_i \neq \ss_j$ if $i\neq j$. 
We call such homoclinic points \emph{one-fibered}. 
More concretely:
\begin{definition}\label{definition one fibered}
A point $Q=(\ss_0,\varphi_0)\in \mathbb{A}$ is~\emph{one-fibered} if  there exists a closed neighborhood $U_Q\subset \mathbb{T}$ of $\ss_0$ such that 
\[
\forall\, j \in \mathbb{Z}\setminus \{0\},\quad \ss_j\notin U_Q,\quad \text{where }(\ss_j,\varphi_j)=f_{\gna}^i(Q)\, .
\]   
The neighborhood $U_Q$ is called the \emph{one-fibered neighborhood} of $Q$.
\end{definition}

As we already claimed, if a homoclinic point belongs to $\mathcal{M}_{\frac{p}{q}}(\Omega)$ all the points in its orbit are one-fibered, meanwhile
if it does not belong to $\mathcal{M}_{\frac{p}{q}}(\Omega)$, 
it might be not one-fibered. However, some points of the homoclinic orbit might be one-fibered. 

While any homoclinic point belonging to $\mathcal{M}_{\frac{p}{q}}(\Omega)$ has an orbit consisting entirely of one-fibered points, this property may not hold for points outside $\mathcal{M}_{\frac{p}{q}}(\Omega)$. In the latter case, the homoclinic orbit is not necessarily one-fibered, although it may still contain some one-fibered points. This leads us to the following definition.

\begin{definition}\label{defi one fibered orbit}
Let $Q$ be a homoclinic point. 
The orbit of $Q$ is \emph{one-fibered} if there exists $\ell\in\mathbb{Z}$ such that the point $f^\ell_{\gna}(Q)$ is one-fibered. 
The orbit of $Q$ is  \emph{not one-fibered} if, for every $n \in\mathbb{Z}$, the point $f^n_{\gna}(Q)$ is not one-fibered.
\end{definition}

Let us introduce also the following definition. 

\begin{definition} \label{definition local invariant manifold:section3} The local stable (resp. unstable) manifold of a point $P$ of a periodic orbit $\mathcal{P}$, denoted by $W^{\sta}_{\mathrm{loc}}(P)$ (resp. $W^{\uns}_{\mathrm{loc}}(P)$), refer to 
\[
W^{\sta}_{\mathrm{loc}}(P):=W^{\sta}_\rho(P)= \{ x \in W^{\sta}(P)\,:\, d_W(P,x)<\rho\},   
\]
(resp. $W^{\uns}_\rho(P)$), where $d_W$ is the distance on the invariant manifold induced by the Riemannian metric, with $\rho$ small enough such that $W^{\sta}_\rho(P)$ (resp. $W^{\uns}_\rho(P)$) is a graph with respect to either $\ss$ or $\varphi$.    
\end{definition}
We stress that when $\mathcal{P}$ belongs to the Aubry-Mather set $\mathcal{M}_{\frac{p}{q}}(\Omega)$, the local manifolds $W^{\diamond}_{\mathrm{loc}}(P)$, $\diamond=\uns,\sta$, are graphs with respect to $\ss$. This comes from the fact that, since points in the Aubry-Mather set do not have conjugate points, the stable and unstable directions are transverse to the vertical.

Finally we define the notion of \emph{localised one-fibered} homoclinic orbit as follows. 

\begin{definition}\label{definition localised fibered:section3}
Let $Q$ be a homoclinic point in $W^{\sta}(P)\cap W^{\uns}(P')\setminus\{P,P'\}$. The orbit of $Q$ is \emph{localised one-fibered} if there exists $\ell\in\mathbb{Z}$ such that $f_{\gna}^\ell(Q)$ is one-fibered and $f_{\gna}^\ell(Q)\in W^{\sta}_{\mathrm{loc}}(P)\cup W^{\uns}_{\mathrm{loc}}(P')\setminus\{P,P'\}$. 

The orbit of $Q$ is said \emph{not localised one-fibered} if, for every $\ell\in\mathbb{Z}$ such that $f_{\gna}^\ell(Q)\in W^{\sta}_{\mathrm{loc}}(P)\cup W^{\uns}_{\mathrm{loc}}(P')\setminus\{P,P'\}$, the point $f_{\gna}^\ell(Q)$ is not one-fibered.
\end{definition}
Observe that if the orbit of a homoclinic point is localised one-fibered, then the orbit is also one-fibered.

Our first result counts the maximum number of points belonging to the same orbit on the same fiber. We refer to Section~\ref{sec:existence one fibered} for the proof.

\begin{lemma}\label{lemma at most two fibered points:section3}
Let $Q=\{\ss_Q,\vp_Q)$ be a homoclinic point. For all integers $\ell \in \mathbb{Z}$, except at most finitely many of them, there are at most two points $Q^{\sta}, Q^{\uns}$ in the orbit of $Q$ on the same vertical fiber $\{\ss=\ss_Q^\ell\}$, where $f_{\gna}^\ell(Q)=(\ss_Q^\ell,\varphi_Q^\ell)$. Moreover, $Q^{\sta}$ belongs to the local stable manifold of the periodic orbit, while $Q^{\uns}$ belongs to the local unstable manifold of the periodic orbit.
\end{lemma}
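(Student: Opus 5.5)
We work with the orbit $Q_k=(\ss_k,\vp_k):=f_\gna^k(Q)$, $k\in\mathbb{Z}$, and use that it converges to $\mathcal{P}=\{P_j\}$ in both time directions. Write $P_j=(\ss^{P}_j,\vp^P_j)$. Since $\mathcal{P}$ lies in the Aubry--Mather set $\mathcal{M}_{\frac pq}(\Omega)$, its projection on the first coordinate is injective, so the $q$ values $\ss^P_0,\dots,\ss^P_{q-1}$ are pairwise distinct. We also use that $W^\diamond_{\mathrm{loc}}(P_j)$, $\diamond=\uns,\sta$, is a graph over $\ss$, as noted after Definition~\ref{definition local invariant manifold:section3}.

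\textbf{Step 1: localisation.} Fix $\delta>0$ smaller than half the minimal distance between distinct $\ss^P_j$, and small enough that the $\delta$-neighbourhoods (in $\ss$) of $P_j$ intersected with $W^\sta_{\mathrm{loc}}(P_j)$, respectively $W^\uns_{\mathrm{loc}}(P_j)$, are graphs over $\ss$. Since $Q$ is homoclinic, there exists $K\in\mathbb{N}$ such that:
\begin{itemize}
\item for $k\geq K$, $Q_k\in W^\sta_{\mathrm{loc}}(P_{j(k)})$ lies $\delta$-close to $P_{j(k)}$;
\item for $k\leq -K$, $Q_k\in W^\uns_{\mathrm{loc}}(P_{j'(k)})$ lies $\delta$-close to $P_{j'(k)}$.
\end{itemize}
Only the finitely many indices $\ell$ with $|\ell|<K$, together with finitely many exceptional indices identified below, are excluded from the statement.

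\textbf{Step 2: counting within a tail.} Fix $\ell$ with $|\ell|\geq K$ and consider the fiber $\{\ss=\ss_\ell\}$. Any point $Q_m$ on this fiber with $|m|\geq K$ is $\delta$-close to a periodic point whose first coordinate is within $\delta$ of $\ss_\ell$, hence to the same $P_j$ (by the choice of $\delta$).

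Suppose two such points come from the same tail, say $m,m'\geq K$ with $m\neq m'$. Then both lie on the graph $W^\sta_{\mathrm{loc}}(P_j)$ over the same $\ss$, so $Q_m=Q_{m'}$. Hence $Q$ would be periodic, which is impossible for a homoclinic point not in $\mathcal{P}$. The same argument applies to the unstable tail.

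Consequently, among the tail indices, the fiber contains at most one stable-tail point $Q^\sta$ and at most one unstable-tail point $Q^\uns$. The point $Q_\ell$ itself is one of them, and each lies in the stated local manifold.

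\textbf{Step 3: excluding the middle indices (main obstacle).} It remains to show that, for all but finitely many $\ell$, no middle point $Q_m$ with $|m|<K$ shares the fiber of $Q_\ell$. The finite set $\{\ss_m:|m|<K\}$ is the key.

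For each $m$ with $|m|<K$, consider the indices $\ell$ with $|\ell|\geq K$ and $\ss_\ell=\ss_m$. These are again at most two, by Step 2: one from each tail. So only finitely many $\ell$ are exceptional, at most $2(2K-1)$. Excluding them, together with $|\ell|<K$, gives the claim.

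The delicate point is making sure that the graph property of $W^\diamond_{\mathrm{loc}}(P_j)$ over $\ss$ holds uniformly on the chosen $\delta$-neighbourhood. This needs the non-verticality of the stable and unstable directions at Aubry--Mather periodic points (no conjugate points). I would invoke this as recalled in Section~\ref{Aubry Mather}, possibly shrinking $\rho$ in Definition~\ref{definition local invariant manifold:section3}.
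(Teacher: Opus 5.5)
Your proposal is correct and follows essentially the same route as the paper. Both arguments use two facts. First, the points of $\mathcal{P}$ lie on distinct fibers and the local stable and unstable manifolds are graphs over $\ss$, which makes the projection injective on each tail. Second, one discards the finitely many tail indices whose fiber meets one of the finitely many middle points; the paper phrases this as enlarging $\ell^{\sta},\ell^{\uns}$ to $\hat\ell^{\sta},\hat\ell^{\uns}$, and your explicit observation that two same-tail points on one fiber would force $Q$ to be periodic spells out a step the paper leaves implicit.
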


Next proposition shows that, except in the case that the periodic orbit is of period 2, that is $\frac p q =\frac 1 2$, all homoclinic orbits are localised one-fibered (not only the ones in the Aubry-Mather set).

\begin{proposition}\label{nec cond for not one fib:section3} 
Let $Q$ be a homoclinic point in $W^{\sta}(P)\cap W^{\uns}(P')$, $P,P' \in \mathcal{P}$. If the orbit of $Q$ is not localised one-fibered, then $\mathcal{P}$ is a periodic point of period 2, that is $\mathcal{P}\in \mathcal{M}_{\frac{1}{2}} (\Omega)$.
\end{proposition}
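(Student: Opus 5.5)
We argue by contraposition: assume the orbit of $Q$ is not localised one-fibered, and show that the periodic orbit $\mathcal{P}$ must have period $2$. The basic tool is Lemma~\ref{lemma at most two fibered points:section3}. For all but finitely many iterates $f_\gna^\ell(Q)$, at most two points of the orbit of $Q$ lie on the fiber $\{\ss=\ss_Q^\ell\}$. Moreover, one of them lies in the local stable manifold and the other in the local unstable manifold of $\mathcal{P}$.

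Fix $\ell$ large enough that $f_\gna^\ell(Q)\in W^\sta_{\mathrm{loc}}(P)\setminus\{P\}$, and large enough to be outside the finite exceptional set of the lemma. By hypothesis this point is not one-fibered. Taking limits along iterates $j$ with $\ss_j\to\ss_Q^\ell$ and using the structure given by the lemma, we get a point $f_\gna^{m}(Q)$ with $m\neq \ell$ lying exactly on the same fiber. Necessarily $f_\gna^m(Q)\in W^\uns_{\mathrm{loc}}(P'')$ for some $P''\in\mathcal{P}$. Accumulation of iterates that are not exactly on the fiber can only come from orbit points converging to $\mathcal{P}$; we can rule that case out by choosing $\ell$ so that $\ss_Q^\ell$ avoids the finitely many fibers $\{\ss=\ss(P_j)\}$.

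Applying this for every large $\ell$ gives a pairing $\ell\mapsto m(\ell)$ with $m(\ell)\to-\infty$. Both points share a fiber, so $\ss(f^\ell_\gna Q)=\ss(f^{m(\ell)}_\gna Q)$. Letting $\ell\to\infty$, the point $f^\ell_\gna(Q)$ converges to points of the orbit $\mathcal{P}$ along $W^\sta_{\mathrm{loc}}$, and $f^{m(\ell)}_\gna(Q)$ converges to points of $\mathcal{P}$ along $W^\uns_{\mathrm{loc}}$. Hence the fibers of $P$ and $P''$ coincide. Since $\mathcal{P}$ is a Birkhoff orbit with injective projection, this forces $P''=P$. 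So $W^\sta_{\mathrm{loc}}(P)$ and $W^\uns_{\mathrm{loc}}(P)$, which are graphs over $\ss$, carry orbit points on the same fibers arbitrarily close to $P$.

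Next compare the combinatorics. Applying $f_\gna^q$ maps the pair $(f^\ell Q, f^{m}Q)$ to $(f^{\ell+q}Q, f^{m+q}Q)$. By the uniqueness part of the lemma, this gives $m(\ell+q)=m(\ell)+q$, so $\ell-m(\ell)$ is eventually constant on residue classes modulo $q$. Linearising at $P$, the stable coordinates of $f^\ell(Q)$ decay like $\mu_s^{\ell/q}$ and the unstable coordinates of $f^{m}(Q)$ decay like $\mu_u^{-|m|/q}$ (in the chart where the local manifolds are graphs over $\ss$). Matching the $\ss$-coordinates for all large $\ell$ is then consistent with the dynamics only if the fiber-matching is equivariant with $f_\gna^q$. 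The key geometric input is that lying on the same fiber is preserved by the billiard dynamics only through the time-reversal involution $I(\ss,\varphi)=(\ss,-\varphi)$, since $f_\gna^{-1}=I\circ f_\gna\circ I$ up to a shift. We will show that, to first order, the matched points are reflections of each other: reversibility maps $W^\sta(P)$ to $W^\uns(I(P))$, up to the index shift. Fiber coincidence on a set of $\ell$ accumulating at $P$ then forces $I(f_\gna^{j}P)=P$ for some $j$. Such a point lies on a fiber with $f_\gna^j(P)$, which for a Birkhoff orbit with injective projection forces $j=0$, or else the orbit is a two-bounce orbit hitting the boundary perpendicularly. In that case $q=2$, and therefore $\mathcal{P}\in\mathcal{M}_{\frac12}(\Omega)$.

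\textbf{Main obstacle.} The expected hard step is this last rigidity argument: turning "infinitely many coincident fibers near $P$ between stable and unstable iterates" into the symmetry statement that $\mathcal{P}$ is a $2$-periodic orbit. I would first try to compare the slopes of $W^\sta_{\mathrm{loc}}(P)$ and $W^\uns_{\mathrm{loc}}(P)$ as graphs over $\ss$, which are distinct by hyperbolicity, against the order of the points $\ss_{j}$. Twist-order (Aubry-Mather) monotonicity of the orbit of $\mathcal{P}$ would then give a contradiction whenever $q\ge3$, because on the fiber $\ss(P)$ only $P$ itself can be approached from both sides compatibly.
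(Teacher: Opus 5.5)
\textbf{There is a genuine gap, and one step is wrong.}

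\emph{The relation $m(\ell+q)=m(\ell)+q$ does not hold.} It would require $f_\gna^{q}$ to send two distinct points on a common fiber to two points on a common fiber. Nothing guarantees this: by the twist condition, a single iterate of $f_\gna$ already sends them to different fibers. The relation also contradicts your own claim that $m(\ell)\to-\infty$ as $\ell\to+\infty$.

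What actually holds, and what the paper uses, is the opposite monotone structure. Near $P=P_0$, the stable points $f_\gna^{k_1+nq}(Q)$ and the unstable points $f_\gna^{-k_2-nq}(Q)$ approach $P$ monotonically along the two local graphs over $\ss$. Each of them has its fiber partner in the other family. So the matching is order preserving, and for a suitable choice of $k_1,k_2$ it pairs $k_1+nq$ with $-k_2-nq$. In other words, $\ell+m(\ell)$, not $\ell-m(\ell)$, is eventually constant on each residue class.

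\emph{The decisive step, from these fiber coincidences to $q=2$, is never proved.} Your reflection mechanism has no support in the hypothesis. For $q\ge 3$, no point of $\mathcal{P}$ lies on the zero section: if $\mathcal{I}(P_j)=P_j$, then $P_{j-1}$ and $P_{j+1}$ would share a fiber, which forces $q\le 2$. Two fiber partners near $P_j$ both lie close to $P_j$, not to $\mathcal{I}(P_j)$, so they are not even approximately reflections. Showing that they are would amount to assuming the conclusion. The symmetry $\mathcal{I}(f_\gna^{k_1+2n}(Q))=f_\gna^{-k_2-2n}(Q)$ is derived in the paper only afterwards, in Proposition~\ref{good property in the bas case:section3}, once $q=2$ is known and $\mathcal{P}$ lies on the zero section.

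The missing idea is a twist-order count carried through one full period.
\begin{enumerate}
\item At each $P_j$, the paired stable and unstable points are vertically ordered on their common fiber. The order is the same at every $P_j$, because the Aubry--Mather orbit has no conjugate points.
\item Apply $f_\gna$ once. The twist condition tells you on which side of the image of the unstable partner the image of the stable point lands.
\item Hence the new partner at $P_{j+1}$ is shifted in the unstable family by $m_{j+1}q$, with $|m_{j+1}|\ge1$ and all $m_j$ of the same sign.
\item After $q$ steps you return to $P_0$. Comparing with the direct pairing $f_\gna^{k_1+q}(Q)\leftrightarrow f_\gna^{-k_2-q}(Q)$ gives $\sum_{j=1}^q m_j=2$.
\item This is impossible if all $m_j\le-1$, and forces $q\le 2$ if all $m_j\ge1$.
\end{enumerate}

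Your closing observation, that an $\mathcal{I}$-invariant point of $\mathcal{P}$ forces $q=2$, is correct, but your argument never reaches it.
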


Again, we refer to Section~\ref{sec:existence one fibered} for the proof of this Proposition. We want now to say something in the very special case when we have a homoclinic point $Q$ whose orbit is not fibered. Thanks to the Proposition~\ref{nec cond for not one fib:section3}, we know that $Q$ is a homoclinic point of a $2$-periodic orbit. 

Let $\mathcal{P}=\{P,f_{\gna}(P)\}$ be the unique $2-$periodic orbit belonging to $\mathcal{M}_{\frac{1}{2}}(\Omega)$. 
In particular, it is contained in the zero section, i.e., $P=(\ss_0,0)$ and $f_\gna(P)=(\ss_1,0)$. 
Consider the involution: 
 \begin{equation}\label{eq:involutionI}
 \begin{split}
 \mathcal{I}\colon\mathbb{A} & \to \mathbb{A},\\
 (\ss,\vp)& \mapsto (\ss,-\vp)
 \end{split}     
 \end{equation}
Thanks to the past/future symmetry of the billiard, we have
\begin{equation}\label{involution:section3}
\mathcal{I}\circ f_{\gna}=f^{-1}_{\gna}\circ \mathcal{I}\, .
\end{equation}
In particular, we deduce that the stable and unstable manifolds of the periodic orbit $\mathcal{P}$ are symmetric with respect to the zero section. That is, the image through $\mathcal{I}$ of the stable manifold is the unstable manifold.

We can actually say more, as discussed in the next statement, whose proof is done in Section~\ref{sec:existence one fibered}.

\begin{proposition}\label{good property in the bas case:section3} 
    Let $Q$ be a homoclinic point to $\mathcal{P} \in \mathcal{M}_{\frac{1}{2}}(\Omega)$. 
    Assume that the orbit of $Q$ is not localised one-fibered and assume that it is not transverse. 
    Then there exists $\ell\in\mathbb{Z}$ such that $f^{\ell}_{\gna}(Q)\in \mathbb{T}\times\{0\}$. 
\end{proposition}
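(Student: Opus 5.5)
Throughout, $P=(\ss_0,0)$ and $f_\gna(P)=(\ss_1,0)$ denote the two points of $\mathcal{P}$, and $W^{\sta}_{\mathrm{loc}}$, $W^{\uns}_{\mathrm{loc}}$ are local manifolds at $P$ or $f_\gna(P)$. The plan is to use Lemma~\ref{lemma at most two fibered points:section3} together with the reversibility~\eqref{involution:section3}. Since the orbit of $Q$ is not localised one-fibered, every iterate $f^\ell_\gna(Q)$ lying in $W^{\sta}_{\mathrm{loc}}\cup W^{\uns}_{\mathrm{loc}}$ shares its vertical fiber with another point of the orbit. Fix such an $\ell$ large, with $Q^{\sta}:=f^\ell_\gna(Q)\in W^{\sta}_{\mathrm{loc}}(P)$, outside the finitely many exceptional indices of Lemma~\ref{lemma at most two fibered points:section3}. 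The lemma then says the fiber $\{\ss=\ss^\ell_Q\}$ contains exactly one other orbit point, $Q^{\uns}=f^{m}_\gna(Q)\in W^{\uns}_{\mathrm{loc}}$, and no third one.

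The key step is to show that $\mathcal{I}(Q^{\sta})=Q^{\uns}$. Since $\mathcal{I}$ maps $W^{\sta}(\mathcal{P})$ onto $W^{\uns}(\mathcal{P})$ and fixes the points of $\mathcal{P}$, the reflected point $\mathcal{I}(Q^{\sta})$ lies on the local unstable manifold at the same base point $\ss^\ell_Q$. The local unstable manifold is a graph over $\ss$, since $\mathcal{P}$ lies in the Aubry--Mather set. So to get $Q^{\uns}=\mathcal{I}(Q^{\sta})$ it suffices to know that $Q^{\uns}$ sits on the branch $\mathcal{I}(\text{branch of }Q^{\sta})$ and near the same periodic point. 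Both should follow from the twist/ordering structure: near $P$, the fiber $\{\ss=\ss^\ell_Q\}$ meets $W^{\uns}_{\mathrm{loc}}$ in only one point, namely the one attached to the periodic point whose $\ss$-coordinate is close to $\ss^\ell_Q$. If $Q^{\uns}$ were attached to the other branch of the $\ss$-graph, its angle would have the same sign as that of $Q^{\sta}$, and the standard hyperbolic picture would then contradict the graph property. I expect this identification of $Q^{\uns}$ as the mirror image to be the main obstacle. It may need a case check, using that hyperbolic eigenvalues of a period-2 orbit of a reversible map are real and that the stable and unstable branches are ordered in $\ss$.

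Once $f^{m}_\gna(Q)=\mathcal{I}(f^\ell_\gna(Q))$, reversibility gives
\[
\mathcal{I}\big(f^{j}_\gna(Q)\big)=f^{\ell+m-j}_\gna(Q)\quad\text{for all } j\in\mathbb{Z}.
\]
So the orbit is $\mathcal{I}$-symmetric with respect to the center $(\ell+m)/2$. If $\ell+m$ is even, put $j=(\ell+m)/2$; then $f^j_\gna(Q)$ is a fixed point of $\mathcal{I}$, i.e. it lies in $\mathbb{T}\times\{0\}$, and we are done. If $\ell+m$ is odd, set $R=f^{(\ell+m-1)/2}_\gna(Q)$. Then $\mathcal{I}(R)=f_\gna(R)$, so the segment from $\gna(\ss_R)$ to $\gna(\ss_{f(R)})$ satisfies $\vp_{f(R)}=-\vp_R$. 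This case must be shown either to be impossible or to force a collision on the zero section. I would try to rule it out with the non-transversality hypothesis: in the odd case, symmetry alone does not force a tangency.

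The remaining use of non-transversality is for the even case to be genuinely reached. Consider the symmetric point $Z=f^{j}_\gna(Q)\in\mathbb{T}\times\{0\}$. Because $\mathcal{I}$ exchanges $W^{\sta}$ and $W^{\uns}$ and fixes $Z$, the tangent lines $T_ZW^{\sta}$ and $T_ZW^{\uns}$ are mirror images under $D\mathcal{I}=\mathrm{diag}(1,-1)$. Hence a tangency at $Z$ means these lines are vertical or horizontal. I would mostly keep this as a consistency check, and spend the effort on the odd-parity exclusion, using the counting from Lemma~\ref{lemma at most two fibered points:section3} applied to both $R$ and $f_\gna(R)$ to reach a contradiction with "not localised one-fibered''.
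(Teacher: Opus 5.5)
Your architecture matches the paper's. You pair an iterate $Q^{\sta}=f^\ell_\gna(Q)\in W^{\sta}_{\mathrm{loc}}(P)$ with its fiber-mate $Q^{\uns}=f^m_\gna(Q)$, show $Q^{\uns}=\mathcal{I}(Q^{\sta})$, propagate by reversibility to $\mathcal{I}(f^j_\gna(Q))=f^{\ell+m-j}_\gna(Q)$, and split according to the parity of $\ell+m$; the even case is correct. The gap is the odd case. You leave it open and propose to attack it through non-transversality, or through the counting of Lemma~\ref{lemma at most two fibered points:section3} at $R$ and $f_\gna(R)$. Neither is needed, and you overlooked the decisive information. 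The involution $\mathcal{I}(\ss,\vp)=(\ss,-\vp)$ does not move the base point. So $\mathcal{I}(R)=f_\gna(R)$ gives not only $\vp_{f(R)}=-\vp_R$, the only consequence you recorded, but also $p_1(f_\gna(R))=p_1(R)$: the billiard map would send the interior point $R$ to a point on its own vertical fiber. That is impossible, because the chord from $\gna(\ss_R)$ to the next collision point would have zero length. Equivalently, by the twist condition the lifted next base point lies strictly in $(\ss_R,\ss_R+1)$ whenever $R$ is off $\partial\mathbb{A}$. This one line excludes the odd case and finishes the proof, exactly as in the paper.

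Two further remarks. First, the step you flag as the main obstacle is routine. Since $p_1(P)\neq p_1(f_\gna(P))$, the local manifolds can be taken as graphs over disjoint small intervals around these two values, and $W^{\uns}_{\mathrm{loc}}(P)$, both branches together, is a single graph over its interval. Since $\mathcal{I}$ fixes $P$ and maps $W^{\sta}_{\mathrm{loc}}(P)$ onto $W^{\uns}_{\mathrm{loc}}(P)$, both $Q^{\uns}$ and $\mathcal{I}(Q^{\sta})$ are the unique point of this graph above $\ss^\ell_Q$, so they coincide. No branch bookkeeping or eigenvalue analysis is required; this is the step the paper dispatches with ``because of the recalled symmetry''. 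Second, the non-transversality hypothesis is never used. The conclusion holds for every homoclinic orbit to $\mathcal{P}$ that is not localised one-fibered. Tangency only enters afterwards, in Remark~\ref{rmk tangent symmetric}, to force the common tangent at the symmetric point to be horizontal or vertical. Your final paragraph, which tries to make non-transversality do work, can be dropped.
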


Observe that, by Proposition~\ref{good property in the bas case:section3} and by~\eqref{involution:section3}, if the orbit of $Q$ is not localised one-fibered and assuming, without loss of generality, that $Q\in\mathbb{T}\times\{0\}$, we obtain that for every $n\in\mathbb{N}$
\[
\mathcal{I}\circ f^n_{\gna}(Q)=f^{-n}_{\gna}(Q)\, .
\]

\subsection{One fibered and two fibered points}\label{sec:transversality one point}
The next step in the proof of Theorem~\ref{main thm 2 bis} is to show that, for a given non transverse homoclinic point $Q$ of the periodic orbit $\mathcal{P}=\{P_j\}_{j=0}^{q-1} $, it is possible to construct
an arbitrarily close analytical perturbation, in fact a trigonometric polynomial, of the original billiard table such that the perturbed invariant manifolds intersect transversally close to $Q$.

Since, by Aubry-Mather theory (see Proposition~\ref{prop:homoclinicpq}), the billiard map has a homoclinic point (not necessarily transverse), the set of homoclinic points is not empty. Let 
\[
Q\in  W^{\sta}(\mathcal{P}) \cap W^{\uns}(\mathcal{P})\setminus \{\mathcal{P}\} 
\]
be a non transverse homoclinic point (notice that if $Q$ is transverse, it will remain transverse by sufficiently small perturbations). 

By Propositions~\ref{nec cond for not one fib:section3} and~\ref{good property in the bas case:section3}, there are two possibilities: 
\begin{itemize}
\item[(i)] \label{i}
either $Q$ belongs to a non transverse localised one-fibered homoclinic orbit, that is, by Definition~\ref{definition localised fibered:section3}, there exists $\ell \in \mathbb{Z}$ such that $f^{\ell}_{\gna}(Q)$ is one-fibered and $f^\ell_{\gna}(Q)$ belongs to the local stable or the local unstable manifold of $\mathcal{P}$; 
\item[(ii)] \label{ii}
or, when $q=2$, the point $Q$ is a non transverse homoclinic point to $\mathcal{P}\in \mathcal{M}_{\frac{1}{2}}(\Omega)$ and for some $\ell \in \mathbb{Z}$, $f^\ell_{\gna} (Q) \in \mathbb{T}\times \{0\}$. 
\end{itemize}

To deal with the $2$-periodic case, let us to introduce the following definition.
\begin{definition}\label{def two fibered}
Let $\mathcal{P}=\{P_0,P_1=f_{\gna}(P_0)\}\in \mathcal{M}_{\frac{1}{2}}(\Omega)$ and $Q\in W^{\sta}(\mathcal{P}) \cap W^{\uns}(\mathcal{P})\setminus \{\mathcal{P}\} $ such that its orbit is not localised one-fibered. 
Assume that $Q=(\ss_0,0)$\footnote{The existence of a point on the zero section is guaranteed by Proposition~\ref{good property in the bas case:section3}.}. 
The point $Q=(\ss_0,0) $ is called a \emph{symmetric two fibered} homoclinic point. 

Let Let $(\ss_i,\varphi_i)=f^{i}_{\gna}(Q)$, and $\ell=\ell(Q)>0$ be an integer such that Lemma~\ref{lemma at most two fibered points:section3} applies. 
Let $U_Q \subset \mathbb{T}$ be a closed neighborhood of $\ss_\ell$ such that $\ss_\ell \in U_Q$ and $ \ss_i\notin U_Q $ for $i\neq \ell , -\ell$. 
The neighborhood $U_Q$ is called a \emph{symmetric two-fibered neighborhood} of $Q$.

\end{definition}

\begin{remark}
Observe that, even if  the neighborhood $U_Q$ in definition 
\ref{def two fibered} is called a \emph{symmetric two-fibered neighborhood} of $Q$, it is, in fact, a neighborhood of $\ss_\ell$, which is the first coordinate of the point $f^{\ell}_{\gna}(Q)$.
\end{remark}

As observed above, using that the billiard map possesses the past/future symmetry in~\eqref{involution:section3}, we get
\[
f^{\ell}_{\gna}(Q) = (\ss_{\ell} , \varphi_\ell), \qquad f^{-\ell}_{\gna}(Q) = (\ss_{-\ell}, \varphi_{-\ell})=(\ss_{\ell}, -\varphi_{\ell})\, .
\]

For our purposes, we are then going to deal with just two possible kinds of homoclinic points for our billiard map $f_{\gna}$, that we will need to make transverse.
\begin{enumerate}
    \item [(i)] 
    We consider a non transverse homoclinic point $h$ to $\mathcal{P}$, which is one-fibered,  belonging to the local stable or the local unstable manifold, and such that $W^{\diamond}_{\mathrm{loc}}(\mathcal{P})$, $\diamond=\uns,\sta$, are local graphs with respect to $\ss$ at $h$. 
    \item [(ii)] 
    We consider a non transverse homoclinic point $h$ to a $2$-periodic orbit, which is symmetric two-fibered point and such that $W^{\diamond}_{\mathrm{loc}}(\mathcal{P})$, $\diamond=\uns,\sta$, are local graphs with respect to $\ss$ at $f^\ell_{\gna}(h)$, for some $\ell>0$. 
\end{enumerate}

\subsection{The $\mathcal{C}^1$ distance between the perturbed invariant manifolds}\label{sec:C1distance}

By Proposition \ref{prop:hyperbolicfixedpoint:section3}, we know that the perturbed billiard map $f_{\dgnalna}$ has a (unique) hyperbolic periodic orbit $\mathcal{P} \in \mathcal{M}_{\frac p q} ( \Omega (\dgnalna) )$. Moreover, by Lemma \ref{lazutkin}, there exists $\nu>0$ such that we can assume that 
$\mathcal{M}_{\frac p q} ( \Omega (\dgnalna) )\subset \mathbb{A}_\nu$ if $\|\lambda\|_r$ is small enough.

As the periodic orbit $\mathcal{P}$ is in the Aubry-Mather set, we know that its manifolds intersect, but not necessarily in a transverse way. In this section, we show how to perturb (again) the map to obtain a transverse intersection between the manifolds.

The results of this section apply to any map, no necessarily associated to a billiard, and are independent of the previous results. Therefore, we will assume that we have a  $\mathcal{C}^k$ diffeomorphism $g: \mathbb{A} \to \mathbb{A}$, having a hyperbolic periodic orbit $\mathcal{P}$, whose stable and unstable manifolds intersect at one homoclinic point $Q$, but the intersection is not transverse.

It is a well known fact that (see e.g. \cite{HirschPughShub,Zehnder}) that, for any perturbation $\tilde g$  which is $\mathcal{C}^2$-close to $g$, the invariant manifolds of $\tilde {\mathcal{P}}$, the continuation of $\mathcal{P}$, behave $\mathcal{C}^2$ smoothly.

This allows a well established  perturbation theory (called in the literature Melnikov-Poincar\'e theory), which gives the first order perturbation of these manifolds. We will not reproduce the computations here, as they are done in \cite{Zehnder} (see also \cite{DelshamsRR96}, \cite{DelshamsRR97}) and, more suitable to us, in \cite{Genecand}, who
characterizes the $\mathcal{C}^1$ distance between the perturbed  invariant manifolds, close to a non transverse homoclinic point $Q$, for diffeomorphisms $\mathcal{C}^2$ close to $g$ as follows (see Figure~\ref{figure_billiards_1}):
\begin{itemize}
\item Let $\mathcal{U}_Q \subset \mathbb{A_\nu}$,  be a small neighbourhood of $Q$.
\item 
Let $\mathbf{t}_0$ be the common unitary tangent vector to $W^{\uns}(\mathcal{P})$ and $W^{\sta}(\mathcal{P})$ at $Q$.
\item 
Let $l_Q$ be a straight line through $Q$ orthogonal to $\mathbf{t}_0$.
\item 
For a given 
$\varrho>0$ and $\nu\ge 0$ consider the set $\Bf{\varrho}{g}$ defined as in~\eqref{ball fgamma}. 
Observe that, if $k\ge 5$, then $\|\tilde{g}-g\|_{\difknu{2}{\nu}} <\varrho$.

Let $\tilde{g} \in\Bf{\varrho}{g}$. 
From the classical perturbation theory (see e.g.~\cite{HirschPughShub}), we deduce that, if $\varrho$ is small enough, $\tilde{g}$ has a saddle periodic orbit $\widetilde {\mathcal P} =\big \{\tilde{P}_i\big \}_{i=0}^{q-1}$ close to $ \mathcal{P}$ with associated stable and unstable manifolds $W^{\diamond}(\widetilde{\mathcal{P}})$, $\diamond=\uns,\sta$ (see Definition~\ref{definition local invariant manifold:section3}). 

Let $\rho$ be such that $Q\in W_\rho \cap W'_\rho$,  with 
$W_\rho\subset W^{\uns}(P_n)$, $W'_\rho \subset W^{\sta}(P_m)$ defined in \eqref{def:branch}, for some $n,m\in \{0,\cdots, q-1\}$. 
In addition, $W_\rho^{\diamond}({\widetilde{\mathcal{P}}})$ are $\mathcal{C}^2$-close to $W_\rho^{\diamond}({\mathcal{P}})$, $\diamond=\uns,\sta$  and then we can define the points
\[
Q^{\uns}_0(\tilde{g}):=W^{\uns }_\rho(\tilde{P}_n)\cap l_Q  , \qquad 
Q^{\sta}_0(\tilde{g}):=W^{\sta}_\rho(\tilde{P}_m)\cap l_Q.
\]
Notice that $Q_0^{\uns}(g)=Q_0^{\sta}(g)=Q$.
\item 
For $\diamond=\uns,\sta$, let $\mathbf{t}^{\diamond}(\tilde{g}) $ be the tangent vector to $W^{\diamond}({\widetilde{\mathcal{P}}})$
at $Q^{\diamond}_0(\tilde{g})$ such that the vector $\mathbf{t}^{\diamond} (\tilde{g})- \mathbf{t}_0$ is orthogonal to $\mathbf{t}_0$.
\item 
There exists $\varrho_0>0$ small enough such that, for all $0<\varrho \leq \varrho_0$,
\begin{equation}\label{definition Phi, Phistauns}
\begin{array}{rcl}
\Phi \colon\Bf{\varrho}{g} &\to & \mathbb{R}^2\\
\tilde{g}  &\mapsto &\left  ( {\mathbf{t}_0} \wedge \big ( Q^{\uns}_0(\tilde{g}) - Q^{\sta}_0(\tilde{g})\big ), 
{\mathbf{t}_0} \wedge \big ( \mathbf{t}^{\uns}_0(\tilde{g}) - \mathbf{t}^{\sta}_0(\tilde{g})\big )
\right )
\end{array}
\end{equation}
is well defined. Here the wedge product is defined as
\[
\mathbf{u} \wedge \mathbf{v} = u_1 \, v_2 - u_2 \, v_1, \qquad \mathbf{u}= (u_1,u_2)^\top, \; \mathbf{v}=(v_1,v_2)^\top,
\]
and we recall that $\mathbf{u}\wedge \mathbf{v} = \| \mathbf{u} \| \| \mathbf{v}\| \sin \theta$ where $\theta$ is the angle between the vectors $\mathbf{u}, \mathbf{v}$. 
\end{itemize}

\begin{figure}[ht]
\begin{overpic}[height=0.9\textwidth, angle=90]{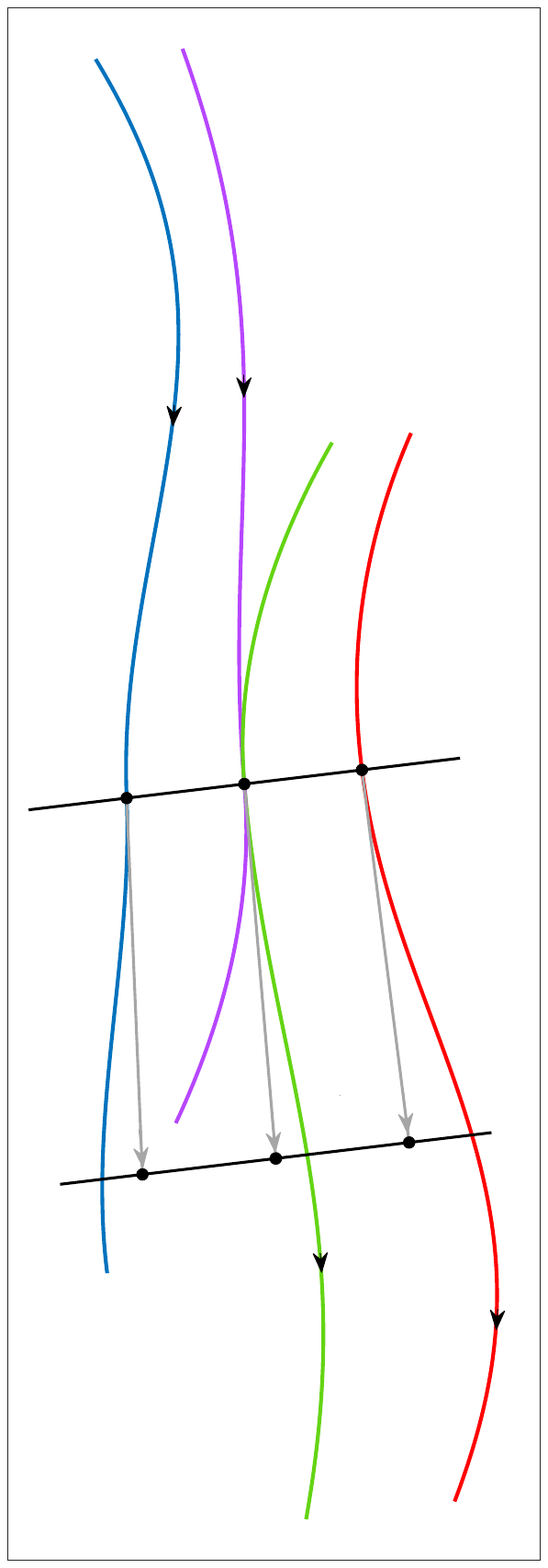}
	\put(43,5){$Q_0^{\mathrm{u}}(\tilde{g})$ }
    \put(42,20){$Q_0^{\mathrm{s}}(\tilde{g})$}
    \put(53,9){$\mathbf{t}^{\mathrm{u}}(\tilde{g})$}
    \put(65,22){$\mathbf{t}^{\mathrm{s}}(\tilde{g})$}
    \put(15,7){$W^{\mathrm{u}}(\tilde{P}_n)$}
    \put(12,17){$W^{\mathrm{u}}({P}_n)$}
    \put(80,28){$W^{\mathrm{s}}(\tilde{P}_m)$}
    \put(85,17){$W^{\mathrm{s}}({P}_m)$}
    \put(47,12){$Q$}
    \put(67,14){$\mathbf{t}_0$}
    \put(45,29){$l_Q$}
    \end{overpic}
    \caption{Depiction of the $\mathcal{C}^1$ distance between  $W^{\uns}(\widetilde{\mathcal{P}})$ and $W^{\sta}(\widetilde{\mathcal{P}})$.
}
\label{figure_billiards_1}  
\end{figure}
Observe that $\Phi (g)=(0,0)$ and 
\begin{equation}\label{property Phi, Phistauns}
\Phi(\tilde{g}) = \left (\pm \| Q_0^{\uns}(\tilde{g}) - Q_0^{\sta}(\tilde{g}) \|, \pm \| \mathbf{t}_0^{\uns} ( \tilde{g}) - \mathbf{t}_0^{\sta}(\tilde{g}) \|\right ).
\end{equation}

\begin{remark}
Later, our  goal will be to find a billiard map $\tilde{f}$ such that $\Phi(\tilde{f})= (a,b)$ for any prescribed small enough $(a,b) \in \mathbb{R}^2$. Notice that, when $a=0$, $b\neq 0$, the perturbed billiard $\tilde{f}$ possesses a transverse homoclinic point whereas $a\neq 0$ means that there are not homoclinic points in a neighbourhood of $Q$. 
\end{remark}

To this end we will need to compute  (see Proposition~\ref{prop:Genecand} below) the quantity $d\Phi (g)h$ with $h \in \mathcal{C}^{k-1}$. For $i\in \mathbb{Z}$, we introduce $Q_{i+1}=f^{i+1}(Q)=f(Q_{i})$, where $Q_0=Q$, 

\begin{equation}\label{defQti}
Q_{i+1}^{\uns,\sta} (\tilde{g})= \tilde{g}^{i+1}(Q_0^{\uns,\sta} (\tilde{g})),\qquad \mathbf{t}_{i+1} = D{g}^i (Q_0) \mathbf{t}_0  
\end{equation}
and we notice that $\mathbf{t}_{i+1} = Dg(Q_i) \mathbf{t}_i$, if $i\geq 0$ and $\mathbf{t}_{i} = Dg^{-1} (Q_{i+1}) \mathbf{t}_{i+1}$ when $i<0$. 

In addition, for a given $\mathcal{C}^{k-1}$ diffeomorphism $h$, letting $g_\sigma= g+ \sigma h$, we define
\[
\delta_h Q^{\uns,\sta}_i(g) = \frac{d}{d \sigma} \big ( Q^{\uns,\sta}_i(g_\sigma) \big )_{|\sigma=0} = \lim_{\sigma\to 0} \frac{Q^{\uns,\sta}_i(g+\sigma h) - Q^{\uns,\sta}_i(g) }{\sigma}.
\]

Next proposition, which was proven in~\cite{Genecand}, will be useful for our purposes.
 
\begin{proposition}\label{prop:Genecand} Fix $\nu>0$. 
There exists $\varrho_\nu>0$ such that, for all $0<\varrho<\varrho_\nu$ the map 
$ \Phi:\Bf{\varrho}{g} \subset \mathcal{C}^{k-1}\to \mathbb{R}^2$, given in \eqref{definition Phi, Phistauns}, is $\mathcal{C}^1$ Fr\'echet differentiable and
\[
d\Phi(g) h = \big (d \Phi_1(g) h, d \Phi_2 (g) h\big )
\] 
with
\begin{align*}
d\Phi_1(g) h & = \sum_{i\in \mathbb{Z}} \mathbf{t}_{i+1} \wedge h(Q_i)
\\ 
d\Phi_2 (g) h &= \sum_{i\in \mathbb{Z}} \mathbf{t}_{i+1} \wedge 
\big [ D h (Q_i)  \mathbf{t}_i + D^2 g (Q_i) \big ( \delta_h Q_i^\sigma(g), \mathbf{t}_i\big )\big ],
\end{align*}
taking $\sigma=\sta$ if $i\geq 0$ and $\sigma  = \uns$ if $i<0$ and where $Dh(Q_i)$ denotes the differential of $h$ at $Q_i$ 
and 
$D^2 g (Q_i) \big ( \delta_h Q_i^\sigma(g), \mathbf{t}_i\big )$ denotes the second differential of $g$ at $Q_i$ applied at $\big ( \delta_h Q_i^\sigma(g), \mathbf{t}_i\big )$.
\end{proposition}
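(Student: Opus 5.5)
The plan is to prove Proposition~\ref{prop:Genecand} along the lines of the Melnikov--Poincar\'e computation in~\cite{Zehnder,Genecand}. First, the invariant manifolds depend $\mathcal{C}^1$ on the map. By the stable manifold theorem with parameters (e.g.\ \cite{HirschPughShub}), applied to the hyperbolic orbit $\mathcal{P}$ of $g$, for $\varrho$ small and $\tilde g\in\Bf{\varrho}{g}$ (which is $\mathcal{C}^2$-close to $g$ once $k\ge 5$) the continuation $\widetilde{\mathcal P}$ and the compact pieces $W^{\uns}_\rho(\tilde P_n)$, $W^{\sta}_\rho(\tilde P_m)$ depend $\mathcal{C}^1$ on $\tilde g$ in the $\mathcal{C}^1$ topology of embedded curves. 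I would realise this through the standard graph transform/Lyapunov--Perron fixed point, which depends smoothly on $\tilde g$ through the superposition operator $\tilde g\mapsto \tilde g\circ u$. This operator is $\mathcal{C}^1$ from $\mathcal{C}^{k-3}$ into $\mathcal{C}^1$ because we lose derivatives, which is why the domain is $\mathcal{C}^{k-1}$ or $\mathcal{C}^{k-3}$ rather than $\mathcal{C}^1$. Then $Q_0^{\diamond}(\tilde g)$, obtained by intersecting these curves with the fixed transversal $l_Q$, and the tangents $\mathbf{t}^{\diamond}(\tilde g)$ are $\mathcal{C}^1$ in $\tilde g$ by the implicit function theorem. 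Hence $\Phi$ is $\mathcal{C}^1$ Fr\'echet.

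Second, I compute $d\Phi_1$. I write $g_\sigma=g+\sigma h$ and differentiate the relation $Q^{\sta}_{i+1}(g_\sigma)=g_\sigma(Q^{\sta}_i(g_\sigma))$ at $\sigma=0$. This gives $\delta_h Q^{\sta}_{i+1}=Dg(Q_i)\,\delta_h Q^{\sta}_i+h(Q_i)$, and the same recursion holds for the unstable points. Since $g$ is area-preserving, $Dg(Q_i)$ has determinant one in the appropriate coordinates, so $\mathbf{t}_{i+1}\wedge Dg(Q_i)v=\mathbf{t}_i\wedge v$. Therefore
\[
\mathbf{t}_{i+1}\wedge\delta_hQ^{\sta}_{i+1}-\mathbf{t}_i\wedge\delta_hQ^{\sta}_i=\mathbf{t}_{i+1}\wedge h(Q_i).
\]
I telescope this identity from $i=0$ to $+\infty$ for the stable points and from $-\infty$ to $-1$ for the unstable points. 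The boundary terms vanish because, as $i\to\pm\infty$, $\mathbf{t}_i$ contracts exponentially along the relevant direction while $\delta_hQ_i$ stays bounded (it converges to the variation of the periodic orbit). This yields $\mathbf{t}_0\wedge(\delta_hQ^{\uns}_0-\delta_hQ^{\sta}_0)=\sum_{i\in\mathbb Z}\mathbf{t}_{i+1}\wedge h(Q_i)$. The factor conventions for $\mathbf{t}_{i+1}$ follow~\eqref{defQti}. The series converges absolutely because the terms decay geometrically.

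Third, I compute $d\Phi_2$ the same way, applied to the tangent vectors. The tangent vector $\mathbf{t}^{\diamond}_{i}(g_\sigma)$ to the manifold at $Q^\diamond_i(g_\sigma)$ satisfies $\mathbf{t}^{\diamond}_{i+1}=Dg_\sigma(Q^\diamond_i)\mathbf{t}^\diamond_i$, up to a scalar normalisation that only produces components parallel to $\mathbf{t}_{i+1}$, which the wedge kills. Differentiating in $\sigma$ gives $\delta\mathbf{t}_{i+1}=Dg(Q_i)\delta\mathbf{t}_i+Dh(Q_i)\mathbf{t}_i+D^2g(Q_i)(\delta_hQ^\diamond_i,\mathbf{t}_i)$. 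The same area-preservation identity and the same telescoping produce the stated formula, with $\sigma=\sta$ for $i\ge0$ and $\sigma=\uns$ for $i<0$.

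The main obstacle is justifying that the boundary terms vanish and that the series in $d\Phi_2$ converges. This requires exponential estimates that are uniform along the infinite orbit, namely that $\delta_hQ_i$ and $\delta\mathbf{t}_i$ stay bounded while $\mathbf{t}_i\to0$ geometrically. These estimates also depend on the regularity loss in the map $\tilde g\mapsto W^\diamond$. I would get them from the $\lambda$-lemma near $\mathcal{P}$, together with the $\mathcal{C}^1$ dependence of the local manifolds established in the first step.
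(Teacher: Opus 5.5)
Your proposal is the standard Melnikov--Poincar\'e telescoping argument of Zehnder and Genecand, which is exactly what the paper relies on. The paper gives no proof of Proposition~\ref{prop:Genecand} and simply cites \cite{Genecand}. The recursion $\delta_hQ^{\diamond}_{i+1}=Dg(Q_i)\,\delta_hQ^{\diamond}_i+h(Q_i)$ and its analogue for $\mathbf{t}^{\diamond}_i(\tilde g):=D\tilde g^{\,i}(Q^{\diamond}_0(\tilde g))\,\mathbf{t}^{\diamond}_0(\tilde g)$ are correct. So are the two one-sided telescopings and the vanishing of the boundary terms: since $\mathbf{t}_0$ is tangent to both manifolds, $\mathbf{t}_i\to 0$ geometrically as $i\to\pm\infty$. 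Together these reproduce both stated formulas, signs included.

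\textbf{The determinant-one step.} The step you must state precisely is $\mathbf{t}_{i+1}\wedge Dg(Q_i)v=\mathbf{t}_i\wedge v$. It requires $\det Dg\equiv 1$ in the very chart in which $\wedge$, the orthogonality to $\mathbf{t}_0$ and the line $l_Q$ are defined. ``In the appropriate coordinates'' is not an admissible justification, because changing the chart changes $\Phi$ itself. If $g$ only preserves $\rho\,dx\wedge dy$, then $\det Dg(Q_i)=\rho(Q_i)/\rho(Q_{i+1})$, and your telescoping yields
\[
\rho(Q_0)\,\mathbf{t}_0\wedge\big(\delta_hQ^{\uns}_0-\delta_hQ^{\sta}_0\big)=\sum_{i\in\mathbb{Z}}\rho(Q_{i+1})\,\mathbf{t}_{i+1}\wedge h(Q_i),
\]
with the same weights in the second component. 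This is not the unweighted formula. So your argument proves the proposition under the hypothesis $g^*(dx\wedge dy)=dx\wedge dy$, which is Genecand's setting, and you should state that hypothesis explicitly.

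This is not cosmetic. The paper later applies the formula to the billiard map in $(s,\varphi)$, where $\det Df=\cos\varphi/\cos\varphi'$. There the weights $\cos\varphi_{i+1}/\cos\varphi_0$ change the constants in Lemma~\ref{lem: computing the differential}: for instance, the factor $1+\cos\varphi_0/\cos\varphi_1$ becomes $2$. They do not affect the fact that these constants are nonzero.

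\textbf{Minor points.} Differentiating $\mathbf{t}^{\diamond}(\tilde g)$ at the moving base point $Q^{\diamond}_0(\tilde g)$ requires $\mathcal{C}^1$ dependence of the manifolds in the $\mathcal{C}^2$ topology of curves, not only the $\mathcal{C}^1$ topology. This is available since $k$ is large. The uniform bounds on $\delta_hQ_i$ and $\delta\mathbf{t}_i$ come from the contraction of $g$ restricted to the local stable manifold, or from Lyapunov--Perron in $\ell^\infty$, rather than from the $\lambda$-lemma.
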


\subsection{Compactly supported perturbations and surjectivity of $d\Psi(0)$}\label{sec:compactly suported:sec3}
Consider the billiard table  obtained in Proposition \ref{prop:hyperbolicfixedpoint:section3} and rename it as $\gamma$. 
Assume that, for  the associated map $f_\gna$,  the stable and unstable manifolds of $\mathcal{P}$ intersect in a non transverse homoclinic point $Q$.
Recall that in Proposition \ref{prop Frechet Gamma nu} we considered the functional $\Gamma_\nu$ which sends every perturbed billiard table of the form~\eqref{def gamma lambda bis}, the billiard map $f_{\dgalac}$, associated to the perturbed billiard table written in the arc lengh coordinates of $\gamma$ \eqref{def gamma lambda arc length}. 
Then, we consider the function $\Phi$ defined in~\eqref{definition Phi, Phistauns} applied to this map.
From Propositions~\ref{prop:Genecand} and~\ref{prop Frechet Gamma nu} one deduces that the functional 
\begin{equation}\label{definiton operator Psi}
\Psi:=\Phi \circ \Gamma_\nu: \Bl{\varrho} \to \mathbb{R}^2
\end{equation}
is $\mathcal{C}^1$-Fr\'echet differentiable. 

The key result in the proof of Theorem \ref{main thm 2 bis},  that will be done in Section~\ref{sec2:final proof}, is the surjectivity of  $d\Psi (0)$ on $\mathcal{C}^\omega_r(\mathbb{T})$. 
As we know that $\Psi$ is $\mathcal{C}^1$-Fr\'echet differentiable, it would be enough to check that 
$d\Psi (0)$ is surjective when it is restricted to compactly supported functions. 
Notice that, in this case,  formulas in Proposition~\ref{prop:Genecand} become simpler.  

Let us first introduce the matrix
\begin{equation}\label{def mathcalT}
\mathcal{T}(s,\varphi,\varphi')= \left (
\begin{array}{cc}
\sin \varphi (\cos \varphi + \cos \varphi') & 0 
\\ 
-\mathcal{K}(s) \sin \varphi (\cos \varphi - \cos \varphi') & \cos \varphi (\cos\varphi + \cos \varphi')   
\end{array}\right ).
\end{equation}

The following two results, Lemma~\ref{lem: computing the differential} and~\ref{lem: computing the differential ii}, which are proven in Section \ref{sec:prooflemmas}, provide explicit formulas for $d\Psi(0)\lambda$ when the homoclinic point $Q$ is on the case (i) and (ii) respectively, described at the end of Section~\ref{sec:transversality one point}.

\begin{lemma} \label{lem: computing the differential}
Let $Q=(\ss_0,\varphi_0)$ be a one-fibered (see Definition~\ref{definition one fibered}), non transverse homoclinic point to the periodic orbit  $\mathcal{P}=(P_j)_{j=0,\cdots, q-1}$ of the billiard map $f_\gna$ and let $U_Q$ be a one-fibered neighborhood of $Q$. Let $\mathcal{C}^\infty_{\mathrm{supp}} (U_Q, \mathbb{R}) $ be the set of $\mathcal{C}^\infty$ functions compactly supported on $U_Q\subset \mathbb{T}$. 

Then, the function 
$d\Psi(0)=d(\Phi \circ \Gamma_\nu)(0) : \mathcal{C}^\infty_{\mathrm{supp}} (U_Q,\mathbb{R}) \to \mathbb{R}^2 $ is given by
\[
d(\Phi_1 \circ \Gamma_\nu)(0) \lna=  
\frac{1}{\cos \varphi_0 \cos\varphi_1} \mathbf{t}_0 \wedge  
\mathcal{T}(s_0,\varphi_0,\varphi_1) \left (\begin{array}{c}  \lac(s_0) \\ \derlac(s_0) \end{array}\right )
\]
with $ \lac(s)=\frac{1}{|\partial \Omega (\gna)|} \lna\circ \sigma^{-1} (s)$, $s_0=\sigma (\ss_0)$, $(s_1,\varphi_1)=f_{\ga}(s_0,\vp_0)$ and $\mathcal{T}$ defined in~\eqref{def mathcalT}.

In addition, when $\lambda(\ss_0)=\derlna(\ss_0)=0$ 
\[
d(\Phi_2 \circ \Gamma_\nu)(0) \lna= 
\dertwolac(s_0) (\pi_1 \mathbf{t}_0)^2 \left (\frac{\cos \varphi_0}{\cos \varphi_1} + 1 \right )  + \mathbf{w}
\]
where $\mathbf{t}_0=(\pi_1 \mathbf{t}_0,  \pi_2 \mathbf{t}_0)$ and $\mathbf{w}$ depending on $d\Gamma_\nu(0)\lambda$ 
but independent on $D[d\Gamma_\nu(0)\lambda]$. 
\end{lemma}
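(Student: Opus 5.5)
The plan is to compose the two derivative formulas already available: the chain rule gives $d\Psi(0)\lambda = d\Phi(f_\ga)\big(d\Gamma_\nu(0)\lambda\big)$. Proposition~\ref{cor:expr:fepsilon} identifies $h:=d\Gamma_\nu(0)\lambda$ as the first-order coefficient in $\varepsilon$ of $f_{\defga}$, with $\varepsilon\lac_0=\lac$. Proposition~\ref{prop:Genecand} then expresses $d\Phi$ as a sum over the orbit $(Q_i)_{i\in\mathbb{Z}}$. Concretely,
\[
h(s,\varphi)=\Big(-\tfrac{1}{\cos\varphi'}A(s,\varphi),\; -\tfrac{\mathcal{K}(s')}{\cos\varphi'}A(s,\varphi)+\derlac(s')-\derlac(s)\Big),
\]
where $A=\derlac(s)\hat\tau+\lac(s)\sin\varphi-\lac(s')\sin\varphi'$. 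I work in arclength coordinates; the conjugation by $S$ only rescales the first component and does not affect the argument.

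First I use the one-fibered hypothesis. Since $\lac$ is supported in $\sigma(U_Q)$, and $s_i\notin\sigma(U_Q)$ for $i\neq0$, every term of $h(Q_i)$ involving $\lac$ or its derivatives at $s_i$ vanishes unless $i=0$. The same holds for terms at $s_{i+1}$ unless $i+1=0$. Hence in $d\Phi_1$ only $i=0$ and $i=-1$ survive. For $i=0$ only the terms evaluated at $s_0$ remain, namely $\derlac(s_0)\hat\tau+\lac(s_0)\sin\varphi_0$ and $-\derlac(s_0)$. For $i=-1$ only the terms at $s'=s_0$ remain, namely $\lac(s_0)\sin\varphi_0$ in $A$ and $\derlac(s_0)$. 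I then rewrite $\mathbf{t}_0\wedge h(Q_{-1})$ as $\mathbf{t}_0\wedge Df(Q_{-1})^{-1}$-transported data. More precisely, I would use $\mathbf{t}_0=Df(Q_{-1})\mathbf{t}_{-1}$, area preservation ($\det Df = \cos\varphi_{-1}/\cos\varphi_0$ for the standard form), and the explicit billiard differential
\[
Df(s,\varphi)=\begin{pmatrix} \frac{\mathcal{K}(s)\hat\tau-\cos\varphi}{\cos\varphi'} & \frac{\hat\tau}{\cos\varphi'}\\ \ast & \ast\end{pmatrix},
\]
to express both contributions as $\mathbf{t}_1\wedge(\cdot)$ or $\mathbf{t}_0\wedge(\cdot)$ evaluated at the single point $Q_0$. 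Adding them and collecting the coefficients of $\lac(s_0)$ and $\derlac(s_0)$ should produce the matrix $\mathcal{T}(s_0,\varphi_0,\varphi_1)$ with prefactor $1/(\cos\varphi_0\cos\varphi_1)$. The index bookkeeping, i.e.\ which of $\mathbf{t}_0,\mathbf{t}_1$ pairs with which term, follows the convention in~\eqref{defQti}. Checking that the $\hat\tau$ terms cancel and the curvature enters only through $-\mathcal{K}\sin\varphi_0(\cos\varphi_0-\cos\varphi_1)$ is the main computational obstacle. I would verify it first on the circle, where $\mathcal{K}$ is constant and $\varphi_1=\varphi_0$, as a sanity check.

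For $d\Phi_2$ under $\lac(s_0)=\derlac(s_0)=0$, I note that $h(Q_i)$ then vanishes for every $i$. However, $Dh(Q_i)$ may still contain $\dertwolac(s_0)$, obtained by differentiating $\derlac(s)$ or $\derlac(s')$ in the $s$-direction. The term $D^2g(Q_i)(\delta_hQ_i^\sigma,\mathbf{t}_i)$ depends on $h$ only through $\delta_hQ_i$, which is determined by values of $h$ rather than its derivatives; this is collected into $\mathbf{w}$. In $Dh(Q_0)\mathbf{t}_0$ the only second-derivative contribution is $-\dertwolac(s_0)\,\pi_1\mathbf{t}_0$ in the second component. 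In $Dh(Q_{-1})\mathbf{t}_{-1}$ it is $\dertwolac(s_0)\,\partial_s s'\,\pi_1\mathbf{t}_{-1}$, and here $\partial_s s'\,\pi_1\mathbf{t}_{-1}+\partial_\varphi s'\,\pi_2\mathbf{t}_{-1}=\pi_1\mathbf{t}_0$. Wedging each of these with $\mathbf{t}_1$ or $\mathbf{t}_0$ respectively, and using again the first row of $Df(Q_0)$ to compute $\pi_1\mathbf{t}_1$ in terms of $\mathbf{t}_0$, should give $\dertwolac(s_0)(\pi_1\mathbf{t}_0)^2(\cos\varphi_0/\cos\varphi_1+1)$, possibly up to a term absorbed in $\mathbf{w}$. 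I expect the sign and normalization conventions of the wedge to be the delicate point in this step.
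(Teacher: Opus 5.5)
Your overall architecture matches the paper's. You use the chain rule, and the one-fibered support kills every index except $i=0,-1$. In the notation of \eqref{def:Gamma}, $h(Q_0)=A(Q_0,Q_1)\,(\lac(s_0),\derlac(s_0))^{T}$ and $h(Q_{-1})=B(Q_0)\,(\lac(s_0),\derlac(s_0))^{T}$, and the terms $D^2 g(Q_i)(\delta_hQ_i^\sigma,\mathbf{t}_i)$ go into $\mathbf{w}$.

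For $\Phi_1$, only the $i=0$ term needs transporting, via $\mathbf{t}_1\wedge u=\det Df(Q_0)\,\mathbf{t}_0\wedge Df(Q_0)^{-1}u$; the $i=-1$ term already lives at $Q_0$. One finds $\det Df(Q_0)\,Df(Q_0)^{-1}A(Q_0,Q_1)=\frac{1}{\cos\varphi_1}\begin{pmatrix}\sin\varphi_0&0\\-\mathcal{K}(s_0)\sin\varphi_0&\cos\varphi_0\end{pmatrix}$, and adding $B(Q_0)$ gives exactly $\mathcal{T}(s_0,\varphi_0,\varphi_1)/(\cos\varphi_0\cos\varphi_1)$. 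This cancellation requires the differential \eqref{formula:Dfsphi}. The first row of $Df$ you wrote uses the opposite sign convention for $\mathcal{K}$ and $\varphi$, not the one in which Proposition~\ref{cor:expr:fepsilon} is written. Mixing the two conventions destroys the cancellation of the $\hat\tau$ terms.

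The genuine error is in the $\Phi_2$ step. You claim the only $\dertwolac$ contribution to $Dh(Q_0)\mathbf{t}_0$ is $-\dertwolac(s_0)\pi_1\mathbf{t}_0$ in the second component; that is false. The factor $\derlac(s)\hat\tau+\lac(s)\sin\varphi-\lac(s')\sin\varphi'$ enters both components of $h$, and its $s$-derivative at $Q_0$ is $\dertwolac(s_0)\hat\tau$. Since $\partial_\varphi h(Q_0)=0$ when $\lac(s_0)=\derlac(s_0)=0$, the correct value is $Dh(Q_0)\mathbf{t}_0=-\frac{\dertwolac(s_0)\,\pi_1\mathbf{t}_0}{\cos\varphi_1}\big(\hat\tau,\ \mathcal{K}(s_1)\hat\tau+\cos\varphi_1\big)^{T}=\dertwolac(s_0)\,\pi_1\mathbf{t}_0\,Df(Q_0)e_2$, with $e_2=(0,1)^T$. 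This identity is what the paper's proof turns on. It gives $\mathbf{t}_1\wedge Dh(Q_0)\mathbf{t}_0=\det Df(Q_0)\,\dertwolac(s_0)\,\pi_1\mathbf{t}_0\,(\mathbf{t}_0\wedge e_2)=\frac{\cos\varphi_0}{\cos\varphi_1}(\pi_1\mathbf{t}_0)^2\dertwolac(s_0)$. The $Q_{-1}$ term, where both $\partial_s$ and $\partial_\varphi$ of $\derlac(f_1(s,\varphi))$ contribute, gives $Dh(Q_{-1})\mathbf{t}_{-1}=\dertwolac(s_0)(0,\pi_1\mathbf{t}_0)^T$ and hence $(\pi_1\mathbf{t}_0)^2\dertwolac(s_0)$; that part of your plan is fine. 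Keeping only your second-component term at $Q_0$, you would instead get, with \eqref{formula:Dfsphi}, an extra $\frac{\hat\tau}{\cos\varphi_1}\dertwolac(s_0)\,\pi_1\mathbf{t}_0\big(\mathcal{K}(s_0)\pi_1\mathbf{t}_0+\pi_2\mathbf{t}_0\big)$. This term is linear in $\dertwolac(s_0)$, so it cannot be absorbed into $\mathbf{w}$, which by definition does not depend on $D[d\Gamma_\nu(0)\lambda]$. As planned, your computation would not reach the stated formula.
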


Now we compute $d\Psi(0)\lambda$ when the homoclinic point is in the case (ii). Indeed, when $q=2$  we cannot guarantee the existence of localised one-fibered homoclinic points so that the formulas for $d\Psi(0)\lambda$ will be more complicated (see Definition~\ref{def two fibered}).  

\begin{lemma} \label{lem: computing the differential ii} 
Let $Q=(\ss_0,0)$ 
be a symmetric two fibered non transverse homoclinic point to the periodic orbit  $\mathcal{P}=(P_j)_{j=0,1} \in \mathcal{M}_{\frac{1}{2}}(\Omega)$ of the billiard map $f_\gna$ and let $U_Q $, a
symmetric two-fibered neighborhood of $Q$, and $\ell$ as in Definition~\ref{def two fibered}. 
Let $\mathcal{C}^\infty_{\mathrm{supp}} (U_Q,\mathbb{R}) $ be the set of $\mathcal{C}^\infty$ functions compactly supported on $U_Q \subset \mathbb{T}$. 

Then, the function $d\Psi(0)=d(\Phi \circ \Gamma_\nu)(0) : \mathcal{C}^\infty_{\mathrm{supp}} (U_Q,\mathbb{R}) \to \mathbb{R}^2 $ is 
\begin{align*}
d(\Phi_1 \circ \Gamma_\nu)(0) \lna=  &\frac{1}{\cos \varphi_\ell \cos\varphi_{\ell+1}}
\mathbf{t}_{\ell} \wedge  
\mathcal{T}(s_\ell,\varphi_\ell,\varphi_{\ell+1}) \left (\begin{array}{c}  \lac(s_\ell) \\ \derlac(s_\ell) \end{array}\right )  
\\ 
& + \frac{1}{\cos \varphi_\ell \cos\varphi_{-\ell +1}}
\mathcal{I}(\mathbf{t}_{\ell}) \wedge  
\mathcal{T}(s_{\ell},-\varphi_{\ell},\varphi_{-\ell+1}) \left (\begin{array}{c}  \lac(s_\ell) \\ \derlac(s_\ell) \end{array}\right ) 
\end{align*}
with $ \lac(s)=\frac{1}{|\partial \Omega (\gna)|} \lna\circ \sigma^{-1} (s)$, $s_0=\sigma(\ss_0)$,
with $(s_{i},\varphi_i)=f_{\ga}^i(s_0, 0)$, $i\in \mathbb{Z}$, $\mathcal{T}$ defined in~\eqref{def mathcalT}, $\mathbf{t}_\ell = Df^{\ell}(Q) \mathbf{t}_0$, $\mathbf{t}_0$ is a common tangent vector to $W^{\sta,\uns}(\mathcal{P})$ at $Q$ (see~\eqref{defQti}) and $\mathcal{I}$ the involution $\mathcal{I}(s,\varphi)=(s,-\varphi)$. 
 
In addition, if $\lambda(\ss_\ell)=\derlna(\ss_\ell)=0$, with $\ss_\ell = \sigma^{-1}(s_\ell)$, then
\[  
d(\Phi_2 \circ \Gamma_\nu)(0) \lna=    
\dertwolac(s_\ell) (\pi_1 \mathbf{t}_\ell)^2 \left (2+ \frac{\cos \varphi_\ell}{\cos \varphi_{\ell+1}} + 
\frac{\cos \varphi_\ell}{\cos \varphi_{-\ell+1}}\right )  + \mathbf{w}
\]
where $\mathbf{t}_\ell=(\pi_\ell \mathbf{t}_\ell,  \pi_2 \mathbf{t}_\ell)$  and $\mathbf{w}$ depends on $d\Gamma_\nu(0)\lambda$ but it is independent on $D[d\Gamma_\nu(0)\lambda]$. 
\end{lemma}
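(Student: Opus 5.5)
The plan is to compute $d\Psi(0)\lambda = d\Phi(f_{\ga})\,[d\Gamma_\nu(0)\lambda]$ by combining the Melnikov-type formulas of Proposition~\ref{prop:Genecand} with the first-order expansion of the perturbed billiard map in Proposition~\ref{cor:expr:fepsilon}.

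\textbf{Step 1: the first-order perturbation $h$.} Since $\Gamma_\nu$ is $\mathcal{C}^1$-Fr\'echet differentiable (Proposition~\ref{prop Frechet Gamma nu}), $h:=d\Gamma_\nu(0)\lambda$ is the $\varepsilon$-derivative at $\varepsilon=0$ of $f_{\defga}$ with $\lac_0=\lac$. By Proposition~\ref{cor:expr:fepsilon}, writing $A(s,\varphi)=\derlac(s)\hat\tau+\lac(s)\sin\varphi-\lac(s')\sin\varphi'$,
\[
h(s,\varphi)=\Big(-\frac{A}{\cos\varphi'},\; -\frac{\mathcal{K}(s')A}{\cos\varphi'}+\derlac(s')-\derlac(s)\Big).
\]
The key observation is that $h(s,\varphi)$ depends on $\lac$ only through the values $\lac(s),\derlac(s),\lac(s'),\derlac(s')$. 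If $\lac$ is supported in $\sigma(U_Q)$, then $h(Q_i)\neq0$ only when $s_i\in\sigma(U_Q)$ or $s_{i+1}\in\sigma(U_Q)$. For a one-fibered $Q$ this happens only for $i=0$ (through $\lac(s_0)$) and $i=-1$ (through $\lac(s_0)=\lac(s_{-1}')$). For the symmetric two-fibered $Q$ it happens for $i\in\{\ell,\ell-1,-\ell,-\ell-1\}$.

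\textbf{Step 2: the formula for $d\Phi_1$.} Only these two (respectively four) terms survive in $\sum_i \mathbf{t}_{i+1}\wedge h(Q_i)$. I will push the $i=-1$ term forward to a wedge against $\mathbf{t}_0$, using that $Df$ preserves $\omega=\cos\varphi\,d\varphi\wedge ds$. This gives, up to the density factors $\cos\varphi$, the identity $\mathbf{u}\wedge\mathbf{v}=\tfrac{\cos\varphi_1}{\cos\varphi_0}\,Df\mathbf{u}\wedge Df\mathbf{v}$, and the $\cos$ factors are exactly what appear in the prefactor. For this I need $Df_{\ga}(Q_0)$ in terms of $\tau$: the generating function gives $\partial s'/\partial\varphi=\cos\varphi/\partial_{12}\tau$ and similar entries, with $\partial_{12}\tau=\cos\varphi\cos\varphi'/\tau$ for billiards.

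Collecting the coefficients of $\lac(s_0)$ and $\derlac(s_0)$, and using $\hat\tau$ together with the standard billiard Jacobian formulas involving $\mathcal{K}$, should reproduce the matrix $\mathcal{T}(s_0,\varphi_0,\varphi_1)$ divided by $\cos\varphi_0\cos\varphi_1$. This algebraic identification is the step I expect to be the most error-prone. I would check it first in the simplest configuration, $\varphi_0=0$, and then in general.

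For Lemma~\ref{lem: computing the differential ii}, the terms at $i=-\ell,-\ell-1$ are converted using the reversibility~\eqref{involution:section3}. Since $Q_{-\ell}=\mathcal{I}Q_\ell$ and $\mathbf{t}_{-\ell}$ is proportional to $\mathcal{I}\mathbf{t}_\ell$ (by symmetry of the manifolds through $Q\in\mathbb{T}\times\{0\}$), this yields the second summand with $-\varphi_\ell$ and $\varphi_{-\ell+1}$.

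\textbf{Step 3: $d\Phi_2$ under $\lambda(\ss_0)=\derlna(\ss_0)=0$.} The $D^2g(\delta_hQ_i,\mathbf{t}_i)$ terms involve $\delta_hQ_i$, which depends only on $h$ itself and not on $Dh$; these terms are absorbed into $\mathbf{w}$. In $Dh(Q_i)\mathbf{t}_i$, the only contributions containing $\dertwolac$ come from differentiating $\derlac(s)$ and $\derlac(s')$ in the second component of $h$. Because the $\lac,\derlac$ values vanish at $s_0$, the first component of $h$ contributes nothing with $\dertwolac$.

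At $i=0$, the term $-\derlac(s)$ gives $-\dertwolac(s_0)\pi_1\mathbf{t}_0$. At $i=-1$, the term $\derlac(s')$ gives $\dertwolac(s_0)\,\partial_s s'\cdot\pi_1\mathbf{t}_{-1}$, which equals $\dertwolac(s_0)\pi_1\mathbf{t}_0$.

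These vertical vectors are then wedged with $\mathbf{t}_1$ and $\mathbf{t}_0$ respectively. Converting $\mathbf{t}_1\wedge(0,1)$ back to $\mathbf{t}_0$ via $Df$ gives the factor $\tfrac{\cos\varphi_0}{\cos\varphi_1}\pi_1\mathbf{t}_0$, and $\mathbf{t}_0\wedge(0,1)=\pi_1\mathbf{t}_0$. This yields $(\pi_1\mathbf{t}_0)^2(\tfrac{\cos\varphi_0}{\cos\varphi_1}+1)$, after fixing signs and the normalization of $\Phi_2$ along $l_Q$.

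In the two-fibered case, four such terms appear. At $\varphi_0=0$ the symmetry gives $\cos\varphi_{-\ell}=\cos\varphi_\ell$ and $\pi_1\mathcal{I}\mathbf{t}_\ell=\pi_1\mathbf{t}_\ell$, which produces the coefficient $2+\tfrac{\cos\varphi_\ell}{\cos\varphi_{\ell+1}}+\tfrac{\cos\varphi_\ell}{\cos\varphi_{-\ell+1}}$.
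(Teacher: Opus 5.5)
Your overall route is the paper's: locality of $h=d\Gamma_\nu(0)\lambda$, only the indices $\pm\ell$ and $\pm\ell-1$ surviving in Genecand's sums, reduction of each pair of terms to one wedge via $\det Df(Q_i)=\cos\varphi_i/\cos\varphi_{i+1}$, then the reversibility $\mathcal I$. The gap is in Step~3, which is exactly where the nonzero coefficient used in Lemma~\ref{exhaustive} comes from.

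At $i=0$ (and likewise at $i=\pm\ell$) it is false that the first component of $h$ contributes no $\ddot\eta$. The function $h$ contains $-\frac{\hat\tau}{\cos\varphi'}\dot\eta(s)$ in its first component and $-\frac{\mathcal K(s')\hat\tau}{\cos\varphi'}\dot\eta(s)$ in its second. With $\eta(s_0)=\dot\eta(s_0)=0$ one gets $\partial_\varphi h(Q_0)=0$ and $\partial_s h(Q_0)=-\frac{\ddot\eta(s_0)}{\cos\varphi_1}(\hat\tau,\ \mathcal K(s_1)\hat\tau+\cos\varphi_1)^\top$. By \eqref{formula:Dfsphi} this is precisely $\ddot\eta(s_0)\,Df(Q_0)\mathbf e_2$, with $\mathbf e_2=(0,1)^\top$. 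So $Dh(Q_0)\mathbf t_0=\ddot\eta(s_0)\,\pi_1\mathbf t_0\,Df(Q_0)\mathbf e_2$, which is not vertical.

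Your conversion rule is also wrong. One has $\mathbf t_1\wedge\mathbf e_2=\pi_1\mathbf t_1$, which involves $\pi_2\mathbf t_0$ and is not $\frac{\cos\varphi_0}{\cos\varphi_1}\pi_1\mathbf t_0$. Carried out literally, your Step~3 gives the coefficient $\pi_1\mathbf t_0(\pi_1\mathbf t_0-\pi_1\mathbf t_1)$, which can vanish. The two slips only cancel inside the unexplained ``fixing signs''.

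The correct computation (the paper's) runs as follows. First, $\mathbf t_1\wedge Dh(Q_0)\mathbf t_0=\ddot\eta(s_0)\,\pi_1\mathbf t_0\,(Df\mathbf t_0)\wedge(Df\mathbf e_2)=\frac{\cos\varphi_0}{\cos\varphi_1}(\pi_1\mathbf t_0)^2\ddot\eta(s_0)$. Second, $Dh(Q_{-1})=\ddot\eta(s_0)\,\mathbf e_2\,\mathbf e_1^\top Df(Q_{-1})$, which gives $\mathbf t_0\wedge Dh(Q_{-1})\mathbf t_{-1}=(\pi_1\mathbf t_0)^2\ddot\eta(s_0)$. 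Both terms carry a plus sign, and repeating this at $\ell$ and at $-\ell$ yields the stated coefficient.

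Two smaller points. First, the identification of $\mathcal T$ that you left unverified in Step~2 does hold. Using $\mathbf t_1\wedge\mathbf v=\frac{\cos\varphi_0}{\cos\varphi_1}\,\mathbf t_0\wedge Df(Q_0)^{-1}\mathbf v$ and the matrices in \eqref{def:A:compactsup}--\eqref{def:B:compactsup}, one checks $\frac{\cos\varphi_0}{\cos\varphi_1}Df(Q_0)^{-1}A(Q_0,Q_1)+B(Q_0)=\frac{1}{\cos\varphi_0\cos\varphi_1}\mathcal T(s_0,\varphi_0,\varphi_1)$.

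Second, ``proportional'' must be pinned down, because the $\Phi_1$ formula is sensitive to the constant. Differentiating $\mathcal I\circ f^{\ell}=f^{-\ell}\circ\mathcal I$ at the $\mathcal I$-fixed point $Q$ gives $\mathcal I\mathbf t_\ell=Df^{-\ell}(Q)\,\mathcal I\mathbf t_0$, and $\mathcal I\mathbf t_0=\pm\mathbf t_0$ by Remark~\ref{rmk tangent symmetric}. The displayed formula, like the paper's identity $\mathbf t_i=\mathcal I(\mathbf t_{-i})$, corresponds to horizontal $\mathbf t_0$. For vertical $\mathbf t_0$ the second summand of $d(\Phi_1\circ\Gamma_\nu)(0)\lambda$ changes sign. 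The $\Phi_2$ coefficient is unaffected, since it is quadratic in $\mathbf t$.
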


These results enable us to prove, in  Section \ref{sec:prooflemmas}, the following statement.

\begin{lemma}\label{exhaustive}
Let $Q=(\ss_0,\varphi_0)$ be either a one-fibered or a two-fibered symmetric non transverse homoclinic point of the billiard map $f_\gna$. 
In the former $U_Q$ is a one-fibered neighborhood of $Q$ and in the latter, let $\ell>0$ and $U_Q $ (a
symmetric two-fibered neighborhood of $Q$) be as in Definition~\ref{def two fibered}. 

Assume, if $Q$ is one-fibered, that $\pi_1\mathbf{t}_0 \neq 0$ and if $Q\in \mathbb{T}\times \{0\}$ is two-fibered, then $\pi_1 \mathbf{t}_\ell \neq 0$.  
Denote by $\mathcal{C}^\infty_{\mathrm{supp}} (U_Q,\mathbb{R})$ the set of $\mathcal{C}^\infty$ compactly supported at $U_Q$. 

The linear map $d (\Phi \circ \Gamma_\nu)(0): \mathcal{C}^\infty_{\mathrm{supp}} (U_Q,\mathbb{R}) \to \mathbb{R}^2$ is surjective.
\end{lemma}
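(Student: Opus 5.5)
Surjectivity onto $\mathbb{R}^2$ of a linear map only needs two vectors in the image spanning $\mathbb{R}^2$. The plan is to exhibit a ``triangular'' pair of perturbations. One perturbation $\lambda_2$ is chosen so that its $\Phi_1$-component vanishes while its $\Phi_2$-component is nonzero. The other, $\lambda_1$, is chosen so that its $\Phi_1$-component is nonzero. Both are built from smooth bump functions supported in $U_Q$, using the explicit formulas of Lemma~\ref{lem: computing the differential} (one-fibered case) and Lemma~\ref{lem: computing the differential ii} (symmetric two-fibered case).

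For the second component, take $\lambda_2\in\mathcal{C}^\infty_{\mathrm{supp}}(U_Q,\mathbb{R})$ with $\lambda_2(\ss_0)=\derlna_2(\ss_0)=0$ (resp.\ at $\ss_\ell$). Then $\eta$ and $\derlac$ also vanish at $s_0$ (resp.\ $s_\ell$), so the first formula gives $d(\Phi_1\circ\Gamma_\nu)(0)\lambda_2=0$. The second formula gives $\dertwolac(s_0)(\pi_1\mathbf{t}_0)^2(\cos\varphi_0/\cos\varphi_1+1)+\mathbf{w}$. Here the coefficient of $\dertwolac$ is nonzero, because $\pi_1\mathbf{t}_0\neq0$ by assumption and the cosines are positive since we work in $\mathbb{A}_\nu$. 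In the two-fibered case the coefficient $2+\cos\varphi_\ell/\cos\varphi_{\ell+1}+\cos\varphi_\ell/\cos\varphi_{-\ell+1}$ is likewise positive. The remaining task is to make $\dertwolac(s_0)$ large while $\mathbf{w}$ stays bounded. Since $\mathbf{w}$ depends only on $d\Gamma_\nu(0)\lambda$ and not on its derivatives, $\mathbf{w}$ is controlled by the $\mathcal{C}^1$ size of $\lambda$. By Proposition~\ref{cor:expr:fepsilon}, $d\Gamma_\nu(0)\lambda$ involves only $\eta,\derlac$. I therefore take $\lambda_2(\ss)=\delta^2\chi((\ss-\ss_0)/\delta)$ with $\chi(x)=x^2$ near $0$ and $\chi$ compactly supported. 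Then $\lambda_2,\derlna_2=O(\delta)$ in sup norm, so $\mathbf{w}=O(\delta)$, while $\lambda_2''(\ss_0)=2$. (The change of variables $\sigma$ only multiplies $\dertwolac(s_0)$ by a nonzero factor, since the lower-order terms vanish at the point.) For $\delta$ small, $d(\Phi_2\circ\Gamma_\nu)(0)\lambda_2\neq0$. I expect this to be the main obstacle: one must verify carefully from the structure in Proposition~\ref{prop:Genecand} that $\mathbf{w}$ is bounded by $C\|\lambda\|_{\mathcal{C}^1}$ uniformly, including the term $D^2g(\delta_hQ_i,\mathbf{t}_i)$ and the sums over $i$, with the bound uniform in the shrinking support.

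For the first component, note that $\mathbf{t}_0\wedge\mathcal{T}(v)$ is a linear form in $v=(\eta(s_0),\derlac(s_0))$, and $(\eta(s_0),\derlac(s_0))$ can be prescribed arbitrarily by bumps. So it suffices that this form is not identically zero. The matrix $\mathcal{T}$ has determinant $\sin\varphi_0\cos\varphi_0(\cos\varphi_0+\cos\varphi_1)^2$, which can vanish when $\varphi_0=0$. Rather than using the determinant, I will use the second column of $\mathcal{T}$, namely $(0,\cos\varphi_0(\cos\varphi_0+\cos\varphi_1))^\top$. Its wedge with $\mathbf{t}_0$ equals $\pi_1\mathbf{t}_0\,\cos\varphi_0(\cos\varphi_0+\cos\varphi_1)\neq0$. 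Hence choosing $\lambda_1$ with $\lambda_1(\ss_0)=0$ and $\derlna_1(\ss_0)\neq0$ gives $d(\Phi_1\circ\Gamma_\nu)(0)\lambda_1\neq0$.

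In the two-fibered case, $\varphi_0=0$, so the coefficient of $\derlac(s_\ell)$ is the sum of two terms. The first is $\pi_1\mathbf{t}_\ell\cos\varphi_\ell(\cos\varphi_\ell+\cos\varphi_{\ell+1})/(\cos\varphi_\ell\cos\varphi_{\ell+1})$. The second is $\pi_1\mathcal{I}(\mathbf{t}_\ell)\cos\varphi_\ell(\cos\varphi_\ell+\cos\varphi_{-\ell+1})/(\cos\varphi_\ell\cos\varphi_{-\ell+1})$. Since $\pi_1\mathcal{I}(\mathbf{t}_\ell)=\pi_1\mathbf{t}_\ell$, both terms have the same sign, so their sum is nonzero. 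Then $d\Psi(0)\lambda_1$ and $d\Psi(0)\lambda_2$ form a triangular, hence linearly independent, pair, which proves surjectivity.
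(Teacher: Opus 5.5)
Your proposal is correct and is essentially the paper's proof. You use the same triangular pair: $\lambda_1$ with $\lambda_1(\ss_0)=0\neq\derlna_1(\ss_0)$ (resp.\ at $\ss_\ell$), read off through the second column of $\mathcal{T}$, and $\lambda_2$ with vanishing $1$-jet and nonzero second derivative. The only variation is how $\mathbf{w}$ is handled: the paper prescribes $\dertwolna_2(\ss_0)$ to compensate for $\mathbf{w}$, while your rescaled bump makes $\mathbf{w}=O(\delta)$. In fact the obstacle you flag is not there. For such $\lambda_2$ the field $d\Gamma_\nu(0)\lambda_2$ vanishes at every point of the orbit of $Q$ and near $\mathcal{P}$, so all the displacements $\delta_h Q_i$ in Proposition~\ref{prop:Genecand} vanish, and hence so does $\mathbf{w}$.
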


Let us show now that $d\Psi(0)$ is surjective on $\mathcal{C}^\omega_r(\mathbb{T})$, in fact in the set of trigonometric polynomials. We follow now Zehnder's approach. Recall that, given two vectors $\mathbf{u},\mathbf{v}\in\mathbb{R}^2$, the notation $\vert \mathbf{u}\ \ \mathbf{v}\vert$ stands for the determinant of the matrix $(\mathbf{u}, \mathbf{v})$.

\begin{proposition}\label{invertible}
Assume that the billiard map $f_\gna$ has either $Q=(\ss_0,\varphi_0)$ a one-fibered non transverse homoclinic point with $\pi_1\mathbf{t}_0\neq 0$ or $Q=(\ss_0,0)$ a two-fibered symmetric non transverse homoclinic point with $\pi_1 \mathbf{t}_\ell \neq 0$ where $\ell $ is defined in Definition~\ref{def two fibered}. 

Then there exist trigonometric polynomials $\lambda_1,\lambda_2$ such that
\[
\left | d(\Phi\circ\Gamma_\nu)(0)\lambda_1  \quad  d(\Phi\circ\Gamma_\nu)(0)\lambda_2\right | \neq 0\, .
\]
As a consequence, $d(\Phi \circ \Gamma_\nu)(0): \mathcal{C}^{\omega}_r(\mathbb{T},\mathbb{R}) \to \mathbb{R}^2$ is surjective. 
\end{proposition}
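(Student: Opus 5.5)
By Lemma~\ref{exhaustive}, the linear map $L:=d(\Phi\circ\Gamma_\nu)(0)$ is already surjective on $\mathcal{C}^\infty_{\mathrm{supp}}(U_Q,\mathbb{R})$. The plan is to transfer this to trigonometric polynomials by density, using that $L$ is continuous in a finite-order $\mathcal{C}^k$ topology.

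First, pick $\mu_1,\mu_2\in \mathcal{C}^\infty_{\mathrm{supp}}(U_Q,\mathbb{R})$ with $L\mu_1=e_1$ and $L\mu_2=e_2$. Then
\[
\left| L\mu_1 \quad L\mu_2\right| =1\neq 0.
\]

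Second, recall that $\Psi=\Phi\circ\Gamma_\nu$ is $\mathcal{C}^1$-Fr\'echet differentiable on $\Bl{\varrho}\subset\mathcal{C}^k(\mathbb{T},\mathbb{R})$, by Propositions~\ref{prop Frechet Gamma nu} and~\ref{prop:Genecand}. Hence $L$ is a bounded linear functional $\mathcal{C}^k(\mathbb{T},\mathbb{R})\to\mathbb{R}^2$: there is $C>0$ with $\|L\lambda\|\le C\|\lambda\|_{\mathcal{C}^k}$. The formula of Lemma~\ref{lem: computing the differential} (or Lemma~\ref{lem: computing the differential ii}) is only the expression of this same bounded operator on compactly supported functions, so no separate argument is needed there.

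Third, approximate each $\mu_i$ in the $\mathcal{C}^k$ norm by trigonometric polynomials. Since $\mu_i$ is $\mathcal{C}^\infty$ on $\mathbb{T}$, its Fourier series converges in every $\mathcal{C}^k$ norm; alternatively take Fej\'er means, which converge in $\mathcal{C}^k$ because they commute with differentiation. Truncating gives trigonometric polynomials $\lambda_i$ with $\|\lambda_i-\mu_i\|_{\mathcal{C}^k}<\delta$. Boundedness then yields $\|L\lambda_i-e_i\|\le C\delta$. The determinant is continuous, so for $\delta$ small enough
\[
\left| L\lambda_1 \quad L\lambda_2\right| \neq 0.
\]
Trigonometric polynomials are entire, so they lie in $\mathcal{C}^\omega_r(\mathbb{T},\mathbb{R})$ for every $r$. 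Since $L\lambda_1,L\lambda_2$ form a basis of $\mathbb{R}^2$, surjectivity on $\mathcal{C}^\omega_r$, and indeed on the span of $\lambda_1,\lambda_2$, follows.

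The main point to verify is the boundedness in the second step. One must check that the domain on which $\Psi$ is differentiable contains all of $\mathcal{C}^k$ near $0$, and not merely the compactly supported functions. This requires that $\Gamma_\nu$ is defined on the full $\mathcal{C}^k$ ball $\Bl{\varrho}$, and that $\Phi$ depends only on the map restricted to $\mathbb{A}_\nu$, which contains the homoclinic orbit (Lemma~\ref{lazutkin}). Both facts are provided by the earlier propositions, so no further estimate is required. The nonvanishing hypotheses $\pi_1\mathbf{t}_0\neq0$ (respectively $\pi_1\mathbf{t}_\ell\neq0$) enter only through Lemma~\ref{exhaustive}.
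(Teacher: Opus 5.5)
Your proposal is correct and is essentially the paper's argument: Lemma~\ref{exhaustive} gives surjectivity on $\mathcal{C}^\infty_{\mathrm{supp}}(U_Q,\mathbb{R})$, the $\mathcal{C}^1$-Fr\'echet differentiability of $\Phi\circ\Gamma_\nu$ makes $d(\Phi\circ\Gamma_\nu)(0)$ continuous, and truncating Fourier series produces the trigonometric polynomials. The only difference is that you measure the approximation in the $\mathcal{C}^k$ norm, where the Fr\'echet differentiability is actually established, whereas the paper invokes the $\mathcal{C}^2$ norm; your choice is if anything the better-justified one.
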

\begin{proof}
By Lemma~\ref{exhaustive} we know that
$d(\Phi\circ\Gamma_\nu)(0)\colon \mathcal{C}^\infty_{\mathrm{supp}}(U_Q,\mathbb{R}) \to \mathbb{R}^2$ is surjective; this means that there exist $\tilde{\lambda}_1,\tilde{\lambda}_2\in \mathcal{C}^\infty_{\mathrm{supp}}(U_Q,\mathbb{R}) \subset \mathcal{C}^\infty(\mathbb{T},\mathbb{R})$ such that
\[
\left | d(\Phi\circ\Gamma_\nu)(0)\tilde{\lambda}_1  \quad  d(\Phi\circ\Gamma_\nu)(0)\tilde{\lambda}_2\right | \neq 0\, .
\]
Since the functional $\Phi\circ \Gamma_\nu$ is $\mathcal{C}^1$-Fr\'echet differentiable, this last condition will still hold for sufficiently small perturbations of $\tilde{\lambda}_1,\tilde{\lambda}_2$ in the $\mathcal{C}^2$-norm. To obtain the required perturbations $\lambda_1,\lambda_2$, we can then consider the trigonometric polynomials obtained by truncating the Fourier series of $\tilde{\lambda}_1,\tilde{\lambda}_2$ to high enough order. The result follows.
\end{proof}

\subsection{Analytic perturbation: transversality of the invariant manifolds}
\label{sec:analytic perturbations}
We are now going to show that we can perturb analytically our initial billiard table $\Omega (\gamma)$ so that the perturbed billiard map has a periodic point with a transverse homoclinic point. 

From Proposition~\ref{invertible}, we can then deduce the following result.

\begin{corollary}\label{corollary inverstible}
Let $Q$ be a one-fibered non transverse homoclinic point for $f_\gna$ with $\pi_1 \mathbf{t}_0\neq 0$ or a two-fibered symmetric non transverse homoclinic point to a hyperbolic periodic orbit $\mathcal{P}$. Then, for any $\varepsilon>0$ small enough, there exists a trigonometric polynomial  $\lna\in \mathcal{C}^\omega(\mathbb{T},\mathbb{R})$ with $\|\lna\|_r \leq \varepsilon$ such that the billiard map $f_{\dgnalna}$, associated to the perturbed billiard table $\dgnalna=\gna+\lna \cdot \nna $, has a transverse homoclinic point $\widetilde{Q}$ which is the continuation of $Q$, associated to the continuation $\widetilde{\mathcal{P}}$ of $\mathcal{P}$.    
\end{corollary}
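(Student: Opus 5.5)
The plan is to restrict $\Psi=\Phi\circ\Gamma_\nu$ to the two-dimensional space spanned by the trigonometric polynomials $\lambda_1,\lambda_2$ given by Proposition~\ref{invertible}. We then apply the inverse function theorem to hit a target value $(0,b)$ with $b\neq 0$ small. (In the two-fibered case we assume $\pi_1\mathbf{t}_\ell\neq 0$, as in Proposition~\ref{invertible}.)

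First, consider the map
\[
F\colon (x_1,x_2)\mapsto \Psi(x_1\lambda_1+x_2\lambda_2),
\]
defined for $(x_1,x_2)$ in a small ball of $\mathbb{R}^2$. Since $\lambda_1,\lambda_2$ are trigonometric polynomials, they lie in $\mathcal{C}^k$ for every $k$. Hence $x_1\lambda_1+x_2\lambda_2\in\Bl{\varrho}$ for $|x|$ small, and we may take $\varrho\le\varrho_\nu$ as in Proposition~\ref{prop Frechet Gamma nu}. Moreover, for $\varrho$ small, $\Gamma_\nu(\lambda)\in\Bf{b_\nu\varrho}{f_\ga}$ falls inside the domain where $\Phi$ is $\mathcal{C}^1$, by Proposition~\ref{prop:Genecand}. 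Therefore $F$ is $\mathcal{C}^1$ with $F(0)=\Phi(f_\ga)=(0,0)$. Its Jacobian at $0$ has columns $d\Psi(0)\lambda_1$ and $d\Psi(0)\lambda_2$, which are linearly independent by Proposition~\ref{invertible}.

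By the inverse function theorem, $F$ is a $\mathcal{C}^1$ diffeomorphism from a neighborhood $V$ of $0$ onto a neighborhood of $(0,0)$, and $F^{-1}$ is continuous with $F^{-1}(0,0)=0$. Given $\varepsilon>0$, choose $b\neq 0$ so small that $x=F^{-1}(0,b)$ satisfies
\[
|x_1|\,\|\lambda_1\|_r+|x_2|\,\|\lambda_2\|_r\le\varepsilon,
\]
which is possible since trigonometric polynomials have finite $\|\cdot\|_r$ norm. Set $\lambda=x_1\lambda_1+x_2\lambda_2$; this is again a trigonometric polynomial. Shrinking $\varepsilon$ further if needed, $\dgnalna$ stays strictly convex (Corollary~\ref{prop Frechet Gamma nu noal}), and by Lemma~\ref{lazutkin} the continuation $\widetilde{\mathcal P}$ of the hyperbolic orbit still lies in $\mathbb{A}_\nu$.

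It remains to read off transversality from $\Psi(\lambda)=(0,b)$. The first component vanishing means $Q_0^{\uns}=Q_0^{\sta}=:\widetilde Q$ on the line $l_Q$, so $\widetilde Q$ is a homoclinic point of $\widetilde{\mathcal P}$ close to $Q$. The second component $b\neq0$ means the tangent vectors $\mathbf t^{\uns}$ and $\mathbf t^{\sta}$ at $\widetilde Q$ differ in the direction normal to $\mathbf t_0$, so the two manifolds cross with nonzero angle, which is transversality. The point $\widetilde Q$ is a homoclinic point for $f_{\dgalac}$. Since $f_{\dgnalna}=S^{-1}\circ f_{\dgalac}\circ S$ and $S$ is a diffeomorphism, $S^{-1}(\widetilde Q)$ is a transverse homoclinic point for $f_{\dgnalna}$. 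It is the continuation of $Q$ because it was constructed by continuity in $\lambda$ along $l_Q$.

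The main obstacle is the functional setup rather than the inverse function step. One must make sure that $\Psi$ is genuinely $\mathcal{C}^1$ on the finite-dimensional slice, which needs compatibility of the $\mathcal{C}^k$-topology used by $\Gamma_\nu$ with the $\mathcal{C}^{k-3}$ topology on maps used by $\Phi$. This is what Propositions~\ref{prop Frechet Gamma nu} and~\ref{prop:Genecand} provide. One must also check that the $\|\cdot\|_r$-smallness and the $\mathcal{C}^k$-smallness of $\lambda$ are both controlled. Because we work on a fixed two-dimensional space of trigonometric polynomials, all these norms are equivalent to $|x|$, so the issue reduces to the equivalence of norms on $\mathbb{R}^2$.
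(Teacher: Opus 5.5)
Your proposal is correct and is essentially the paper's argument. You restrict $\Phi\circ\Gamma_\nu$ to the span of the trigonometric polynomials $\lambda_1,\lambda_2$ from Proposition~\ref{invertible} and solve $\Phi\circ\Gamma_\nu(c_1\lambda_1+c_2\lambda_2)=(0,b)$ with $b\neq 0$ small. You then bound $\|\lambda\|_r$ by $|c_1|\|\lambda_1\|_r+|c_2|\|\lambda_2\|_r$ and transport the transverse homoclinic point back to $f_{\dgnalna}$ via the conjugacy $S$.

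The only difference is cosmetic. You apply the inverse function theorem to $(c_1,c_2)\mapsto\Phi\circ\Gamma_\nu(c_1\lambda_1+c_2\lambda_2)$, while the paper applies the implicit function theorem to $(c_1,c_2,b)\mapsto\Phi\circ\Gamma_\nu(c_1\lambda_1+c_2\lambda_2)-(0,b)$. Your explicit assumption $\pi_1\mathbf{t}_\ell\neq 0$ in the two-fibered case is exactly the hypothesis the paper uses implicitly through Proposition~\ref{invertible}.
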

\begin{proof}
Let $\lambda_1,\lambda_2$ be trigonometric polynomials given by Proposition~\ref{invertible}. Consider the function
\begin{equation*}
F\colon \left\{
\begin{array}{rcl}
\R^3 &\to& \mathbb{R}^2\\
(c_1,c_2,b) &\mapsto& F(c_1,c_2,b)=\Phi\circ\Gamma_\nu(c_1\lambda_1+c_2\lambda_2)-(0,b)\, .
\end{array}
\right. 
\end{equation*}
Observe that $F(0,0,0)=(0,0)$, and $\frac{\partial F}{\partial(c_1,c_2)}(0,0,0)$ is invertible by Proposition~\ref{invertible}. By Implicit Function Theorem, there exists $c_1=c_1(b),c_2=c_2(b)$ such that
\[
F(c_1,c_2,b)=(0,0)\, ,
\]
i.e., the corresponding perturbation 
$ \lna= c_1(b) \lambda_1 + c_2(b) \lambda_2$ 
of the billiard table provides a billiard map 
$f_{\dgalac}$, associated then to the boundary 
$\dgalac (s)= \ga(s) + \narc(s) \lac(s)$ (see \eqref{def gamma lambda arc length}) which exhibits a transverse homoclinic point $\widetilde{\mathcal{Q}}$ as soon as $b\neq 0$. On the other hand, as $c_1(0) = c_2(0)=0$ and $c_1,c_2$ are smooth functions of $b$, 
\[
\|\lna\|_r \leq |c_1(b)| \|\lambda_1\|_r + |c_2(b) | \|\lambda_2\|_r \leq \varepsilon
\]
provided $b$ is small enough. 

Finally, we stress that, as $f_{\dgnalna} = S^{-1} \circ f_{\dgalac} \circ S $, (see \eqref{conjugation with arclength} and \eqref{change arc length}), and $S$ is a diffeomorphism the map $f_{\dgnalna}$ also has a transversal homoclinic point $\widetilde Q=S^{-1}(\widetilde{\mathcal{Q}})$. 
\end{proof}

\subsection{Density of $\mathcal{V}^{p/q}_{r,N}$. End of the proof of Theorem~\ref{main thm 2 bis}}\label{sec2:final proof} 
Let $\gna \in \mathcal{B}_r$. Let us observe that, by Proposition~\ref{prop:hyperbolicfixedpoint:section3}, we can make an arbitrarily small perturbation through a trigonometric polynomial $\lna\in \mathcal{B}_r$ such that $\dgnalna=\gna+\lna\cdot \nna$ is a strongly convex analytic billiard and that the associated billiard map $f_{\dgnalna}$ has a unique periodic orbit $\mathcal{P}= (P_i)_{i=0,\dots, q-1}$ in the Aubry Mather set of rotation number $\frac{p}{q}$, which moreover is hyperbolic. Therefore property in item~\ref{it_a} of Theorem~\ref{main thm 2 bis} is proven. Let us denote again by $\gna$ the new billiard parametrization $\dgnalna$.

Item~\ref{it_b} in Theorem~\ref{main thm 2 bis} is a consequence of the following proposition. 

\begin{prop}\label{prop:existence homoclinic point:section3}
Let $r>0$ and $\frac p q \in\mathbb{Q}/\mathbb{Z}$. Let $\gna\in  \mathcal{B}_r$ be such that its billiard map has a unique periodic orbit $\mathcal{P}$ in the Aubry-Mather set of rotation number $\frac p q$, which is hyperbolic. 
For any $\epsilon>0$ there exists a trigonometric polynomial $\lambda\in \mathcal{C}^\omega_r(\mathbb{T},\mathbb{R})$ with $\Vert \lambda\Vert_r<\epsilon$ such that, for the billiard map associated to $\gna+\lna\cdot \nna$, the set of transverse homoclinic points to the continuation of $\mathcal{P}$ \( ( \textit{which remains the only periodic orbit in the Aubry-Mather set of rotation number $\frac p q$} )\) is not empty.
\end{prop}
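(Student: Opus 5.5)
We combine the existence of homoclinic points from Aubry--Mather theory with Corollary~\ref{corollary inverstible}. If $f_\gna$ already has a transverse homoclinic point to $\mathcal{P}$, take $\lambda=0$. Otherwise, Proposition~\ref{prop:homoclinicpq} gives a homoclinic orbit $\mathcal{Q}\subset\mathcal{M}_{\frac pq}(\Omega(\gna))$, all of whose points are non transverse, and whose projection on the first coordinate is injective. We pick $Q\in\mathcal{Q}$ and perturb around it.

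\textbf{Step 1: $Q$ satisfies the hypotheses of Corollary~\ref{corollary inverstible}.} We take $Q$ close to $\mathcal{P}$, in $W^{\sta}_{\mathrm{loc}}(P)$ for some $P\in\mathcal{P}$. Injectivity of the projection only rules out coincidences among the points $\ss_j$. To obtain a closed one-fibered neighborhood $U_Q$, we also need $\ss_0$ to be isolated from $\{\ss_j\}_{j\neq0}$. The points $\ss_j$ accumulate only at the $q$ values $\ss(P_i)$ as $j\to\pm\infty$, and $\ss_0$ is not one of them. Hence $\ss_0$ is at positive distance from $\{\ss_j\}_{j\ne 0}$, and $Q$ is one-fibered in the sense of Definition~\ref{definition one fibered}. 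For $\pi_1\mathbf{t}_0\neq0$: since $\mathcal{P}$ lies in the Aubry--Mather set, which has no conjugate points, $W^{\sta}_{\mathrm{loc}}(P)$ is a graph over $\ss$ (as remarked after Definition~\ref{definition local invariant manifold:section3}). So its tangent at $Q$ is not vertical, that is, $\pi_1\mathbf{t}_0\neq0$. Note that $\mathcal{P}$ and $Q$ lie in $\mathbb{A}_\nu$ by Lemma~\ref{lazutkin}, so the perturbative machinery of Proposition~\ref{prop Frechet Gamma nu} applies.

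\textbf{Step 2: apply Corollary~\ref{corollary inverstible}.} For any $\epsilon>0$, the corollary gives a trigonometric polynomial $\lambda$ with $\|\lambda\|_r<\epsilon$ such that $f_{\gna+\lambda\nna}$ has a transverse homoclinic point $\widetilde Q$ to the continuation $\widetilde{\mathcal{P}}$. Shrinking $\epsilon$, we must still check two things.
\begin{enumerate}
\item $\gna+\lambda\nna\in\mathcal{B}_r$. This holds because $\mathcal{B}_r$ is open and $\|\lambda\|_r$ controls the $\mathcal{C}^k$ norm on $\mathbb{T}$ via Cauchy estimates.
\item $\widetilde{\mathcal{P}}$ is still the unique periodic orbit in $\mathcal{M}_{\frac pq}$.
\end{enumerate}

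\textbf{Main obstacle: persistence of uniqueness.} For the second point, we use upper semicontinuity of the Aubry--Mather set in the $\mathcal{C}^1$ topology of twist maps. For a perturbation small in $\mathcal{C}^2$, any periodic orbit of rotation number $\frac pq$ in $\mathcal{M}_{\frac pq}$ of the perturbed map lies close to $\mathcal{M}_{\frac pq}(\Omega(\gna))$. Its points are then near $\mathcal{P}$, or near the compact pieces of the connecting homoclinic orbits.

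\begin{itemize}
\item Near $\mathcal{P}$: hyperbolicity and local uniqueness give that the only periodic orbit there is $\widetilde{\mathcal{P}}$.
\item Globally: we use the variational characterization. Minimal periodic configurations of type $(p,q)$ converge, as the perturbation tends to zero, to minimal periodic configurations of the unperturbed action. By uniqueness, the only such limit is $\mathcal{P}$. So for small perturbations every minimizer lies near $\mathcal{P}$, and hence equals $\widetilde{\mathcal{P}}$ by the previous item.
\end{itemize}

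This argument, a compactness plus nondegeneracy argument on the action functional, is the delicate part. Everything else follows directly from the earlier results.
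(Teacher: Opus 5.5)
Your proposal is correct and follows the paper's proof. Proposition~\ref{prop:homoclinicpq} gives a homoclinic orbit in $\mathcal{M}_{\frac pq}$ whose points are one-fibered; you pass to a point with non-vertical tangent (via the local stable graph, where the paper invokes the twist condition); and Corollary~\ref{corollary inverstible} produces the transverse homoclinic point. What you add is an actual justification of two steps the paper only asserts. The first is the closed one-fibered neighbourhood, where $\ss_0\neq \ss(P_i)$ follows from the graph property of Lemma~\ref{lem:injectivity} applied to $\mathcal{P}\cup\mathcal{O}(Q)$. The second is the persistence of uniqueness of the periodic Aubry--Mather orbit, which you get from convergence of periodic maximizers of the action together with local uniqueness near the hyperbolic orbit.
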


\begin{proof}
By Proposition~\ref{prop:homoclinicpq}, as $\mathcal{P}$ is the unique periodic orbit in the Aubry-Mather set $\mathcal{M}_{\frac{p}{q}} (\Omega)$, there exists a homoclinic point $Q\in  W^\uns(\mathcal{P}) \cap W^{\sta}(\mathcal{P})\setminus\{\mathcal{P}\}$. Moreover, any point of the orbit of $Q$ is one-fibered and by the twist condition (up to select another point of the orbit of $Q$), we can assume that, $\mathbf{t_0}$, the tangent vector at $Q$ of $W^{\sta}({\mathcal{P}})$ satisfies $\pi_1 \mathbf{t_0}\neq 0$. 
Thus, for a given $\epsilon>0$, by Corollary~\ref{corollary inverstible}, there exists a trigonometric polynomial $\lambda\in \mathcal{C}^\omega_r(\mathbb{T},\mathbb{R})$, $\|\lambda\|_r<\epsilon$, so that $\tilde Q$, the continuation of $Q$, is a transverse homoclinic point for $\tilde{\mathcal{P}}$, the continuation of $\mathcal{P}$. For $\epsilon>0$ small enough, $\mathcal{P}$ is still the only periodic orbit in the Aubry-Mather set of rotation number $\frac p q$ and moreover it is hyperbolic.
\end{proof}

From Proposition~\ref{prop:existence homoclinic point:section3}, and renaming again the new $\dgnalna$ by $\gna$, we assume that the billiard map $f_\gna$ associated to $\gna \in \mathcal{B}_r$ has at least one transverse homoclinic point which belongs to $W^{\sta}(\mathcal{P}) \cap W^{\uns}(\mathcal{P})\setminus\{\mathcal{P}\}$ with $\mathcal{P}$ the unique periodic orbit in $\mathcal{M}_{\frac{p}{q}}(\Omega)$, which moreover is hyperbolic.
In the remainder of this section we prove the density of the property in item~\ref{it_c}. 

\begin{remark}\label{rmk cases non transverse homoclinic}
As already observed at the end of Section~\ref{sec:transversality one point}, recall that, if $Q \in W^{\sta}(\mathcal{P}) \cap W^{\uns} (\mathcal{P})\setminus\{\mathcal{P}\}$ is a non transverse homoclinic point, by Propositions~\ref{nec cond for not one fib:section3} and~\ref{good property in the bas case:section3}, there are only two possibilities.
\begin{enumerate}[label=(\alph*)]
\item \label{pos_a} Either there exists $\ell_Q \in \mathbb{Z}$ such that $f^{\ell_Q}(Q) \in W_{\mathrm{loc}}^{\sta}(\mathcal{P}) \cap W^{\uns} (\mathcal{P})$ is a non-transverse one-fibered homoclinic point (see Definitions~\ref{defi one fibered orbit} and~\ref{definition localised fibered:section3}). In particular, since the periodic orbit $\mathcal{P}$ belongs to the Aubry-Mather set $\mathcal{M}_{\frac{p}{q}} (\Omega)$, $W_{\mathrm{loc}}^{\sta}(\mathcal{P})$ satisfies $\pi_1 \mathbf{t} \neq 0$ with $\mathbf{t}$ the tangent vector of $W_{\mathrm{loc}}^s(\mathcal{P})$ at $f^{\ell_Q} (Q)$.
\item \label{pos_b} Or, $q=2$ and $Q$ is a non-transverse homoclinic point to $\mathcal{P}\in \mathcal{M}_{\frac{1}{2}}(\Omega)$ such that, for some $\ell_Q \in \mathbb{Z}$, the point $f^{\ell_Q} (Q)$ is a symmetric two fibered homoclinic point (see Definition~\ref{def two fibered}). 
\end{enumerate}

That is, for any non-transverse homoclinic point $Q$ there exists $\ell_Q$ such that $f^{\ell_Q}(Q)$ satisfy the conditions of Corollary \ref{corollary inverstible}. 
\end{remark}

We first perform a perturbation to avoid the degenerate case of coincident branches. 

\begin{lemma}\label{non coincident branches} 
Assume that $f=f_\gna$ has a hyperbolic periodic orbit $\mathcal{P} = \big \{P_i\big\}_{i=0,\cdots, q-1}$ belonging to the Aubry-Mather set of rotation number $\frac{p}{q}$.  
For $\diamond=\uns,\sta$, consider $\{W^{i,\diamond}\}_{i=0,\cdots,2q-1}$ the branches  of 
$W^\diamond(\mathcal{P})$.  

Then, for all $\epsilon>0$ there exists a trigonometric polynomial $\lambda$, with  $\|\lambda\|_r <\epsilon$, such that  the billiard map $f_{\dgnalna}$ associated to $\dgnalna$ has a periodic orbit, which is the continuation of $\mathcal{P}$, whose stable and unstable manifolds satisfy $W^{i,\sta}\neq W^{j,\uns}$, for all $i,j \in \{1, \cdots, 2q-1\}$.
\end{lemma}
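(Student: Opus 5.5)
The plan is to treat a coincidence $W^{i,\sta}=W^{j,\uns}$ as a degenerate (saddle-connection) situation in which \emph{every} point of the common branch is a non-transverse homoclinic point. We then break all such coincidences at once by prescribing a nonzero first component of $\Phi$ at one suitably chosen point of each coincident pair, with the perturbation built exactly as in Corollary~\ref{corollary inverstible}.

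\textbf{Step 1 (choice of points).} There are at most $(2q)^2$ pairs $(i,j)$ with $W^{i,\sta}=W^{j,\uns}$; call them $(i_k,j_k)$, $k=1,\dots,K$. If $K=0$ there is nothing to do. For each $k$ the common branch is a curve of homoclinic points, and we choose one point $Q_k$ on it.
\begin{itemize}
\item We take $Q_k$ in $W^{i_k,\sta}_{\mathrm{loc}}$, where the branch is a graph over $\ss$, since $\mathcal{P}\in\mathcal{M}_{\frac pq}$. This gives $\pi_1\mathbf{t}_0\neq0$.
\item By Lemma~\ref{lemma at most two fibered points:section3}, Proposition~\ref{nec cond for not one fib:section3} and Proposition~\ref{good property in the bas case:section3}, after moving along the orbit we may take $Q_k$ to be either localised one-fibered, or, when $q=2$, symmetric two-fibered.
\end{itemize}
The choice has freedom: $Q_k$ ranges over an open arc of the branch. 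The set of fibers $\{\ss=\ss_m(Q_{k'})\}$ visited by the orbits of the previously chosen points is countable. We can therefore pick $Q_k$ so that its one-fibered (or symmetric two-fibered) neighbourhood $U_{Q_k}$ contains no point of the orbits of the other $Q_{k'}$, and so that the sets $U_{Q_k}$ are pairwise disjoint. Shrinking the neighbourhoods is harmless.

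\textbf{Step 2 (joint surjectivity).} Consider
\[
\Psi_K:=(\Phi^{(1)}_1\circ\Gamma_\nu,\dots,\Phi^{(K)}_1\circ\Gamma_\nu)\colon \Bl{\varrho}\to\mathbb{R}^K ,
\]
where $\Phi^{(k)}$ is the splitting functional of \eqref{definition Phi, Phistauns} at $Q_k$. It is $\mathcal{C}^1$ by Propositions~\ref{prop Frechet Gamma nu} and~\ref{prop:Genecand}.

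For $\lambda$ supported in $U_{Q_k}$, the formula of Proposition~\ref{prop:Genecand} only involves the values of $d\Gamma_\nu(0)\lambda$ on the orbit of $Q_{k'}$. By Proposition~\ref{cor:expr:fepsilon} these values vanish unless a point of that orbit lies on a fiber through $U_{Q_k}$ (as a starting or arrival point). By the choice in Step 1 this happens only for $k'=k$. Hence, restricted to $\bigoplus_k\mathcal{C}^\infty_{\mathrm{supp}}(U_{Q_k})$, the differential $d\Psi_K(0)$ is diagonal.

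Each diagonal entry is nonzero: by Lemma~\ref{lem: computing the differential} (resp. Lemma~\ref{lem: computing the differential ii}) and Lemma~\ref{exhaustive}, its first component is onto $\mathbb{R}$. Truncating Fourier series as in Proposition~\ref{invertible}, we obtain trigonometric polynomials $\lambda_1,\dots,\lambda_K$ for which the $K\times K$ matrix $d\Psi_K(0)(\lambda_1,\dots,\lambda_K)$ is invertible.

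\textbf{Step 3 (conclusion).} By the implicit function theorem, as in Corollary~\ref{corollary inverstible}, for each small $a\in\mathbb{R}^K$ with all $a_k\neq0$ there is $\lambda=\sum c_k(a)\lambda_k$ with $\|\lambda\|_r<\epsilon$ and $\Psi_K(\lambda)=a$. For the perturbed map, the continued local branches $W^{j_k,\uns}$ and $W^{i_k,\sta}$ then cross the normal segment $l_{Q_k}$ at distinct points.

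Pairs that were not coincident remain non-coincident provided $\lambda$ is small enough. To see this, note that non-coincidence is witnessed by a point $x$ of a compact fundamental domain of $W^{i,\sta}_{\mathrm{loc}}$ lying off $W^{j,\uns}$. If the branches became coincident after perturbation, such a point would have to be reached along $W^{j,\uns}$ within bounded length. This is excluded by $\mathcal{C}^1$-continuity of compact pieces of the invariant manifolds, together with the finiteness argument used for the openness of $\mathcal{V}^{p/q}_{r,N}$. The same argument shows that the uniqueness and hyperbolicity of $\mathcal{P}$ in the Aubry-Mather set persist.

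\textbf{Main obstacle.} The delicate point is to justify that $Q^\uns_0\neq Q^\sta_0$ really forces $W^{i_k,\sta}\neq W^{j_k,\uns}$ as sets. A priori, another far-away arc of $W^{j_k,\uns}$ could pass through $Q^\sta_0$. I would rule this out by working with a compact fundamental domain of the local stable branch near $P_{i_k}$. If the perturbed branches coincided, that domain would be contained in a piece of $W^{j_k,\uns}$ of length bounded uniformly in $\lambda$. For $\lambda=0$ this piece is the original coincident arc, so by continuity it is a $\mathcal{C}^1$-small deformation of that arc, and it must cross $l_{Q_k}$ only at $Q^\uns_0$. This contradicts $a_k\neq0$.
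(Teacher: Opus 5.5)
Your Steps~1--2 are essentially sound. They amount to a simultaneous version of what the paper does one pair at a time. You choose the points $Q_k$ so that their fibered neighbourhoods are disjoint and avoided by the other orbits. This makes $d\Psi_K(0)$ diagonal on $\bigoplus_k\mathcal{C}^\infty_{\mathrm{supp}}(U_{Q_k})$, because $d\Phi_1$ only sees $d\Gamma_\nu(0)\lambda$ along the orbit.

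\textbf{The gap is in Step~3}, exactly at the point you flag. Prescribing $\Phi^{(k)}_1=a_k\neq0$ only says that the continued \emph{local} arcs $W^{\uns}_\rho(\tilde P_n)$ and $W^{\sta}_\rho(\tilde P_m)$ cross $l_{Q_k}$ at different points. If the perturbed branches coincided, the common curve would be a connection from $\tilde P_n$ to $\tilde P_m$ containing both crossing points, i.e.\ returning near $Q_k$ a second time.

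You exclude this by asserting that the perturbed connection has length bounded uniformly in $\lambda$, but that is not proved. $\mathcal{C}^1$-continuity of invariant manifolds controls only pieces of a length fixed in advance. It says nothing about where the perturbed unstable branch goes after passing near $\tilde P_m$ slightly off $W^{\sta}_{\mathrm{loc}}(\tilde P_m)$.

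This is not an empty worry when several pairs coincide. Take a cycle $W^{\uns}(P_n)=W^{\sta}(P_m)$, $W^{\uns}(P_m)=W^{\sta}(P_n)$, as for the $2$-periodic orbit of an ellipse. The perturbed unstable branch of $\tilde P_n$ could shadow $\Gamma$, turn along the second connection, pass near $\tilde P_n$, and shadow $\Gamma$ again before landing on $W^{\sta}_{\mathrm{loc}}(\tilde P_m)$. Nothing in your argument excludes such a multi-round connection for the $\lambda$ given by the implicit function theorem. Your argument that the originally non-coincident pairs stay non-coincident uses the same unproved bound.

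The fix is to break each coincidence by a \emph{transverse} intersection rather than by a splitting, which is what the paper does. For a coincident pair it takes $Q$ on the common branch and moves to an iterate satisfying the hypotheses of Corollary~\ref{corollary inverstible}. It then solves $\Phi\circ\Gamma_\nu(\lambda)=(0,b)$ with $b\neq0$.

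A transverse intersection point is a local obstruction to coincidence. If the analytic branches coincided as sets, the two local arcs through a common point would be arcs of the same curve, hence tangent there. A transverse intersection also persists under all further sufficiently small perturbations. So pairs already broken stay broken, and finiteness of the set of branches finishes the argument without any length control. (The paper does not discuss pairs that were non-coincident from the outset either.)

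In your scheme this means prescribing both components, $\Phi^{(k)}=(0,b_k)$ with $b_k\neq0$, using the second component as in Lemma~\ref{exhaustive}. Alternatively, treat the pairs one at a time as the paper does.
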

\begin{proof}
The proof follows straightforwardly from Corollary~\ref{corollary inverstible}. 
If for some $i,j$ one has $W^{i,\sta} = W^{j,\uns}$, taking any $Q \in W^{i,\sta} = W^{j,\uns}$ (which is then a homoclinic point), there exists $\ell_Q$ such that $f^{\ell_Q}(Q)$ satisfies the conditions on  Corollary \ref{corollary inverstible} and, after an arbitrarily small perturbation of the billiard table, the continuation of the branches, $\widetilde{W}^{i,\sta}$, $\widetilde{W}^{i,\uns}$ intersect transversally in a neighbourhood of $Q$. In particular, they do not coincide. Since the number of branches is finite, after this procedure we end up with a billiard table having no coinciding branches. 
\end{proof}

Now, we need to introduce the notions of adapted charts and order of a tangency.
In order to do so, we consider the $k$-jet of a function. Let $I\subset \mathbb{R}$ be an open interval and let $g\in C^\infty(I,\mathbb{R})$. Let $s\in I$ and $k\geq 0$. The $k$-jet of the function $g$ at the point $s$ is the vector in $\mathbb{R}^{k+1}$
\[
J^k_s(g):=(a_0,a_1,\dots,a_k)\in\mathbb{R}^{k+1}\, ,
\]
where $a_i=g^{(i)}(s)$ for all $0\le i\le k$.

\begin{definition}[Adapted charts, order of a homoclinic point] \label{definition adapted chart}
Let $Q$ be a homoclinic point to a hyperbolic periodic orbit $\mathcal{P}$ of an analytic diffeomorphism. 
An \emph{adapted chart} for $Q$ is a pair $(U,\phi)$ where $U$ is a neighborhood of $Q$ and $\phi\colon U\to\mathbb{R}^2$ is an analytic diffeomorphism onto its image $V:=\phi(U)$, such that:
\begin{itemize}
\item 
$\phi(Q)=0$;
\item 
denoting by $cc(W^{\sta}(\mathcal{P})\cap U, Q)$ the connected component of $W^{\sta}(\mathcal{P})\cap U$ containing $Q$, we have $\phi(cc(W^{\sta}(\mathcal{P})\cap U, Q))=(\mathbb{R}\times\{0\})\cap V$;
\item 
denoting by $cc(W^{\uns}(\mathcal{P})\cap U, Q)$ the connected component of $W^{\uns}(\mathcal{P})\cap U$ containing $Q$, we have that $\phi(cc(W^{\uns}(\mathcal{P})\cap U, Q))$ is the graph of a function $\g\in C^\omega((\mathbb{R}\times\{0\})\cap V,\mathbb{R})$.
\end{itemize}
For $k\ge 0$, let us denote by  $J^k_0(\g)\in \mathbb{R}^{k+1}$ the $k$-jet of the function $\g$ at $0$. 
The~\emph{order} of the homoclinic point $Q$ is then defined as 
\[
\mathrm{order}(Q):=\min\{k\ge 0 :\ J^k_0(\g)\neq 0_{\mathbb{R}^{k+1}}\}\, .
\]
Note that $\mathrm{order}(Q)$ is independent of the choice of adapted charts. 
    
Moreover, if $\mathcal{P}$ has no homoclinic connection (i.e., no coinciding branches), by analyticity of the invariant manifolds, we have $\mathrm{order}(Q)<+\infty$, for any homoclinic point $Q$. 
\end{definition}

The following result assures that, if two branches do not coincide, then the set of homoclinic points belonging to them is countable. 
 
\begin{lemma}\label{lemm_discrete}
Let $g$ be a two-dimensional analytic diffeomorphism with a hyperbolic periodic orbit $\mathcal{P}$. Let $W\subset W^\sta(\mathcal{P})$ and $W'\subset W^\uns(\mathcal{P})$ be two branches such that $W\neq W'$. Then the set of homoclinic points $h \in W\cap W'$ is discrete, hence at most countable. 

As a result, for any $N\in\mathbb{N}$ the set  $W_N\cap W_N'$ is finite.
\end{lemma}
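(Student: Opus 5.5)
The plan is to argue by analyticity of the invariant manifolds together with the identity principle. Since $g$ is analytic, the stable and unstable manifolds of each point $P_j$ of $\mathcal{P}$ are immersed analytic curves. More precisely, $W^\sta(P_j)$ is the increasing union $\bigcup_{m\ge 0} g^{-mq}(W^\sta_{\mathrm{loc}}(P_j))$, and the local manifold is an analytic graph by the analytic stable manifold theorem. Hence any branch $W$ admits an analytic injective parametrization $w\colon (0,\infty)\to \mathbb{A}$ (or a half-open interval containing the endpoint $P_j$), and likewise $w'$ for $W'$. I would choose these parametrizations by arc length $d_W$, or any analytic parameter compatible with it, so that $W_N$ corresponds to a bounded parameter interval.

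First I show discreteness. Suppose $h\in W\cap W'$ is an accumulation point of $W\cap W'$ inside $W$, in the intrinsic topology of $W$. I would take a small analytic chart around $h$ in which the connected component of $W$ through $h$ is the horizontal axis, exactly as in the adapted charts of Definition~\ref{definition adapted chart}. One caveat: $W'$ is only an injectively immersed curve, so near $h$ it may pass through the chart in several pieces. However, the points of $W\cap W'$ accumulating at $h$ within the compact piece $W'_{N'}$, for suitable $N'$, can be assumed, after extracting a subsequence, to lie on a single arc of $W'$. If that arc does not pass through $h$, compactness gives a contradiction, because its limit point lies in $W'$, and so in the closure of $W'_{N'}$, which is compact and contained in $W'\cup\{P\}$.

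So I can reduce to the case of an arc of $W'$ through $h$ that contains infinitely many intersection points converging to $h$. This arc is either vertical at $h$, where the tangency in one direction is handled by swapping roles, or a graph of an analytic function $\g$ over the horizontal axis. The function $\g$ has zeros accumulating at $0$, so $\g\equiv 0$ near $0$. An open subarc of $W$ then coincides with an open subarc of $W'$.

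Next I propagate this coincidence. The set of parameters $t$ for which $w'(t)$ lies in $W$ and the two curves locally coincide there is open by construction. It is also closed, by the same identity-principle argument applied at a limit point, which is again a homoclinic point where the zeros accumulate. By connectedness of the parameter interval, the whole branch $W'$ is contained in $W$. Iterating $g^{-q}$ shrinks $W'$ towards $P$ along $W'$, which forces $P$ to lie in the closure of $W$. Symmetrically, $W$ is contained in $W'$ by iterating forward. The conclusion is $W=W'$, contradicting the hypothesis.

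The step I expect to need the most care is this global propagation, namely handling the endpoints $P_j$ and the fact that a limit point could a priori be the periodic point itself. A cleaner alternative I would try first is to use invariance directly: the coincident subarc is sent by $g^{-mq}$ into $W^\uns_{\mathrm{loc}}(P')$ and, for large $m$, still lies in $W$. Since local manifolds are graphs, this shows that $W$ contains a piece of $W^\uns_{\mathrm{loc}}$ accumulating at $P'$. A stable branch can only accumulate at a periodic point transversally, or by coinciding. So in fact $W=W'$ by forward and backward saturation.

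Finally, finiteness follows from discreteness together with compactness. $W_N\cap W'_N$ is a discrete subset of the compact set $\overline{W_N}\cap\overline{W'_N}$. The closure adds only the endpoints $P_j, P_{j'}$ and the points at distance exactly $N$, all of which are handled by the same local argument. An infinite discrete subset would accumulate somewhere in that compact set, and by the argument above this would produce coinciding branches. Countability of $W\cap W'$ follows by writing $W\cap W'=\bigcup_N W_N\cap W'_N$.
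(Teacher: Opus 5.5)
Your route (analytic chart at $h$, identity principle for the graph $\psi$, compactness for $W_N\cap W'_N$) is the paper's, and you are more explicit than the paper about why local coincidence forces $W=W'$. The gap is in the notion of discreteness used in your first step. You take $h$ to be an accumulation point of $W\cap W'$ in the intrinsic topology of $W$, and then assert that the accumulating points ``can be assumed'' to lie in a compact piece $W'_{N'}$. Nothing justifies this. Worse, the statement it is meant to prove is false in general.

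Take $W$ and $W'$ based at the same point of $\mathcal{P}$ and crossing transversally at $h$. The $\lambda$-lemma shows that $W'$ accumulates in $\mathcal{C}^1$ on every compact piece of itself. So there are arcs of $W'$, lying farther and farther from the base point in $d_{W'}$, that are $\mathcal{C}^1$-close to the arc of $W'$ through $h$. These arcs cut the arc of $W$ through $h$ at points converging to $h$. Hence $W\cap W'$ does accumulate at $h$ inside $W$ (and in $\mathbb{A}$) even though $W\neq W'$. This is the homoclinic tangle, exactly the situation the paper creates. So that step cannot be repaired as formulated.

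What is true, and what the paper proves, is two-sided isolation: $W_\varrho(h)\cap W'_\varrho(h)=\{h\}$ for small $\varrho$. Equivalently, the set of parameter pairs $(t,t')$ with $w(t)=w'(t')$ is discrete. Your chart argument gives exactly this once you look only at the arc of $W'$ through $h$ and stop trying to control other sheets.

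Your final paragraph then works. The sets $\overline{W_N}$ and $\overline{W'_N}$ are continuous injective images of compact intervals, hence embedded arcs. So near an accumulation point $x$ of $W_N\cap W'_N$, each is a single arc through $x$. This, not an extraction inside $W'_{N'}$, is what legitimately reduces you to one arc of $W'$. Also, $x$ cannot be a point of $\mathcal{P}$, since there the local stable and unstable arcs meet only at that point.

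In the propagation step, closedness needs the limit point $w'(t^*)$ to lie in $W$. This is not automatic, because $W$ is not closed in $\mathbb{A}$. It follows because on a coincidence interval the $d_W$-parameter is an isometric reparametrization of the $d_{W'}$-parameter, so it stays bounded. It also cannot tend to $0$, since $P_j\notin W'$. With this, the open-and-closed argument gives $W'\subset W$, and symmetrically $W\subset W'$. Your ``cleaner alternative'' is then unnecessary. Its claim about how a stable branch can accumulate at a periodic point is not a standard fact and should be dropped.
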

\begin{proof}
Let us a fix a homoclinic point $h\in W\cap W'$. 
Given $\varrho>0$, let us denote by $W_\varrho(h),W_\varrho'(h)$ the $\varrho$-neighborhoods of $h$ within $W,W'$, respectively, for the distance induced by the Riemannian metric on these leaves. 
We want to show that for $\varrho>0$ sufficiently small, $W_\varrho(h)\cap W_\varrho'(h)=\{h\}$. 
By analyticity of $W,W'$, after passing to an analytic chart, we can assume that $h=(0,0)$, $W_\varrho(h)\subset\mathbb{R} \times \{0\}$ is a horizontal segment, and $W_\varrho'(h)=\{(x,\g(x)):|x|\text{ small}\}$ is the graph of some analytic function $\g\colon x \mapsto \sum_{k=0}^{+\infty} a_k x^k$, with $\g(0)=a_0=0$. 
If $a_k=0$ for all $k\geq 0$, then by analyticity, $W=W'$, contrary to our assumption. 
Otherwise, $k_0:=\min\{k\geq 0:a_k \neq 0\}<+\infty$, and then $\g(x)\sim_0 a_{k_0} x^{k_0}$. 
Therefore, $\g$ has an isolated zero at $0$, and for $\varrho>0$ sufficiently small, $W_\varrho(h)\cap W_\varrho'(h)=\{h\}$. 

In particular, recalling that $W_N $, resp. $W_N' $ (see~\eqref{def:branch}) are the points in $W \subset W^\sta(\mathcal{P})$, resp. $W'\subset W^{\uns}(\mathcal{P})$, at distance (induced on the stable, resp. unstable, branch) at most $N$ from the periodic point at the which the branch is based, there are at most finitely many homoclinic points in $W_N\cap W_N'$.
\end{proof}

By Lemma~\ref{non coincident branches} we can assume that our initial billiard map $f_\gna$ has no coinciding branches. Then, by Lemma~\ref{lemm_discrete}, for any $N\in\mathbb{N}$ and for any $i,j\in \{0,\cdots,2q-1\}$, the set of homoclinic points $ W^{i,\sta}_N\cap W^{j,\uns}_N$ is finite.  Let
\begin{equation}\label{eq;HN}
\mathcal{H}_N=  \bigcup_{i,j=0}^{2q-1} W^{i,\sta}_N \cap W^{j,\sta}_N \, ,  \end{equation}
which can be split into the set of transverse homoclinic points and the complementary, namely 
$\mathcal{H}_N= \mathcal{N}_N \cup \mathcal{T}_N$, with $\mathcal{T}_N$ the set of transverse homoclinic points. We write 
\[
\mathcal{N}_N = \{ Q_{n}\}_{n=1}^{n_0}, \qquad  \mathcal{T}_N = \{ Q_{n}\}_{n=n_0+1}^{n_1}.
\]
By Definition~\ref{definition adapted chart}, any homoclinic point has associated an adapted chart $(U_n,\phi_n)$ and we call (see Definition~\ref{definition adapted chart}) 
\[
\iota_n = \mathrm{order} (Q_n). 
\]
Take any collection of compact sets $\{K_n\}_{n=1}^{n_1}$ such that 
\begin{equation}\label{eq:Kn}
K_n \subset U_n, \qquad K_n \cap \mathcal{H}_N = \{Q_n\}.
\end{equation}

The following result studies how many homoclinic points belonging to a given compact set can (at most) arise after a sufficiently small perturbation.

\begin{lemma}\label{maximum of homoclinic points}
Fix a compact set $K\subset \mathbb{A}_\nu$. Let $\mathcal{W}_{K,N}$ be the set 
\[
\mathcal{W}_{K,N}:= K \cap \mathcal{H}_N
\]
and assume that $\mathrm{card} (\mathcal{W}_{K,N}) \leq 1$, that is, either $\mathcal{W}_{K,N}=\emptyset$ or $\mathcal{W}_{K,N} = \{Q\}$. 
    
There exists $\epsilon_0>0$ small enough such that for any $\lna\in \mathcal{C}^\omega_r(\mathbb{T},\mathbb{R})$, with $\|\lambda\|_r <\epsilon_0$, the billiard map $f_{\dgnalna}$ with $\dgnalna= \gna + \lna \cdot \nna$ satisfies the following. The set
\[
\widetilde{\mathcal{W}}_{K,N}:=K\cap \left (\bigcup_{i,j=0}^{2q-1} \widetilde{W}_N^{i,\sta} \cap \widetilde{W}_N^{j,\uns}\right ),
\]
where $\widetilde{W}_{N}^{i,\sta}, \widetilde{W}_N^{j,\uns}$ are the continuation of $W_N^{i,\sta}, W_N^{j,\uns}$, falls into one of the following three cases. 
\begin{enumerate}
\item If $\mathcal{W}_{K,N}=\emptyset$, then $\widetilde{\mathcal{W}}_{K,N} = \emptyset$. 
\item If $\mathcal{W}_{K,N}=\{Q\}$ with $Q$ a transverse homoclinic point,  then $\widetilde{\mathcal{W}}_{K,N} = \{\widetilde{Q}\}$, with  $\widetilde{Q}$ a transverse homoclinic point which is the continuation of $Q$. 
\item If $\mathcal{W}_{K,N}=\{Q\}$ with $Q$ a non transverse homoclinic point, then
\[
\mathrm{card} \big (\widetilde{\mathcal{W}}_{K,N} \big ) \leq  \mathrm{order}(Q).
\]
\end{enumerate}
\end{lemma}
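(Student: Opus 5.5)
The proof will rest on the $\mathcal{C}^1$ (in fact $\mathcal{C}^2$) continuity of the compact pieces of invariant manifolds $W_N^{i,\sta}, W_N^{j,\uns}$ with respect to $\lambda$. Fix $\nu$ as in Lemma~\ref{lazutkin}. By Corollary~\ref{prop Frechet Gamma nu noal}, $\|\lambda\|_r<\epsilon_0$ implies that $f_{\dgnalna}$ is $\mathcal{C}^{k-3}$-close to $f_\gna$ on $\mathbb{A}_\nu$; the $\mathcal{C}^k$ norm of $\lambda$ on $\mathbb{T}$ is controlled by $\|\lambda\|_r$ through Cauchy estimates. Since $W_N^{i,\diamond}$ is obtained by iterating a bounded number of times a local manifold, its continuation $\widetilde W_N^{i,\diamond}$ is $\mathcal{C}^2$-close to $W_N^{i,\diamond}$ as an embedded arc. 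The three cases are then treated separately.

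For case (1): the sets $W_N^{i,\sta}\cap K$ and $W_N^{j,\uns}\cap K$ are compact and disjoint (for every pair $i,j$). Disjoint compact sets lie at positive distance from each other, so their $\mathcal{C}^0$-small continuations remain disjoint. To handle the endpoints of the arcs, I would slightly enlarge $N$ to $N+\delta$ for the unperturbed pieces and use that the continuation of $W_N$ lies within the $\delta$-neighbourhood of $W_{N+\delta}$. For case (2): transversality at $Q$ is an open condition. In an adapted chart the intersection is given by the zero of $\g$ with $\g'(0)\neq 0$, and the implicit function theorem gives a unique nearby transverse zero $\widetilde Q$ in a small ball $B$ around $Q$. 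On $K\setminus B$, which is compact, the two manifolds are disjoint, and we argue as in case (1).

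Case (3) is the main step. Let $k_0=\mathrm{order}(Q)$. In the adapted chart $(U,\phi)$, shrink $K$ around $Q$; outside a small ball $B$ we again use the disjointness argument. Inside $B$, the perturbed stable manifold is a $\mathcal{C}^{k_0}$-small graph perturbation of $\mathbb{R}\times\{0\}$, say $y=\widetilde\g_s(x)$, and the perturbed unstable manifold is $y=\widetilde\g_u(x)$, close to $\g$. Homoclinic points in $B$ correspond to zeros of $\widetilde\g=\widetilde\g_u-\widetilde\g_s$ on an interval $I=[-\delta,\delta]$. Since $\g^{(k_0)}(0)\neq 0$, we choose $\delta$ so that $|\g^{(k_0)}|\ge c>0$ on $I$. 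Then for small perturbations we have $|\widetilde\g^{(k_0)}|\ge c/2$ on $I$. A repeated Rolle argument shows that a function whose $k_0$-th derivative never vanishes has at most $k_0$ zeros on $I$, counted with multiplicity. This requires $\mathcal{C}^{k_0}$-closeness of the manifolds, hence $k-3\ge k_0$. This is harmless since the perturbations are analytic and the analytic norm controls every $\mathcal{C}^k$ norm on $\mathbb{A}_\nu$ (again by Cauchy estimates on a slightly smaller strip). Smooth dependence of the invariant manifolds in $\mathcal{C}^{k_0}$ follows from the standard theory (\cite{HirschPughShub}).

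The main obstacle is to make uniform the $\mathcal{C}^{k_0}$-closeness of the perturbed manifolds in the fixed chart, including the pieces reached only after many iterates. I would handle it by writing $W_N$ as $f^m(W_{\mathrm{loc}})$ for a fixed $m$, using $\mathcal{C}^{k_0}$ continuity of local manifolds and of $f^m$ with respect to $\lambda$, and then choosing $\epsilon_0$ as the minimum over the finitely many constraints.
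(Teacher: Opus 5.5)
Your proposal is correct, but in the key step it takes a different route from the paper. For items (1) and (2) you are spelling out what the paper dismisses in one line as ``$\mathcal{C}^2$-openness of transversality''.

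For item (3), the paper argues in the complex domain. In the adapted chart it bounds $|\psi-(\psi^{\uns}-\psi^{\sta})|$ by $C\|\lambda\|_r$ on the boundary of a neighbourhood in which $\psi$ has a single zero of multiplicity $\iota=\mathrm{order}(Q)$. Rouch\'e's theorem then gives exactly $\iota$ complex zeros of $\psi^{\uns}-\psi^{\sta}$ counted with multiplicity, hence at most $\iota$ real ones. You instead stay on the real line, using $|\widetilde\psi^{(\iota)}|\geq c/2$ on $I$ and iterated Rolle.

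The trade-off is as follows. Rouch\'e needs only sup-norm closeness, but on a complex neighbourhood. It therefore presupposes that the perturbed invariant manifolds extend holomorphically to a fixed complex domain and depend continuously on $\lambda$ there. The paper takes this for granted, and it goes beyond the real $\mathcal{C}^{k-3}(\mathbb{A}_\nu)$ estimates actually proved in Proposition~\ref{prop Frechet Gamma nu} and Corollary~\ref{prop Frechet Gamma nu noal}. Your version needs closeness of derivatives up to order $\iota$, but that is exactly what this real machinery provides once $k\geq\iota+3$. Cauchy estimates turn $\|\lambda\|_r<\epsilon_0$ into $\mathcal{C}^k$-smallness for every $k$, and the stable manifold theorem with $\mathcal{C}^{\iota}$-continuous dependence on the map does the rest. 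Analyticity is then used only to ensure $\iota<\infty$, so your argument also covers smooth perturbations of a finite-order tangency.

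Two small boundary effects are skipped by both proofs. First, the branches $W_N$ are open arcs not containing their base point, so $W_N^{i,\sta}\cap K$ is not compact when $K$ meets $\mathcal{P}$. In that case you need the extra fact that near the saddle the perturbed local manifolds meet only at the perturbed periodic point. Second, your $N+\delta$ device tacitly assumes that no homoclinic point of $K$ lies at arclength exactly $N$ from its base point. This is an edge case the statement itself does not exclude.
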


\begin{proof} The first two items are straightforward by $\mathcal{C}^2$-openness of thansversality. 
Assume then that $Q$ is a non transverse homoclinic point. In the adapted chart, $(U,\phi)$, the connected component of the unstable manifold is described as the graph of a function $\g(x) = x^{\iota} + \mathcal{O}(x^{\iota +1})$, with $\iota = \mathrm{order}(Q)$, having a unique zero with multiplicity $\iota$. 
Let 
\[
M= \min_{x \in \partial U} |\g (x)|>0.
\]
Considering any small perturbation of the billiard table of the form $\gna+ \lna \cdot \nna$, with $\|\lambda\|_r < \epsilon$, the connected component of the unstable manifold $W^{\uns}(\widetilde{\mathcal{P}})$ belonging to $U$ is described by 
$\g^{\uns} (x;\epsilon) = \g (x) + \epsilon \tilde \g^{\uns} (x;\epsilon)$  with $\tilde \g^{\uns}$ an analytic function. 
Similarly, the connected component of the stable manifold $W^\sta(\widetilde{\mathcal{P}})$ belonging to $U$ is given by $\g^s(x;\epsilon)=\epsilon\tilde \g^s(x;\epsilon)$, with $\tilde \g^s$ an analytic function. 
Then, for all $x \in \partial U$, 
\[
|\g (x) -  (\g^{\uns}(x;\epsilon)- \g^\sta(x;\epsilon))| \leq C |\epsilon| <M \leq |\g (x)|
\]
if $\epsilon$ is small enough. 
Then, by Rouche's theorem, $\g$ and $\g^{\uns}- \g^{\sta}$ have the same number of zeros on $\overline{U}$, counting their multiplicity. 
Up to take a smaller $\epsilon$, we have that $\widetilde{\mathcal{W}}_{K,N} \cap \big (K\backslash U) = \emptyset$ and the proof is complete.  
\end{proof}

We are now under the conditions of finishing the proof of Theorem~\ref{main thm 2 bis}. We are going to perform an inductive argument considering different perturbations. To do so, let us first to introduce the following notation. For a given $\lambda_n^{(m)} \in \mathcal{C}^\omega_r(\mathbb{T},\mathbb{R})$ we set 
\[
\gna_n^{(m)} (s) = \gna (s) + \lambda_n^{(m)}(s)  \nna(s)
\]
and the corresponding billard map $f_{n}^{(m)}$. In addition, we also introduce 
\[
\mathcal{H}_{n}^{(m)} = \bigcup_{i,j=0}^{2q-1} \big (W_N^{i,\sta} \big )_n^{(m)} \cap \big (W_N^{i,\uns}\big )_n^{(m)}\, ,
\]
where $\big (W_N^{i,\sta} \big )_n^{(m)}, \big (W_N^{i,\uns}\big )_n^{(m)}$ are the continuation of 
$W_N^{i,\sta} ,  W_N^{i,\uns} $, the set of homoclinic points in the corresponding branches after the perturbation. We also consider the decomposition
\[
\mathcal{H}_{n}^{(m)} = \mathcal{N}_{n}^{(m)} \cup \mathcal{T}_{n}^{(m)} 
\]
of non transverse and transverse homoclinic points, and we call $\iota_n^{(m)} = \mathrm{card} \big (\mathcal{N}_{n}^{(m)} \big)$

Recall that for the unperturbed billiard map, in $\mathcal{H}_N$ (see \eqref{eq;HN}), we have $n_0$ non transverse homoclinic points and $n_1-n_0$ transverse homoclinic points. Consider now the decomposition of $\mathbb{A}_\nu$ given by
\[
\mathbb{A}_\nu = \left (\bigcup_{n=1}^{n_0} K_n \right ) \cup \widehat{\mathcal{K}}.
\]
where $K_n$ are the compact sets  given in \eqref{eq:Kn}
In other words, inside the compact set $\widehat{\mathcal{K}}$, if any, the homoclinic points are transverse. 

By Lemma~\ref{maximum of homoclinic points}, there exists $\epsilon_0$ such that
if $\|\lambda\|_r<\epsilon_0$, then the billiard map $f_{\widetilde{\gna}}$ with $\widetilde{\gna} = \gna + \lna \cdot \nna$,  satisfies that 
\begin{itemize}
\item in $\widehat{\mathcal{K}}$, the number of homoclinic points is the same as for $f$ and all of them are transverse.  
\item the number of homoclinic points on $K_n$ is at most $\iota_n$, where $\iota_n$ is the order of the unique homoclinic point (for the unperturbed dynamics) in $K_n$.
\end{itemize}

Take $\epsilon<\epsilon_0$ and assume that $n_0\geq 1$, otherwise we are done. By Remark~\ref{rmk cases non transverse homoclinic}, there exists $\ell_1$ such that $f^{\ell_1}(Q_1)$ satisfies the conditions of Corollary~\ref{corollary inverstible}. Let $\lambda^{(1)}_1$ be such that $\|\lambda_1^{(1)}\|_r<\epsilon$ and the corresponding perturbed billard $f_1^{(1)}$ has a transverse homoclinic point $Q_1^{(1)}$ in $K_1$. 
Since by Lemma~\ref{maximum of homoclinic points}, $f_{1}^{(1)}$ has at most $\iota_1$ homoclinic points, 
we deduce that 
\[
\mathrm{card} (\mathcal{N}_1^{(1)} \cap K_1) = \iota_1^{(1)}\leq \iota_1-1.  
\]
This is because, the perturbation $f_1^{(1)}$ makes one homoclinic point transverse by construction. 
In addition, since the perturbation $\lambda_1^{(1)}$ has size less than $\epsilon_0$, by Lemma~\ref{maximum of homoclinic points}, we have that
\[
\mathrm{card} (\mathcal{N}_1^{(1)} \cap K_n) = \iota_n^{(1)}\leq \iota_n, \qquad 2\leq n \leq n_0.  
\]

Since the condition of being transverse is an open condition, there exists $\epsilon^{(1)}_0 \leq \epsilon_0$ such that any perturbation of size less than $\epsilon^{(1)}_0$ keeps the continuation of $Q_1^{(1)}$ transverse. On the other hand, again using Remark~\ref{rmk cases non transverse homoclinic} and Corollary~\ref{corollary inverstible}, there exists a trigonometric polynomial $\lambda^{(2)}_1$, $\|\lambda^{(2)}_1\|_r<\epsilon^{(1)}_0 <\epsilon$ such that perturbed billiard map has $2$ homoclinic transverse homoclinic points at $K_1$. Since the number of homoclinic points at $K_1$ is at most $\iota_1$, after this second perturbation, 
\[
\mathrm{card} (\mathcal{N}_1^{(2)} \cap K_1) = \iota_1^{(1)}\leq \iota_1-2
\]
and, taking $\epsilon^{(1)}_0$ small enough, 
\[
\mathrm{card} (\mathcal{N}_1^{(2)} \cap K_n) = \iota_n^{(1)}\leq \iota_n, \qquad 2\leq n \leq n_0.  
\]
Notice that the perturbation $f_1^{(2)}$ is also a perturbation of $f$. 

We proceed inductively, $\iota^{(1)}_1$ times and we end up with a billiard map $f_{\widetilde{\gna}}$, having all the homoclinic points at $K_1 \cup \widehat{\mathcal{K}}$ transverse with associated analytic billiard table $\widetilde{\gna}$ that is $\epsilon$-close to $\gna$. 
 
Now we consider the decomposition of $\mathbb{A}_\nu$ given by 
\[
\mathbb{A}_\nu = \left (\bigcup_{n=2}^{n_0} K_n \right ) \cup \widehat{\mathcal{K}}^{(1)}, \qquad \widehat{\mathcal{K}}^{(1)} = \widehat{\mathcal{K}} \cup K_1.
\]
Analogously as before, we set $\epsilon_1\leq \epsilon_0$ such that after a perturbation of size $\epsilon_1$, the set $\widehat{\mathcal{K}}^{(1)}$ only contains transverse homoclinic points. As before we choose a perturbation $\lambda_2^{(1)}$ small enough such that the perturbed billiard map $f_2^{(1)}$ has a transverse homoclinic point at $K_2$. Since by Lemma~\ref{maximum of homoclinic points}, the number of homoclinic points at $K_2$ is $\iota_2$, 
\[
\mathrm{card} (\mathcal{N}_2^{(1)} \cap K_2) = \iota_2^{(1)}\leq \iota_2-1
\]
and 
\[
\mathrm{card} (\mathcal{N}_2^{(1)} \cap K_n) = \iota_n^{(2)}\leq \iota_n, \qquad 3\leq n \leq n_0.  
\]
This procedure needs a finite number of steps to obtain a billiard map satisfying that the set $
 \bigcup_{i,j=0}^{2q-1} \widetilde{W}_N^{i,\sta} \cap \widetilde{W}_N^{j,\uns} 
$ only contains transverse homoclinic points. 

\section{On Aubry-Mather sets of a twist map}\label{Aubry Mather}
The aim of this section is presenting some well-known results about Aubry-Mather sets of a twist map. Our main references are \cite{Bangert,Gole,ArnaudSalto}.

Denote by $\mathbb{A}$ the bounded annulus $\mathbb{T}\times\left [-\frac{\pi}{2}, \frac{\pi}{2} \right ]$. Endow it with the standard area form $\omega=ds\wedge dr$.
\begin{definition}
A positive conservative twist map is a $\mathcal{C}^1$ diffeomorphism $f\colon\mathbb{A}\to\mathbb{A}$ such that
\begin{enumerate}
\item $f$ is isotopic to the identity\footnote{In particular, $f$ preserves the orientation and sends each boundary into itself.};
\item $f$ preserves the area form $\omega$, i.e., $f^*\omega=\omega$;
\item for every $(x,y)\in \mathbb{A}$ it holds
\[
D(\pi_1\circ f)(x,y)\begin{pmatrix}
0 \\ 1
\end{pmatrix}>0\, ,
\]
where $\pi_1\colon \mathbb{A}\to \mathbb{T}$ is the projection on the first coordinate.
\end{enumerate}
\end{definition}

Let $F\colon \mathbb{R}\times\left [-\frac{\pi}{2}, \frac{\pi}{2} \right ]\to \mathbb{R}\times\left [-\frac{\pi}{2}, \frac{\pi}{2} \right ]$ be a lift of $f$. Denote by $p_1\colon\mathbb{R}\times\left [-\frac{\pi}{2}, \frac{\pi}{2} \right ]\to\mathbb{R}$, the projection on the first coordinate. Consider the domain 
\[
\mathcal{D}:=\{(X,X')\in \mathbb{R}^2 :\ \Pi_1\circ F(X,0)\leq X'\leq \Pi_1\circ F(X,1)\}\, .
\]
A conservative twist map can be described through its generating function
\[
H\colon  \mathcal{D}\to\mathbb{R}\, ,
\]
which is defined (up to adding an additive constant) by
\begin{equation}\label{generating function}
F(X,y)=(X',y') \quad \Longleftrightarrow \quad 
\begin{cases}
y = -\partial_1 H(X,X')\, , \\
y'=\partial_2 H(X,X')\, .
\end{cases}
\end{equation}
Following \cite[Section 7]{Bangert}, it is possible to extend the generating function to a $\mathcal{C}^2$ function defined on $\mathbb{R}^2$, that we still denote by $H\colon\mathbb{R}^2\to \mathbb{R}$, such that
\begin{enumerate}
\item for every $(X,X')\in\mathbb{R}^2$ it holds $H(X+1,X'+1)=H(X,X')$;
\item there exists $\delta>0$ such that for every $(X,X')\in\mathbb{R}^2$ it holds
\[
\partial_2\partial_1 H(X,X')\leq -\delta <0\, .
\]
\end{enumerate}

We denote by $\rho(F)$ the rotation interval of the lift $F$, i.e., the set of real values that can be obtained as
\[
\lim_{n\to+\infty}\dfrac{p_1\circ F^n(X,y)-X}{n}\, ,
\]
for some $(X,y)\in\mathbb{R}\times\left [-\frac{\pi}{2}, \frac{\pi}{2} \right ]$. Then, $\rho(f)$ will denote the rotation interval $\rho(F)\mod \mathbb{Z}$.
 
We are going to work in the space of bi-infinite sequences 
\[
\mathbb{R}^{\mathbb{Z}}=\{(X_i)_{i\in\mathbb{Z}} :\ X_i\in\mathbb{R}\quad \forall i\in \mathbb{Z}\}.
\]
A sequence $(X_i)_{i\in\mathbb{Z}}$ is a configuration for $F$ if and only if 
\begin{equation}
\partial_2H(X_{i-1},X_i)+\partial_1H(X_i,X_{i+1})=0\qquad\forall i\in\mathbb{Z}\, .
\end{equation}
Being a configuration corresponds then to be the projection on the first coordinate of a $F$-orbit of a point.

For any fixed $j,k\in \mathbb{Z}, j<k$, we consider the functional
\[
H_{j,k}\colon \mathbb{R}^{k-j+1}\to \mathbb{R}
\]
defined by
\[
(X_j,\dots, X_k)\mapsto H_{j,k}(X_j,\dots, X_k):=\sum_{i=j}^{k-1}H(X_{i},X_{i+1})\, .
\]
A segment $(X_j^*,\dots, X_k^*)$ is minimal if
\[
H_{j,k}(X_j^*,\dots, X_k^*)\leq H_{j,k}(X_j,\dots, X_k)
\]
for every $(X_j,\dots, X_k)\in\mathbb{R}^{k-j+1}$ such that $X_j=X_j^*,X_k=X_k^*$.
\begin{definition}
A bi-infinite sequence $(X_i)_{i\in\mathbb{Z}}$ is minimal if every finite segment of it is minimal. The set of minimal sequences is denoted by $\mathcal{M}(H)$.
\end{definition}
Observe that, since minimal segments are in particular critical ones, every minimal sequence is a configuration. Moreover, it is possible to show that a minimal sequence has a well-defined rotation number:
\[
\rho((X_i)_{i\in\mathbb{Z}})=\lim_{n\to+\infty}\dfrac{X_n-X_0}{n}\, .
\]

A sequence $(X_i)_{i\in\mathbb{Z}}$ is periodic of type $(p,q), q\neq 0$, if
\[
X_{i+q}=X_i+p\qquad\forall i\in\mathbb{Z}\, .
\]
Two sequences $(X_i)_{i\in\mathbb{Z}},(\tilde{X}_i)_{i\in\mathbb{Z}}$ are
$\omega$-asymptotic if $\lim_{i\to+\infty}\vert X_i-\tilde{X}_i\vert =0$, and $\alpha$-asymptotic if $\lim_{i\to-\infty}\vert X_i-\tilde{X}_i\vert =0$.

\begin{theorem}[\cite{Bangert}]
For every $\alpha\in\rho(F)$, there exists a sequence $(X_i)_{i\in\mathbb{Z}}$ in $\mathcal{M}(H)$ whose rotation number is equal to $\alpha$. The set of minimal sequences with rotation number $\alpha$ is the Aubry-Mather set of rotation number $\alpha$ and we denote it by $\mathcal{M}_\alpha(H)$.
\end{theorem}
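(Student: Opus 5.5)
The statement to prove is Bangert's existence theorem: for every $\alpha\in\rho(F)$ there is a minimal configuration $(X_i)_{i\in\mathbb{Z}}$ with rotation number $\alpha$. The plan is the classical variational route in three stages. First I treat rational $\alpha=\frac{p}{q}$ by minimizing over periodic sequences. Then I obtain irrational $\alpha$ as limits of rational ones. Finally I check that the rotation numbers obtained fill the twist interval $\rho(F)$.

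\emph{Rational case.} For $\alpha=\frac pq$, consider on the space of $(p,q)$-periodic sequences ($X_{i+q}=X_i+p$) the periodic action
\[
W_{p,q}(X_0,\dots,X_{q-1})=\sum_{i=0}^{q-1}H(X_i,X_{i+1}).
\]
This action is invariant under the diagonal translation $X_i\mapsto X_i+1$ and under index shifts. The condition $\partial_{12}H\le-\delta<0$ of the extended generating function gives
\[
H(X,X')\ge \frac{\delta}{2}(X'-X)^2-C.
\]
Hence $W_{p,q}$ is coercive modulo the diagonal translation, and a minimizer $X^*$ exists.

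Next I show, via Aubry's crossing lemma, that any two such minimizers are totally ordered with respect to each other and to their translates $(X_{i+k}+j)$. The lemma says that if two sequences cross, then exchanging their $\min$ and $\max$ strictly lowers the total action, and the strictness comes from $\partial_{12}H<0$. Total ordering implies that $X^*$ is Birkhoff (cyclically ordered).

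The step I expect to be the main obstacle is upgrading from minimality among periodic sequences to minimality of every finite segment with fixed endpoints. I would first try the standard argument of Bangert. Take a competing segment on $[j,k]$ with the same endpoints, extend it periodically over a long multiple $mq$ of the period, and compare actions using the crossing lemma together with the fact that $X^*$ is also a minimizer over the $(mp,mq)$-periodic class. This yields $X^*\in\mathcal{M}(H)$ with $\rho=\frac pq$.

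\emph{Irrational case.} Choose rationals $\frac{p_n}{q_n}\to\alpha$ and the corresponding minimal Birkhoff configurations $X^{(n)}$, normalized so that $X^{(n)}_0\in[0,1)$. Birkhoff sequences with rotation number $\omega$ satisfy the uniform bound
\[
|X_k-X_0-k\omega|\le 1 .
\]
Diagonal extraction then gives pointwise convergence to a limit $X$. Minimality of finite segments is a closed condition, since each one is a finite-dimensional inequality, so $X$ is minimal. The uniform bound passes to the limit and gives $\rho(X)=\alpha$.

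\emph{Matching with $\rho(F)$.} Since $H$ was extended off $\mathcal{D}$ using the twist condition, I still have to check that for $\alpha\in\rho(F)$ the configuration found is an actual $F$-orbit, i.e. $(X_i,X_{i+1})\in\mathcal{D}$. The rotation numbers of the boundary circles bound $\rho(F)$. By the ordering property, a minimal configuration with rotation number strictly inside $\rho(F)$ cannot cross the configurations coming from the boundary orbits, so it stays in the region where the extension agrees with the true generating function. A final limiting argument handles the endpoints of $\rho(F)$.
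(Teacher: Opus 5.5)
The paper gives no proof of this statement; it quotes it from \cite{Bangert}. Your first two stages follow the standard route of that reference and are sound: minimize the periodic action, order the minimizers with Aubry's crossing lemma, then pass to the limit along rational approximants using the Birkhoff bound. The step you call the main obstacle is really the claim that $X^*$ also minimizes among $(mp,mq)$-periodic sequences. This follows from the ordering lemma applied to an $(mp,mq)$-minimizer $Y$ and its shift $(Y_{i+q}-p)_i$, which is again an $(mp,mq)$-minimizer: a strict order between them would give $Y_{i+mq}-mp>Y_i$, so $Y$ must be $(p,q)$-periodic. Once this is known, splicing a cheaper competitor into one period window with $mq>k-j$ contradicts minimality directly. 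Note also that the extended $H$ is periodic with $\partial_{12}H\le-\delta$ on all of $\mathbb{R}^2$, so these two stages already produce minimal sequences for every $\alpha\in\mathbb{R}$. The hypothesis $\alpha\in\rho(F)$ only matters for knowing that they are genuine orbits of $f$ in $\mathbb{A}$, which is how the paper uses $\mathcal{M}_\alpha(H)$.

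That third stage is where your argument breaks. You claim that a minimal configuration with rotation number strictly inside $\rho(F)=[\rho_-,\rho_+]$ ``cannot cross'' the configurations of boundary orbits. This is false. If $\rho(X)\neq\rho(Y)$, then $X_i-Y_i$ tends to $+\infty$ at one end and to $-\infty$ at the other, so $X$ and $Y$ must cross. Aubry's non-crossing only concerns minimal configurations with the same rotation number.

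The confinement comes from phase space instead. Let $\widehat F$ be the plane diffeomorphism generated by the extended $H$. It agrees with $F$ on the strip, maps the lines $\{y=\pm\frac{\pi}{2}\}$ to themselves, and leaves $\{y>\frac{\pi}{2}\}$ and $\{y<-\frac{\pi}{2}\}$ invariant. Suppose the orbit $(X_n,y_n)$ of $X$ has $y_0>\frac{\pi}{2}$, and let $(Y_n)$ be the orbit of $(X_0,\frac{\pi}{2})$ on the upper boundary. The twist condition and the monotonicity of the boundary map give inductively
\[
X_{n+1}=p_1\widehat F(X_n,y_n)>p_1F\big(X_n,\tfrac{\pi}{2}\big)\ge p_1F\big(Y_n,\tfrac{\pi}{2}\big)=Y_{n+1},
\]
so $\rho(X)\ge\rho_+$. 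Symmetrically, an orbit below the lower line has $\rho(X)\le\rho_-$. Hence for $\alpha\in(\rho_-,\rho_+)$ the minimal configurations correspond to orbits in $\mathbb{A}$. Your limiting argument then handles $\alpha=\rho_\pm$, using that $\mathbb{A}$ is closed and that $y_0=-\partial_1H(X_0,X_1)$ depends continuously on the configuration.
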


In the sequel we will be interested into minimal configurations with rational rotation number. We describe then the corresponding Aubry-Mather set $\mathcal{M}_{\frac p q}(H)$, for $\frac p q\in\mathbb{Q}\cap\rho(F)$; it is the disjoint union of three sets of configurations:
\[
\mathcal{M}_{\frac p q}(H)=\mathcal{M}^{\mathrm{Per}}_{\frac p q}(H)\sqcup \mathcal{M}^+_{\frac p q}(H)\sqcup\mathcal{M}^-_{\frac p q}(H)\, .
\]
The set $\mathcal{M}^{\mathrm{Per}}_{\frac p q}(H)$ is the set of minimal configurations of rotation number $\frac p q$ which are periodic of type $(p,q)$. 

Two configurations $(X_i)_i,(\tilde X_i)_i\in\mathcal{M}^{\mathrm{Per}}_{\frac p q}(H)$ are \textit{neighboring} if there not exists a configuration $(\hat X_i)_i\in \mathcal{M}^{\mathrm{Per}}_{\frac p q}(H)$ such that $X_i<\hat X_i<\tilde X_i$ for every $i$. Given two neighboring configurations $(X_i)_i,(\tilde X_i)_i$ such that $X_i<\tilde X_i$ for every $i$, the set $\mathcal{M}^+_{\frac p q}(H)((X_i)_i,(\tilde X_i)_i)$ is the set
\[
\{(Z_i)_i\in \mathcal{M}_{\frac p q}(H) :\ (Z_i)_i, (X_i)_i \text{ are $\alpha$-asymptotic and }(Z_i)_i, (\tilde X_i)_i \text{ are $\omega$-asymtptotic}\}\, .
\]
The set $\mathcal{M}^-_{\frac p q}(H)((X_i)_i,(\tilde X_i)_i)$ is the set
\[
\{(Z_i)_i\in \mathcal{M}_{\frac p q}(H) :\ (Z_i)_i, (X_i)_i \text{ are $\omega$-asymptotic and }(Z_i)_i, (\tilde X_i)_i \text{ are $\alpha$-asymtptotic}\}\, .
\]
Thus, the set $\mathcal{M}^+_{\frac p q}(H)$ (resp. $\mathcal{M}^-_{\frac p q}(H)$) is the union of all the sets $\mathcal{M}^+_{\frac p q}(H)((X_i)_i,(\tilde X_i)_i)$ (resp. $\mathcal{M}^-_{\frac p q}(H)((X_i)_i,(\tilde X_i)_i)$) over all pairs of neighboring configurations $(X_i)_i,(\tilde X_i)_i\in\mathcal{M}^{\mathrm{Per}}_{\frac p q}(H)$.

\begin{theorem}[\cite{Bangert}]\label{thm_Bangert}
If $(X_i)_i, (\tilde X_i)_i$ are neighboring configurations in $\mathcal{M}_{\frac p q}^{\mathrm{Per}}(H)$, then the sets $\mathcal{M}^+_{\frac p q}(H)((X_i)_i,(\tilde X_i)_i)$ and $\mathcal{M}^-_{\frac p q}(H)((X_i)_i,(\tilde X_i)_i)$ are not empty.
\end{theorem}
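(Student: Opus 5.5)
This is Bangert's theorem on the existence of heteroclinic minimizers between neighbouring periodic minimizers. I would prove it variationally, treating the $+$ case; the $-$ case follows by the same argument with the roles of $X$ and $\tilde X$ exchanged, or by reversing time via $H^*(X,X')=H(X',X)$.

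\textbf{Setting up.} Fix neighbouring $(X_i)_i<(\tilde X_i)_i$ in $\mathcal{M}^{\mathrm{Per}}_{\frac p q}(H)$. Work in the order interval
\[
\mathcal{S}=\{(Z_i)_i : X_i\le Z_i\le \tilde X_i\ \forall i\}.
\]
Define the renormalized action
\[
I(Z)=\sum_{i\in\mathbb{Z}}\big(H(Z_i,Z_{i+1})-c\big),
\qquad c=\frac1q\sum_{i=0}^{q-1}H(X_i,X_{i+1})=\frac1q\sum_{i=0}^{q-1}H(\tilde X_i,\tilde X_{i+1}).
\]
The two periodic averages agree because both periodic configurations are minimal of type $(p,q)$.

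\textbf{Step 1: finite approximations.} For $n\in\mathbb{N}$, minimize $H_{-nq,nq}$ over segments in $\mathcal{S}$ with endpoints $Z_{-nq}=X_{-nq}$ and $Z_{nq}=\tilde X_{nq}$. Among the minimizers, choose one $Z^{(n)}$ that is additionally pinned to cross a fixed level: for instance, impose $Z_0\ge \frac12(X_0+\tilde X_0)$, or select a translate in $i$ by multiples of $q$ so that the first index where $Z$ exceeds this midpoint is $0$. By Aubry's crossing lemma, if the twist condition $\partial_1\partial_2H\le-\delta$ holds, then the minimum of $\max(Z,W)$ and $\min(Z,W)$ of two minimizers are again minimizers. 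This lets me show three things:
\begin{itemize}
\item the constraint set $\mathcal{S}$ is not active, so minimizers in $\mathcal{S}$ are unconstrained minimal segments;
\item $Z^{(n)}$ does not cross the $\mathbb{Z}^2$-translates of itself, where the translations act by $(i,X)\mapsto(i+q,X+p)$ and $(i,X)\mapsto(i,X+1)$;
\item consequently $Z^{(n)}$ is monotone along $q$-steps, that is, $Z^{(n)}_{i+q}-p\ge Z^{(n)}_i$.
\end{itemize}

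\textbf{Step 2: limit.} By compactness, extract a pointwise limit $Z$ as $n\to\infty$. Limits of minimal segments are minimal, so $Z\in\mathcal{M}(H)$, and $Z$ lies in $\mathcal{S}$. Its rotation number is $\frac pq$. The $q$-monotonicity passes to the limit, so the limits
\[
Y^\pm_i=\lim_{k\to\pm\infty}\big(Z_{i+kq}-kp\big)
\]
exist. Each $Y^\pm$ is a minimal $(p,q)$-periodic configuration lying between $X$ and $\tilde X$. Since the two are neighbouring, each of $Y^\pm$ equals either $X$ or $\tilde X$. The pinning $Z_0\ge\frac12(X_0+\tilde X_0)$ together with monotonicity forces $Y^+=\tilde X$. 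For $Y^-$ I must rule out $Y^-=\tilde X$; that case would force $Z\equiv\tilde X$, since monotone and squeezed between equal limits.

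\textbf{Main obstacle.} The key step is showing that the pinning genuinely keeps $Z$ away from $\tilde X$ on the left, so that $Y^-=X$. I would handle it by choosing the translate so that $Z^{(n)}_{-q}-(-p)<\frac12(X_0+\tilde X_0)\le Z^{(n)}_0$. Because the endpoints are fixed at $X_{-nq}$, there is a first crossing of the midpoint, and translating so that this crossing occurs at index $0$ gives the pinning. After the translation the window still grows in both directions, which I must check using the boundedness of $I$. Specifically, $I(Z^{(n)})$ is bounded above by the action of a single-step junction between $X$ and $\tilde X$. Combined with the lower bound on $I$ for sequences in $\mathcal{S}$ (each $q$-block contributes at least $0$ up to a telescoping term), this shows that the crossing index cannot escape to the window ends.

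With $Y^-=X$ and $Y^+=\tilde X$, the limit $Z$ is $\alpha$-asymptotic to $X$ and $\omega$-asymptotic to $\tilde X$, so $Z\in\mathcal{M}^+_{\frac pq}(H)((X_i)_i,(\tilde X_i)_i)$, which is therefore nonempty. The set $\mathcal{M}^-_{\frac pq}(H)((X_i)_i,(\tilde X_i)_i)$ is handled symmetrically.
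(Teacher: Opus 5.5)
The paper gives no proof of this statement; it quotes Bangert. Most of your scheme is the standard one and is sound:
\begin{itemize}
\item the exchange argument with $X$ and $\tilde X$ shows the order constraint is inactive;
\item non-crossing with the $(q,p)$-translate gives $Z^{(n)}_{i+q}-p\ge Z^{(n)}_i$;
\item once a genuine limit configuration exists, the neighbouring hypothesis identifies $Y^\pm$.
\end{itemize}
Also, $Y^-\neq\tilde X$ follows at once from $Z_{-q}+p\le m$, where $m=\frac12(X_0+\tilde X_0)$, so that is not where the difficulty lies. Only the translation normalization is viable: imposing $Z_0\ge m$ as a constraint can make it active, and then $Z^{(n)}$ is no longer minimal.

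The genuine gap is the one you flag, and your proposed fix does not close it. You need the translated windows $[-(n+k_n)q,(n-k_n)q]$ to exhaust $\mathbb{Z}$. Two-sided $O(1)$ bounds on the renormalized action cannot localize the transition. The segment equal to $X$ on $[-nq,nq-1]$ that jumps to $\tilde X_{nq}$ at the last step has renormalized action $H(X_{-1},\tilde X_0)-H(X_{-1},X_0)$, which is also bounded independently of $n$. So boundedness is compatible with $n-k_n$ (or $n+k_n$) staying bounded along a subsequence. In that case your limit is only a half-infinite minimal segment, not an element of $\mathcal{M}(H)$.

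What is missing is a strict-gain argument. Let $\Delta(N)$ be the minimal value of $\sum(H(Z_i,Z_{i+1})-c)$ over segments spanning $N$ periods, starting on $X$ and ending on $\tilde X$; by periodicity of $H$ it depends only on $N$.
\begin{itemize}
\item Padding with a full period of $X$ or $\tilde X$ costs $0$, so $\Delta$ is nonincreasing.
\item Comparing with the minimal segment of $X$, changing only the last step, gives $\Delta(N)\ge -L|\tilde X_0-X_0|$, where $L$ is a local Lipschitz constant of $H$ in its second variable.
\item Hence $\Delta(N)-\Delta(N+1)\to 0$.
\end{itemize}
Now suppose $n-k_n$ stays bounded along a subsequence, and anchor $Z^{(n)}$ at its right endpoint. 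The limit $W$ on $(-\infty,0]$ is minimal on finite windows, satisfies $W_0=\tilde X_0$, and $W\neq\tilde X$ by the pinning below $m$. Since configurations are determined by two consecutive values, $W_{-1}\neq\tilde X_{-1}$. Using the Euler--Lagrange equation for $\tilde X$ at index $0$ and $\partial_1\partial_2H\le-\delta$,
\[
\partial_2H(W_{-1},\tilde X_0)+\partial_1H(\tilde X_0,\tilde X_1)=\partial_2H(W_{-1},\tilde X_0)-\partial_2H(\tilde X_{-1},\tilde X_0)\neq0 .
\]
So appending one period of $\tilde X$ to $Z^{(n)}$ violates the Euler--Lagrange equation at the junction, uniformly for large $n$. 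Moving the junction value gives a gain $\eta>0$ independent of $n$. Hence $\Delta(2n+1)\le\Delta(2n)-\eta$ infinitely often, which contradicts $\Delta(N)-\Delta(N+1)\to0$. The left end is symmetric: prepend a period of $X$ instead. Once $n\pm k_n\to\infty$ is established, the rest of your argument goes through.
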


Denote by $p_0\colon (X_i)_i\in \mathbb{R}^{\mathbb{Z}}\to X_0\in \mathbb{R}$ the projection on the $0$-th entry. It is possible to show that the function $p_0$ restricted to the set
\[
\mathcal{M}^{\mathrm{Per}}_{\frac p q}(H)\cup\mathcal{M}^+_{\frac p q}(H)\, ,
\]
respectively the set
\[
\mathcal{M}^{\mathrm{Per}}_{\frac p q}(H)\cup\mathcal{M}^-_{\frac p q}(H)\, ,
\]
is injective. This comes from the fact that the considered set is totally ordered with respect to the partial order
\[
(X_i)_i<(\tilde X_i)_i \quad\text{if and only if} \quad X_i<\tilde X_i \quad\forall i\, .
\]
Recall that a set is totally ordered if for any $(X_i)_i\neq (\tilde{X}_i)_i$ belonging to the set, either $(X_i)_i<(\tilde{X}_i)_i$ or $(X_i)_i>(\tilde{X}_i)_i$.

\begin{proposition}\label{prop:injectivity}
Let $f\colon \mathbb{A}\to\mathbb{A}$ be a twist map. Let $\frac p q \in \mathbb{Q}\cap\rho(f)$. 
\begin{itemize}
\item Either there is a not homotopically trivial circle made up of periodic points of rotation number $\frac p q$;
\item or there exists two periodic points $P,P'$ of rotation number $\frac p q$ and a point $Q$, which belongs to a heteroclinic orbit from $P$ to $P'$, such that the following holds. Denote by $\textbf{p}_1\colon \mathbb{A}\to \mathbb{T}$ the projection on the first coordinate and by $\mathcal{O}(Q)$ the orbit of $Q$. The map
\[
\textbf{p}_1\colon \mathcal{O}(Q)\to \mathbb{T}
\]
is injective.
\end{itemize}
\end{proposition}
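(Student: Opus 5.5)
We split into two cases according to the structure of the set $\mathcal{M}^{\mathrm{Per}}_{\frac p q}(H)$ of minimal $(p,q)$-periodic configurations. By the total ordering, this set is closed and projects injectively under $p_0$ onto a closed subset $A\subset\mathbb{R}$, which is invariant under $X\mapsto X+1$ (shift by $(1,\dots)$ via the lift) and under the shift $i\mapsto i+1$ of indices.

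In the first case $A=\mathbb{R}$: every $X_0\in\mathbb{R}$ is the $0$-th entry of a minimal periodic configuration. Each such configuration gives a periodic point $(X_0,y_0)$ with $y_0=-\partial_1H(X_0,X_1)$. The map $X_0\mapsto y_0$ is continuous, because minimal periodic configurations depend continuously on $X_0$ by the ordering and closedness. Its graph, projected to $\mathbb{A}$, is therefore a non-homotopically trivial invariant circle made up of periodic points of rotation number $\frac p q$. This is the first alternative.

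In the second case $A\neq\mathbb{R}$. Pick a gap $(a,b)$ of $\mathbb{R}\setminus A$ with $a,b\in A$. Since the configurations are totally ordered, the corresponding configurations $(X_i)_i$ and $(\tilde X_i)_i$ with $X_0=a$ and $\tilde X_0=b$ satisfy $X_i<\tilde X_i$ for all $i$ and are neighboring. By Theorem~\ref{thm_Bangert}, the set $\mathcal{M}^+_{\frac p q}(H)((X_i)_i,(\tilde X_i)_i)$ contains some $(Z_i)_i$, which is $\alpha$-asymptotic to $(X_i)$ and $\omega$-asymptotic to $(\tilde X_i)$. Let $P,P'$ be the periodic points associated with $(X_i)$ and $(\tilde X_i)$, and let $Q=(Z_0,-\partial_1H(Z_0,Z_1))$. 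Asymptoticity of the configurations gives convergence of the first coordinates. It also gives convergence of the second coordinates, since $y_i=-\partial_1H(Z_i,Z_{i+1})$ and $H$ is $\mathcal{C}^2$. Hence $Q$ lies on a heteroclinic orbit from $P$ to $P'$.

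It remains to show that $\mathbf{p}_1$ is injective on $\mathcal{O}(Q)$, and this is the main point. Suppose $Z_i\equiv Z_j \pmod 1$ with $i\neq j$, say $Z_j=Z_i+m$ for some $m\in\mathbb{Z}$. Consider the translated and shifted sequence $Z'_k:=Z_{k+j-i}-m$. It is again minimal, because $H$ is invariant under $(1,1)$ translation and minimality is shift invariant, and it has rotation number $\frac p q$. It is also $\alpha$/$\omega$-asymptotic to shifted translates of $(X_i)$ and $(\tilde X_i)$. Those translates are themselves elements of $\mathcal{M}^{\mathrm{Per}}_{\frac p q}(H)$, so $(Z'_k)$ belongs to $\mathcal{M}^{\mathrm{Per}}_{\frac p q}(H)\cup\mathcal{M}^+_{\frac p q}(H)$. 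On this set $p_0$ is injective. By construction $Z'_i=Z_i$, so applying the injectivity at index $i$ (after shifting indices so that $i$ becomes $0$) gives $(Z'_k)=(Z_k)$. Thus $(Z_k)$ satisfies $Z_{k+(j-i)}=Z_k+m$, i.e. it is a periodic configuration of some type $(m,j-i)$. A periodic configuration is not asymptotic to two distinct periodic configurations unless it equals both, which would force $(X_i)=(\tilde X_i)$ and contradict $X_0<\tilde X_0$. Therefore $\mathbf{p}_1$ is injective on $\mathcal{O}(Q)$.

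The delicate step is justifying that the shifted and translated sequence lies in the same totally ordered family. This rests on the standard Aubry fact that minimal sequences with the same rotation number do not cross, i.e. for minimal configurations $Z'_i=Z_i$ at one index forces equality everywhere unless the sequences cross. I would invoke Aubry's crossing lemma from \cite{Bangert} directly: two distinct minimal configurations cross at most once. Since both $(Z_k)$ and $(Z'_k)$ are asymptotic at both ends to members of the ordered periodic family, the two sequences cannot cross even once, which again forces equality.
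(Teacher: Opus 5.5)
Your first three paragraphs are correct and follow essentially the paper's route. Both arguments rest on two facts: the total order of $\mathcal{M}^{\mathrm{Per}}_{\frac pq}(H)\cup\mathcal{M}^+_{\frac pq}(H)$, and its invariance under index shifts and integer translations. The paper packages this as Lemma~\ref{lem:injectivity} (the image in the annulus is a graph, via the twist condition). You argue directly on configurations, and you make explicit two points the paper leaves implicit: the alternative giving a circle of periodic points, and the fact that $(Z_k)_k$ cannot itself be periodic.

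Your final paragraph, however, justifies the key step with a principle that is false, and you should not rely on it. Minimal configurations with the same \emph{rational} rotation number can cross. Take $z^+\in\mathcal{M}^+_{\frac pq}(H)((X_i)_i,(\tilde X_i)_i)$ and $z^-\in\mathcal{M}^-_{\frac pq}(H)((X_i)_i,(\tilde X_i)_i)$. Then $z^+_k<z^-_k$ for $k\ll 0$ and $z^+_k>z^-_k$ for $k\gg 0$, although both are asymptotic at both ends to members of the ordered periodic family. This is exactly why injectivity of $p_0$ is asserted only on $\mathcal{M}^{\mathrm{Per}}_{\frac pq}(H)\cup\mathcal{M}^{+}_{\frac pq}(H)$ and on $\mathcal{M}^{\mathrm{Per}}_{\frac pq}(H)\cup\mathcal{M}^{-}_{\frac pq}(H)$ separately. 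Being asymptotic to the periodic family does not prevent crossing; being asymptotic \emph{to each other} at one end does.

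Here is the correct argument. Set $X'_k:=X_{k+j-i}-m$ and $\tilde X'_k:=\tilde X_{k+j-i}-m$.
\begin{enumerate}
\item The common value $Z'_i=Z_i$ lies in both $(X_i,\tilde X_i)$ and $(X'_i,\tilde X'_i)$.
\item Both intervals are connected components of the complement of the closed set of $i$-th entries of elements of $\mathcal{M}^{\mathrm{Per}}_{\frac pq}(H)$, so they coincide.
\item Hence $(X'_k)_k=(X_k)_k$ and $(\tilde X'_k)_k=(\tilde X_k)_k$, so $Z$ and $Z'$ are both $\omega$-asymptotic to $(\tilde X_k)_k$.
\item Aubry's exchange argument now forbids a crossing: a crossing gives a definite gain of action, while correcting the far endpoint costs $O(|Z_b-Z'_b|)\to 0$.
\item Distinct minimal configurations that agree at an index must cross there, so $Z=Z'$.
\end{enumerate}
Either give this argument, or simply invoke the cited total order of $\mathcal{M}^{\mathrm{Per}}_{\frac pq}(H)\cup\mathcal{M}^+_{\frac pq}(H)$, as your third paragraph already does, and drop the last paragraph.
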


Observe that Proposition~\ref{prop:homoclinicpq} is a direct consequence of the previous proposition and Proposition~\ref{prop:hyperbolicfixedpoint:section3}.

The result comes from the fact that we are considering sets that are totally ordered and from the following lemma.
\begin{lemma}\label{lem:injectivity}
Let $\mathcal{A}\subset \mathbb{R}^{\mathbb{Z}}$ be a periodic, totally ordered set of minimal configurations. Let $\Upsilon \colon\mathcal{A}\subset \mathbb{R}^{\mathbb{Z}}\to \mathbb{R}^2$ be the function that associates to each minimal configuration $(X_i)_i$ the point $(X,Y)\in\mathbb{R}^2$ such that $p_1\circ F^n(X,Y)=X_n$   for all $n\in\mathbb{Z}$. Let $\pi\colon \mathbb{R}^2\to \mathbb{T}\times\mathbb{R}$ be the universal cover of the unbounded annulus. Then $\pi\circ\Upsilon(\mathcal{A})=:A$ is a graph over its projection on $\mathbb{T}$.
\end{lemma}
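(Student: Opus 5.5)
The plan is to exploit the Aubry--Mather ordering directly in the universal cover and then project. Take $\mathcal{A}\subset\mathbb{R}^{\mathbb{Z}}$ periodic and totally ordered. Here ``periodic'' means invariant under the integer translations $(X_i)_i\mapsto (X_i+m)_i$, $m\in\mathbb{Z}$, and under the shift $(X_i)_i\mapsto (X_{i+1})_i$. Note that $\Upsilon$ is well defined by the twist condition: $X_0$ and $X_1$ determine $Y=-\partial_1 H(X_0,X_1)$ via~\eqref{generating function}, and then the whole configuration is the projected orbit. So $\Upsilon((X_i)_i)=(X_0,-\partial_1H(X_0,X_1))$.

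\textbf{Step 1: injectivity of $X_0$ on $\mathcal{A}$.} Suppose two distinct elements $(X_i)_i\neq(\tilde X_i)_i$ of $\mathcal{A}$ have $X_0=\tilde X_0$. Total order gives, say, $X_i<\tilde X_i$ for all $i$, which already fails at $i=0$. The order is strict in every coordinate, so two distinct configurations never share a coordinate. Hence $p_0|_{\mathcal{A}}$ is injective, and $\Upsilon(\mathcal{A})$ is a graph over its first projection in $\mathbb{R}^2$. Even without total order, one could use the Aubry crossing lemma: two distinct minimal configurations agreeing at $X_0$ would have $X_{-1}\neq\tilde X_{-1}$ and $X_1\neq \tilde X_1$, which yields a crossing and contradicts minimality. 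This is why the ordered set is the right hypothesis.

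\textbf{Step 2: passage to the annulus.} We must show that if $\pi\Upsilon(X)$ and $\pi\Upsilon(\tilde X)$ have the same $\mathbb{T}$-coordinate, then they coincide. Equal $\mathbb{T}$-coordinate means $\tilde X_0=X_0+m$ for some $m\in\mathbb{Z}$. By the periodicity of $\mathcal{A}$, the translate $(X_i+m)_i$ lies in $\mathcal{A}$. Since $F$ commutes with integer translations and $H(X+1,X'+1)=H(X,X')$, this translate is again a configuration, with $\Upsilon((X_i+m)_i)=\Upsilon(X)+(m,0)$. Step 1 then forces $\tilde X=(X_i+m)_i$. Therefore $\pi\Upsilon(\tilde X)=\pi\Upsilon(X)$, and $A$ is a graph over its projection.

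\textbf{Main obstacle.} The delicate point is not the argument itself but making the hypotheses match. Total order in the strict sense (``$<$ in every coordinate'') must genuinely hold for the relevant sets $\mathcal{M}^{\mathrm{Per}}_{p/q}\cup\mathcal{M}^\pm_{p/q}$, which is the Bangert fact quoted before the lemma. I would also need translation-invariance to be part of ``periodic'', so that the translated configuration is comparable with $\tilde X$. If only weak order (``$\le$'') were available, I would first apply Aubry's fundamental lemma to upgrade it: two ordered minimal configurations that touch at one index coincide, since at a touching index $X_{i-1}\le \tilde X_{i-1}$ and $X_{i+1}\le\tilde X_{i+1}$ combined with the configuration equation and $\partial_{12}H<0$ force equality of neighbors, and by induction everything agrees.
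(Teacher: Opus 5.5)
Your argument is correct and is essentially the paper's proof: you use integer-translation invariance of $\mathcal{A}$ to reduce to two elements with the same $X_0$, and then observe that two distinct elements of a strictly totally ordered set cannot share a coordinate (the paper reaches the same contradiction by using the twist condition to get $Y<\tilde Y\Rightarrow X_1<\tilde X_1$, i.e.\ a crossing). One correction to an aside that your proof does not actually use: two distinct minimal configurations agreeing at $X_0$ do \emph{not} contradict minimality, since Aubry's lemma only forbids crossing more than once (e.g.\ for $H(X,X')=\frac{1}{2}(X'-X)^2$ every configuration $X_i=X_0+i\omega$ is minimal, and all of them share $X_0$), which is exactly why the total-order hypothesis is needed.
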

\begin{proof}
Assume by contradiction that there exists $(x,Y),(x,\tilde Y)\in A$ such that $Y<\tilde Y$. In particular, there exists $k\in \mathbb{Z}$ such that $X=\tilde X+k$ where $\pi_0(X)=\pi_0(\tilde X)= x$ and $\pi_0\colon\mathbb{R}\to\mathbb{T}$ is a universal cover of the torus. Without loss of generality, because of the periodicity of the set, we can assume that $k=0$. Let $(X_i)_i\in \mathcal{A}$, respectively $(\tilde X_i)_i\in \mathcal{A}$, be the configuration corresponding to the point $(X,Y)\in \Upsilon({\mathcal{A}})$, respectively $(X, \tilde Y)\in \Upsilon({\mathcal{A}})$, such that $X_0=X$, respectively $\tilde X_0=X$. By the twist condition and since $Y<\tilde Y$, we can deduce that $X_1<\tilde X_1$, that is, the configurations $(X_i)_i$ and $(\tilde X_i)_i$ cross. This contradicts the fact that the set $\mathcal{A}$ is totally ordered and we conclude the proof.
\end{proof}

\section{Billiard perturbations. Proof of Propositions~\ref{cor:expr:fepsilon} and~\ref{prop Frechet Gamma nu}}
\label{billiard perturbations}
Along this section we will study the map $f_{\varepsilon}$ associated to the perturbed billiard table $\defga$, as in~\eqref{eq:perturbedmap:section3}. 
To this end, we need some extra properties of the generating function $\tau (s,s')$ of the unperturbed map $f_{\ga}$ (see~\eqref{eq:unperturbedmap arclength}) defined in~\eqref{generating function arclength}.
When restricted to the set $\Delta_\mu$ defined in~\eqref{definition Delta mu}, besides equations~\eqref{first gen fct:section3} the generating function satisfies  (see e.g.~\cite[Lemma 2.1]{KaloshinZhang}),
\begin{equation}\label{der seconde gen}
\begin{aligned} 
&\partial_{11} \tau(s,s')=\mathcal{K}(s)\cos \varphi+\dfrac{\cos^2 \varphi}{\tau(s,s')},\\
&\partial_{12} \tau(s,s')=\partial_{21} \tau(s,s')=\dfrac{\cos \varphi\cos\varphi'}{\tau(s,s')},\\
&\partial_{22} \tau(s,s')=\mathcal{K}(s')\cos \varphi'+\dfrac{\cos^2 \varphi'}{\tau(s,s')},
\end{aligned}
\end{equation}
with $\varphi=\varphi(s,s')$ and $\varphi'=\varphi'(s,s')$ defined by~\eqref{first gen fct:section3} and $\mathcal{K}(s)$ is the curvature of $\ga(\mathbb{T})$.  

It is convenient to introduce the function 
\begin{equation}\label{def:tau_hat}
\hat \tau\colon \mathbb{A}\to \R_+,\qquad  (s,\varphi)\mapsto\tau(s,s'),  
\end{equation}
where $s'=s'(s,\varphi)$ is defined by~\eqref{eq:unperturbedmap arclength}, i.e., $(s',\varphi')=f_{\ga} (s,\varphi)$. That is, given $(s,s') \in \mathbb{R} \times \mathbb{R}$, we let $\varphi(s,s')$ be defined by~\eqref{first gen fct:section3} so that $\hat \tau(s,\varphi(s,s'))=\tau(s,s')$.  
 
\begin{lemma}
For any $(s,\varphi)$, letting $(s',\varphi')=f_{\ga}(s,\varphi)$, we have that
\begin{align*}
\partial_1 \hat \tau(s,\varphi)&=-\sin \varphi -(\hat \tau(s,\varphi)\mathcal{K}(s)+\cos \varphi) \tan \varphi',\\
\partial_2 \hat \tau(s,\varphi)&=-\hat \tau(s,\varphi)\tan \varphi' .
\end{align*}
\end{lemma}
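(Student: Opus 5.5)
The plan is to write $\hat\tau(s,\varphi)=\tau\big(s,s'(s,\varphi)\big)$ and apply the chain rule. This gives
\[
\partial_1\hat\tau=\partial_1\tau+\partial_2\tau\,\frac{\partial s'}{\partial s},\qquad
\partial_2\hat\tau=\partial_2\tau\,\frac{\partial s'}{\partial \varphi},
\]
where $\tau$ and its derivatives are evaluated at $(s,s')$. By~\eqref{first gen fct:section3} we have $\partial_1\tau=-\sin\varphi$ and $\partial_2\tau=\sin\varphi'$. It therefore only remains to compute the two partial derivatives of $s'(s,\varphi)$, i.e.\ the first row of $Df_{\ga}$.

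To get them, I will differentiate implicitly the defining relation $-\sin\varphi=\partial_1\tau(s,s'(s,\varphi))$, which holds identically in $(s,\varphi)$. We work for $|\varphi|<\frac{\pi}{2}$, so that $(s,s')\notin\Delta$ and $\tau$ is smooth there. Differentiating with respect to $\varphi$ gives $-\cos\varphi=\partial_{12}\tau\,\partial_\varphi s'$. Using~\eqref{der seconde gen}, $\partial_{12}\tau=\cos\varphi\cos\varphi'/\tau$, so
\[
\partial_\varphi s'=-\frac{\tau}{\cos\varphi'} .
\]
Differentiating with respect to $s$ gives $0=\partial_{11}\tau+\partial_{12}\tau\,\partial_s s'$. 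Inserting $\partial_{11}\tau=\mathcal{K}(s)\cos\varphi+\cos^2\varphi/\tau$ from~\eqref{der seconde gen}, this yields
\[
\partial_s s'=-\frac{\tau\mathcal{K}(s)+\cos\varphi}{\cos\varphi'} .
\]

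Substituting these two expressions into the chain rule, with $\tau=\hat\tau(s,\varphi)$, gives exactly
\[
\partial_2\hat\tau=-\hat\tau\tan\varphi',\qquad
\partial_1\hat\tau=-\sin\varphi-(\hat\tau\mathcal{K}(s)+\cos\varphi)\tan\varphi'.
\]

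The only delicate point is the non-degeneracy needed for the implicit differentiation: $\cos\varphi$ and $\cos\varphi'$ must be nonzero, so that $\partial_{12}\tau\neq0$ by the twist condition and $s'$ is a differentiable function of $(s,\varphi)$. This holds in the open annulus, where $\varphi'\in(-\frac{\pi}{2},\frac{\pi}{2})$ (cf.\ Lemma~\ref{away from boundary 1}).

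One should also check that the sign conventions for $\mathcal{K}$ match those used in~\eqref{der seconde gen}. Since the formulas here are derived directly from~\eqref{der seconde gen}, they are consistent with that convention by construction.
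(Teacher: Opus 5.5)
Your proposal is correct and is essentially the paper's argument: both apply the chain rule to $\hat\tau(s,\varphi)=\tau(s,s')$ using \eqref{first gen fct:section3} and \eqref{der seconde gen}. The only cosmetic difference is the direction of the change of variables. The paper differentiates the inverse relation $\varphi=\varphi(s,s')$ to get $\partial_1\varphi$ and $\partial_2\varphi$, whereas you implicitly differentiate $s'(s,\varphi)$ and in doing so recover the first row of \eqref{formula:Dfsphi}.
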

\begin{proof}
By~\eqref{first gen fct:section3}, \eqref{extension generating function} and~\eqref{der seconde gen}, for $\varphi=\varphi(s,s')$, we obtain:
\begin{equation}\label{eq partial varphi 1}
\partial_1 \varphi(s,s')=-\frac{\partial_{11} \tau(s,s')}{\cos \varphi(s,s')}=-\mathcal{K}(s)-\frac{\cos \varphi}{\tau(s,s')},
\end{equation}
and  
\begin{equation}\label{eq partial varphi 2}
\partial_2 \varphi(s,s')= -\frac{\partial_{12} \tau(s,s')}{\cos \varphi(s,s')} =-\frac{\cos \varphi'}{\tau(s,s')},
\end{equation}
with $\varphi'=\varphi'(s,s')$ as in~\eqref{first gen fct:section3}. Since $\hat \tau(s,\varphi)=\tau(s,s')$ for $\varphi=\varphi(s,s')$, we deduce that
\begin{equation*}
\partial_2\hat \tau(s,\varphi)=\frac{\partial_2\tau(s,s')}{\partial_2 \varphi(s,s')}=-\hat \tau(s,\varphi)\frac{\sin \varphi'}{\cos \varphi'}.
\end{equation*}
Similarly, from \eqref{eq partial varphi 1}, we obtain:
\begin{equation*}
\begin{aligned}
\partial_1\hat \tau(s,\varphi) & =\partial_1 \tau(s,s')-\partial_1\varphi(s,s')\cdot \partial_2 \hat \tau(s,\varphi) \\ 
& =-\sin \varphi -\hat \tau(s,\varphi)\left(\mathcal{K}(s)+\frac{\cos \varphi}{\hat\tau(s,\varphi)}\right)\frac{\sin \varphi'}{\cos \varphi'}.
\end{aligned}
\end{equation*}
\end{proof}

Let us also recall the expression of the differential of the (unperturbed) billiard map $f_{\ga}\colon(s,\varphi)\mapsto (s',\varphi')$ (see e.g. \cite[p. 35] {CheMar_book}) that will be needed in the following sections:

\begin{equation} \label{formula:Dfsphi}
Df_{\ga}(s,\varphi)=-\frac{1}{\cos \varphi'}\begin{bmatrix}
\hat \tau \,   \mathcal{K}(s)+\cos \varphi & \hat \tau   \\
\hat \tau  \,  \mathcal{K}(s)\mathcal{K}(s')+\mathcal{K}(s)\cos \varphi'+\mathcal{K}(s') \cos \varphi & \hat \tau  \, \mathcal{K}(s')+\cos \varphi'
\end{bmatrix}
\end{equation}
where $\hat{\tau}= \hat{\tau}(s,\varphi)$. 

\subsection{Perturbations of the billiard table. Proof of Proposition \ref{cor:expr:fepsilon}} \label{billiard perturbationTable} 
The goal in this section is to compute the first order in $\varepsilon$ of the perturbed billiard map $f_\varepsilon=f_{\defga}$, with $\defga$ given in~\eqref{deformations}. We recall that the perturbed boundary $\partial\Omega_\varepsilon$, is  not parametrized in arc-length. 

\begin{lemma}\label{lem:normtangent}
For each $s \in \mathbb{T}$, it holds
\begin{equation}\label{norme gamma prime eps}
\begin{aligned}
\|\derdefga(s)\| = & \sqrt{(1-\varepsilon \lac(s)\mathcal{K}(s))^2+\varepsilon^2 (\derlac(s))^2}=1-\varepsilon
\lac(s)\mathcal{K}(s)+\varepsilon^2 R_1(s;\varepsilon), \\
\mathcal{K}_\varepsilon(s)  =& \Big [1- \varepsilon  \lac(s) \mathcal{K}(s)) (\mathcal{K}(s) + \varepsilon (\dertwolac(s) -  \lac(s) (\mathcal{K}(s))^2)) \\ 
& + \varepsilon^2 \derlac(s) (2 \derlac(s) \mathcal{K}(s) +  \lac(s) \der{\mathcal{K}}(s)) 
\Big ] \cdot \Big[(1- \varepsilon  \lac(s) \mathcal{K}(s))^2 + \varepsilon^2 (\derlac(s))^2
\Big ]^{-\frac{3}{2}} 
\\
=& \mathcal{K}(s)+\varepsilon \big (\dertwolac(s)+  \lac(s) (\mathcal{K}(s))^2 \big )+ \varepsilon^2 R_2(s;\varepsilon)
\end{aligned}
\end{equation}
where $\mathcal{K}_\varepsilon(s)\leq 0$ denotes the curvature of $\partial\Omega_\varepsilon $ at the point $\defga(s)$.

Moreover there exist $\varepsilon_1$ and a constant $M_1$ depending only on $\|\ga\|_{\mathcal{C}^k}$, such that for all $\varepsilon \in [0, \varepsilon_1]$, $\mathcal{K}_\varepsilon(s)<0$, 
\[
\|R_1 \|_{\mathcal{C}^{k-2}} \leq M_1 \|\lac\|_{\mathcal{C}^{k-1}}^2, \qquad \|R_2\|_{\mathcal{C}^{k-3}} \leq M_1 \|\lac \|_{\mathcal{C}^{k-1}}^2.
\] 	
\end{lemma}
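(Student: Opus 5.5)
This is a direct computation from the Frenet formulas for the arc-length parametrization $\ga$. Since $\ga$ has unit speed, write $\derga=\mathbf{T}$, and recall that with the paper's convention (outward normal $\narc$, curvature $\mathcal{K}<0$) we have $\dot{\mathbf{T}}=\mathcal{K}\,\narc$ and $\dot\narc=-\mathcal{K}\,\mathbf{T}$. First I would double-check these signs against a circle of radius $R$ traversed counterclockwise: there $\mathcal{K}=-1/R$ and the outward normal is $\narc=-R\ddot\ga$. Differentiating $\defga=\ga+\varepsilon\lac\narc$ gives
\[
\derdefga=(1-\varepsilon\lac\mathcal{K})\,\mathbf{T}+\varepsilon\derlac\,\narc .
\]
Orthonormality of $(\mathbf{T},\narc)$ then yields the square-root formula for $\|\derdefga\|$ at once. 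Taylor expanding $\sqrt{(1-a)^2+b^2}$ with $a=\varepsilon\lac\mathcal{K}$ and $b=\varepsilon\derlac$ gives $1-\varepsilon\lac\mathcal{K}+\varepsilon^2R_1$.

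For the curvature I would differentiate once more:
\[
\dertwodefga=\big(-\varepsilon\derlac\mathcal{K}-\varepsilon\lac\dot{\mathcal{K}}-\varepsilon\derlac\mathcal{K}\big)\mathbf{T}+\big((1-\varepsilon\lac\mathcal{K})\mathcal{K}+\varepsilon\dertwolac\big)\narc .
\]
Then I would use the signed-curvature formula $\mathcal{K}_\varepsilon=\pm(\derdefga\wedge\dertwodefga)/\|\derdefga\|^3$ in the frame $(\mathbf{T},\narc)$, with the sign fixed so that $\varepsilon=0$ recovers $\mathcal{K}$. The numerator is
\[
(1-\varepsilon\lac\mathcal{K})\big((1-\varepsilon\lac\mathcal{K})\mathcal{K}+\varepsilon\dertwolac\big)+\varepsilon^2\derlac\,(2\derlac\mathcal{K}+\lac\dot{\mathcal{K}}).
\]
Factoring out $\mathcal{K}$ and rearranging, this matches the bracket in the statement, which I read as $(1-\varepsilon\lac\mathcal{K})(\mathcal{K}+\varepsilon(\dertwolac-\lac\mathcal{K}^2))$ plus the $\varepsilon^2$ term. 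Expanding numerator and denominator to first order, the numerator is $\mathcal{K}+\varepsilon(\dertwolac-2\lac\mathcal{K}^2)+O(\varepsilon^2)$. The denominator to the power $-3/2$ contributes a factor $1+3\varepsilon\lac\mathcal{K}+O(\varepsilon^2)$. The product is therefore $\mathcal{K}+\varepsilon(\dertwolac+\lac\mathcal{K}^2)+O(\varepsilon^2)$, which is the claimed linear term.

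For the remainder estimates I would write $R_1$ and $R_2$ via the integral form of Taylor's theorem in $\varepsilon$. Each is a smooth function of the quantities $(\varepsilon,\lac,\derlac,\dertwolac,\mathcal{K},\dot{\mathcal{K}})$, and every term in it is at least quadratic in the jets of $\lac$.
\begin{itemize}
\item $R_1$ involves only $\lac$ and $\derlac$. Its $\mathcal{C}^{k-2}$ norm therefore involves derivatives of $\lac$ up to order $k-1$ and of $\mathcal{K}$ up to order $k-2$. The latter is controlled by $\|\ga\|_{\mathcal{C}^k}$ because $\mathcal{K}$ involves $\ddot\ga$.
\item $R_2$ involves $\dertwolac$ and $\dot{\mathcal{K}}$. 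Its $\mathcal{C}^{k-3}$ norm involves derivatives of $\lac$ up to order $k-1$ and of $\mathcal{K}$ up to order $k-2$.
\end{itemize}
The Faà di Bruno and Leibniz rules then give $\|R_i\|\le M_1\|\lac\|^2_{\mathcal{C}^{k-1}}$, provided $\varepsilon\|\lac\|_{\mathcal{C}^{k-1}}$ stays bounded. This keeps the argument $(1-\varepsilon\lac\mathcal{K})^2+\varepsilon^2\derlac^2$ of the square root and of the $-3/2$ power away from $0$.

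Finally, $\mathcal{K}$ is bounded above by $-\kappa_0<0$ on $\mathbb{T}$ by compactness and strong convexity. Since $\mathcal{K}_\varepsilon=\mathcal{K}+O(\varepsilon)$ uniformly, there is an $\varepsilon_1$ with $\mathcal{K}_\varepsilon<0$ for $\varepsilon\le\varepsilon_1$. This implicitly assumes $\|\lac\|_{\mathcal{C}^2}$ is normalized, as in \eqref{deformations gamma ast}, so that $\varepsilon_1$ depends only on $\ga$.

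The only delicate step is the bookkeeping of signs and orientation conventions for $\mathcal{K}$ and $\narc$, together with tracking exactly which derivatives enter the remainders so that the stated regularity losses come out right.
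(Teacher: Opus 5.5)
Your proposal is correct and follows the paper's proof essentially step by step. Both use the Frenet relations $\dertwoga=\mathcal{K}\narc$ and $\dot\narc=-\mathcal{K}\derga$, decompose $\derdefga$ and $\dertwodefga$ in the frame $(\derga,\narc)$, apply the determinant formula for $\mathcal{K}_\varepsilon$ (the paper writes it as $-\det(\derdefga,\dertwodefga)/\|\derdefga\|^3$ using $\det(\narc,\derga)=1$), and expand to first order with the factor $1+3\varepsilon\lac\mathcal{K}$. You also correctly note two points the paper leaves implicit: $\varepsilon_1$ can depend on $\ga$ alone only once $\lac$ is normalized as in \eqref{deformations gamma ast}, and keeping the square root away from zero requires $\varepsilon\|\lac\|$ to be small, not merely bounded.
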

\begin{proof}
Fix $s \in \mathbb{T}$. As $\ga$ is an arclength anticlockwise parametrization of $\partial \Omega$, we have $\narc(s)=R_{-\frac \pi 2}\derga(s)$, where $R_{-\frac \pi 2}$ is the rotation of angle $-\frac \pi 2$, and $\dertwoga(s)=\mathcal{K}(s)\narc(s)$. Thus,
\begin{equation}\label{derivative of n arclength}
\frac{d}{ds}\narc (s)=R_{-\frac \pi 2}\dertwoga(s)=R_{-\frac \pi 2}\mathcal{K}(s)\narc (s)=-\mathcal{K}(s)\derga(s),
\end{equation}
and then,
\[
\derdefga(s)=(1-\varepsilon  \lac(s)\mathcal{K}(s))\derga(s)+\varepsilon \derlac(s) \narc(s).
\]
As a consequence 
\[
\| \derdefga (s) \|^2 = (1- \varepsilon \lac(s) \mathcal{K}(s))^2 + \varepsilon^2 (\derlac(s))^2 
\]
and we easily obtain the first identity in~\eqref{norme gamma prime eps}. On the other hand, 
\begin{align*}
\dertwodefga(s)= & (1-\varepsilon \lac(s)\mathcal{K}(s))\mathcal{K}(s)\narc(s)-\varepsilon \frac{d}{ds} [\lac(s)\mathcal{K}(s)]\derga(s)+\varepsilon \dertwolac(s) \narc(s) \\ &-\varepsilon \derlac(s)\mathcal{K}(s)\derga(s)\\
= &\big [\mathcal{K}(s)+\varepsilon(\dertwolac(s)-  \lac(s) (\mathcal{K}(s))^2) \big ]\narc(s)-\varepsilon \big [2\derlac(s)\mathcal{K}(s)+ \lac(s)\der{\mathcal{K}}(s) \big ]
\derga(s)
\end{align*}
and we notice that the terms in the expansion of $\dertwodefga$ are $\mathcal{C}^{k-3}$ functions with $\mathcal{C}^{k-3}$ norm of the same order as $\| \lac\|_{\mathcal{C}^{k-1}}$.
 
Using that $\det(\narc(s),\dot \ga(s))=1$, by standard formulas expressing curvature, we thus obtain:
\begin{align}\label{curvature arc length}
\mathcal{K}_\varepsilon(s)=&-\frac{\det (\derdefga(s), \dertwodefga(s))}{\|\derdefga(s)\|^3} \\
=&\Big [(1- \varepsilon  \lac(s) \mathcal{K}(s)) (\mathcal{K}(s) + \varepsilon (\dertwolac(s) -  \lac(s) (\mathcal{K}(s))^2)) \notag \\  &+ \varepsilon^2 \derlac(s) (2 \derlac(s) \mathcal{K}(s) +  \lac(s) \der{\mathcal{K}}(s)) \Big ] \cdot \Big[
(1- \varepsilon  \lac(s) \mathcal{K}(s))^2 + \varepsilon^2 (\derlac(s))^2\Big ]^{-\frac{3}{2}} \notag \\
=&(\mathcal{K}(s)-\varepsilon  \lac(s)(\mathcal{K}(s))^2+\varepsilon(\dertwolac(s)-  \lac(s)(\mathcal{K}(s))^2))(1+3\varepsilon \lac(s)\mathcal{K}(s))+\varepsilon^2 \hat R_2(s;\varepsilon) \notag \\
=&\mathcal{K}(s)+\varepsilon(\dertwolac(s)+ \lac(s)
(\mathcal{K}(s))^2)+\varepsilon^2 R_2(s;\varepsilon) \notag
\end{align}
and the bound for $\|R_2\|_{\mathcal{C}^{k-3}}$ follows easily. 
\end{proof}

In the following, we let 
\begin{equation}\label{eq:taueps}
\tau_\varepsilon\colon (s,s')\mapsto \|\defga(s)-\defga(s')\|.    
\end{equation}
We also introduce, for $0<\mu <\frac{1}{2}$, $0<\nu \ll 1$ and $\ell \leq k-1$ 
\begin{equation}\label{defnormDeltamu}
\begin{aligned}
\|R\|_{\norm{\ell}{\Delta_\mu}} & := \max_{j=0,\cdots , \ell} \left \{ \max_{(s,s',\varepsilon)\in \Delta_\mu \times [0,\epsilon_0]} \|D^j R(s,s';\varepsilon)\|\right \}\\
\|\hat R\|_{\norm{\ell}{\mathbb{A}_\nu}}  & := \max_{j=0,\cdots , \ell} \left \{ \max_{(s,\varphi,\varepsilon)\in \mathbb{A}_\nu \times [0,\epsilon_0]} \|D^j \hat R(s,\varphi;\varepsilon)\|\right \}
\end{aligned}
\end{equation}
where $R, \hat{R}$ are $\mathcal{C}^{\ell}$ in its arguments, $D$ means in the former the differential with respect to $(s,s',\varepsilon)$ and in the latter with respect to $(s,\varphi,\varepsilon)$. Recall that the sets $\Delta_\mu$ and $\mathbb{A}_\nu$ were introduced in Lemma~\ref{away from boundary 1}. If there is no danger of confusion we omit the sets $\Delta_\mu, \mathbb{A}_\nu$ in the notation of the norms in~\eqref{defnormDeltamu}. 

\begin{lemma}\label{lemma pertr lenght func}
For any $(s,s') \in \T \times \T$ we have 
\[
\tau^2 _\varepsilon(s,s') = \tau^2 (s,s')+2\varepsilon \tau(s,s') (\lac(s) \cos \varphi+\lac(s')\cos \varphi') +
\varepsilon^2 \| \lac(s)\narc(s)-\lac(s')\narc(s')\|^2
\]
where $\varphi=\varphi(s,s')$ and $\varphi'=\varphi'(s,s')$ are defined by $\sin \varphi=-\partial_1 \tau(s,s')$, and $\sin \varphi'=\partial_2\tau(s,s')$.

Moreover, for any $0<\mu <\frac{1}{2}$ there exist $\varepsilon_\mu$ and a constant $M_{\mu}$, depending also on the $\mathcal{C}^k$ norm of $\ga$, such that if $s, s' \in \Delta_\mu$ and $\varepsilon \in [-\varepsilon_\mu, \varepsilon_\mu]$, then 
\begin{equation}\label{exp tau epss}
\begin{aligned}
\tau_\varepsilon(s,s')=&\tau(s,s')+\varepsilon (\lac(s)\cos \varphi+\lac(s')\cos \varphi')\\
&+\frac{\varepsilon^2}{2\tau(s,s')}\big(\|\lac(s)\narc(s)-\lac(s') \narc(s')\|^2-(\lac(s)\cos \varphi+\lac(s')\cos \varphi')^2\big) \\ & +\varepsilon^3 R_3(s,s';\varepsilon), 
\end{aligned}
\end{equation}
where $ \|R_3\|_{\norm{k}{\Delta_\mu}} \leq M_{\mu} \|\lac \|_{\mathcal{C}^k}^3$.
\end{lemma}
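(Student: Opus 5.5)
We first prove the exact identity by a direct computation, and then obtain the expansion of $\tau_\varepsilon$ by taking a square root and Taylor expanding in $\varepsilon$.

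\emph{Exact identity.} Write
\[
\defga(s)-\defga(s') = \big(\ga(s)-\ga(s')\big) + \varepsilon\big(\lac(s)\narc(s)-\lac(s')\narc(s')\big)
\]
and expand the squared norm. The quadratic term in $\varepsilon$ is already $\varepsilon^2\|\lac(s)\narc(s)-\lac(s')\narc(s')\|^2$. The cross term is
\[
2\varepsilon\big(\lac(s)\langle \ga(s)-\ga(s'),\narc(s)\rangle - \lac(s')\langle \ga(s)-\ga(s'),\narc(s')\rangle\big).
\]
To evaluate it, let $v=(\ga(s')-\ga(s))/\tau$ be the unit vector along the chord. By definition of $\varphi$ (the angle between the velocity $v$ and the inward normal $-\narc(s)$), we have $\langle v,-\narc(s)\rangle=\cos\varphi$. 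At the endpoint, the reflected velocity makes angle $\varphi'$ with $-\narc(s')$, and the incoming velocity $v$ satisfies $\langle v,\narc(s')\rangle=\cos\varphi'$.

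This can be checked consistently with~\eqref{first gen fct:section3}. Indeed $\partial_1\tau=-\langle v,\derga(s)\rangle=-\sin\varphi$ and $\partial_2\tau=\langle v,\derga(s')\rangle=\sin\varphi'$, so the tangential components match $\sin$. Since the domain is convex, the chord points inward at $s$ and outward at $s'$, which forces the normal components to be $+\cos$ in the stated sense.

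Hence $\langle \ga(s)-\ga(s'),\narc(s)\rangle=\tau\cos\varphi$ and $-\langle \ga(s)-\ga(s'),\narc(s')\rangle=\tau\cos\varphi'$, and the identity follows. This only requires $(s,s')\notin\Delta$; on $\Delta$ it extends by continuity.

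\emph{Expansion.} On $\Delta_\mu$, Lemma~\ref{away from boundary 2} gives $\tau\ge c>0$. Write
\[
\tau_\varepsilon=\tau\sqrt{1+\varepsilon A+\varepsilon^2 B},\qquad A=\frac{2(\lac(s)\cos\varphi+\lac(s')\cos\varphi')}{\tau},\qquad B=\frac{\|\lac(s)\narc(s)-\lac(s')\narc(s')\|^2}{\tau^2}.
\]
The functions $\cos\varphi,\cos\varphi'$ are $\mathcal{C}^{k-1}$ functions of $(s,s')$ on $\Delta_\mu$, and they are positive and bounded away from zero there by Corollary~\ref{corollary away boundary}. For $\varepsilon$ small, $|\varepsilon A+\varepsilon^2B|\le 1/2$. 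Using $\sqrt{1+x}=1+\frac x2-\frac{x^2}{8}+x^3 r(x)$ with $r$ analytic on $|x|\le1/2$, we get
\[
\tau_\varepsilon=\tau+\varepsilon\frac{\tau A}{2}+\varepsilon^2\tau\Big(\frac B2-\frac{A^2}{8}\Big)+\varepsilon^3 R_3 ,
\]
which is exactly the displayed expansion~\eqref{exp tau epss}.

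\emph{Remainder bound.} To bound $R_3$, note that $R_3$ is a sum of terms each containing at least three factors of $\lac$, evaluated at $s$ or $s'$. These are multiplied by smooth functions of $(s,s',\varepsilon)$ built from $\tau^{-1}$, $\cos\varphi$, $\cos\varphi'$, $\narc$, and derivatives of $r$. The bound $\|R_3\|\le M_\mu\|\lac\|^3$ then follows from the Leibniz and Faà di Bruno formulas together with the lower bound on $\tau$.

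The main obstacle is tracking regularity. The functions $\varphi,\varphi'$ inherit only $\mathcal{C}^{k-1}$ regularity from $\tau$, so a careful count is needed to justify the norm index claimed for $R_3$.
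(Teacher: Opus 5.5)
Your proposal is correct and is essentially the paper's proof: expand the squared norm, identify the cross term with $2\varepsilon\tau(\lac(s)\cos\varphi+\lac(s')\cos\varphi')$ via the unit chord vector, and then, using $\tau\geq c_\mu$ on $\Delta_\mu$, Taylor-expand $\tau\sqrt{1+\varepsilon A+\varepsilon^2B}$.

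The regularity caveat you raise is real, and the paper's proof does not address it either. With $\ga\in\mathcal{C}^k$, the functions $\narc$, $\cos\varphi$, $\cos\varphi'$ are only $\mathcal{C}^{k-1}$, so one really controls $R_3$ in $\mathcal{C}^{k-1}$. That is enough for the later $\mathcal{C}^{k-2}$ and $\mathcal{C}^{k-3}$ estimates. Finally, the lower bound on $\cos\varphi,\cos\varphi'$ that you invoke is not needed, and the corollary you cite does not state it; only $\tau\geq c_\mu$ enters the expansion.
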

\begin{proof}
In the following computation, we denote by $v(s,s')$ the unit vector $v(s,s'):=\frac{\ga (s)-\ga(s')}{\|\ga(s)-\ga(s')\|}$. We have:
\begin{align*}
\tau_{\varepsilon}^2 (s,s')   -  & \tau^2(s,s') =  \langle \defga(s) - \defga(s') , \defga(s) - \defga(s')\rangle - 
\langle \ga(s) - \ga(s') , \ga(s) - \ga(s')\rangle 
\\ 
 &=  2\varepsilon \langle  \lac(s)\narc(s) -  \lac(s')\narc(s'), \ga(s) - \ga(s')\rangle 
+\varepsilon^2 \| \lac(s) \narc(s) -  \lac(s')\narc(s')\|^2 \\
&= 2\varepsilon  \lac(s) \|\ga(s) - \ga(s')\| \langle n *(s) , v(s,s')\rangle   - 
2\varepsilon  \lac(s') \|\ga(s) - \ga(s')\| \langle \narc(s') , v(s,s')\rangle 
 \\ & \hspace{0.3cm}+ \varepsilon^2 \| \lac(s) \narc(s) -  \lac(s')\narc(s')\|^2 
\\ &= 
2\varepsilon \tau(s,s') ( \lac(s) \cos \varphi+ \lac(s')\cos \varphi') +
\varepsilon^2 \| \lac(s)\narc(s)- \lac(s')\narc(s')\|^2.
\end{align*}
Therefore, for a fixed $\mu \in \left (0,\frac{1}{2}\right )$, by Lemma~\ref{away from boundary 2}, $\tau(s,s')\geq c_\mu$ for some constant $c_\mu>0$ and then
\[
\frac{\tau_{\varepsilon} (s,s')}{\tau (s,s')}= \left [ 1 +\frac{2\varepsilon}{\tau(s,s')} ( \lac(s) \cos \varphi+ \lac(s')\cos \varphi') +
\frac{\varepsilon^2}{\tau^2(s,s')}\| \lac(s)\narc(s)- \lac(s')\narc(s')\|^2 \right ]^{1/2}
\]
which gives the result using the Taylor expansion of the square root. \qedhere
\end{proof}

Given $(s,s')\in \mathbb{T} \times \mathbb{T}$, we denote by $\varphi_\varepsilon=\varphi_\varepsilon(s,s')$, resp. $-\varphi_\varepsilon'=-\varphi_\varepsilon'(s,s')$, the angle between the inward normal to $\Omega_\varepsilon$ at $\defga(s)$, resp. $\defga(s')$ and the vector $\defga(s')-\defga(s)$, resp. $\defga(s)-\defga(s')$.

\begin{corollary}\label{coro angl} 
Let $0<\mu <\frac{1}{2}$. There exist $\varepsilon_\mu>0$ and a constant $M_{\mu}$ also depending on  $\|\ga\|_{\mathcal{C}^k}$, such that for any $(s,s')\in \Delta_\mu$ and $\varepsilon \in [-\varepsilon_\mu , \varepsilon_\mu]$, we have
\begin{align*}
\varphi_\varepsilon(s,s')&=\varphi-\varepsilon \left (\derlac(s)+ \lac(s)\frac{\sin \varphi}{\tau}-\lac(s')\frac{\sin\varphi'}{\tau}\right) +\varepsilon^2 R_4(s,s';\varepsilon),\\
\varphi_\varepsilon'(s,s')&=\varphi'+\varepsilon \left (\derlac(s')+\lac(s)\frac{\sin\varphi}{\tau}- \lac(s')\frac{\sin \varphi'}{\tau}\right ) +\varepsilon^2 R_5(s,s';\varepsilon),
\end{align*}
where $\varphi=\varphi(s,s')$, $\varphi'=\varphi'(s,s')$ are defined by~\eqref{first gen fct:section3},  $\tau=\tau(s,s')$ and the reminders satisfy $\|R_4\|_{\norm{k-2}{\Delta_\mu}}, \|R_5\|_{\norm{k-2}{\Delta_\mu}}\leq M_{\mu} \|\lac\|^2_{\mathcal{C}^k}$.
\end{corollary}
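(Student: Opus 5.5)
We prove Corollary~\ref{coro angl} by expressing the perturbed angles through the perturbed generating function $\tau_\varepsilon$, and then expanding with Lemma~\ref{lemma pertr lenght func} and Lemma~\ref{lem:normtangent}. The perturbed boundary $\defga$ is not parametrized by arc length, so the analogue of~\eqref{first gen fct:section3} carries the speed factor $\|\derdefga(s)\|$. This is exactly the relation~\eqref{first gen fct no arclength} applied to $\defga$:
\[
\sin\varphi_\varepsilon(s,s') = -\frac{\partial_1\tau_\varepsilon(s,s')}{\|\derdefga(s)\|},\qquad
\sin\varphi'_\varepsilon(s,s') = \frac{\partial_2\tau_\varepsilon(s,s')}{\|\derdefga(s')\|}.
\]
This formula holds on $\Delta_\mu$, where $\tau_\varepsilon$ is smooth and bounded away from zero for $\varepsilon$ small by Lemma~\ref{away from boundary 2}.

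The first step is to differentiate~\eqref{exp tau epss} with respect to $s$. We use $\partial_1\varphi$ and $\partial_1\varphi'$ from~\eqref{eq partial varphi 1}--\eqref{eq partial varphi 2} and their analogues. For $\partial_1\varphi'$, we differentiate $\sin\varphi'=\partial_2\tau$ and use $\partial_{12}\tau$ from~\eqref{der seconde gen}, which gives $\partial_1\varphi' = \cos\varphi/\tau$. The first-order term of $\partial_1\tau_\varepsilon$ is then
\[
\derlac(s)\cos\varphi - \lac(s)\sin\varphi\,\partial_1\varphi - \lac(s')\sin\varphi'\,\partial_1\varphi'
= \derlac(s)\cos\varphi + \lac(s)\sin\varphi\Big(\mathcal{K}(s)+\frac{\cos\varphi}{\tau}\Big) - \lac(s')\frac{\sin\varphi'\cos\varphi}{\tau}.
\]

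The second step is to divide by $\|\derdefga(s)\| = 1-\varepsilon\lac\mathcal{K}+O(\varepsilon^2)$. At first order this adds $-\sin\varphi\,\lac(s)\mathcal{K}(s)$ to $\sin\varphi_\varepsilon$, which cancels the curvature term above. This leaves
\[
\sin\varphi_\varepsilon = \sin\varphi - \varepsilon\cos\varphi\Big(\derlac(s) + \lac(s)\frac{\sin\varphi}{\tau} - \lac(s')\frac{\sin\varphi'}{\tau}\Big) + O(\varepsilon^2).
\]
Since $\cos\varphi$ is bounded below on $\Delta_\mu$ (Lemma~\ref{away from boundary 1}), we can apply $\arcsin$, whose derivative is $1/\cos\varphi$. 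This yields the claimed formula for $\varphi_\varepsilon$. The computation for $\varphi'_\varepsilon$ is symmetric, with the roles of $s$ and $s'$ exchanged and the sign conventions of~\eqref{first gen fct:section3}.

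The main obstacle is controlling the remainders. Differentiating $\tau_\varepsilon$ once costs one derivative, and $\|\derdefga\|$ involves $\derlac$. The remainder $R_3$ of Lemma~\ref{lemma pertr lenght func} is only controlled in $\mathcal{C}^k$ through the explicit formula for $\tau_\varepsilon^2$. We would therefore bound the remainders directly by Taylor's formula with integral remainder in $\varepsilon$, applied to the smooth expressions $\partial_1\tau_\varepsilon/\|\derdefga\|$ and then to $\arcsin$. The $\mathcal{C}^{k-2}$ norms on $\Delta_\mu\times[-\varepsilon_\mu,\varepsilon_\mu]$ are then bounded by $M_\mu\|\lac\|_{\mathcal{C}^k}^2$, with one derivative lost in $\partial_1$ and one in the speed factor. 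This uses the lower bounds on $\tau$ and $\cos\varphi$ together with a Fa\`a di Bruno estimate for the compositions, with $\varepsilon_\mu$ chosen so that these lower bounds persist.
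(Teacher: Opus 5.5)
Your proposal is correct and follows essentially the same route as the paper. Both write $\sin\varphi_\varepsilon=-\partial_1\tau_\varepsilon/\|\derdefga(s)\|$, differentiate the expansion of $\tau_\varepsilon$ from Lemma~\ref{lemma pertr lenght func} in $s$, let the factor $1/\|\derdefga(s)\|=1+\varepsilon\lac(s)\mathcal{K}(s)+O(\varepsilon^2)$ cancel the curvature term, invert with $\arcsin$ using that $\cos\varphi$ is bounded below, and obtain $\varphi'_\varepsilon$ from the symmetry $\varphi'_\varepsilon(s,s')=-\varphi_\varepsilon(s',s)$. Your Taylor-in-$\varepsilon$ control of the remainders is a slightly more careful version of the paper's step, which simply differentiates $R_3$. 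One wording slip: the speed factor contributes $+\varepsilon\lac(s)\mathcal{K}(s)\sin\varphi$ to $\sin\varphi_\varepsilon$; your ``$-\sin\varphi\,\lac(s)\mathcal{K}(s)$'' is correct only as a contribution to the first-order bracket being subtracted.
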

\begin{proof}
It holds (see \eqref{eq:taueps})
\begin{equation}\label{gennnnbi}
\partial_1 \tau_\varepsilon(s,s')=\left\langle \derdefga (s),\frac{\defga(s)-\defga(s')}{\|\defga(s)-\defga(s')\|}\right\rangle=-\|\derdefga(s)\|\sin \varphi_\varepsilon,
\end{equation}
using  formula \eqref{norme gamma prime eps} for $\|\derdefga(s)\|$ we obtain
\[
\sin\varphi_\varepsilon= (1+\varepsilon \lac(s)\mathcal{K}(s) + \varepsilon^2 \widetilde{R}_1(s,s';\varepsilon))\cdot (-\partial_1 \tau_\varepsilon(s,s'))
\]
with $\|\widetilde{R}_1 \|_{\mathcal{C}^{k-2}  } \leq M \| \lac\|_{\mathcal{C}^{k-1}}^2$ for some constant $M>0$. 

By Lemma~\ref{away from boundary 2}, $\tau(s,s')\geq c_\mu>0$ if $(s,s') \in \Delta_\mu$. Then, differentiating~\eqref{exp tau epss} with respect to $s$ and using that $\tau(s,s')\geq c_\mu>0$,
\[
-\partial_1 \tau_{\varepsilon} (s,s') = \sin \varphi - \varepsilon (\derlac(s) \cos \varphi +  \lac(s) \partial_1 \cos \varphi(s,s')   +  \lac(s') \partial_1 \cos \varphi'(s,s') )+ \varepsilon^2 \widetilde{R}_3(s,s'; \varepsilon)
\]
where $\|\widetilde{R}_3\|_{\mathcal{C}^k} \leq M_{c_\mu} \| \lac\|^2_{\mathcal{C}^k}$, with $M_{c_\mu}$ a constant depending on $c_\mu$. Therefore, 
\begin{align*}
\sin\varphi_\varepsilon=& 
\sin \varphi+\varepsilon ( \lac(s)\mathcal{K}(s)\sin \varphi-\derlac(s)\cos \varphi- \lac(s)\partial_1 \cos \varphi- \lac(s')\partial_1\cos \varphi') \\ & +\varepsilon^2 \widetilde{R}_4 (s,s';\varepsilon),
\end{align*}
with $\varphi=\varphi(s,s')$ and $\varphi'=\varphi'(s,s')$ as in~\eqref{first gen fct:section3}, and $\|\widetilde{R}_4\|_{\mathcal{C}^{k-2}}\| \leq {M}_{\mu} \| \lac\|^2_{\mathcal{C}^k}$.
    
By \eqref{first gen fct:section3}, \eqref{der seconde gen}-\eqref{eq partial varphi 1},  we have: 
\begin{equation}\label{partial cos phi}
\begin{aligned}
\partial_1 \cos \varphi(s,s')&=-\sin\varphi(s,s') \partial_1 \varphi(s,s')=\mathcal{K}(s)\sin \varphi+\frac{\sin \varphi\cos \varphi}{\tau},\\
\partial_1 \cos \varphi'(s,s')&=\partial_1 \sqrt{1-(\sin \varphi')^2}=-\frac{\sin \varphi'}{\cos \varphi'} \partial_{12} \tau(s,s')=-\frac{\sin\varphi'\cos \varphi}{\tau}.  
\end{aligned}
\end{equation}
Plugging these expressions into the previous expansion of $\sin \varphi_\varepsilon$, we obtain:
\begin{align*}
\sin\varphi_\varepsilon=\sin \varphi-\varepsilon \left (\derlac(s)+\lac(s)\frac{\sin \varphi}{\tau}-\lac(s')\frac{\sin\varphi'}{\tau}\right)\cos \varphi+\varepsilon^2 \widetilde{R}_4(s,s';\varepsilon),
\end{align*}
from which we can easily obtain the expansion of $\varphi_\varepsilon
$. To obtain the expansion of $\varphi_\varepsilon'$, we note that  $\varphi'(s,s')=-\varphi(s',s)$, $\varphi(s,s')=-\varphi'(s',s)$, and  $\varphi_\varepsilon'(s,s')=-\varphi_\varepsilon(s',s)$.
\end{proof}

Let us denote by $f_\ga   \colon (s,\varphi)\mapsto (s',\varphi')$ the initial billiard map, and by 
\begin{equation}\label{eq:perturbedmap}
f_\varepsilon\colon (s,\varphi)\mapsto (s_\varepsilon',\varphi_\varepsilon')
\end{equation}
the billiard map for the perturbed domain $\Omega_\varepsilon$, defined in~\eqref{deformations}, with $s_\varepsilon'=s_\varepsilon'(s,\varphi)$ and $\varphi_\varepsilon'=\varphi_\varepsilon'(s,\varphi)$. Note that we slightly abuse notation by keeping the same notation as for the function $\varphi_\varepsilon'(s,s')$ previously defined, so that 
\begin{equation}\label{link_notation}
\varphi_\varepsilon'(s,\varphi)=\varphi_\varepsilon'(s,s_\varepsilon'). 
\end{equation} 

Finally we finish the proof of Proposition \ref{cor:expr:fepsilon}.
\begin{proof}[End of the proof of Proposition~\ref{cor:expr:fepsilon}]
Along this proof we will denote by $M>0$ a generic constant depending on $\nu$ and $\|\ga\|_{\mathcal{C}^{k}}$ that can (and usually does) change its value. 

Fix $(s,\varphi)\in \mathbb{A}_\nu$. We look for $s_\varepsilon'=s_\varepsilon'(s,\varphi)$ satisfying $\varphi_\varepsilon(s,s_\varepsilon')=\varphi$ with $\varphi_\varepsilon(s,s')$ defined by Corollary~\ref{coro angl} and we recall that $s'=s'(s,\varphi)$ is the $s-$ component of the original billiard map $f_\ga $. In other words, $s_\varepsilon$ is such that
\begin{equation}\label{et:varphi}
\varphi_\varepsilon(s,s_\varepsilon'(s,\varphi))= \varphi(s,s'(s,\varphi))
\end{equation}
where $\varphi=\varphi(s,s')$ defined through $\partial_1 \tau(s,s')= -\sin \varphi$. We write 
\begin{equation}\label{def:sepsilon}
s_\varepsilon'(s,\varphi)=s'(s,\varphi) + \varepsilon \varpi(s,\varphi;\varepsilon)
\end{equation}
with $\varpi\colon\mathbb{A}_\nu \times (-\varepsilon_\nu, \varepsilon_\nu)\to\mathbb{R} $. Then~\eqref{et:varphi} is equivalent to
\begin{equation}\label{et:varphi_2}
0=\varphi_\varepsilon (s,s_\varepsilon')- \varphi(s,s') = \varphi_\varepsilon(s,s_\varepsilon') - \varphi(s,s_\varepsilon') + \varphi(s, s'_\varepsilon) - \varphi(s,s').
\end{equation}
On the one hand by Corollary~\ref{coro angl}, letting
\begin{equation}\label{def:phi_1_cor}
\varphi^1 (s,t'):= -\derlac(s) + \lac(s) \frac{\sin \varphi}{\tau(s,t')} -  \lac(t') \frac{\sin \varphi'(s,t')}{\tau(s,t')}, \qquad (s,t') \in \mathbb{R}\times \mathbb{R},
\end{equation}
with $\varphi'(s,t')$ defined by~\eqref{first gen fct:section3}, we have that 
\[
\varphi_\varepsilon (s,s_\varepsilon')- \varphi(s,s'_\varepsilon) =  \varepsilon \varphi^1 (s,s'_\varepsilon) + 
\varepsilon^2 R_4(s,s_\varepsilon';\varepsilon).
\]
On the other hand
\[
\varphi(s,s_\varepsilon') - \varphi(s,s')= \varepsilon \varpi(s,\varphi;\varepsilon) \int_0^1 \partial_2 \varphi(s,s'(s,\varphi)+ \mu \varepsilon \varpi(s,\varphi;\varepsilon) d\mu.
\]
Therefore, \eqref{et:varphi_2} can be rewritten as
\begin{equation}\label{et:varphi_3}
\varphi^1(s,s'+\varepsilon \varpi) + \varepsilon R_4(s,s'+\varepsilon \varpi;\varepsilon) + \varpi \int_{0}^1 \partial_2 \varphi(s,s'+ \mu \varepsilon \varpi) d\mu =0.
\end{equation}
with, again, $s'=s'(s,\varphi)$ and $\varpi=\varpi(s,\varphi;\varepsilon)$. 

We recall now that by~\eqref{eq partial varphi 2} $\partial_2 \varphi(s,s') = -\tau^{-1} \cos \varphi'$ with $\varphi'=\varphi'(s,\varphi)$ the $\varphi-$ component of the billiard map $f_\ga $. Then, by Lemmas~\ref{away from boundary 1} and~\ref{away from boundary 2}, there exists a constant $c$ such that
\begin{equation}\label{bound_implicit}
\partial_2 \varphi(s,s'(s,\varphi)) = -\frac{\cos \varphi'(s,\varphi)}{\tau(s,s'(s,\varphi))} 
=-\frac{\cos \varphi'(s,\varphi)}{\hat{\tau}(s,\varphi)}\leq -c <0
\end{equation}
and, as a consequence, 
\begin{equation}
\begin{aligned}\label{def:varpi0}
\varpi_0(s,\varphi)& = -\frac{\varphi^1(s,s')}{\partial_2 \varphi(s,s')} =  -\left (\derlac(s) +  \lac(s) \frac{\sin \varphi}{\tau} -  \lac(s') \frac{\sin \varphi'}{\tau} \right ) \frac{\hat{\tau}} {\cos \varphi'} \\ & = -\frac{1}{\cos \varphi'} (\derlac(s) \hat{\tau} +  \lac(s) \sin \varphi -  \lac(s') \sin \varphi')
\end{aligned}
\end{equation}
is the unique solution of~\eqref{et:varphi_3} for $\varepsilon=0$. We observe that $\varpi_0 \in \mathcal{C}^{k-1} (\mathbb{A}_\nu, \mathbb{R})$  with $\|\varpi_0\|_{\norm{k-1}{\mathbb{A}_\nu}} \leq M \| \lac\|_{\mathcal{C}^k}$. 
Writing $\varpi = \varpi_0 + \varpi_1 $, equation~\eqref{et:varphi_3} can be rewording as the fixed point equation
\begin{align}\label{et:varphi_4}
\varpi_1   =   \mathcal{F}[ \hat \varpi_1]   := &\frac{1}{  \partial_2 \varphi(s,s' )} \Big  ( 
\varphi^1 (s,s' + \varepsilon \varpi_0 + \varepsilon  \varpi_1   ) - \varphi^1(s,s')+ \varepsilon R_4(s,s'+\varepsilon \varpi_0+ \varepsilon \varpi_1;\varepsilon)  \notag \\ &  + (\varpi_0 +  \varpi_1) \int_{0}^1 \big (\partial_2 \varphi(s,s'+\mu \varepsilon (\varpi_0+\varpi_1))- \partial_2 \varphi(s,s') \big ) d\mu
\Big ).
\end{align}
Let $\mu,c,\nu'$ be as in Lemma~\ref{away from boundary 1} so that $(s,s')\in \Delta_{\mu}$, $\tau(s,s')\geq c$ and $\varphi'\in \left [-\frac{\pi}{2} - \nu' , \frac{\pi}{2} - \nu'\right ]$ when $(s,\varphi) \in \mathbb{A}_\nu$. Then, recalling that by Corollary~\ref{coro angl}, $\|R_4\|_{\norm{k-2}{\Delta_\mu}}\leq M \| \lac\|^2_{\mathcal{C}^k}$ and that, by definition~\eqref{def:phi_1_cor}, $\|\varphi^1\|_{\norm{k-1}{\Delta_\mu}} \leq M \| \lac\|_{\mathcal{C}^k}$, an straightforward application of the fixed point theorem proves that there exists $\varepsilon_0>0$ small enough such that~\eqref{et:varphi_4} has a unique solution $\varpi_1$ which is $\mathcal{C}^{k-2}$ on $\mathbb{A}_\nu $ satisfying that for any $\varepsilon\in (-\varepsilon_0,\varepsilon_0)$, $\|\varpi_1 \|_{\norm{k-3}{\mathbb{A}_\nu}} \leq  M \varepsilon  \| \lac\|_{\mathcal{C}^k}$. 
From~\eqref{et:varphi_3}, we obtain that  
\begin{equation}\label{expresion for svarepsilon}
s_{\varepsilon}' (s,\varphi)= s'(s,\varphi) + \varepsilon \varpi_0(s,\varphi) + \varepsilon^2 R_6(s,\varphi;\varepsilon)
\end{equation}
with $\varpi_0$ defined in~\eqref{def:varpi0} and $\|R_6\|_{\norm{k-3}{\mathbb{A}_\nu}} \leq M \| \lac\|_{\mathcal{C}^k}$. 

Now, in order to obtain the expansion of $\varphi_\varepsilon'$, as observed in~\eqref{link_notation}, we use the fact  that $\varphi_\varepsilon'=\varphi_\varepsilon'(s,\varphi)=\varphi_\varepsilon'(s,s_\varepsilon')$; by Corollary \ref{coro angl}, we obtain
\begin{equation*}
\varphi_\varepsilon'=\varphi'(s,s_\varepsilon')+\varepsilon \frac{1}{\tau}(\derlac(s_\varepsilon')\tau+ \lac(s)\sin\varphi- \lac(s_\varepsilon')\sin \varphi')  +\varepsilon^2 R_5(s,s_\varepsilon';\varepsilon)
\end{equation*}
with $\|R_5\|_{\norm{k-2}{\Delta_\mu}} \leq M \| \lac\|_{\mathcal{C}^k}$. Moreover, by the expansion of $s_\varepsilon'$ obtained before and using the notation $\hat \tau= \hat{\tau}(s,\varphi) = \tau(s,s'(s,\varphi))$ in~\eqref{def:tau_hat}, 
\begin{equation}\label{dev varphi eps prime}
\begin{aligned}
\varphi_\varepsilon'= & \varphi'(s,s')+ \varepsilon \partial_2 \varphi' (s,s') \varpi_0(s,\varphi) + \varepsilon \frac{1}{\hat \tau}(\derlac(s')\hat \tau+ \lac(s)\sin\varphi- \lac(s')\sin \varphi')  \\ & +\varepsilon^2  R_7(s,\varphi;\varepsilon)
\end{aligned}
\end{equation} 
with $(s',\varphi')=f_\ga (s,\varphi)$ and $\| {R}_7\|_{\norm{k-3}{\mathbb{A}_\nu}} \leq M \| \lac\|_{\mathcal{C}^k}$. Using that $\varphi'$ is defined through $\partial_2 \tau(s,s')=\sin \varphi'$   and \eqref{der seconde gen},  we obtain that
\[
\cos \varphi' \partial_2 \varphi'(s,s') = \partial_{22} \tau(s,s') = \mathcal{K}(s') \cos \varphi' + \frac{\cos^2 \varphi'}{\tau}.
\]
Therefore, using expression~\eqref{def:varpi0} of $\varpi_0$
\begin{align*}
\varphi'_{\varepsilon} = &\varphi' + \varepsilon \left (\mathcal{K}(s') +  \frac{\cos \varphi'}{\hat \tau} \right ) \varpi_0(s,\varphi) +\varepsilon \frac{1}{\hat \tau}(\derlac(s')\hat \tau+ \lac(s)\sin\varphi-
 \lac(s')\sin \varphi') 
\\ & +\varepsilon^2 R_7(s,\varphi;\varepsilon) \\ 
= & \varphi'   -\varepsilon    \frac{\mathcal{K}(s')}{\cos \varphi'}   \big (\derlac(s) \hat \tau +  \lac(s) \sin \varphi -  \lac(s') \sin \varphi' \big ) + \varepsilon \big (\derlac (s') - \derlac(s))+\varepsilon^2 R_7(s,\varphi;\varepsilon).
\end{align*} 
Together with~\eqref{expresion for svarepsilon} this finishes the proof of of Proposition \ref{cor:expr:fepsilon}. Notice that
by Lemmas~\ref{away from boundary 1} and~\ref{away from boundary 2}, $\|f_\varepsilon\|_{\norm{k-3}{\mathbb{A}_\nu}} \leq 2 \|f_\ga \|_{\mathcal{C}^{k-1}}$. 
\end{proof}

\subsection{Fr\'echet  differentiability of the operator $\Gamma_\nu$. Proof of Proposition \ref{prop Frechet Gamma nu}}\label{sec:Gammanu}  

Recall that the operator $\Gamma_\nu$ sends perturbations $\lambda \in \Bl{\varrho}$ to billiard maps $f_{\dgalac}$ where $\dgalac$ is
given by \eqref{def gamma lambda arc length}, that is:
\[
\dgalac(s) =\ga(s) +  \lac(s) \narc(s),
\]
with $\narc(s) $ the normal vector at $\ga(s)$, which is in arc-length parametrization of $\gna(\mathbb{T})$, and 
$ \lac(s)=\frac {1}{ |\partial \Omega(\gna)|} \lna\circ \sigma^{-1} (s)$, with $\lambda \in \Bl{\varrho}$ and satisfying (see \eqref{lambdaast lambda}):
\begin{equation}\label{lambdaast lambda section 5}
\| \lac\|_{\mathcal{C}^k}  \leq \co\varrho .
\end{equation}
The first result is about the convexity of these billiard tables.

\begin{corollary} \label{cor strictly convex perturbations} 
There exists $\varrho_0>0$ small enough such that, for $0<\varrho \leq \varrho_0$, if $ \lac\in \Bl{\varrho}$, the billiard table $\Omega(\dgalac)$ with $\dgalac$ defined in~\eqref{def gamma lambda arc length} is strictly convex.    
\end{corollary}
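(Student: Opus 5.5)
The strict convexity of $\Omega(\dgalac)$ amounts to the nonvanishing (indeed uniform negativity) of its curvature, so I would read it off from the curvature expansion in Lemma~\ref{lem:normtangent}. Since $\Omega(\ga)$ is strongly convex and $\ga$ is $\mathcal{C}^k$ on the compact $\mathbb{T}$, there is $\kappa_0>0$ with $\mathcal{K}(s)\le -\kappa_0$ for all $s\in\mathbb{T}$. By \eqref{deformations gamma ast}, $\dgalac=\ga+\varepsilon\lac_0\narc$ with $\|\lac_0\|_{\mathcal{C}^k}=1$ and $\varepsilon=\|\lac\|_{\mathcal{C}^k}\le\co\varrho$ by \eqref{lambdaast lambda section 5}. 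So it is enough to work in the one-parameter family \eqref{deformations} with a normalized profile and a small parameter.

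Next I would control $\mathcal{K}_\varepsilon$ uniformly in $s$ and in the normalized profile. Rather than use the first-order formula, it is cleaner to use the exact expression in \eqref{norme gamma prime eps}. Its numerator is $\mathcal{K}(s)+\varepsilon A(s)+\varepsilon^2B(s)$, where $A$ and $B$ are polynomials in $\lac_0,\derlac_0,\dertwolac_0,\mathcal{K},\der{\mathcal{K}}$. Hence $|A|,|B|\le C$, with $C$ depending only on $\|\ga\|_{\mathcal{C}^3}$ (note $k\ge 2$ suffices since only $\dertwolac_0$ appears, and $\|\lac_0\|_{\mathcal{C}^2}\le 1$). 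The denominator $[(1-\varepsilon\lac_0\mathcal{K})^2+\varepsilon^2\derlac_0^2]^{3/2}$ lies in $[1/2,2]$ once $\varepsilon\le\varepsilon_1$, where $\varepsilon_1$ also depends only on $\|\mathcal{K}\|_\infty$. This gives
\[
\mathcal{K}_\varepsilon(s)\le \frac{-\kappa_0+C\varepsilon+C\varepsilon^2}{2}\quad\text{or}\quad \le 2(-\kappa_0+C\varepsilon+C\varepsilon^2),
\]
depending on the sign of the numerator. In either case $\mathcal{K}_\varepsilon(s)\le-\kappa_0/4<0$ once $C\varepsilon+C\varepsilon^2\le\kappa_0/2$.

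Finally I would choose $\varrho_0$ so that $\co\varrho_0\le\min(\varepsilon_1,\varepsilon_*)$, where $\varepsilon_*$ is the threshold from the previous step. I also need $\dgalac$ to remain an embedding, which holds since $\|\derdefga\|\ge 1-\varepsilon\|\mathcal{K}\|_\infty>0$. A closed regular curve of turning number one with nonvanishing curvature bounds a strictly convex domain, and the turning number is preserved under this $\mathcal{C}^1$-small homotopy, so $\Omega(\dgalac)$ is strictly convex.

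The only delicate point is the uniformity. Lemma~\ref{lem:normtangent} states the remainder bound with a constant $M_1$, and I have to check that the $\varepsilon_1$ there does not depend on $\lac_0$ beyond its $\mathcal{C}^2$ norm. That is exactly why I plan to rely on the exact closed formula rather than the asymptotic one.
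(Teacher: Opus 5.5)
Your proposal is correct and follows the paper's route: write $\dgalac=\ga+\varepsilon\lac_0\narc$ with $\|\lac_0\|_{\mathcal{C}^k}=1$, and read off $\mathcal{K}_\varepsilon<0$ for small $\varepsilon$ from the curvature formula of Lemma~\ref{lem:normtangent}. Your extra steps make explicit what the paper's one-line proof leaves implicit: the exact closed-form expression gives uniformity in $\lac_0$, and the rotation-index argument gives the passage from negative curvature to strict convexity, including injectivity, which $\|\derdefga\|>0$ alone does not give.
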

\begin{proof} 
Writing the perturbed billiard as 
\[
\dgalac(s) =\ga(s) + \varepsilon\lac_0 (s) \narc(s), \qquad \lac_0=\frac{ \lac}{\| \lac\|_{\mathcal{C}^k}}
\]
with $\varepsilon \le \varrho$, we apply Lemma~\ref{lem:normtangent} to $\dgalac$ and we conclude that if $\varrho$ is small enough the curvature of $\ga + \varepsilon  \lac_0$, is strictly negative and as a result the billiard table $\Omega(\dgalac)$ is strictly convex. 
\end{proof}

As a consequence of Corollary~\ref{cor strictly convex perturbations}, the billiard map $f_{\dgalac}$ associated to $\Omega (\dgalac)$ is well defined and, since $\dgalac \in \mathcal{C}^{k-1} (\mathbb{T},\mathbb{R}^2)$, the general theory assures that $f_{\dgalac} \in \mathcal{C}^{k-2} (\mathbb{A},\mathbb{A})$. 
Let  
\begin{equation}\label{generating function lambda}
\tau_{ \lac} (s, s') = \| \dgalac (s) -\dgalac(s') \|.
\end{equation}
Observe that, even if $\dgalac$ is not parameterized in arclength anymore, the function $\frac{\tau_{ \lac}}{\vert\partial \Omega(\dgalac)\vert}$ is a  generating function of $f_{\dgalac}$ (see Section \ref{sec: preliminaries billiard perturbations}).
Moreover, analogously to~\eqref{first gen fct no arclength}, the map associated to $\dgalac$ is given by 
$f_{\dgalac}(s,\varphi)=(s_\lac',\varphi_\lac')$, 
where 
\begin{equation}\label{eq:implicitperturbedmap}
\|\derdgalac(s)\|  \sin \varphi + \partial_1 \tau _{ \lac} ( s, s_{\lac}')=0 , 
\qquad 
\|\derdgalac( s_{\lac}')\|
\sin \varphi_{\lac}' - \partial_2 \tau _{ \lac}( s, s_{\lac}') =0.
\end{equation}
 
\begin{lemma} 
\label{lem existence billiard map no arclength}  
There exists $0<\varrho_1\leq \varrho_0$ small enough such that, for any fixed $\nu>0$,  if $\lna\in \Bl{\varrho}$, with $\varrho \leq \varrho_1$, then $f_{\dgalac}\in \Bf{b_\nu\varrho}{f_{\ga}}$ with $\Bf{b_\nu\varrho}{f_{\ga}}$ defined in~\eqref{ball fgamma} and $b_\nu$ a constant depending only on $\nu, \varrho_1$. In addition $\|f_{\dgalac}\|_{\difknu{k-3}{\nu}} \leq 2 \|f_{\ga}\|_{\difknu{k-1}{\nu}}$.
\end{lemma}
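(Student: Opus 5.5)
The plan is to reduce the statement to Proposition~\ref{cor:expr:fepsilon}, applied to the one-parameter family of deformations. Given $\lna\in\Bl{\varrho}$, write $\dgalac=\ga+\varepsilon\lac_0\narc$ as in~\eqref{deformations gamma ast}, with $\lac_0=\lac/\|\lac\|_{\mathcal{C}^k}$ and $\varepsilon=\|\lac\|_{\mathcal{C}^k}\le \co\varrho$ by~\eqref{lambdaast lambda section 5}.

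\textbf{Step 1 (well-posedness).} Choose $\varrho_1\le\varrho_0$ so that $\co\varrho_1\le\varepsilon_\nu$, where $\varepsilon_\nu$ is given by Proposition~\ref{cor:expr:fepsilon}. By Corollary~\ref{cor strictly convex perturbations} the table $\Omega(\dgalac)$ is strictly convex, so $f_{\dgalac}$ is a well-defined $\mathcal{C}^{k-2}$ billiard map. It is an area-preserving twist diffeomorphism for the form $\cos\varphi\, d\varphi\wedge ds$ transported through the arclength reparametrization; by the remark after~\eqref{generating function lambda}, its generating function is $\tau_\lac/|\partial\Omega(\dgalac)|$. One point needs checking here: the definition~\eqref{ball fgamma} asks for symplecticity in the coordinates $(s,\varphi)$ of $\ga$. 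I would interpret it as symplecticity with respect to the form preserved by the reparametrized map, as for $f_\gna$ earlier in the paper.

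\textbf{Step 2 (estimate).} Apply Proposition~\ref{cor:expr:fepsilon} with $\lac_0$, which satisfies $\|\lac_0\|_{\mathcal{C}^k}=1$. On $\mathbb{A}_\nu$ this gives $f_{\dgalac}-f_\ga=\varepsilon(\varpi_0,\ \cdot\ )+\varepsilon^2(\mathcal{R}_1,\mathcal{R}_2)$. The first-order terms are explicit combinations of $\lac_0,\derlac_0$ evaluated at $s,s'$ together with $\hat\tau,\sin\varphi,\cos\varphi'^{-1},\mathcal{K}(s')$. These coefficients are bounded in $\mathcal{C}^{k-3}(\mathbb{A}_\nu)$ because $\cos\varphi'\ge\sin\nu'>0$ and $\hat\tau\ge c$, by Lemmas~\ref{away from boundary 1} and~\ref{away from boundary 2}. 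Their $\mathcal{C}^{k-3}$ norm is therefore at most $M_\nu\|\lac_0\|_{\mathcal{C}^{k-1}}\le M_\nu$. The remainders are bounded by $M_\nu\|\lac_0\|^2_{\mathcal{C}^{k-1}}\le M_\nu$. Hence
\[
\|f_{\dgalac}-f_\ga\|_{\difknu{k-3}{\nu}}\le 2M_\nu\varepsilon\le 2M_\nu\co\varrho=:b_\nu\varrho/2<b_\nu\varrho,
\]
and $b_\nu$ depends only on $\nu$ (and on $\ga$), through $\varrho_1$. The bound $\|f_{\dgalac}\|_{\difknu{k-3}{\nu}}\le 2\|f_\ga\|_{\difknu{k-1}{\nu}}$ is the last assertion of Proposition~\ref{cor:expr:fepsilon}. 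Alternatively it follows from the triangle inequality after shrinking $\varrho_1$ so that $b_\nu\varrho_1\le\|f_\ga\|$.

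\textbf{Main obstacle.} The main difficulty is uniformity: the constants in Proposition~\ref{cor:expr:fepsilon} must not depend on the direction $\lac_0$. I would handle this by rereading its proof. There the implicit equation~\eqref{et:varphi_4} is solved by a contraction whose constants involve only $\|\lac\|_{\mathcal{C}^k}$, $c$, $\mu$, $\nu'$ and $\|\ga\|_{\mathcal{C}^k}$. This makes $\varepsilon_\nu$ and $M_\nu$ uniform over the unit sphere of $\mathcal{C}^k$. A second, smaller issue is that $f_{\dgalac}$ might map $\mathbb{A}_\nu$ outside $\mathbb{A}$. This is excluded by the bound $\varphi'\in[-\tfrac\pi2+\nu',\tfrac\pi2-\nu']$ combined with smallness of the perturbation.
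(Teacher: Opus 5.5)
Your proposal is correct and follows essentially the same route as the paper. The paper writes $\lac=\varepsilon\lac_0$ with $\|\lac_0\|_{\mathcal{C}^k}=1$ and $\varepsilon=\|\lac\|_{\mathcal{C}^k}\le\co\varrho$, applies Proposition~\ref{cor:expr:fepsilon} to get $\|f_{\dgalac}-f_{\ga}\|_{\difknu{k-3}{\nu}}\le M_\nu\varepsilon$, takes $b_\nu$ proportional to $M_\nu\co$, and reads the final bound off the proposition, exactly as you do. Your extra remarks are points the paper passes over silently rather than departures from its argument: the uniformity of the constants over the direction $\lac_0$, the constant factor needed for strict inclusion in the open ball, the fact that $\varrho_1$ really depends on $\nu$ through $\varepsilon_\nu$, and which $2$-form ``symplectic'' refers to.
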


\begin{proof} 
Take now $\nu>0$ and consider $f_{\dgalac}$, that, for a given $ \lac$, $\dgalac$ is a deformation of $\ga$ as in~\eqref{deformations gamma ast}, namely
\[
\dgalac (s) = \ga (s) + \varepsilon \lac_0(s), \qquad  \lac_0(s) = \frac{\lac(s)}{\|\lac\|_{\mathcal{C}^k}}
\]
with $\varepsilon = \|\lac \|_{\mathcal{C}^k}$. By~\eqref{lambdaast lambda section 5}, $\|\lac\|_{\mathcal{C}^k} \leq \co \varrho$. 
Then, taking $\varrho$ small enough and applying Proposition~\ref{cor:expr:fepsilon},
\[
f_{\dgalac}(s,\varphi) = f_{\ga}(s,\varphi) + \varepsilon E(s,\varphi;\delta),
\]
where the function $E\in \mathcal{C}^{k-3}(\mathbb{A}_\nu, \mathbb{A})$, so that $\|E\|_{\difknu{k-3}{\nu}} \leq M_\nu \|\lac_0\|_{\mathcal{C}^{k}} = M_\nu$ for some constant $M_\nu>0$ that only depends on $\nu$ (recall that $\|\lac_0\|_{\mathcal{C}^k}=1$). 
As a consequence, 
\[
\|f_{\dgalac}-f_{\ga}\|_{\difknu{k-3}{\nu}} =\varepsilon \|E\|_{\difknu{k-3}{\nu}} \leq M_\nu\| \lac\|_{\mathcal{C}^k}  \leq M_\nu \co\varrho.
\]
Thus, taking $b_\nu= M_\nu \co $, we have that 
$f_{\dgalac} \in \Bf{\varrho}{f_{\ga}}$ and 
$\|f_{\dgalac}\|_{\difknu{k-3}{\nu}} \leq 2 \|f_{\ga}\|_{\mathcal{C}^{k-1}}$.  
\end{proof}
         
We recall that a functional $\mathcal{G}:U \to Y$ with $U\subset X$ an open subset and $X,Y$ are normed spaces  is Fr\'echet differentiable in $U$ if, for every $x\in U$, there  exists a linear map $d\mathcal{G}(x) : X \to Y$ such that 
\[
\lim_{\|\delta x\|_X \to 0} \frac{1}{\|\delta x\|_X} \|\mathcal{G}(x+\delta x) - \mathcal{G}(x) - d\mathcal{G}(x) \delta x\|_Y=0.
\]
This is equivalent to say that 
\[
\mathcal{G}(x+ \delta x) = \mathcal{G}(x) + d\mathcal{G}(x) \delta x + o(\delta x), \qquad \lim_{\|x\|_X \to 0} \frac{\|o(\delta x)\|_Y}{\|\delta x\|_X} =0.
\]
In addition if $d\mathcal{G}(x)$ {depends continuously on $x$, 
we say that $\mathcal{G}$ is $\mathcal{C}^1$ Fr\'echet differentiable. 

\begin{remark}\label{rmk Frechet} 
Note that the operator $\mathcal{D}: \mathcal{C}^\ell ( \mathbb{T};\mathbb{R}^m) \to \mathcal{C}^{\ell-1} ( \mathbb{T};\mathbb{R}^m) $, with $\ell \geq 2$, that sends a function to its derivative, $\mathcal{D}(g) = \dot{g} = \partial_s g$ is linear and as a consequence it is $\mathcal{C}^1$ Fr\'echet differentiable with the usual topology. 

In addition, the operator 
$\mathcal{G}: \mathcal{C}^\ell ( \mathbb{T};\mathbb{R}^m) \times \mathbb{T} \to \mathbb{R}^m $, $\ell \geq 2$, $m\in \mathbb{N}_{\geq 1}$ 
given by 
$\mathcal{G}(g,s)=g(s)$ is $\mathcal{C}^1$ Fr\'echet differentiable and 
$d\mathcal{G}(g,s) (\delta g, \delta s) = \der{g}(s) \delta s + \der{\delta g}(s)$.  
\end{remark}

In order to prove that the functional $\Gamma_\nu$ is Fr\'echet differentiable, we first consider the functional:
\begin{equation}\label{map:Gamma_1} 
\begin{split}  
\mathcal{L}:  \Bl{\varrho} \subset \mathcal{C}^k(\mathbb{T}) & \to \Bl{\varrho_1} \subset \mathcal{C}^k(\mathbb{T})  \\ 
\lna&\mapsto \frac{1}{|\partial \Omega(\gna)|}\lna\circ \sigma^{-1}
\end{split}
\end{equation}
with $\sigma^{-1}$ defined by~\eqref{arclength parameterization:section3} and $\varrho_1 = \co \varrho$ as in~\eqref{lambdaast lambda section 5}. It is clear that, since $\mathcal{L}$ is linear, it is $\mathcal{C}^1$ Fr\'echet differentiable.

Secondly we write $\Gamma_\nu = \Gamma_\nu^* \circ \mathcal{L}$ with 
\begin{equation}\label{map:Gamma:prep}  
\begin{split}  
\Gamma^*_\nu:  \Bl{\varrho_1} \subset \mathcal{C}^k(\mathbb{T}) & \to \Bf{\varrho_2}{f_{\ga}} \subset \mathcal{C}^{k-2}(\mathbb{A_\nu}, \mathbb{A})  \\ 
 \lac &\mapsto f_{\dgalac}
\end{split}
\end{equation}
for $\varrho_2= b_\nu \varrho>0$. 
Observe that $\Gamma_\nu^*( \lac)$ is the billiard map $f_{\dgalac}$ associated to the billiard table $\Omega (\dgalac)$, with $\dgalac= \ga +  \lac \cdot \narc$. Recall that $f_{\dgalac}$ is written in coordinates $(s,\varphi)$ where $s$ is the arclength parameterization of $\ga$. 

As a result, Proposition~\ref{prop Frechet Gamma nu} is a straightforward consequence of the following lemma.
\begin{lemma} The operator $\Gamma^*_\nu$ is $\mathcal{C}^1$ Fr\`echet differentiable.   
\end{lemma}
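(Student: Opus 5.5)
The plan is to realise $f_{\dgalac}$ as the solution of the implicit equations \eqref{eq:implicitperturbedmap} and apply the implicit function theorem in Banach spaces. Consider the map
\[
\mathcal{F}\colon \Bl{\varrho_1}\times \mathcal{C}^{k-2}(\mathbb{A}_\nu,\mathbb{R}^2) \to \mathcal{C}^{k-3}(\mathbb{A}_\nu,\mathbb{R}^2),
\]
\[
\mathcal{F}(\lac,(S,\Phi))(s,\varphi) = \Big(\|\derdgalac(s)\|\sin\varphi + \partial_1\tau_\lac(s,S(s,\varphi)),\ \|\derdgalac(S(s,\varphi))\|\sin\Phi(s,\varphi) - \partial_2\tau_\lac(s,S(s,\varphi))\Big).
\]
By construction $\mathcal{F}(0,f_{\ga})=0$, and $\Gamma^*_\nu(\lac)$ is the zero $(S,\Phi)=(s'_\lac,\varphi'_\lac)$ near $f_{\ga}$. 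This zero exists and lies in $\Bf{b_\nu\varrho}{f_{\ga}}$ by Lemma~\ref{lem existence billiard map no arclength}. The loss of derivatives between domain and target is accepted from the start; this is why the target ball in \eqref{ball fgamma} is measured in a weaker $\mathcal{C}^{k-3}$ norm.

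\textbf{Step 1: $\mathcal{F}$ is $\mathcal{C}^1$ Fréchet.} The dependence on $\lac$ enters only through $\dgalac=\ga+\lac\narc$ and $\derdgalac=(1-\lac\mathcal{K})\derga+\derlac\narc$. These are affine in $\lac$ (Remark~\ref{rmk Frechet}). $\tau_\lac$ and its partial derivatives are smooth functions of $(\dgalac(s),\dgalac(s'),\derdgalac(s),\derdgalac(s'))$ away from the diagonal. By Lemma~\ref{away from boundary 1} and Corollary~\ref{corollary away boundary}, $(s,S(s,\varphi))\in\Delta_\mu$ for $(s,\varphi)\in\mathbb{A}_\nu$ and $S$ close to $s'$, and there $\tau_\lac\ge c/2$. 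Composition operators $(g,S)\mapsto g\circ(\mathrm{id},S)$ with $g$ smooth are $\mathcal{C}^1$ from $\mathcal{C}^{m}\times\mathcal{C}^{m}$ to $\mathcal{C}^{m-1}$, the standard $\omega$-lemma with loss of one derivative. Combined with Remark~\ref{rmk Frechet}, this gives that $\mathcal{F}$ is $\mathcal{C}^1$ with the stated loss of regularity.

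\textbf{Step 2: invertibility of $\partial_{(S,\Phi)}\mathcal{F}(0,f_{\ga})$.} The partial derivative is a pointwise multiplication operator by a triangular matrix. Its diagonal entries are $\partial_{12}\tau(s,s')=\cos\varphi\cos\varphi'/\hat\tau$ and $\cos\varphi'$, with the normalisation $\|\derga\|=1$. On $\mathbb{A}_\nu$ both are bounded below by Lemmas~\ref{away from boundary 1} and~\ref{away from boundary 2}, so the operator is invertible with bounded inverse in every $\mathcal{C}^m$.

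\textbf{Step 3: the main obstacle, the derivative loss.} Because of the loss, the plain implicit function theorem does not directly close. I would instead argue as in the proof of Proposition~\ref{cor:expr:fepsilon}. The fixed point \eqref{et:varphi_4} already provides the solution in $\mathcal{C}^{k-2}$ with $\mathcal{C}^{k-3}$ bounds. Differentiability is then checked by hand: take the candidate derivative $d\Gamma^*_\nu(\lac)\delta\lac=-[\partial_{(S,\Phi)}\mathcal{F}]^{-1}\partial_\lac\mathcal{F}\,\delta\lac$, evaluated at the solution. Its explicit form at $\lac=0$ is the first-order term of Proposition~\ref{cor:expr:fepsilon}. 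The remainder, estimated in the weaker $\mathcal{C}^{k-3}$ norm, is $o(\|\delta\lac\|_{\mathcal{C}^k})$: one applies Taylor's formula with integral remainder to $\mathcal{F}$ and uses the extra derivative available on $S$. This interpolation between $\mathcal{C}^{k-2}$ bounds and $\mathcal{C}^{k-3}$ estimates is also what gives continuity of $\lac\mapsto d\Gamma^*_\nu(\lac)$, completing the $\mathcal{C}^1$ Fréchet property.
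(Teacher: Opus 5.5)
Your proof works in outline and takes a different route from the paper's, but your Step 3 diagnoses an obstacle that does not exist. The workaround you propose there is not sound as written.

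\textbf{Comparison with the paper.} The paper never sets up an implicit function theorem in function spaces. It fixes $(s,\varphi)$ and takes as the only unknown the scalar $t=s'_\lac-s$ in
\[
\mathcal{F}(\lac,s,\varphi,t)=\|\derdgalac(s)\|\sin\varphi+\partial_1\tau_\lac(s,s+t).
\]
Remark~\ref{rmk Frechet} (the evaluation map $(g,s)\mapsto g(s)$ is $\mathcal{C}^1$) shows that $\mathcal{F}$ is $\mathcal{C}^1$ on $\Bl{\varrho}\times\mathbb{R}^3$. Since $\partial_t\mathcal{F}=\partial_{12}\tau>0$, a scalar implicit function theorem with a Banach parameter applies at each point. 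The local solutions are patched over a finite subcover of the compact $\mathbb{A}_\nu$ using uniqueness, and $\varphi'_\lac$ is then read off explicitly from the arcsin formula. This is more elementary: no composition lemma and no triangular system. What it yields directly is joint $\mathcal{C}^1$ dependence of $\mathcal{S}(\lac,s,\varphi)$, i.e. differentiability uniformly in $(s,\varphi)$. Upgrading to the $\mathcal{C}^{k-3}$ norm in which $\Bf{\varrho}{f_{\ga}}$ is measured would need extra work. Your function-space formulation addresses that topology head-on, at the price of the $\omega$-lemma.

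\textbf{Step 3.} Take the unknowns $(S,\Phi)$ in $\mathcal{C}^{k-3}$ rather than $\mathcal{C}^{k-2}$, and the loss disappears.
\begin{enumerate}
\item The nonlinearities $\|\derdgalac\|$, $\partial_1\tau_\lac$ and $\partial_2\tau_\lac$ are $\mathcal{C}^{k-2}$ functions. They depend continuously and affinely on $\lac\in\mathcal{C}^k$ through $\dgalac$ and $\derdgalac$. Hence $(\lac,S)\mapsto g_\lac\circ(\mathrm{id},S)$ is $\mathcal{C}^1$ from $\Bl{\varrho_1}\times\mathcal{C}^{k-3}$ into $\mathcal{C}^{k-3}$, with no derivative lost.
\item Your triangular multiplication operator from Step 2 has $\mathcal{C}^{k-2}$ entries with diagonal bounded away from zero. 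It is therefore an isomorphism of $\mathcal{C}^{k-3}$.
\item The standard Banach implicit function theorem now closes directly. The resulting $\mathcal{C}^1$ solution map coincides with $f_{\dgalac}$ by local uniqueness together with Lemma~\ref{lem existence billiard map no arclength}.
\end{enumerate}

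If you instead keep Step 1 in its $\mathcal{C}^{k-2}\to\mathcal{C}^{k-3}$ form, set $w=u(\lac+\delta\lac)-u(\lac)$. The Taylor remainder is then only $o(\|\delta\lac\|_{\mathcal{C}^k}+\|w\|_{\mathcal{C}^{k-2}})$. Closing the estimate would require a Lipschitz bound on $w$ in $\mathcal{C}^{k-2}$, which the fixed point \eqref{et:varphi_4} does not provide. Interpolating a bounded $\mathcal{C}^{k-2}$ norm against a small $\mathcal{C}^{k-3}$ norm gives only sublinear smallness, not $o(\|\delta\lac\|_{\mathcal{C}^k})$. So drop the ``extra derivative'' and interpolation argument and run the whole argument in $\mathcal{C}^{k-3}$; your proof then goes through.
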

\begin{proof}
To obtain explicitly $(s_{ \lac}',\varphi_{ \lac}')= f_{\dgalac}(s,\varphi) $ we need to solve the equations \eqref{eq:implicitperturbedmap}. 
Moreover, by Corollary \ref{cor strictly convex perturbations}, the billiard map $f_{\dgalac}$ is well defined, and therefore, given $(s,\varphi)\in \mathbb{A}_\nu$ these equations have unique solutions $(s_{\lac}',\vp_{\lac}')=f_{\dgalac} (s,\varphi)$. We want to check that these solutions depend on a $\mathcal{C}^1$-Fr\'echet way on $ \lac$. 

Since the equation on the right  in \eqref{eq:implicitperturbedmap} gives explicitly $\vp'_\lac$, we focus on the equation on the left to solve for $s_\lac '$, and, changing variables to $s'_{\lac}= \ttt +s$,  we will apply the implicit function theorem to the equation $\mathcal{F} ( \lac,s,\vp,\ttt)=0$, where 
\[
\mathcal{F}:\Bl{\varrho}\times  [0,1]\times \left [-\frac \pi 2+\nu,\frac \pi 2 -\nu \right ] \times (\mu, 1-\mu) \to  \mathbb{R}
\]
is given by 
\[
\mathcal{F}( \lac,s,\vp,\ttt) = \|\derdgalac(s)\|  \sin \vp + \partial_1 \tau _{ \lac}(s,s +\ttt).
\]

First, we will check that $\mathcal{F}$ is $\mathcal{C}^1$ Fr\'echet differentiable. On the one hand, since $\ga$ is an arclength parameterization, we can apply Lemma \ref{lem:normtangent} (with $\varepsilon=1$) obtaining:
\[
\| \derdgalac (s)\|= \left [ \left ( 1-  \lac(s) \mathcal{K}(s)\right )^2 + (\derlac(s))^2 \right ]^{1/2} 
\]
with $\mathcal{K}$  the curvature of $\ga (\mathbb{T})$. By Remark~\ref{rmk Frechet} $\| \derdgalac (s)\|$ is $\mathcal{C}^1$ Fr\'echet differentiable. 
Indeed, we only need to emphasize that, taking $\varrho$ small enough 
\[ 
\frac{1}{2} \leq \|\derdgalac(s) \|   \leq 2 .
\]
We consider now (see~\eqref{generating function lambda}) 
$G: \Bl{\varrho} \times [0,1]\times [\mu, 1-\mu] \to \mathbb{R}$ defined as
\[
G( \lac, s, \ttt) =  \tau_{ \lac} (s, s+\ttt) =  \| \dgalac (s)  - \dgalac(s+\ttt)  \|
\]
and $H: \Bl{\varrho} \times [0,1]\times [\mu, 1-\mu] \to \mathbb{R}$
\[
H( \lac, s , \ttt) = \partial_1 \tau_{ \lac}(s,s+\ttt) = 
\frac{1}{ G( \lac,s, \ttt)} \langle \derdgalac(s), \dgalac (s) - \dgalac(s+\ttt)\rangle. 
\]
It is clear that $G, H$ are $\mathcal{C}^1$ Fr\'echet differentiable. Indeed, it is enough to take $\varrho$ small enough such that 
\[
G( \lac,s, \ttt) \geq  \left ( \|\ga(s) - \ga(s+\ttt)\| - 2 \| \lac \|_{\mathcal{C}^k}\right ) \geq (c-2\varrho) >0
\]
where $c$ is given in Lemma \ref{away from boundary 2}. 
As a result, $\mathcal{F}$ is $\mathcal{C}^1$ Fr\'echet. 
 
Take $(s_0, \vp_0,\ttt_0) \in [0,1]\times [-\frac \pi 2+\nu,\frac \pi 2 -\nu] \times (\mu, 1-\mu)$. 
By~\eqref{first gen fct:section3}, $\mathcal{F}(0,s_0, \vp_0, \ttt_0)=0$ with $\ttt_0= s_0'-s_0$, where $(s'_0, \vp'_0)= f_{\ga}(s_0,\vp_0)$ is the original billiard map at the point $(s_0,\vp_0)$. 
By~\eqref{der seconde gen}, and using that $\tau_0=\tau$, the generating function of $f_{\ga}$,
\begin{align*}
\partial_\ttt \mathcal{F} (0,s_0, \vp_0, \ttt_0) &= \partial_{12} \tau_0(s_0, s_0+\ttt_0)= \partial_{12} \tau (s_0, s'_0)= \dfrac{\cos \vp_0\cos\vp_0'}{\tau(s_0,s'_0)}  \geq \frac{\sin \nu \, \sin \nu'}{c}>0 
\end{align*}
and then by the implicit function theorem, there exist $\delta_{s_0,\vp_0}>0$, $\varrho_{s_0,\vp_0}>0$  and a $\mathcal{C}^1$ Fr\'echet map 
\[
\mathcal{S}:\Bl{\varrho_{s_0,\vp_0}} \times (s_0-\varrho_{\ss_0,\vp_0}, s_0+\varrho_{\ss_0,\vp_0}) \times  (\vp_0-\varrho_{s_0,\vp_0}, \vp_0+\varrho_{s_0,\vp_0}) \to (\ttt_0-\delta_{s_0,\vp_0}, \ttt_0+\delta_{s_0,\vp_0})
\]
such that 
\[
\mathcal{F} ( \lac, s, \vp, \ttt)=0 \qquad \text{if and only if } \qquad \ttt= \mathcal{S} ( \lac, s, \vp).
\]
By uniqueness and using that $\tau_{ \lac}(s+1,t+1)=\tau_{ \lac}(s,t)$, we have that $s_{\lac}'=\mathcal{S}( \lac, s,\vp)+s$. 

Now we consider the open cover of $A_\nu:=[0,1] \times \left [-\frac{\pi}{2} + \nu, \frac{\pi}{2}-\nu\right ]$ given by 
\[
A_\nu \subset \bigcup_{(s_0,\vp_0)\in \mathbb{A}_\nu} (s_0-\varrho_{s_0,\vp_0}, s_0+\varrho_{\ss_0,\vp_0}) \times (\vp_0-\varrho_{s_0,\vp_0}, \vp_0+\varrho_{s_0,\vp_0})
\]
Since $A_\nu$ is a compact set, there exist a finite subcover, namely, 
\[
A_\nu \subset  \bigcup_{k=1}^{k_0} (s^k-\varrho^k, s^k+\varrho^k) \times (\vp^k-\varrho^k, \vp^k+\varrho^k)
\]
with $\varrho^k = \varrho_{s^k,\vp^k}$. Let $\varrho\leq \min_{1\leq k\leq k_0} \varrho^k$ so that $b_\nu \varrho \leq \min\{1\leq k \leq k_0\} \delta_{\ss^k,\vp^k}$ where $b_\nu$ is the constant provided in Lemma~\ref{lem existence billiard map no arclength}. 
 
We can now define the map 
\[
\mathcal{S}:\Bl{\varrho} \times \mathbb{A}_\nu\to \mathbb{R}
\]
globally as follows. 
For any $(\lac,s,\vp) \in \Bl{\varrho} \times \mathbb{A}_\nu$, take $1\leq k \leq k_0$ such that the projection onto $\mathbb{A}_\nu$, 
$(s,\vp) \in (s^k - \varrho^k, s^k+ \varrho^k) \times (\vp ^k - \varrho^k, \vp^k+ \varrho^k)$. 
Then, $\lac \in \Bl{\varrho} \subset \Bl{\varrho^k}$ so that $\mathcal{S}(\lac,s,\vp)$ is well defined by the implicit function theorem. By uniqueness $\mathcal{S}$ is well defined and it is $\mathcal{C}^1$ Fr\'echet differentiable.

Since by Lemma~\ref{lem existence billiard map no arclength}, $\|\pi_s f_{\dgalac} - s'\|_{\difknu{k-3}{\nu}} \leq b_\nu \varrho$, by uniqueness, $ \pi_{s} f_{\dgalac}(s,\vp)= \mathcal{S} (\lac,s,\vp)+s$, with $\pi_{s}$ the projection onto the $s-$component. Then, by~\eqref{eq:implicitperturbedmap}, 
\[
\pi_{\vp} f_{\dgalac} (s,\vp) = \mathrm{arcsin} \left ( \frac{\partial_2 \tau_{\lac} (s, \pi_s f_{\dgalac}(s, \vp))}
{\|\derdgalac (\pi_s f _{\dgalac}
(s,\vp))\|}\right )
\]
defines de $\vp-$ component of $f_{\dgalac}$. 
One can reasoning in an analogous way as for $\pi_s$ to check that it is also a $\mathcal{C}^1$ Fr\'echet map, and this concludes the proof. 
\end{proof}

To end this section we compute $d \Gamma_\nu(0)$.
To compute $d \Gamma_\nu(0)\etana$, with $\etana\in B(1)$, we strongly rely in the computations done in Section~\ref{billiard perturbationTable}. Indeed, recall that 
$\Gamma_\nu = \Gamma_\nu^* \circ \mathcal{L}$ with $\mathcal{L}, \Gamma_\nu^*$ defined in~\eqref{map:Gamma_1} and~\eqref{map:Gamma:prep} respectively.
Therefore  
\[
d\Gamma_\nu (0)\etana= d\Gamma_\nu^* (\mathcal{L}(0))d\mathcal{L}(0)\etana = d\Gamma_\nu^* (0)d\mathcal{L}(0)\etana.
\]

We observe that,  for any $\lna$,
\begin{equation}\label{dif Lambda_1}
d\mathcal{L}(\lna) \etana = \frac{1}{|\partial \Omega(\gna)|} \etana \circ \sigma^{-1}, \qquad \etana\in B(1),
\end{equation}
and it only remains to compute $d\Gamma^*_\nu (0) \etarc $ with $\etarc= |\partial \Omega (\gna)|^{-1} \etana \circ \sigma^{-1} $. 

Recall that, since $\Gamma^*_\nu$ is $\mathcal{C}^1$ Fr\'echet differentiable $d\Gamma^*_\nu (0)\etarc$ coincides with the Gateaux differential of $\Gamma^*_\nu $ at $0$ in the $\etarc\in B(1)$ direction, namely
\begin{equation}\label{Gateaux_def}
d\Gamma^*_\nu(0)\etarc = \mathbf{d}_{\etarc} \Gamma^*_\nu (0) = \lim_{\varepsilon \to 0 } \frac{\Gamma^*_\nu( \varepsilon \etarc) - \Gamma^*_\nu(0) }{\varepsilon}.
\end{equation}
Since $\defga = \ga + \varepsilon \etarc \cdot \narc$ belongs to the set of deformations $\Omega_\varepsilon$ in~\eqref{deformations}, we can apply 
Proposition~\ref{cor:expr:fepsilon} and we have that 
\begin{equation}\label{def:Gamma}
\big [d\Gamma^*_\nu (0)\etarc \big ] (s,\varphi) = A(z,z') \left (\begin{array}{c} \etarc(s) \\ \deretarc(s) \end{array} \right ) + 
B(z' )\left (\begin{array}{c} \etarc(s') \\ \deretarc(s') \end{array} \right ).
\end{equation}
with $z=(s,\varphi)$, and $f(z)=(s',\vp')$ 
\begin{equation}\label{def:A:compactsup}
A(z,z')=-\frac{1}{\cos  \varphi'} \left ( \begin{array}{cc} \sin \varphi & \hat \tau(s,\varphi) \\ \sin \varphi \mathcal{K}(s') & \mathcal{K}(s') \hat \tau(s,\varphi) + \cos \varphi' \end{array}\right )
\end{equation}
and 
\begin{equation}\label{def:B:compactsup}
B(z')= \frac{1}{\cos \varphi'} \left ( \begin{array}{cc} \sin \varphi' & 0 \\ \sin \varphi' \mathcal{K}(s') & \cos \varphi' \end{array}\right ).
\end{equation}

Moreover, we also compute $\partial_s d\Gamma^*_\nu(0)$ and $\partial_\varphi d\Gamma^*_\nu(0)$:
\begin{equation}\label{def:dGamma}
\begin{aligned}
\partial_s \big [d\Gamma^*_\nu(0)\etarc \big ] (s,\varphi) = &\left [ \partial_s A(z,z') + \partial_s s'\partial_{s'} A(z,z')  + \partial_{s} \varphi' \partial_{\varphi'} A(z,z') \right ] \left (\begin{array}{c} \etarc(s) \\ \deretarc(s) \end{array} \right ) 
\\ & 
+ A(z,z') \left (\begin{array}{c} \deretarc(s) \\ \dertwoetarc(s) \end{array} \right ) \\
&+ \left [ \partial_s s' \partial_{s'} B(z')  + \partial_{s} \varphi'\partial_{\varphi'} B(z') \right ] \left (\begin{array}{c} \etarc(s' ) \\ \deretarc(s') \end{array} \right ) 
\\ &
+ \partial_s s' B(z')  
\left (\begin{array}{c} \deretarc(s') \\ \dertwoetarc(s') \end{array} \right ), \\ 
\partial_{\varphi} \big [d\Gamma^*_\nu(0)\etarc \big ] (s,\varphi) = &\left [ \partial_\varphi A(z,z') + \partial_\varphi s'\partial_{s'} A(z,z')  + \partial_{\varphi} \varphi'  \partial_{\varphi'} A(z,z') \right ] \left (\begin{array}{c} \etarc(s) \\ \deretarc(s) \end{array} \right ) 
\\
&+ \left [ \partial_\varphi s'\partial_{s'} B(z')  + \partial_{\varphi} \varphi' \partial_{\varphi'} B(z') \right ] \left (\begin{array}{c} \etarc(s') \\ \deretarc(s') \end{array} \right ) 
\\ &+ \partial_\varphi s' B(z')  
\left (\begin{array}{c} \deretarc(s') \\ \dertwoetarc(s') \end{array} \right ).
\end{aligned}
\end{equation}

\section{keeping a single hyperbolic Aubry-Mather periodic orbit of type $(p,q)$. Proof of Proposition~\ref{prop:hyperbolicfixedpoint:section3}}\label{sec:hyp}

The goal of this section is to give the proof of Proposition~\ref{prop:hyperbolicfixedpoint:section3} stated in Section~\ref{sec:hyp:section3}. 
In other words, 
given $r>0$, $\gna\in\mathcal{B}_r$,   $\frac p q\in \mathbb{Q}/\mathbb{Z}$, and $\epsilon>0$ arbitrarily small, we will find suitable analytic functions $\lna$, such that $\|\lna\|_r <\epsilon$, and for \begin{equation}\label{defi_gammalambd}
\dgnalna(\ss) = \gna(\ss) + \lna(\ss) \nna(\ss), 
\end{equation}
the billiard map $f_{\dgnalna}$ of the perturbed billiard table $\Omega(\dgnalna)$ has a unique periodic orbit in the Aubry-Mather set $\mathcal{M}_{\frac{p}{q}}(\Omega(\dgnalna))$ of rotation number $\frac{p}{q}$, which, in addition, is hyperbolic. 
\begin{proof}[Proof of Proposition~\ref{prop:hyperbolicfixedpoint:section3}]
Let $f_{\gna}$ be the billiard map associated to $\gna$, let $\Omega=\Omega(\gna)$, and let $\gentau \colon 
(\ss,\ss')\mapsto \|\gna(\ss)-\gna(\ss')\|$, see \eqref{generating function original:section3}. 
 
Consider the action functional
\begin{equation*}
L^0_{p,q}\colon
\left\{
\begin{array}{rcl}
\R^q &\to &\R_+,\\
(\ss_0,\cdots,\ss_{q-1}) &\mapsto& \sum_{k=0}^{q-1}\gentau(\ss_k,\ss_{k+1}),
\end{array}
\right.
\end{equation*}
with the convention that $\ss_q=\ss_0+p$.  

Periodic orbits in the Aubry-Mather  set $\mathcal{M}_{\frac{p}{q}}(\Omega)$  correspond to maximizers of the action functional $L^0_{p,q}$ (see Section~\ref{sec: preliminaries billiard perturbations}). Let us then fix a maximizer $\bar \ss$ of $L^0_{p,q}$. We choose a configuration $(\bar \ss_0,\cdots,\bar \ss_{q-1}) \in \mathbb{R}^{q}$     representing $\bar \ss$, and denote by $\bar\varphi_0,\cdots,\bar\varphi_{q-1}$ the respective angles at the collision points $\bar \ss_0,\cdots,\bar \ss_{q-1}$. We also denote by $\bar S$ the associated bi-infinite configuration 
\[
\bar S=(\bar S_j)_{j\in \mathbb{Z}},\quad \bar S_j:=\bar \ss_{j\mod q}+jp,\, \forall\, j \in \mathbb{Z},
\]
and by $\bar{\mathcal{S}}:=\{(\bar S_{j},\bar S_{j+1},\cdots, \bar S_{j+q-1}):j \in \mathbb{Z}\}$ the set of all words of length $q$ in $\bar S$. Let $\lna_0\colon \T\to \R$ be a function which meets the following requirements:
\begin{itemize}
\item $\lna_0\leq 0$ and $\lna_0<0$ on $\T\setminus \cup_{k=0}^{q-1} \{\bar \ss_k\}$;
\item $\lna_0(\bar \ss_k)=\derlna_0(\bar \ss_k)=0$ and $\dertwolna_0(\bar \ss_k) <0$, for any $k\in \{0,\cdots,q-1\}$.
\end{itemize}
We can take e.g. $\lna_0 \colon \ss \mapsto - \prod_{k=0}^{q-1} \sin^2 \left(\pi(\ss-\bar{\ss}_k)\right)$. 
 
We consider the one-parameter family $\{\gna_{\varepsilon \lna}=\gna+\varepsilon \lna n\}_{\varepsilon\in \mathbb{R}}$ of deformations of $\gna$ as in~\eqref{defi_gammalambd}, with $\lna= \|\lna_0\|_r^{-1} \lna_0$. For $\varepsilon\geq 0$ small, let $f_{\gna_{\varepsilon \lna}}$ be the associated billiard map. Let  $\gentau_{\varepsilon} \colon (\ss,\ss')\mapsto \|\gna_{\varepsilon \lna}(\ss)-\gna_{\varepsilon \lna}(\ss')\|$, and let us consider the action functional
\begin{equation*}
L^\varepsilon_{p,q}\colon
\left\{
\begin{array}{rcl}
\R^q &\to &\R_+,\\
(\ss_0,\cdots,\ss_{q-1}) &\mapsto& \sum_{k=0}^{q-1}\gentau_\varepsilon(\ss_k,\ss_{k+1}),
\end{array}
\right.
\end{equation*}
with the convention that $\ss_q=\ss_0+p$. 
    
As for $L^0_{p,q}$, periodic orbits in the Aubry-Mather set $\mathcal{M}_{\frac{p}{q}}(\Omega(\gna_{\varepsilon \lna}))$  correspond to maximizers of the action functional $L^\varepsilon_{p,q}$. Recall that $\bar{\mathcal{S}}$ is the set of all $q$-words in the bi-infinite sequence $\bar S$ associated to the maximizer $\bar \ss$ fixed above. 

Let $\ss=(\ss_0,\cdots,\ss_{q-1})\in \mathbb{R}^{q}$, and let $\ss_{-1}=\ss_{q-1}-p$, $\ss_q=\ss_0+p$. 
We claim that if $\ss\notin \bar{\mathcal{S}}$, then $L_{p,q}^\varepsilon(\ss)<L_{p,q}^\varepsilon(\bar \ss)$ for $\varepsilon>0$ sufficiently small.  
Indeed, for $k \in \{0,\cdots,q-1\}$, we let $\varphi_k^{-}$, resp. $\varphi_k^{+}$ denote the angles between the normal to $\partial\Omega$ at $\gna(\ss_k)$ and the line segment from $\gna(\ss_k)$ to $\gna(\ss_{k-1})$, resp. from $\gna(\ss_k)$ to $\gna(\ss_{k+1})$. 
By Lemma~\ref{lazutkin}, there exist $\varepsilon_0>0$ and $\nu>0$ such that for any $\varepsilon \in (-\varepsilon_0, \varepsilon_0)$, we have $\mathcal{M}_{\frac{p}{q}}(\Omega(\gna_{\varepsilon \lna}))\subset \mathbb{A}_\nu$ (recall~\eqref{def:Anu}). 
In particular, we are free to restrict ourselves to configurations $\ss$ for which $(\ss_k,\varphi_k^\pm)\in \mathbb{A}_\nu$ for each $k \in \{0,\cdots,q-1\}$, for otherwise, the configuration cannot be a maximizer. Then, by Lemmas~\ref{away from boundary 1} and~\ref{away from boundary 2}, there exists a constant $c>0$ such that, for any $\varepsilon \in (-\varepsilon_0, \varepsilon_0)$,     
\[
\gentau_\varepsilon(\ss_{k},\ss_{k+1})\geq c\quad \text{and}\quad \cos  (\varphi_k^\pm)  \geq \sin \nu,\qquad \forall\, k \in \{0,\cdots,q-1\}. 
\]
 
On the one hand, by Lemma~\ref{lemma pertr lenght func}, for every $k\in \{0,\cdots,q-1\}$, since $\lna_0(\bar{\ss}_k)=0$, we have that $\gentau_{\varepsilon}(\bar{\ss}_k,\bar{\ss}_{k+1})= \gentau (\bar{\ss}_k,\bar{\ss}_{k+1})$ and then $L^\varepsilon_{p,q}(\bar \ss)=L^0_{p,q}(\bar \ss)$. 

On the other hand, by Lemma~\ref{lemma pertr lenght func}, for every $k\in \{0,\cdots,q-1\}$, since $\lna_0\leq 0$, since $\lna_0(\ss_k)<0$, if $\ss\notin \bar{\mathcal{S}}$, 
\begin{align*}
\gentau_\varepsilon^2 (\ss_k,\ss_{k+1}) \leq &\gentau^2 (\ss_k,\ss_{k+1}) - \frac{2 \varepsilon c \sin \nu }{\|\lna_0\|_r}\big (|\lna_0 (\ss_k)| + |\lna_0(\ss_{k+1})|\big ) \\ & + \frac{\varepsilon^2 }{\|\lna_0\|_r^2}\big (|\lna_0(\ss_k)| + |\lna_0(\ss_{k+1})|)^2 \\ 
\leq &\gentau^2 (\ss_k,\ss_{k+1})  - \frac{\varepsilon}{\|\lna_0\|^2_r} (|\lna_0(\ss_k)| + |\lna_0(\ss_{k+1})|) \left (2 c \sin \nu -  2 \varepsilon\right ) \\  <& \gentau^2 (\ss_k,\ss_{k+1})
\end{align*}
if $\varepsilon$ is such that $ 0< \varepsilon  < c\sin \nu $. 
Then we have that, for $\ss\notin \bar{\mathcal{S}}$,  
we have 
\[
L_{p,q}^{\varepsilon} (\ss) < L_{p,q}^0(\ss) \leq L_{p,q}^{0}(\bar{\ss}) = L_{p,q}^\varepsilon(\bar{\ss}).
\]
This proves that $\bar{\ss}$ is the unique mazimizer of $L_{p,q}^\varepsilon$. 
Moreover, since by Lemma~\ref{lemma pertr lenght func}
\begin{equation*}
L_{p,q}^\varepsilon(\ss_0,\cdots,\ss_{q-1})=L_{p,q}^0(\ss_0,\cdots,\ss_{q-1})+ \frac{\varepsilon}{\|\lna_0\|_r} L_{p,q}^1(\ss_0,\cdots,\ss_{q-1}) +O(\varepsilon^2),	
\end{equation*}
where
\[
L_{p,q}^1\colon (\ss_0,\cdots,\ss_{q-1})\mapsto 
\sum_{k=0}^{q-1} \big [\lna_0(\ss_k)\cos \big (\varphi(\ss_k,s_{k+1}) \big )+\lna_0(\ss_{k+1})\cos \big (\varphi'(\ss_k,\ss_{k+1}) \big ) \big ]
\]
with $\varphi(\ss_k,\ss_{k+1})$ being the angle between $-\nna(\ss_k)$ and $\gna(\ss_{k+1})- \gna(\ss_k)$, we have that
\begin{equation*}
D^2L_{p,q}^\varepsilon(\bar \ss)=D^2L_{p,q}^0(\bar \ss)+2 \frac{\varepsilon}{\|\lna_0\|_r}\,  \mathrm{diag}(\dertwo{\lna}_0(\bar \ss_k)\cos \bar\varphi_k)_{k=0,\cdots,q-1}+O(\varepsilon^2)\, ,
\end{equation*}
using the fact that $\lna_0(\bar{\ss}_k)=\derlna_0(\bar{\ss}_k)=0$ for every $k$.
    
Therefore,
\begin{equation}\label{exp pert le fu}
\det D^2L_{p,q}^\varepsilon(\bar \ss)= \det D^2L_{p,q}^0(\bar \ss)+2 \frac{\varepsilon}{\|\lna_0\|_r}\sum_{k=0}^{q-1} \dertwolna_0(\bar \ss_k) \cos \bar\varphi_k\,\delta_{p,q}^k(\bar \ss)+O(\varepsilon^2),
\end{equation}
where $\delta_{p,q}^k(\bar \ss)$ is the determinant of the matrix $(D^2L_{p,q}^0(\bar \ss))_{i,j\neq k}$. 
In particular, as $\bar \ss$ is a local maximizer of the action functional $L_{p,q}^0$, the matrix $D^2L_{p,q}^0(\bar \ss)$ is negative semi-definite, and then, $(-1)^q  \det D^2L_{p,q}^0(\bar \ss) \geq 0$.
In fact, since $\bar \ss$ is in the Aubry-Mather set, by \cite{ArnaudSalto}, and since, once again, orbits in the Aubry-Mather set correspond to maximizers of $L^0_{p,q}$, for any $k\in \{0,\cdots,q-1\}$, the functional
\begin{equation*}
\hat{L}_{p,q}^{k}\colon
\left\{
\begin{array}{rcl}
\R^{q-1} &\to &\R_+,\\
(\ss_0,\cdots,\ss_{k-1},\ss_{k+1},\cdots,\ss_{q-1}) &\mapsto& 
\sum_{j=0}^{k-2}\gentau(\ss_j,\ss_{j+1})+\gentau(\ss_{k-1},\bar \ss_k)+\gentau(\bar \ss_{k},\ss_{k+1})\\
& &+\sum_{j=k+1}^{q-2}\gentau(\ss_j,\ss_{j+1})+\gentau(\ss_{q-1},\ss_0+p),
\end{array}
\right.
\end{equation*}
has a non-degenerate critical point at $(\bar \ss_0,\cdots,\bar \ss_{k-1},\bar \ss_{k+1},\cdots,\bar \ss_{q-1})$ (actually, a maximizer). Therefore, 
\begin{equation}\label{signdeltapq}
(-1)^{q-1}\delta_{p,q}^k(\bar \ss)> 0,\quad \forall\,k\in \{0,\cdots,q-1\},
\end{equation}
i.e., $\delta_{p,q}^k(\bar \ss)$ has the same sign as $(-1)^{q-1}$. By~\eqref{exp pert le fu}, there are two cases: 
\begin{itemize}
\item 
if $(-1)^q\det D^2L_{p,q}^0(\bar \ss)>0$, then for $\varepsilon>0$ sufficiently small, we have $(-1)^q\det D^2L_{p,q}^\varepsilon(\bar \ss)>0$;
\item 
if $\det D^2L_{p,q}^0(\bar \ss)=0$, then by~\eqref{signdeltapq} and since $\dertwolna_0(\bar \ss_k) <0$, for any $k\in \{0,\cdots,q-1\}$, for $\varepsilon>0$ sufficiently small, we also have $(-1)^q\det D^2L_{p,q}^\varepsilon(\bar \ss)>0$.
\end{itemize}
We have thus proved that for $\varepsilon>0$ sufficiently small, $\bar \ss$ is the unique periodic orbit in the Aubry-Mather set of rotation number $\frac p q$ for the perturbed billiard map, and that $D^2L_{p,q}^\varepsilon(\bar \ss)$ is non-degenerate. 
In particular, the periodic orbit $\bar \ss$ is hyperbolic (see e.g.~\cite{MacKayStark}).  
\end{proof}

\section{Existence of one fibered homoclinic points. Proof of Lemma \ref{lemma at most two fibered points:section3}, Proposition \ref{nec cond for not one fib:section3} and Proposition \ref{good property in the bas case:section3}}
\label{sec:existence one fibered}

This section is devoted to study if and when a homoclinic point belongs to a one-fibered orbit. As already emphasized in Section~\ref{sec:existence one fibered:sec3}, we will prove here that, in the case we are interested in, any homoclinic orbit is one-fibered if $q>2$, or it could have a two-symetrically fibered point when $q=2$.

Lemma~\ref{lemma at most two fibered points:section3} counts the maximum number of points belonging to the same orbit on the same fiber. We start by proving it.

\begin{proof}[Proof of Lemma~\ref{lemma at most two fibered points:section3}]
Let $P,P'\in \mathcal{P}$ be such that $Q\in W^\sta(P)\cap W^\uns(P')$. For $\ell\gg 1$ very large, $f_\gna^\ell(Q)\in W_{\mathrm{loc}}^\sta(f_\gna^\ell(P))$ and $f_\gna^{-\ell}(Q)\in W_{\mathrm{loc}}^\uns(f_\gna^{-\ell}(P'))$. Then $f_\gna^\ell(Q)$ (resp.  $f_\gna^{-\ell}(Q)$) is very close to $f_\gna^\ell(P)\in \mathcal{P}$ (resp.  $f_\gna^{-\ell}(P')$). On the one hand, for any  $i\in \{0,\cdots,q-1\}$, the stable space at $P_i$ is transverse to be vertical because $P_i$ is in the Aubry-Mather set, thus $W_{\mathrm{loc}}^\sta(P_i)$ is a partial graph over the coordinate $\ss$. Moreover, by Aubry-Mather theory (see Proposition~\ref{prop:injectivity}), all the points $(P_i)_{i=0,\cdots,q-1}$ in $\mathcal{P}$ lie on different vertical fibers. We conclude that for $\ell^\sta\gg 1$ sufficiently large, the points $\{f_\gna^\ell(Q):\ell \geq \ell^\sta\}$ all lie on different vertical fibers. Arguing similarly for negative iterates, for $\ell^\uns\gg 1$ sufficiently large, the points $\{f_\gna^{-\ell}(Q):\ell \geq \ell^\uns\}$ all lie on different vertical fibers. 
Up to choose a larger $\hat{\ell}^s\ge \ell^s$ and denoting by $p_1\colon \mathbb{A}\to\mathbb{T}$ the projection over the first coordinate, the set of points $\{f_\gna^{\ell}(Q):\ell \geq \hat{\ell}^\sta\}$ projects injectively on the first coordinate and its image by $p_1$ is disjoint from $p_1(\{f_\gna^\ell(Q):-\ell^\uns\leq\ell \leq \ell^\sta\})$. Similarly, up to choose a larger $\hat{\ell}^\uns\geq \ell^\uns$, the set of points $\{f_\gna^{-\ell}(Q):\ell \geq \hat{\ell}^\uns\}$ projects injectively on the first coordinate and its image by $p_1$ is disjoint from $p_1(\{f_\gna^\ell(Q):-\ell^\uns\leq\ell \leq \ell^\sta\})$.
We conclude that for $\ell\geq \max(\hat{\ell}^\sta,\hat{\ell}^\uns)$, there exists a most one other point $f_\gna^{-m}(Q)$, $m\geq \max(\ell^\sta,\ell^\uns)$, in the orbit of $Q$, which lies on the same vertical fiber as $f_\gna^\ell(Q)$. 
\end{proof}

We are going to see now that a general homoclinic point is localised one-fibered (see Definition~\ref{definition localised fibered:section3}), as stated in Proposition~\ref{nec cond for not one fib:section3}.

\begin{proof}[Proof of Proposition~\ref{nec cond for not one fib:section3}]
Let $\mathcal{P}=(P_i)_{i=0}^{q-1}$ be the unique periodic orbit in the Aubry-Mather set of rotation number $\frac p q$. Recall that $p_1\colon \mathbb{A}\to \mathbb{T}$ denotes the projection on the first coordinate. In particular, from Aubry-Mather theory (see Proposition~\ref{prop:injectivity}), we know that
\[
p_1(P_i)=\ss_{P_i}\neq p_1(P_j)=\ss_{P_j}\qquad \forall i\neq j\, .
\]
Since there is a finite number of points, there exists $\epsilon>0$ such that, considering, for every $i$, the intervals $(\ss_{P_i}-\epsilon, \ss_{P_i}+\epsilon)$, one has
\[
(\ss_{P_i}-\epsilon, \ss_{P_i}+\epsilon)\cap (\ss_{P_j}-\epsilon, \ss_{P_j}+\epsilon)=\emptyset\qquad \forall i\neq j\, .
\]
Observe that, again thanks to Aubry-Mather theory (since these points do not have conjugate points), at every $P_i$ the stable and unstable directions are transverse to the vertical one. Thus, the local stable and unstable manifolds of the point $P_i$ are partial graphs over $\mathbb{T}$. Up to shrink the local stable and unstable manifolds, we can assume that they are all graphs on domains contained in the union $\cup_{i=0}^{q-1}(\ss_{P_i}-\epsilon, \ss_{P_i}+\epsilon)$.

Let now $Q$ be a homoclinic point in $W^{\sta}(P)\cap W^{\uns}(P')$. There exists $N\in \mathbb{N}$ such that for every $n\geq N$:
\begin{itemize}
\item the point $f_\gna^n(Q)$ belongs to the union of the local stable manifolds $\cup_{i=0}^{q-1}W^{\sta}_{\mathrm{loc}}(P_i)$;
\item the point $f_\gna^{-n}(Q)$ belongs to the union of the local unstable manifolds $\cup_{i=0}^{q-1} W^{\uns}_{\mathrm{loc}}(P_i)$.
\end{itemize}
For $i=0,\dots, q-1$, denote by $ D^{\sta}_{P_i}$ (resp. $ D^{\uns}_{P_i}$) the domain of the local stable (resp. unstable) manifold at $P_i$. Since there is a finite number of points in the orbit of $Q$ which do not belong to such local stable and unstable manifolds, for every $i$, we can restrict further the domain $D^{\sta}_{P_i}$ (resp. $ D^{\uns}_{P_i}$) such that the set of points in the orbit of $Q$ whose projection on the first coordinate is in $D^\sta_{P_i}\setminus\{P_i\}$ (resp. $D^\uns_{P_i}\setminus\{P_i\}$) is made up only of the points of the orbit of $Q$ belonging to the local stable (resp. local unstable) manifold at $P_i$. 

The point $Q$ is in $W^{\sta}(P)\cap W^{\uns}(P')$. Then, there exists $k_1,k_2\in\mathbb{N}$ such that
\[
f_\gna^{k_1}(Q)\in p_1^{-1}( D^{\sta}_P\setminus\{\ss_P\})\qquad \text{and}\qquad f_\gna^{-k_2}(Q)\in p_1^{-1}( D^{\uns}_P\setminus\{\ss_P\})\, ,
\]
since $P,P'$ are points lying on the same periodic orbit. Assume that $f_\gna^{k_1}(Q)$ is the first point of the orbit of $Q$ belonging to the local stable manifold, whose first-coordinate projection is in $ D^{\sta}_P\cap  D^{\uns}_P$. Similarly, assume that $f_\gna^{-k_2}(Q)$ is the last point of the orbit of $Q$ belonging to the local unstable manifold, whose first-coordinate projection is in $ D^{\sta}_P\cap  D^{\uns}_P$. Thus, for every $n\ge 0$, one has
\[
f_\gna^{k_1+nq}(Q)\in p_1^{-1}( D^{\sta}_P\setminus\{\ss_P\})\qquad \text{and}\qquad f_\gna^{-k_2-nq}(Q)\in p_1^{-1}( D^{\uns}_P\setminus\{\ss_P\})\, ,
\]
while, for every $n\ge 1$ 
and every $i\not\equiv 0 \mod q$, we have
\[
f_\gna^{k_1+ni}(Q)\notin p_1^{-1}( D^{\sta}_P\setminus\{\ss_P\})\qquad \text{and}\qquad f_\gna^{-k_2-ni}(Q)\notin p_1^{-1}( D^{\uns}_P\setminus\{\ss_P\})\, .
\]
This is possible because of the choice of the domains $ D^{\sta}_{P_i}$ and $ D^{\uns}_{P_i}$ and by the properties of the local stable and unstable manifolds. 

We then look at the set of points
$$
\mathcal{S}:=\{ f_\gna^{k_1+nq}(Q) :\ n\ge 0 \}\cup \{ f_\gna^{-k_2-nq}(Q) :\ n\ge 0 \}\, ;
$$
there are two exclusive possibilities. 
\begin{itemize}
\item Either there exists a point in $\mathcal{S}$ that is one-fibered;
\item or, since the local stable and unstable manifolds are graphs over the domain $ D^{\sta}_P\cap  D^{\uns}_P$, we deduce that, for every $n\ge 0$, we have
\begin{equation}\label{condition not one fibered}
p_1(f_\gna^{k_1+nq}(Q))=p_1(f_\gna^{-k_2-nq}(Q))\, .
\end{equation}
\end{itemize}
The first possibility would contradict the hypothesis that the orbit of $Q$ is not localised one-fibered. Thus, consider the second possibility. For every $i\ge 0$, we denote
\[
(\ss_{k_1+i},\varphi^+_{k_1+i})=f_\gna^{k_1+i}(Q)\qquad\text{and}\qquad (\ss_{-k_2-i},\varphi^-_{-k_2-i})=f_\gna^{-k_2-i}(Q)\, .
\]
According to this notation, from \eqref{condition not one fibered}, we have, for every $i\ge 0$,
\[
\ss_{k_1+iq}=p_1(\ss_{k_1+iq},\varphi^+_{k_1+iq})=p_1(\ss_{-k_2-iq},\varphi^-_{-k_2-iq})=\ss_{-k_2-iq}\, .
\]
Moreover, for every $i\ge 0$, we have
\[
(\ss_{k_1+i+1},\varphi^+_{k_1+i+1})=f_\gna(\ss_{k_1+i},\varphi^+_{k_1+i})\quad\text{and}\quad (\ss_{-k_2-i},\varphi^-_{-k_2-i})=f_\gna(\ss_{-k_2-i-1},\varphi^-_{-k_2-i-1})\, .
\]
Up to change the order of the indices, we can assume that $P=P_0$. Up to further restrict the domains $D^{\sta}_{P_0}, D^{\uns}_{P_0}$ and up to replace $k_j$ with $k_j+q$ for $j=1,2$, we can assume that the first $q$ iterates in the future and in the past of $f_\gna^{k_1}(Q)$ and of $f_\gna^{-k_2}(Q)$ have a first coordinate projection that still belongs to the domains $ D^{\sta}_{P_j}, D^{\uns}_{P_j}$ (for the suitable $j$).

Denoting $P_q=P_0$, observe that, for $j=0,\dots, q-1$, we have
\[
f_\gna^{k_1+nq+j}(Q)\in W^{\sta}_{\mathrm{loc}}(P_j)\qquad \forall n\ge 0\, ,
\]
and
\[
f_\gna^{-k_2-nq-j}(Q)\in W^{\uns}_{\mathrm{loc}}(P_{q-j})\qquad \forall n\ge 0\, .
\]
Actually, their projections on the first coordinate are all contained in the domains $ D^{\sta}_{P_j}$, resp. $D^{\uns}_{P_{q-j}}$.

Again, there is a dichotomy. 
\begin{itemize}
\item Either there exists an index $j=1,\dots, q-1$ such that there exists a point in the set
\[
\mathcal{S}_j:=\{f_\gna^{k_1+nq +j}(Q) :\ n\ge 0\}\cup\{f_\gna^{-k_2-nq-(q-j)}(Q):\ n\ge 0\}
\]
which is one-fibered;
\item or for every point there is another point of the orbit on the same fiber. More precisely, for every $j=1,\dots,q-1$ and for every $n\ge 0$, there exists $m\in \mathbb{Z}$ such that
\begin{equation}\label{condition not one fibered everywhere}
p_1(f_\gna^{k_1+nq+j}(Q))=p_1(f_\gna^{-k_2-mq-(q-j)}(Q))\, .
\end{equation}
\end{itemize}
Again, the first possibility contradits the hypothesis on the orbit pof $Q$ not being localised one-fibered. We then consider this second case. See Figure~\ref{fig: two periodic case}.  
\begin{figure}
\begin{minipage}{{0.32\textwidth}}
\vspace{1.2cm}
\begin{overpic}[scale=0.08]{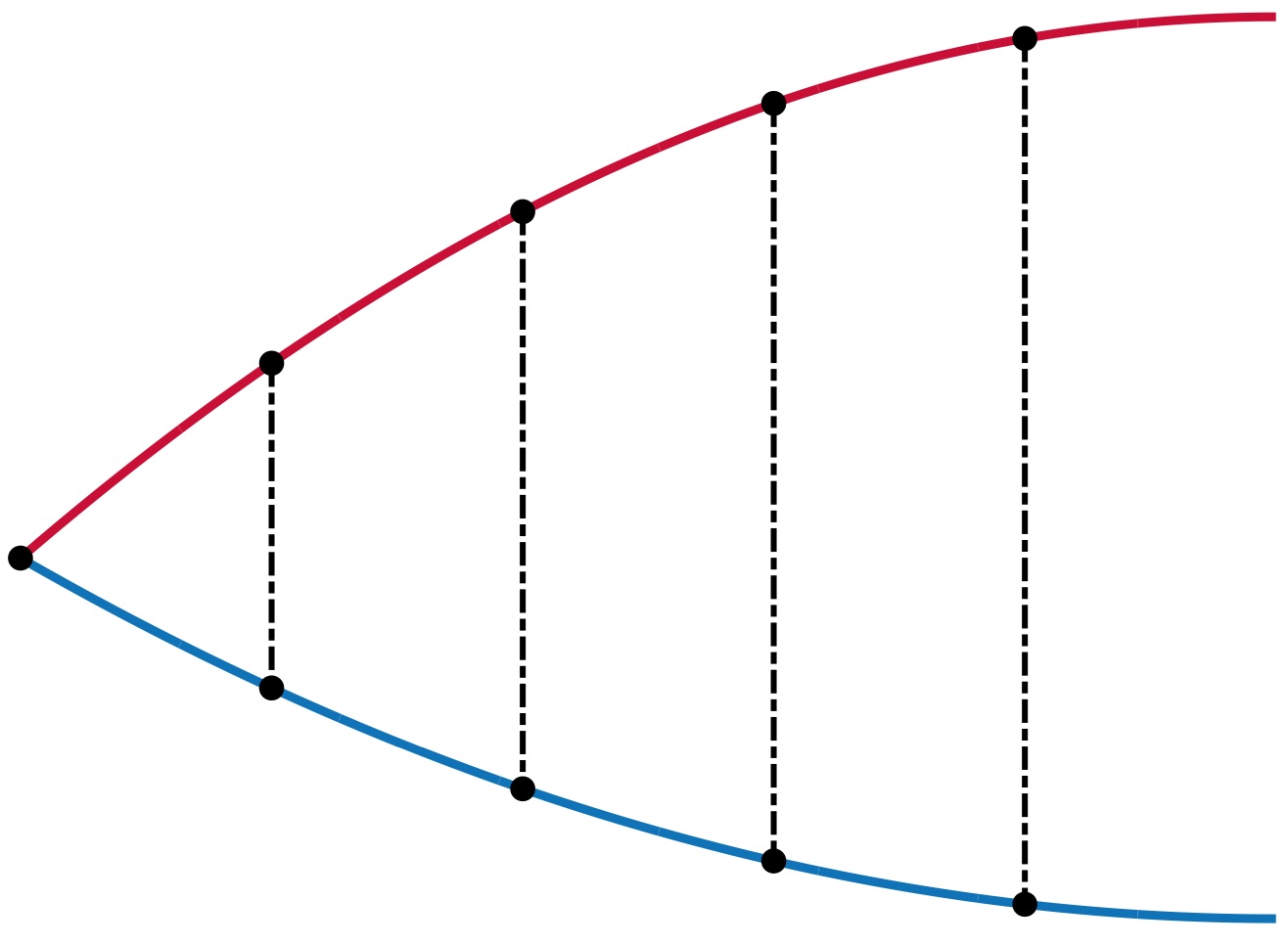}
	\put(-14,29){{\color{black} $P_0$ }}
	\put(15,70){{\color{red} $W^{\sta}_{\mathrm{loc}}(P_0)$ }}
	\put(15, -5){{\color{blue} $W^{\uns}_{\mathrm{loc}}(P_0)$ }}
	\put(65,77){{\color{black} $f_\gna^{k_1}(Q)$ }}	
	\put(65, -10){{\color{black} $f_\gna^{-k_2}(Q)$ }}	
\end{overpic}
\end{minipage}
\begin{minipage}{{0.32\textwidth}}
\begin{overpic}[scale=0.08]{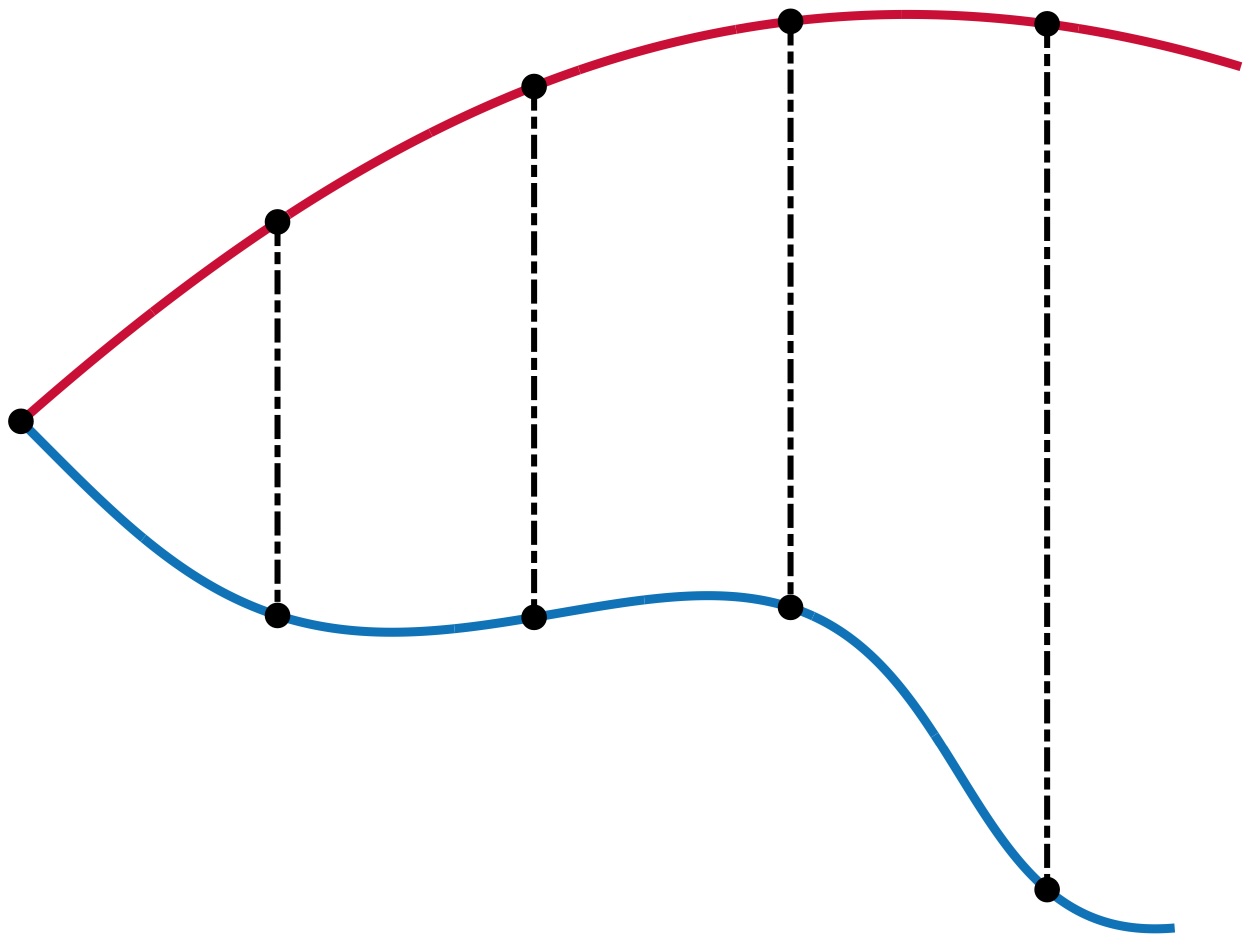}
	\put(-14,40){{\color{black} $P_1$ }}
	\put(15,80){{\color{red} $W^{\sta}_{\mathrm{loc}}(P_1)$ }}
	\put(15, 4){{\color{blue} $W^{\uns}_{\mathrm{loc}}(P_1)$ }}
	\put(65,80){{\color{black} $f_\gna^{k_1+1}(Q)$ }}	
	\put(60, -10){{\color{black} $f_\gna^{-k_2-q+1}(Q)$ }}	
\end{overpic}
\end{minipage}
\begin{minipage} {0.32\textwidth}
\vspace{1.7cm}
\begin{overpic}[scale=0.08]{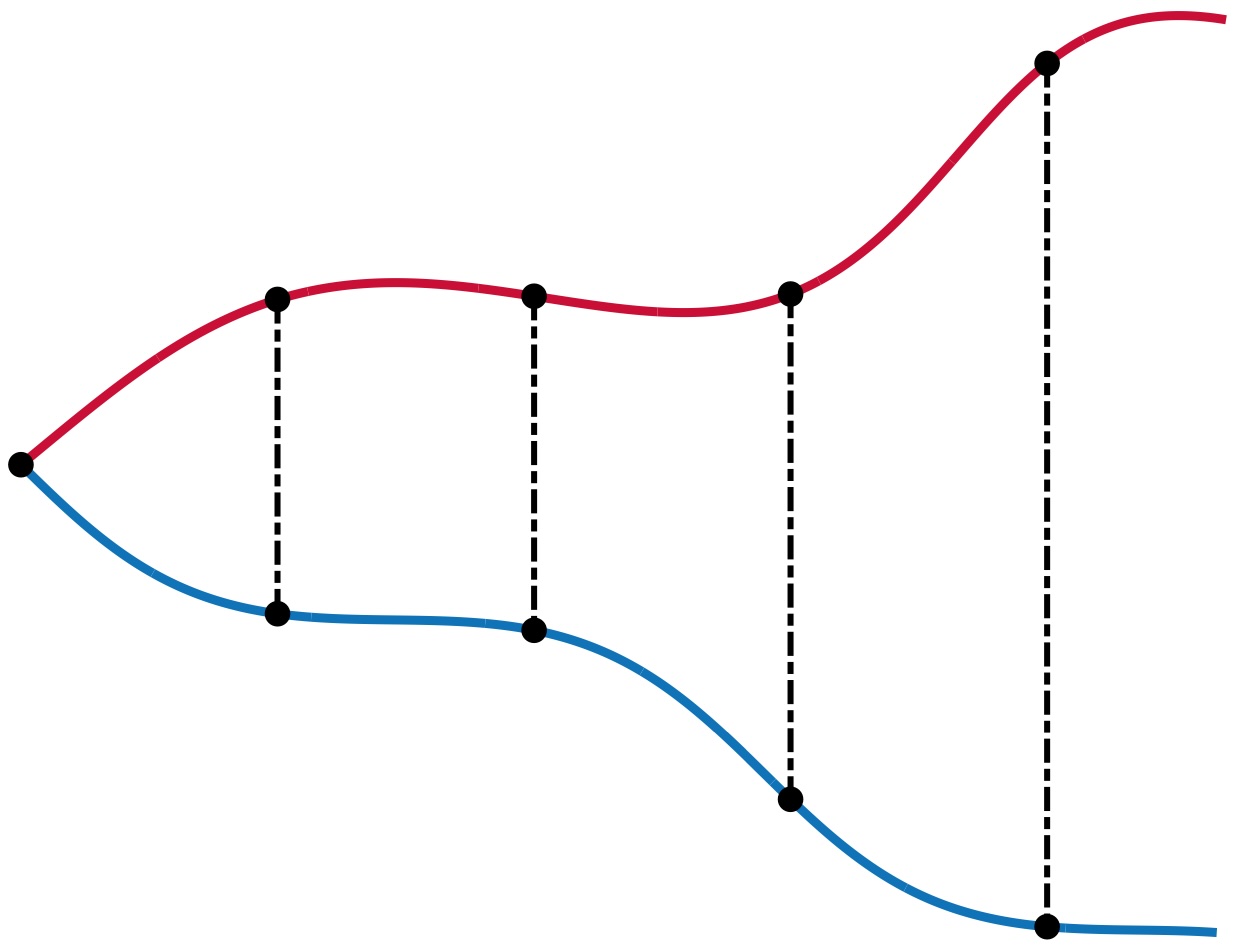}
	\put(-14,35){{\color{black} $P_2$ }}
	\put(13,68){{\color{red} $W^{\sta}_{\mathrm{loc}}(P_2)$ }}
	\put(15, 3){{\color{blue} $W^{\uns}_{\mathrm{loc}}(P_2)$ }}
	\put(65,82){{\color{black} $f_\gna^{k_1+2}(Q)$ }}	
	\put(65, -10){{\color{black} $f_\gna^{-k_2-2q+2}(Q)$ }}	
\end{overpic}
\end{minipage}
\vspace{0.3cm}
\caption{On the local stable and unstable manifolds of every $P_j$, there is no one-fibered points in the orbit of $Q$. 
}
\label{fig: two periodic case}
\end{figure}

Observe that, since the orbit of $P_0$ is in the Aubry-Mather set, if the branch of the local stable manifold at $P_0$ lies above the branch of the unstable manifold at $P_0$, then the order is the same between the branches of the stable and unstable manifolds at $P_j$, for $j\neq 0$. This comes from the fact that the point $P_0$ does not have conjugate points.

Consider the point $f_\gna^{k_1}(Q)$. It belongs to the local stable manifold of $P_0$ and it lies on the same fiber as $f_\gna^{-k_2}(Q)$, from \eqref{condition not one fibered}.
\begin{enumerate}
\item Assume that $f_\gna^{k_1}(Q)$ lies above $f_\gna^{-k_2}(Q)$. That is, denoting by $p_2$ the projection over the second coordinate:
\[
p_2(f_\gna^{-k_2}(Q))<p_2(f_\gna^{k_1}(Q))\, .
\]

The image of $f_\gna^{k_1}(Q)$ through $f_\gna$, i.e., the point $f_\gna^{k_1+1}(Q)$, belongs to the local stable manifold of $P_1$. Moreover, from \eqref{condition not one fibered everywhere}, it exists $m_1\in \mathbb{Z}$ such that $f_\gna^{k_1+1}(Q)$ lies on the same fiber as $f_\gna^{-k_2-m_1q+1}(Q)$. Since the local unstable manifold of $P_1$ is a graph, because of the choice of $D^s_{P_j}$ and $D^u_{P_j}$, and because of the (positive) twist condition, we can deduce that $m_1\le -1$.

Repeat now the same argument, starting with $f_\gna^{k_1+1}(Q)$ and $f_\gna^{-k_2-m_1q+1}(Q)$. We then deduce that the point $f_\gna^{k_1+2}(Q)$ lies on the same fiber as $f_\gna^{-k_2-m_1q-m_2q+2}(Q)$, for some $m_2\le -1$. Repeating then the argument $q$ times, we get that the point $f_\gna^{k_1+q}(Q)$, which belongs to the local stable manifold of $P_0$, lies on the same fiber as $f_\gna^{-k_2-q\sum_{i=1}^qm_i+q}(Q)$, where all $m_i$ are smaller or equal $-1$.

On the other hand, from \eqref{condition not one fibered}, we know that $f_\gna^{k_1+q}(Q)$ lies on the same fiber as $f_\gna^{-k_2-q}(Q)$. We conclude that
\[
-q\sum_{i=1}^qm_i+q=-q\, ,
\]
that is $\sum_{i=1}^qm_i=2$, which is a contradiction since all $m_i$ are smaller or equal $-1$.

\item The remaining possibility is then that $f_\gna^{k_1}(Q)$ lies below $f_\gna^{-k_2}(Q)$. That is, denoting by $p_2$ the projection over the second coordinate:
\[
p_2(f_\gna^{-k_2}(Q))>p_2(f_\gna^{k_1}(Q))\, .
\]
Repeating the same argument as before, we have now that $f_\gna^{k_1+1}(Q)$ lies on the same fiber as $f_\gna^{-k_2-m_1q+1}(Q)$, for some $m_1\ge 1$. In the end, we obtain that $f_\gna^{k_1+q}(Q)$ lies on the same fiber as
\[
f_\gna^{-k_2-q\sum_{i=1}^qm_i+q}(Q)\qquad\text{with }m_i\ge 1\ \forall i\, .
\]
From \eqref{condition not one fibered}, we deduce that
\[
f_\gna^{-k_2-q\sum_{i=1}^qm_i+q}(Q)=f_\gna^{-k_2-q}(Q)\quad \Rightarrow \quad 2=\sum_{i=1}^q m_i\ge q\, .
\]
This is possible if and only if $q=2$, i.e., if and only if $P$ is a periodic point of period 2.
\end{enumerate}
\end{proof}

The only thing that we still need to prove is the special case of a homoclinic orbit to a $2$-periodic orbit, which is not localised one-fibered. The statement of Proposition~\ref{good property in the bas case:section3} deals with such a case: we are going to prove it right now.

\begin{proof}[Proof of Proposition~\ref{good property in the bas case:section3}]
Recall that, since the orbit of $Q$ is not localised one-fibered, there exists $k_1,k_2\in\mathbb{N}$ such that for every $n\ge 0$ one has
\[
p_1(f^{k_1+2n}(Q))=p_1(f^{-k_2-2n}(Q))\, ,
\]
and, for any fixed $n$, they are the only points on the same fiber. Because of the recalled symmetry of the stable and unstable manifolds, we deduce that, for every $n\ge 0$,
\begin{equation}\label{good prop 1}
\mathcal{I}(f_\gna^{k_1+2n}(Q))=f_\gna^{-k_2-2n}(Q)\, .
\end{equation}
From \eqref{involution:section3} and \eqref{good prop 1}, we deduce that
\[
f_\gna\circ \mathcal{I}(f_\gna^{k_1}(Q))=\mathcal{I}\circ f_\gna^{-1}(f_\gna^{k_1}(Q))=\mathcal{I}(f_\gna^{k_1-1}(Q))
\] 
but also
\[
f_\gna\circ \mathcal{I}(f_\gna^{k_1}(Q))=f_\gna(f_\gna^{-k_2}(Q))=f_\gna^{-k_2+1}(Q)\, ,
\]
that is
\begin{equation}
\mathcal{I}(f_\gna^{k_1-1}(Q))=f_\gna^{-k_2+1}(Q)\, .
\end{equation}
Applying the same argument by recurrence, we obtain for every $i\ge 0$
\begin{equation}\label{involution at the orbit}
\mathcal{I}(f_\gna^{k_1-i}(Q))=f_\gna^{-k_2+i}(Q)\, .
\end{equation}
Observe that $k_1>-k_2$, thanks to the fact that they correspond to points respectively on the local stable and unstable manifolds, which are different local graphs. Let us consider the following two cases.
\begin{enumerate}
\item If $k_1+k_2$ is odd, then by choosing $i=\frac{k_1+k_2-1}{2}$ and by \eqref{involution at the orbit}, we have that $\mathcal{I}(f_\gna^{k_1- i}(Q))=f_\gna^{-k_2+ i}(Q)$, i.e.,
\[
\mathcal{I}(f_\gna\circ f_\gna^{\ell}(Q))=f_\gna^{\ell}(Q)\, ,
\]
with $\ell= \frac{k_1-k_2-1}{2}$. This is actually not possible. Indeed, we would have a point $f_\gna^{\ell}(Q)$ whose image by the billiard map lies on the same vertical: since the billiard map is the identity on the boundaries $\mathbb{T}\times \{\pm \frac{\pi}{2}\}$, and since $f_\gna^{\ell}(Q)$ is in the interior of the annulus, this would contradict the twist condition.
\item If $k_1+k_2$ is even, then by choosing $i=\frac{k_1+k_2}{2}$ and by \eqref{involution at the orbit}, we obtain that $\mathcal{I}(f_\gna^{k_1- i}(Q))=f_\gna^{-k_2+ i}(Q)$, i.e.,
\[
\mathcal{I}(f_\gna^{\ell}(Q))=f_\gna^{\ell}(Q)\, ,
\]
with $\ell=\frac{k_1-k_2}{2}$. This implies that $f_\gna^{\ell}(Q)$ belongs to the zero section, as required.
\end{enumerate}

We have then proved the existence of a point $f_\gna^{\ell}(Q)$ on the orbit of $Q$ lying on the zero section. 
\end{proof}

\begin{remark}\label{rmk tangent symmetric} Under the conditions of Proposition~\ref{good property in the bas case:section3}, at any point of the orbit of a non transverse homoclinic point $Q$, belonging to $ \mathbb{T}\times \{0\}$,  the stable (and so the unstable) manifold is tangent either to the vertical or to the horizontal direction. Indeed, denoting by $E^{\sta}_x$, resp. $E^{\uns}_x$, the stable, resp. unstable, direction at the point $x=(\ss,\varphi)$, since $\mathcal{I}$ is linear and since the stable and unstable manifolds of $2$-periodic orbits are symmetric with respect to $\mathcal{I}$, we have
\begin{equation}\label{involution and eigenspaces}
\mathcal{I}(E^{\sta}_{f_\gna^{\ell}(Q)})=E^{\uns}_{\mathcal{I}(f_\gna^{\ell}(Q))}=E^{\uns}_{f_\gna^{\ell}(Q)}=E^{\sta}_{f_\gna^{\ell}(Q)}\, ,
\end{equation}
where the last equality comes from the fact that we are assuming that $Q$ is not a transverse homoclinic point. Therefore, the stable direction at $f_\gna^{\ell}(Q)$ is preserved by the involution $\mathcal{I}$: the only possibilities are that $E^{\sta}_{f_\gna^{\ell}(Q)}$ (so also $E^{\uns}_{f_\gna^{\ell}(Q)}$) is either the vertical direction or the horizontal one, as we claimed.
\end{remark}

\section{Compactly supported perturbations and surjectivity of $d\Psi(0)$: proof of Lemmas~\ref{lem: computing the differential} and~\ref{lem: computing the differential ii} and \ref{exhaustive} } 
\label{sec:prooflemmas}

This section is devoted to prove Lemmas~\ref{lem: computing the differential} and~\ref{lem: computing the differential ii} and \ref{exhaustive}.

\subsection{Proof of Lemma \ref{lem: computing the differential}}
Recall that $\Gamma_\nu=\Gamma^*_\nu \circ \mathcal{L}$ and that, therefore,
\[
d \Gamma_\nu(0)\lna= d \Gamma^*_\nu(\mathcal{L}(0))  d \mathcal{L}(0) \lna= d \Gamma^*_\nu(0)  \lac
\]
where $\lac$ is given in~\eqref{lambdaast lambda}.

Since  $Q_0=Q =(\ss_0,\varphi_0)$ is a one fibered point for the billiard map $f_\gna$, so is $\tilde Q=(s_0,\vp_0)$, where $s_0=\sigma (\ss_0)$ for $f_{\ga}$, where we recall that $\ga$ is the arc-lengh parameterization of $\gna$. 
Moreover, if $\lna$ is a function compactly supported in $U_Q$, then $\lac$ will be supported in $\tilde U_{\tilde Q}$, the image of the neighborhood $U_Q$, which will be also one-fibered for the map $f_{\ga}$.

From now on, to avoid a cumbersome notation we denote $f=f_{\ga}$. 
As $\tilde U_{\tilde Q}$ is a one-fibered neighborhood of $\tilde Q =\tilde {Q}_0$, for every $i\neq 0$, the point $\tilde {Q} _i= f^i(\tilde Q) \notin \tilde U_{\tilde Q}\times \left[-\frac{\pi}{2} , \frac{\pi}{2} \right]$. 
Consequently, as $\lna (\ss_i)=0$ for $i\neq 0$, we also have  $\lac(s_i)=0$  for $i\neq 0$, and therefore, by~\eqref{def:Gamma} and~\eqref{def:dGamma}  
\[
\big [d \Gamma^*_\nu(0)\lac \big ](\tilde Q_i) =   0, \;\;\;  D\big [d \Gamma^*_\nu(0)\lac \big ](\tilde Q_i)=0 \qquad i\neq 0,-1
\]
where we have denoted by $D$ the differentiation with respect to $(s,\varphi)$. 
Therefore, by Proposition~\ref{prop:Genecand}, using that $\Gamma^*_\nu(0)=f$ and the chain rule, we obtain that
\begin{align}
d (\Phi_1 \circ \Gamma_\nu)(0) \lna= &  (Df(\tilde Q_0) \mathbf{t}_0 ) \wedge \big [d \Gamma^*_\nu(0) \lac \big ] (\tilde Q_0) +
\mathbf{t}_{0} \wedge \big [d \Gamma^*_\nu(0) \lac \big ] (\tilde Q_{-1}) \notag\\
d (\Phi_2 \circ \Gamma^*_\nu)(0) \lac =   & (Df(\tilde  Q_0) \mathbf{t}_0) \wedge \big [
D [d \Gamma^*_\nu(0) \lac](\tilde Q_0) \mathbf{t}_0 
\big ]  \notag \\ 
& + \mathbf{t}_0 \wedge \big [
D [d \Gamma^*_\nu(0) \lac]( \tilde Q_{-1}) Df^{-1} ( \tilde Q_0) \mathbf{t}_{0} \big ] + \mathbf{w}
\label{formuladphi12Gamma}
\end{align}
with $\mathbf{t}_0$ being the (common) vector tangent of $W^{\uns,\sta}$ at $\tilde Q$ and $\mathbf{w}$ depending on $d\Gamma^*_\nu(0)\lac$, but being independent on $D [d\Gamma^*_\nu(0) \lac]$. 

Using~\eqref{def:Gamma} to compute $\big [d\Gamma^*_\nu(0) \lac](\tilde Q_0)$ and 
$\big [d\Gamma^*_\nu(0) \lac](\tilde Q_{-1})$, 
using that
\begin{equation}\label{propertyM}
M \mathbf{u}\wedge \mathbf{v} = \mathrm{det} (M\mathbf{u},\mathbf{v})= \mathrm{det} (M) \cdot \mathrm{det} (\mathbf{u},M^{-1} \mathbf{v}), \qquad \mathbf{u},\mathbf{v} \in \mathbb{R}^2 , \, M \in \mathcal{M}_{2\times 2},
\end{equation}
and formula~\eqref{formula:Dfsphi} for $Df(s,\varphi)$ which gives $\mathrm{det} Df(\tilde  Q_0)=(\cos \varphi_0) (\cos \varphi_1)^{-1}$,
we obtain that 
\begin{align*}
d (\Phi_1 \circ \Gamma_\nu)(0) \lna= &
(Df( \tilde Q_0) \mathbf{t}_0 ) \wedge \left [A( \tilde Q_0, f( \tilde Q_0)) \left (\begin{array}{c} \lac(s_0) \\ \derlac(s_0) \end{array} \right ) \right ] 
+ \mathbf{t}_0 \wedge \left [B( \tilde Q_0) \left (\begin{array}{c} \lac(s_0) \\ \derlac(s_0) \end{array} \right ) \right ] \\ =&
\mathbf{t}_0 \wedge \left [  \left (\frac{\cos \varphi_0}{\cos \varphi_1} (Df( \tilde Q_0))^{-1} A( \tilde Q_0,f(Q_0)) + B( \tilde Q_0)  \right )\left (\begin{array}{c} \lac(s_0) \\ \derlac(s_0) \end{array} \right )\right ].
\end{align*}
Tedious but straightforward computations lead us to 
\begin{equation}\label{formula for dPsi1}
d (\Phi_1 \circ \Gamma_\nu)(0) \lna= 
\frac{1}{\cos \varphi_0 \cos\varphi_1} \mathbf{t}_0 \wedge  
\mathcal{T}(s_0,\varphi_0,\varphi_1) 
\left (\begin{array}{c} 
\lac(s_0) \\ \derlac(s_0) 
\end{array}\right )
\end{equation}
with $\mathcal{T}$ the matrix defined in~\eqref{def mathcalT}.

Now we compute $d(\Phi_2 \circ \Gamma_\nu)(0)(\lna)$, assuming also that $\lna(\ss_0)=\derlna(\ss_0)=0$, and therefore $\lac(s_0)=\derlac(s_0)=0$. 
From~\eqref{def:dGamma} and letting $f_1 = \pi_s f$, the first component of $f$, we have that 
\begin{equation}\label{first:formulaD}
\begin{aligned}
D[d \Gamma^*_\nu(0) \lac ](\tilde Q_0)  &= -\frac{\dertwolac(s_0)}{\cos \varphi_1} \left ( \begin{array}{cc} \hat{\tau} & 0 \\ \mathcal{K}(s_1) \hat \tau + \cos \varphi_1  & 0 \end{array} \right ), \\   
D [d \Gamma^*_\nu(0) \lna] ( \tilde Q_{-1})  &=  \dertwolac(s_0) \left (\begin{array}{cc} 0 & 0 \\ \partial_s f_1(s_{-1}, \varphi_{-1}) & \partial_\varphi  f_1(s_{-1},\varphi_{-1}) \end{array} \right ).
\end{aligned}
\end{equation} 

We compute now each term in~\eqref{formuladphi12Gamma}. Using again~\eqref{propertyM}
\[
(Df(\tilde Q_0) \mathbf{t}_0) \wedge \big [ D [d \Gamma^*_\nu(0) \lac]( \tilde Q_0) \mathbf{t}_0 \big ] 
= \frac{\cos\varphi_0}{\cos \varphi_1} \mathbf{t}_0 \wedge Df(\tilde Q_0)^{-1} D[d \Gamma^*_\nu(0) \lac ](\tilde Q_0) \mathbf{t}_0.
\]
Notice that, from formula~\eqref{formula:Dfsphi}, we have that 
\begin{align*}
Df(\tilde Q_0)^{-1} D[d \Gamma^*_\nu(0) \lac ](\tilde Q_0) \mathbf{t}_0 & = (\pi_1 \mathbf{t}_0) \dertwolac(s_0) Df(\tilde Q_0)^{-1} \left (\begin{array}{c} -\frac{\hat \tau}{\cos \varphi_1} \\ -\frac{1}{\cos \varphi_1} \left (\mathcal{K}(s_1) \hat\tau + \cos \varphi_1 \right )\end{array}\right ) \\ & = (\pi_1 \mathbf{t}_0) \dertwolac(s_0) \left (\begin{array}{c} 0 \\ 1 \end{array}\right )
\end{align*}
Therefore 
\begin{equation}\label{first:surjective}
(Df(\tilde Q_0) \mathbf{t}_0) \wedge \big [
D [d \Gamma^*_\nu(0) \lac](\tilde Q_0) \mathbf{t}_0 
\big ] =(\pi_1 \mathbf{t}_0)^2  \dertwolac(s_0) \frac{\cos\varphi_0}{\cos \varphi_1}.
\end{equation}
Now we deal with the second term in~\eqref{formuladphi12Gamma}. 
We first note that using that $Df^{-1}(\tilde Q_0) = (Df(\tilde Q_{-1}) )^{-1}$ we have that 
\[
\left (\begin{array}{cc} 0 & 0 \\ \partial_s f_1(s_{-1}, \varphi_{-1}) & \partial_\varphi f_1(s_{-1},\varphi_{-1}) \end{array} \right ) Df^{-1}(\tilde Q_0) = \left (\begin{array}{cc} 0 & 0\\ 1 & 0 \end{array} \right ).
\]
and from~\eqref{first:formulaD}  
\[
\mathbf{t}_0 \wedge \big [
D [d \Gamma^*_\nu(0) \lac](\tilde Q_{-1}) Df^{-1} (\tilde Q_0) \mathbf{t}_{0} \big ] =  \big (\dertwolac(s_0)  \big )\mathbf{t}_0 \wedge \left (\begin{array}{cc} 0 & 0\\ 1 & 0 \end{array} \right ) \mathbf{t}_0 =  \dertwolac(s_0)  (\pi_1 \mathbf{t}_0)^2.
\]
Finally, by~\eqref{first:surjective} and from~\eqref{formuladphi12Gamma} we obtain 
\begin{equation}\label{formula dPsi2}
d(\Phi_2 \circ \Gamma_\nu)(0)\lna  = \dertwolac(s_0)  (\pi_1 \mathbf{t}_0)^2 \left ( \frac{\cos \varphi_0}{\cos \varphi_1} + 1\right )+ \mathbf{w}.
\end{equation}

\subsection{Proof of Lemma \ref{lem: computing the differential ii}}
The proof of this result is analogous to the one of Lemma~\ref{lem: computing the differential}. Indeed, by Definition~\ref{def two fibered}, if $\lna\in \mathcal{C}^\infty_{\mathrm{supp}} (U_Q )$, then, as in Lemma~\ref{lem: computing the differential}, we have $\lac\in \mathcal{C}^\infty_{\mathrm{supp}} (\tilde U_{\tilde Q} )$, were $\tilde Q=(s_0,0)$, with $s_0=\sigma(\ss_0)$. Therefore $\lac(s_i)=0$ for $i\neq \ell, -\ell$ and  using~\eqref{def:Gamma} and~\eqref{def:dGamma}  
\[
\big [d \Gamma^*_\nu(0)\lac \big ](\tilde Q_i)=0, \;\;\;  D\big [d \Gamma^*_\nu(0)\lac \big ](\tilde Q_i)=0 \qquad i\neq \ell ,\ell-1, -\ell, -\ell -1
\]
where $\tilde Q_i = f^{i}(\tilde Q)=(s_i,\varphi_i)$. Recall that $D$ means the differentiation with respect to $(s,\varphi)$. We use now Proposition~\ref{prop:Genecand} together with the chain rule to obtain  
\begin{equation}\label{formuladphi12Gammaell}
\begin{aligned}
d (\Phi_1 \circ \Gamma_\nu)(0) \lna=    &  (Df(\tilde Q_\ell) \mathbf{t}_\ell ) \wedge \big [d \Gamma^*_\nu(0) \lac \big ] (\tilde Q_\ell) +
\mathbf{t}_{\ell} \wedge \big [d \Gamma^*_\nu(0) \lac \big ] (\tilde Q_{\ell-1})  \\ 
&+  (Df(\tilde Q_{-\ell}) \mathbf{t}_{-\ell} ) \wedge \big [d \Gamma^*_\nu(0) \lac \big ] (\tilde Q_{-\ell}) +
\mathbf{t}_{-\ell} \wedge \big [d \Gamma^*_\nu(0) \lac \big ] (\tilde Q_{-\ell-1}) \\
d (\Phi_2 \circ \Gamma_\nu)(0) \lna=   & (Df(\tilde Q_\ell) \mathbf{t}_\ell) \wedge \big [
D [d \Gamma^*_\nu(0) \lac](\tilde Q_\ell) \mathbf{t}_\ell 
\big ]   \\  
& + \mathbf{t}_\ell \wedge \big [
D [d \Gamma^*_\nu(0) \lac](\tilde Q_{\ell-1}) Df^{-1} (\tilde Q_\ell) \mathbf{t}_{\ell} \big ] 
\\ &+ (Df(\tilde Q_{-\ell}) \mathbf{t}_{-\ell}) \wedge \big [
D [d \Gamma^*_\nu(0) \lac](\tilde Q_{-\ell}) \mathbf{t}_{-\ell} 
\big ] 
\\ & + \mathbf{t}_{-\ell} \wedge \big [
D [d \Gamma^*_\nu(0) \lac](\tilde Q_{-\ell-1}) Df^{-1} (\tilde Q_{-\ell}) \mathbf{t}_{-\ell} \big ] + \mathbf{w} 
\end{aligned}
\end{equation}
with $\mathbf{t}_{\ell}, \mathbf{t}_{-\ell}$ being the (common) vector tangent of $W^{\uns,\sta}(\mathcal{P})$ at $f^{\ell}(\tilde Q)$ and $f^{-\ell}(\tilde Q)$, respectively, and $\mathbf{w}$ depending on $d\Gamma^*_\nu(0)\lac$, but being independent on $D [d\Gamma^*_\nu(0) \lac]$. 

Performing the same computations as in the proof of Lemma~\ref{lem: computing the differential}, we obtain the analogous to~\eqref{formula for dPsi1}
\begin{equation}\label{formula for dPsi1 ii}
\begin{aligned}
d (\Phi_1 \circ \Gamma_\nu)(0) \lna= &\frac{1}{\cos \varphi_\ell \cos\varphi_{\ell + 1}} \mathbf{t}_\ell \wedge  
\mathcal{T}(s_\ell,\varphi_\ell,\varphi_{\ell+1}) \left (\begin{array}{c} \lac(s_\ell) \\ \derlac(s_\ell) \end{array}\right ) \\ 
&+\frac{1}{\cos \varphi_{-\ell} \cos\varphi_{-\ell+1}} \mathbf{t}_{-\ell} \wedge  
\mathcal{T}(s_{-\ell},\varphi_{-\ell},\varphi_{-\ell+1}) \left (\begin{array}{c} \lac(s_{-\ell}) \\ \derlac(s_{-\ell}) \end{array}\right )
\end{aligned}
\end{equation}
and when $\lna(\ss_\ell)= \derlna(\ss_{\ell})=0$, we obtain the analogous formula to~\eqref{formula dPsi2}:
\begin{equation}\label{formula dPsi2 ii}   
\begin{aligned}
d(\Phi_2 \circ \Gamma_\nu)(0)\lna  = & \dertwolac(s_\ell)  (\pi_1 \mathbf{t}_\ell)^2 \left ( \frac{\cos \varphi_\ell}{\cos \varphi_{\ell+1}} + 1\right )  \\ 
&+\dertwolac(s_{-\ell})  (\pi_1 \mathbf{t}_{-\ell})^2 \left ( \frac{\cos \varphi_{-\ell}}{\cos \varphi_{-\ell+1}} + 1\right )+ \mathbf{w}.
\end{aligned}
\end{equation}

Now we use the special symmetry properties of this case. Indeed, recall that $\mathcal{I}\circ f = f^{-1} \circ \mathcal{I}$ with $\mathcal{I}$ the involution defined as $\mathcal{I}(s,\varphi)=(s,-\varphi)$ (see~\eqref{eq:involutionI} and Definition~\ref{def two fibered}) and that $\tilde Q_0=\mathcal{I}(\tilde Q_0)$, {where $\tilde Q_0=\tilde Q$}. Then $\tilde Q_i=f^{i}(\tilde Q)$ and $\tilde Q_{-i}=f^{-i}(\tilde Q)$ satisfy $\tilde Q_{-i}=(s_{i},-\varphi_{i}) = \mathcal{I}(\tilde Q_i)$, for $i\in \mathbb{Z}$. In addition, since $\mathbf{t}_{i+1}=Df(\tilde Q_{i})\mathbf{t}_i$, for $i\geq 0$ and $\mathbf{t}_{i+1}=Df^{-1}(\tilde Q_{i}) \mathbf{t}_i$, $i<0$ (see~\eqref{defQti}) we conclude that 
$\mathbf{t}_i = \mathcal{I}(\mathbf{t}_{-i})$ (see also Remark~\ref{rmk tangent symmetric}). 
In particular
\[
(s_{-\ell}, \varphi_{-\ell})=(s_\ell , -\varphi_{\ell}), \qquad \pi_1 \mathbf{t_{-\ell}} = \pi_1 \mathbf{t}_\ell
\]
Using these  facts we deduce the result form~\eqref{formula for dPsi1 ii} and~\eqref{formula dPsi2 ii}.

\subsection{Proof of Lemma \ref{exhaustive}}
For any $k\in \mathbb{Z}$, we introduce 
\[
C_{k}= \frac{1}{\cos \varphi_{k+1}}  \big ( \cos \varphi_{k}+ \cos \varphi_{k+1}\big ) =1 + \frac{\cos \varphi_k}{\cos \varphi_{k+1}}>0
\]
because we recall that, for $k\in \mathbb{Z}$, the point $f_\gna^{k}(Q) \in \mathcal{M}_{\frac{p}{q}} (\Omega) \subset \mathbb{A}_\nu$. Now we choose suitable $\lna_1, \lna_2$. 

We first deal with the case $Q$ one-fibered satisfying $\pi_1 \mathbf{t}_0\neq 0$. We take any $\lna_1 \in \mathcal{C}^\infty_{\mathrm{supp}} (U_Q)$ satisfying $\lna_1(\ss_0)=0$ and $\derlna_1(\ss_0) = 1$. 
Then, by Lemma~\ref{lem: computing the differential}
\[
d(\Phi \circ \Gamma_\nu) (0) \lna_1 =  \big (\pi_1 \mathbf{t}_0 \, C_0 , \mathbf{a} \big )
\]
for some constant $\mathbf{a}$. Secondly, we notice that, if  $\lna_2\in  \mathcal{C}^\infty_{\mathrm{supp}} (U_Q)$ satisfying that $\lna_2 (\ss_0) = \derlna_2 (\ss_0) =0$, again using Lemma~\ref{lem: computing the differential}  
\[
d(\Phi \circ \Gamma_\nu)(0)\lna_2   = \left( 0,\dertwolac_2(s_0)  \big (\pi_1 \mathbf{t}_0 \big )^2 C_0+ \mathbf{w}\right).
\]
We then choose $\lna_2 \in \mathcal{C}^\infty_{\mathrm{supp}} (U_Q)$ satisfying that $\lna_2(\ss_0) = \derlna_2(\ss_0) =0$, and $\dertwolna_2(\ss_0) = |\partial \Omega(\dgnalna)|(\der{\sigma}(\ss_0))^2(1- \mathbf{w} C_0^{-1}\big (\pi_1 \mathbf{t}_0 \big )^{-2}) $ and therefore
\[
d(\Phi \circ \Gamma_\nu) (0) \lna_2 = (0, \big (\pi_1 \mathbf{t}_0 \big )^2 C_0). 
\]
Notice that, since $\pi_1 \mathbf{t}_0 , C_0\neq 0$,                    
\[
\left |\begin{array}{cc} \pi_1 \mathbf{t}_0 \, C_0& 0 \\
\mathbf{a} & \big (\pi_1 \mathbf{t}_0 \big )^2 C_0 \end{array} \right |  =  \big (\pi_1 \mathbf{t}_0 \big )^3 \frac{1}{(\cos \varphi_1)^2} ( \cos \varphi_0 + \cos \varphi_1)^2 \neq 0
\]
and therefore $\{d(\Phi \circ \Gamma_\nu)(0) \lna_1, d(\Phi \circ \Gamma_\nu)(0) \lna_2\} \subset \mathbb{R}^2$ is a basis of $\mathbb{R}^2$.

The case $Q=(\ss_0,0)$ two-fibered follows analogously. Indeed, if $\lna_1 \in \mathcal{C}^\infty_{\mathrm{supp}}(U_Q)$ is such that $\lna_1(\ss_\ell)=0$ and $\derlna_1(\ss_\ell)=1$, by Lemma~\ref{lem: computing the differential ii} and using that $\pi_1 \mathcal{I}(\mathbf{t}_\ell)= \pi_1 \mathbf{t}_\ell$, we obtain
\[
d(\Phi\circ \Gamma_\nu)(0)\lna_1 = (\pi_1 \mathbf{t}_\ell C_\ell + \pi_1 \mathcal{I}(\mathbf{t}_\ell) C_{-\ell}, \mathbf{b} ) = \big (
\pi_1 \mathbf{t}_\ell [C_\ell + C_{-\ell} ] , \mathbf{b} \big )
\]
for some constant $\mathbf{b}$. 

As before, we choose now $\lna_2 \in \mathcal{C}^\infty_{\mathrm{supp}}(U_Q)$ such that $\lna_2(\ss_\ell) = \derlna_2(\ss_\ell)=0$. Then, by Lemma~\ref{lem: computing the differential ii}
\[
d(\Phi\circ \Gamma_\nu)(0)\lna_2 = \big (0, \dertwolac_2(s_\ell)\big (\pi_1 \mathbf{t}_\ell\big )^2 (C_\ell + C_{-\ell}) + \mathbf{w}\big ).
\]
Finally, by choosing $\dertwo{\lna}_2(\ss_\ell) = |\partial \Omega(\dgnalna)|(\sigma'(\ss_\ell))^2(1- \mathbf{w} (C_\ell + C_{-\ell})^{-1} (\pi_1 \mathbf{t}_\ell)^{-2})$, we obtain 
\[
d(\Phi\circ \Gamma_\nu)(0)\lna_2 = (0, \big (\pi_1 \mathbf{t}_\ell \big )^2 (C_\ell + C_{-\ell}))
\]
and the result is proven, as done for the one-fibered case.

\bibliographystyle{alpha}
\bibliography{biblio.bib}  
\end{document}